\theoremstyle{plain}
\newtheorem{thm}{Theorem}[section]
\newtheorem{cor}[thm]{Corollary}
\newtheorem{conj}[thm]{Conjecture}
\newtheorem{prop}[thm]{Proposition}
\newtheorem{lemma}[thm]{Lemma}
\newtheorem{claim}[thm]{Claim}
\newtheorem{addendum}[thm]{Addendum}
\newtheorem{question}{Question}
\newtheorem*{thm-eight}{Theorem 8.1}
\newtheorem*{thmapp}{Theorem~\ref{thm:appendix}}
\newcounter{prp}
\theoremstyle{definition}
\newtheorem{defn}[thm]{Definition}
\newtheorem{remark}[thm]{Remark}
\newtheorem{property}[prp]{Property}
\newcommand{\arc}[1]{\ensuremath{\overline{#1}}}
\newcommand{\edge}[1]{\ensuremath{(#1)}}
\newcommand{\hatF}{\ensuremath{{\widehat{F}}}}
\newcommand{\hatQ}{\ensuremath{{\widehat{Q}}}}
\newcommand{\calA}{\ensuremath{{\mathcal A}}}
\newcommand{\calB}{\ensuremath{{\mathcal B}}}
\newcommand{\calN}{\ensuremath{{\mathcal N}}}
\newcommand{\cala}{\ensuremath{{\mathfrak a}}}    
\newcommand{\calL}{\ensuremath{{\mathcal L}}}
\newcommand{\calT}{\ensuremath{{\mathcal T}}}
\newcommand{\calC}{\ensuremath{{\mathcal C}}}
\newcommand{\calD}{\ensuremath{{\mathcal D}}}
\newcommand{\calE}{\ensuremath{{\mathcal E}}}
\newcommand{\calF}{\ensuremath{{\mathcal F}}}
\newcommand{\B}{\ensuremath{{\mathfrak B}}}
\newcommand{\trunk}{\ensuremath{{\mathfrak t}}}
\newcommand{\comment}[1]{}
\newcommand{\bdry}{\ensuremath{\partial}}
\DeclareMathOperator{\Int}{Int}
\DeclareMathOperator{\nbhd}{Nbhd}
\newcommand{\N}{\ensuremath{\mathbb{N}}}
\newcommand{\R}{\ensuremath{\mathbb{R}}}
\newcommand{\C}{\ensuremath{\mathbb{C}}}
\newcommand{\I}{{\mbox{{\rm \scriptsize{I}}}}}
\newcommand{\II}{{\mbox{{\rm \scriptsize{I\!I}}}}}
\newcommand{\III}{{\mbox{{\rm \scriptsize{I\!I\!I}}}}}
\newcommand{\SII}{\ensuremath{\Sigma_{\II}}}
\newcommand{\g}{\ensuremath{{\mathfrak g}}}
\newcommand{\G}{\ensuremath{{\mathfrak G}}}
\newcommand{\mobius}{M\"{o}bius }
\newcommand{\AO}{A\Theta}
\newcommand{\ao}{a\theta}
\def\smallcoprod{\raise.3ex\hbox{$\,\scriptstyle\coprod\,$}}
\def\C{{\mathcal C}}
\begin{document}
\title{Bridge number, Heegaard genus and non-integral Dehn surgery}
\author{Kenneth L.\ Baker}
\address{Department of Mathematics \\ University of Miami \\ Coral Gables, FL 33146}
\email{kb@math.miami.edu}

\author{Cameron Gordon}
\address{Department of Mathematics \\ The University of Texas at Austin \\ Austin, TX 78712}
\email{gordon@math.utexas.edu}

\author{John Luecke}
\address{Department of Mathematics \\ The University of Texas at Austin \\ Austin, TX 78712}
\email{luecke@math.utexas.edu}
\begin{abstract}
We show there exists a linear function $w \colon \N \to \N$ with the following
property.
Let $K$ be a hyperbolic knot in a hyperbolic $3$--manifold $M$ admitting a 
non-longitudinal $S^3$ surgery. If $K$ is put into thin position with respect to a strongly irreducible, genus $g$ Heegaard splitting of $M$ then $K$ intersects a thick level at most $2w(g)$ times. Typically, this shows that the bridge 
number of $K$ with respect to this Heegaard splitting is at most $w(g)$,
and the tunnel number of $K$ is at most $w(g) + g-1$.   
\end{abstract}
\maketitle


\section{Introduction}

Let $M = K'(p/q)$ be the manifold obtained by $p/q$--Dehn surgery on a knot 
$K'$ in $S^3$. 
(Here $p/q \in \mathbb Q$, $q > 0$, parametrizes the slope on the boundary of 
the exterior of $K'$ that runs $p$ times meridionally and $q$ times 
longitudinally around $K'$.) 
We denote the core of the attached solid torus in $K'(p/q)$ by $K$. 
Thus $K$ is a knot in $M$ with a Dehn surgery that yields $S^3$, and it is 
natural to investigate what properties of $K$ this entails. 
In the present paper we are interested in the relationship between $K$ and 
the Heegaard splittings of $M$; more specifically, if $S$ is a Heegaard 
surface for $M$, what can we say about the bridge number 
$br(K) = br_S(K)$ of $K$ with respect to $S$? 
First note that if $S$ is a Heegaard surface for $X = S^3 - N(K')$, the 
exterior of $K'$, i.e. $S$ separates $X$ into a handlebody and a compression 
body, then $S$ becomes a Heegaard surface for $M$ such that 
$br_S(K) = 0$. 
In general not every Heegaard surface for $M$ arises in this way; for 
example the Heegaard genus of $M$ may be smaller than that of $X$. 
Nevertheless, it is shown in \cite{rs1} (see also \cite{moriahrubenstein} and
\cite{rieck}) that, given $K'$, for all but finitely 
many slopes $p/q$ we have $br_S(K) = 0$ for any Heegaard surface $S$ of $M$.

Recall that if $\gamma,\tau$ are two isotopy classes of simple closed
curves on a 2-torus, then the {\em distance} between $\gamma$ and $\tau$,
denoted $\Delta(\gamma,\tau)$ is defined to be the absolute value of the
intersection number between $\gamma$ and $\tau$.
Note that $q = \Delta(p/q,1/0)$, the distance of $p/q$ from the meridian 
of $K'$ on $\partial X$. 
Since the trivial Dehn surgery $K'(1/0) = S^3$ represents the maximal 
possible degeneration of Heegaard genus, one would expect the Heegaard 
splittings of $K'(p/q)$ to behave better as $q$ gets large. 
Indeed, it follows from \cite{rieck} that if $K'$ is hyperbolic and $S$ is a 
Heegaard surface of genus $g$ for $K'(p/q)$, then $q \ge 18(g+1)$ 
implies $br_S(K) = 0$.

Here we consider the question whether there is an upper bound on 
$br_S(K)$ that depends only on the genus of $S$:

\begin{question}
Is there a function $w : \mathbb N \to \mathbb N$ such that if $K'$ is a 
knot in $S^3$ and $S$ is a Heegaard surface of genus $g$ for $M = K'(p/q)$, 
where $q > 0$, then $br_S(K) \le w(g)$?
\end{question}

It turns out that the answer to Question~1 is ``no'' in general when $q = 1$; 
see Remark~\ref{rem1-6}  below. 
However, Corollary~\ref{cor1-1} below says 
that the answer is ``yes'' if $q \ge 2$, provided that $K'$ is 
hyperbolic, $M$ is not a special kind of 
Seifert fibered space, and $p/q$ is not a boundary slope for $X$.
Corollary~\ref{cor1-1} follows from our main result, Theorem~8.1, 
where we consider more generally the number of intersections of 
$K$ with a thick level surface in a Heegaard splitting of $M$ with 
respect to which $K$ is in thin position.

\begin{thm-eight}    
There is a linear function $w : \mathbb N \to \mathbb N$ with the following 
property. 
Let $K'$ be a hyperbolic knot in $S^3$, $M = K'(p/q)$ where $q \ge 2$, 
and $K$ the core of the attached solid torus in $M$. 
Suppose $K$ is in thin position with respect to a genus $g$ Heegaard 
splitting of $M$ and let $S$ be a corresponding thick level surface. 
If $S$ is a strongly irreducible Heegaard surface for $M$
then either
\begin{itemize}
\item[(1)] $|K\cap S| \le 2w(g)$; or

\item[(2)] $M$ is toroidal; or

\item[(3)] $M$ is a Seifert fibered space over the 2-sphere with exactly 
three exceptional fibers, at least one of which has order 2 or 3. 
\end{itemize}
Furthermore, in cases (2) and (3) $M$ has a genus~2 
Heegaard splitting with respect to 
which $K$ has bridge number~$0$ in case (2) and at most $w(2)$ in case (3). 
\end{thm-eight}

In \S\ref{sec:proofofmain} we show that  
$w(g)=10,581(g-1)+394$ works.
The technical assumption in Theorem~\ref{thm:main} that $S$ is 
strongly irreducible is likely unnecessary. It is only used in the proof of 
Lemma~\ref{lem:parallelesc}. 

If $X$ is a compact, orientable 3-manifold with torus boundary 
let us say that a slope $r$ on $\partial X$ is a {\it $(g,b)$--boundary slope 
for $X$} if there is a compact, connected, orientable essential surface 
$(F,\partial F) \subset (X,\partial X)$ with boundary slope $r$, such that 
$F$ has genus at most $g$ and at most $b$ boundary components. We say
that $r$ is a {\em $g$--boundary slope} if it is a $(g,b)$--boundary slope for
some $b$.

\begin{cor}\label{cor1-1} 
There is a linear function $w : \mathbb N \to \mathbb N$ with the 
following property. 
Let $K'$ be a hyperbolic knot in $S^3$,  $M = K'(p/q)$ where $q \ge 2$, 
and $K$ the core of the attached solid torus in $M$. 
Let $S$ be a genus $g$, strongly irreducible Heegaard surface in $M$. 
If $p/q$ is not a $(g,2w(g)-2)$--boundary slope for the exterior of $K'$ then 
either $br_S(K) \le w(g)$, or conclusion (3) of Theorem~8.1 holds. 
\end{cor}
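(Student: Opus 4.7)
The plan is to put $K$ in thin position with respect to the Heegaard splitting of $M$ containing $S$, apply Theorem~8.1, and use the hypothesis that $p/q$ is not a $(g,2w(g)-2)$-boundary slope for $X = S^3 - N(K')$ to eliminate those alternatives of Theorem~8.1 that do not yield either the bridge bound or case~(3). Since case~(3) of Theorem~8.1 is already one of the permitted conclusions of the corollary, it suffices to treat cases~(1) and (2).

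In case~(1), $|K \cap S| \le 2w(g)$ at the thick level $S$. If the thin-position presentation has no thin levels, then $K$ is in bridge position with respect to $S$, giving $br_S(K) \le \tfrac12|K\cap S| \le w(g)$. Otherwise, take a thin level surface $P$. By the standard thin-position analysis (following Scharlemann--Thompson), $P$ is essential in the knot exterior $X$, its boundary has slope $p/q$, its genus is at most $g$ (thin levels are obtained from the genus-$g$ Heegaard surface by compressions), and $|\partial P| = |P \cap K| \le |K \cap S| - 2 \le 2w(g)-2$, since in thin position a thin-level intersection is smaller than each adjacent thick-level intersection by at least~$2$. Hence $p/q$ would be a $(g,2w(g)-2)$-boundary slope for $X$, contradicting the hypothesis. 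So no thin levels exist and the bridge bound holds.

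In case~(2), $M$ is toroidal. Since $K'$ is hyperbolic, $X$ is atoroidal, so every essential torus $T \subset M$ must meet $K$; isotope $T$ to minimize $|T \cap K|$. Then $T \cap X$ is an essential genus-$1$ surface in $X$ with boundary slope $p/q$ and $|T \cap K|$ boundary components. Assuming $g \ge 1$, a contradiction with the hypothesis will follow provided $|T \cap K| \le 2w(g) - 2$; I expect this to be achieved by selecting $T$ from the thin-position data used in the proof of Theorem~8.1 to detect case~(2) (for example as a thin-level torus), so that the thick-level intersection bound of $2w(g)$ transfers to a bound on $|T \cap K|$.

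The main obstacle is precisely the bound on $|T \cap K|$ in case~(2): the case~(1) argument is essentially bookkeeping on the width sequence of thin position once the thick-level bound is in hand, but extracting an essential torus with controlled intersection with $K$ requires unpacking the mechanism by which case~(2) is produced inside the proof of Theorem~8.1. Once that bound is in place, both cases~(1) and (2) become incompatible with the boundary-slope hypothesis, leaving only the bridge number bound or case~(3).
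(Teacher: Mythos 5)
Your handling of case~(1) is essentially the paper's argument, reproduced from scratch: you are reconstructing the content of Lemma~\ref{lem:thin=bridge}, which says that if thin position is not bridge position then the meridian of $K$ is a $(g,b)$-boundary slope with $b$ the number of intersections with a thin level, and then noting that any thin level meets $K$ at most $2w(g)-2$ times. That part is fine.

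The gap is in case~(2), and it is a real one: your plan to extract an essential torus with controlled $|T\cap K|$ ``from the thin-position data used in the proof of Theorem~8.1'' cannot work, because the toroidal case is not produced by that proof --- it is simply carved off at the outset. The paper's standing hypotheses include ``(4) $M$ is atoroidal,'' so the entire combinatorial argument for Theorem~8.1 takes place under that assumption; conclusion~(2) is just the negation of that hypothesis and carries no intrinsic bound on how a torus meets~$K$. The ingredient you are missing is external to the thin-position machinery: by the main theorem of Gordon--Luecke on non-integral toroidal Dehn surgeries (\cite{gl:nitds}), if $K'$ is hyperbolic and $K'(p/q)$ is toroidal with $q\geq 2$, then in fact $q=2$ and the essential torus can be isotoped to meet the core~$K$ in exactly two points, so that $p/q$ is a $(1,2)$-boundary slope of~$X$. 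Since $g\geq 1$ and $2w(g)-2\geq 2$, this makes $p/q$ a $(g,2w(g)-2)$-boundary slope, contradicting the hypothesis directly --- no appeal to thin position is needed. In short, the paper disposes of case~(2) first (by a citation, before invoking Theorem~8.1 at all), then runs the thin/bridge dichotomy on the surviving cases; your outline leaves case~(2) open without the Gordon--Luecke result.
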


\begin{proof}
For terminology on thin presentations, see section~\ref{sec:basics}.
Let $w$ be the function of Theorem~\ref{thm:main}. Assume $K'(p/q)$ has 
a genus $g$ splitting which is strongly irreducible, and that $p/q$ is not a 
$(g,2w(g)-2)$--boundary slope. In particular, $M$ cannot be toroidal since, otherwise,
by \cite{gl:nitds} 
$p/q$ is a $(1,2)$--boundary slope (and $g > 1$).
Put $K$ in thin position with respect to this splitting
and let $S$ be a thick level surface.
Theorem~\ref{thm:main} says that either $M$ is the Seifert
fibered space of its conclusion $(3)$ and that $K$ has bridge number at most $w(2)$
with respect to a genus $2$ splitting of $M$,
or $|K \cap S| \le 2w(g)$. This thin presentation of $K$ must be a bridge
presentation of $K$, otherwise there will be a thin level surface in the thin 
presentation which intersects $K$ fewer times than does $S$, and
Lemma~\ref{lem:thin=bridge} would contradict that $p/q$ is not a 
$(g,2w(g)-2)$--boundary slope. Thus $K$ is bridge with respect to $S$ and
 $br_S(K) \le w(g)$.
\end{proof}

\begin{remark}\label{rem1-2}
By \cite{hatcher}, at most finitely many slopes of the exterior of $K'$ are boundary 
slopes. 
Note that if $M$ is non-Haken, then $p/q$ cannot be a boundary slope, 
by Theorem~2.0.3 of \cite{cgls:dsok} ($q\ge2$ and $M$ is irreducible by 
\cite{gl:oidscyrm}).
If $M$ is a Seifert fibered space over the 2-sphere with at most three 
exceptional fibers, then, since $p \neq 0$,  $M$ is non-Haken.
\end{remark}

\begin{remark}\label{rem1-3}
It is conjectured that a Seifert fibered space can never be obtained by 
non-integral Dehn surgery on a hyperbolic knot in $S^3$ --- that is, that 
(3) of Theorem~8.1 never occurs. 
Theorem~2.4 of \cite{bgl:og2hsfs} implies that if $M$ is Seifert fibered over 
the 2-sphere 
with three exceptional fibers and $q>2$ then $K$ has bridge number at most 
1 with respect to some genus~2 Heegaard splitting of $M$, though possibly 
a 1-sided splitting (which are not considered in this paper). 
Note that since a 1-sided splitting corresponds to a non-orientable surface, 
if $p$ is odd then $K'(p/q)$ does not have a 1-sided splitting. 
Thus when $q>2$ and $p$ is odd, Theorem~2.4 of \cite{bgl:og2hsfs} 
sharpens conclusion (3) above.
\end{remark}
\begin{remark}\label{rem1-4}
If $M$ is toroidal, then by \cite{gl:nitds} ($q = 2$ and) $K'$ belongs to 
the family of knots described by Eudave-Mu\~noz in \cite{em:nhmobdsohk}. 
In \cite{em:otkklmnp} Eudave-Mu\~noz shows that these knots have tunnel number~1, and 
hence $br(K) = 0$ with respect to a minimal genus (genus~2) splitting of $M$.
\end{remark}

\begin{remark}\label{rem1-5}
We conjecture that the hypotheses that $K'$ is hyperbolic in 
Theorem~8.1  and 
Corollary~\ref{cor1-1}, and that $p/q$ is not a 
$(g,2w(g)-2)$--boundary slope in Corollary~\ref{cor1-1}, 
are redundant.
\end{remark}

\begin{remark}\label{rem1-6}
When $q = 1$ the answer to Question 1 is ``no'' in general. 
In \cite{teragaito} Teragaito 
constructs infinitely many hyperbolic knots $K'_n$ 
in $S^3$ such that 4-surgery on $K'_n$ gives the same 3-manifold $M$ 
for all $n$, where $M$ is a Seifert fibered space of type $S^2(2,6,7)$. 
In particular $M$ has Heegaard genus~2. 
Let $K_n$ be the core of the attached solid torus in $K'_n(4) = M$, and define 
$b(K_n) = \min \{br_S(K_n) : S$ a genus~2 Heegaard surface for $M\}$. 
It is shown in \cite{bgl:bnaids} that $b(K_n)$ is unbounded.
\end{remark}

\begin{remark}\label{rem1-7}
For small values of $g$ the bridge number $br(K)$ is either known or 
conjectured to be very small. 
For example, the impossibility of getting $S^3$ by non-trivial Dehn 
surgery on a non-trivial knot \cite{gl:kadbtc} 
can be expressed as saying that if $g=0$ and $q>0$ then $br (K)=0$.
When $g=1$, $K'(p/q)$ is a lens space and here the Cyclic Surgery 
Theorem \cite{cgls:dsok}
says that if $q>1$ then $K'$ is a torus knot, which is easily seen to 
imply $br(K)=0$, while if $q=1$ and $K'$ is hyperbolic the Berge Conjecture \cite{berge}
is equivalent to the assertion that $br(K)=1$.
In another paper (\cite[Theorem~2.4]{bgl:og2hsfs}), we consider the case 
$g=2$ and 
show that if $q>2$ then, generically, $br(K) \le 1$ (with respect to some 
genus~2 splitting, possibly 1-sided). 
\end{remark}

\begin{remark}
Strongly irreducible Heegaard surfaces and closed incompressible surfaces may exhibit similar behavior.  With this theme in mind, in the Appendix we adapt the proof of Theorem~\ref{thm:main} to bound the intersection number of $K$ with an incompressible surface rather than a thick Heegaard surface in terms of the genus of the surface:
%
\begin{thmapp} 
There is a linear function $w_I \colon \mathbb{N} \to \mathbb{N}$ with the following
property.
Let $K'$ be a hyperbolic knot in $S^3$, $M = K'(p/q)$ where $q \ge 2$,
and $K$ the core of the attached solid torus in $M$.
Let $S$ be an orientable, incompressible surface in $M$ of
genus $g$.
Then $K$ can be isotoped to intersect $S$ at most $w_I(g)$ times.
\end{thmapp}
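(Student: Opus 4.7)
The plan is to adapt the proof of Theorem~\ref{thm:main}, exploiting the fact that working with an incompressible surface $S$ is strictly simpler than working with a strongly irreducible Heegaard surface. In particular, thin position is not required: we may isotope $K$ to minimize $|K\cap S|$ directly, and then use $S$ itself throughout as the analogue of a thick level surface.

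First, put $K$ transverse to $S$ with $|K\cap S|$ minimal. Let $P = S \setminus \Int N(K)$, a compact orientable surface of genus $g$ with $|K\cap S|$ boundary circles on $\partial N(K) = \partial N(K')$. Viewed in the exterior $X = S^3 \setminus \Int N(K')$, $P$ is properly embedded with every boundary component of slope $p/q$. A standard outermost-arc / innermost-disk argument, using incompressibility of $S$ in $M$ together with minimality of $|K\cap S|$, shows that $P$ is essential in $X$.

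Next, apply the combinatorial graph-of-intersection machinery from the body of the proof of Theorem~\ref{thm:main}. One pairs $P$ with a second essential surface (for instance one arising from the $p/q$ surgery structure) and studies the resulting intersection graphs on $P$ and on the dual surface. The face-counting and parity arguments of \S\ref{sec:proofofmain} that produce the linear bound $w(g)$ use nothing about the Heegaard structure except through Lemma~\ref{lem:parallelesc}, which is the only place the strong irreducibility hypothesis enters. In the incompressible setting the analogous statement about parallel escape curves follows at once from incompressibility of $P$, and the remainder of the combinatorics carries over to yield a linear function $w_I(g)$ bounding $|K\cap S|$.

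The main obstacle is bookkeeping. One must verify that each step in the proof of Theorem~\ref{thm:main} that invoked the Heegaard splitting (thick versus thin level surfaces, arcs of $K$ running through handlebodies, compressions bounding in handlebodies, and so on) either has an analogue requiring only incompressibility of $S$ or is unnecessary here. In addition, one must check that the exceptional conclusions~(2) and~(3) of Theorem~\ref{thm:main} do not appear as genuine obstructions — which is expected, since those special structures of $M$ were forced by the interplay between $K$ and the Heegaard splitting rather than by incompressibility alone, and since only the intersection count (and not a bound on bridge number with respect to a specific splitting) is being asserted. Once these verifications are made, the argument produces the desired universal linear bound $w_I(g)$.
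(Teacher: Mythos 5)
Your structural plan is the paper's: adapt the proof of Theorem~\ref{thm:main}, use $S$ itself as the analogue of the thick level surface, and observe that strong irreducibility enters only in Lemma~\ref{lem:parallelesc} (specifically Claim~\ref{clm:stronglyirred}), where it can be replaced by incompressibility --- if $N$ sat inside a $3$-ball, a curve homotopically essential in $\hatF$ would lie in that ball and be nullhomotopic in $M$, contradicting incompressibility. You are also right that thin position of $K$ in $M$ is not needed, but you blur what the second surface $Q$ is: the paper still puts $K'$ into thin position in $S^3$ and applies Gabai's Lemma~4.4 of \cite{gabai:fatto3mIII} to obtain a level sphere $\hatQ$ whose restriction $Q$ meets $F$ in arcs that are essential on both sides; without naming this step your ``second essential surface arising from the surgery structure'' is too vague to carry the combinatorics.

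The actual gap is the toroidal case. You assert that conclusions (2) and (3) of Theorem~\ref{thm:main} are ``expected'' not to be genuine obstructions, but this is only automatic for (3): an atoroidal Seifert fibered space over $S^2$ with three exceptional fibers contains no closed orientable incompressible surface, so that case simply cannot arise. The toroidal case (2), however, requires a real argument, not bookkeeping: if $M$ is toroidal then by \cite{gl:nitds} $K'$ is an Eudave-Mu\~noz knot, $M$ is a union along a torus $T$ of two Seifert fibered spaces over the disk whose fibers meet once on $T$, and one must argue that $T$ is the \emph{only} orientable incompressible surface (any other would be horizontal in both pieces, hence non-separating, contradicting that $M$ is a rational homology sphere) and that $K$ can be isotoped to meet $T$ twice (again from \cite{gl:nitds}). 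Your proposal leaves this case unaddressed; as written the verification you defer is not a routine check but a use of the Eudave-Mu\~noz classification, and you should make that explicit and split the two exceptional cases apart.
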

%
\end{remark}

\begin{remark}
Osoinach shows that for integral surgeries the above does not hold.
In \cite{osoinach}, he gives examples of infinitely many different
knots in the 3-sphere on which $0$--surgery gives the same manifold. 
This manifold has
an essential torus, and in \cite{osoinachdiss} he shows 
that the set of 
minimal intersection numbers in $M$ of this torus with the corresponding 
cores of the attached
solid tori must be infinite. For a sharpening of Osoinach's result see \cite{teragaitotoroidal}.
\end{remark}

Our results give information on the relationship between the Heegaard genus 
of $M$ and that of $X$, the exterior of $K'$. 
Recall that a Heegaard splitting of $X$ is a decomposition $X = V\cup_S W$,
where $V$ is a handlebody with $\partial V = S$ and $W$ is a compression 
body with $\partial W = S\sqcup \partial X$. 
The {\em Heegaard genus} $g(X)$ of $X$ is the minimal genus of 
$S$ over all such decompositions. 
In this context one often talks about the {\em tunnel number} $t(K')$ 
of $K'$, the minimum number of arcs (``tunnels'') that need to be attached to 
$K'$ so that the complement of an open regular neighborhood of the resulting 
1-complex is a handlebody.
It is easy to see that $g(X) = t(K')+1$.
For any slope $p/q$, $V\cup_S W(p/q)$ is a Heegaard splitting of 
$M= K'(p/q)$; in particular $g(M) \le g(X)$. 
The question arises as to what extent Heegaard genus can decrease under 
Dehn filling, i.e. how bad the inequality $g(M) \le g(X) = t(K') + 1$ 
can be. 
Since it is easy to see that if $K$ has bridge number $br(K)$ with respect 
to a genus~$g$ splitting of $M$ then $t(K) (=t (K')) \le g+br (K)-1$, 
Corollary~\ref{cor1-1} 
gives a linear bound on the extent to which Heegaard genus 
can decrease under a non-integral Dehn surgery.

\begin{cor}\label{cor1-8}
There is a linear function $w_{TN} :\N \to \N$ with the following 
property. 
Let $K'$ be a hyperbolic knot in $S^3$ and $M= K(p/q)$. 
If $M$ has Heegaard genus~$g$, $p/q$ is not a $(g,2(w_{TN}(g) -g))$--boundary 
slope for the exterior of $K'$, and $q\ge 2$, then the tunnel number of 
$K'$ is 
at most $w_{TN}(g)$.
\end{cor}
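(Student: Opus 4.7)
Set $w_{TN}(g) := g + w(g) - 1$, where $w$ is the linear function from Corollary~\ref{cor1-1}. Then $w_{TN}$ is linear, and the identity $2(w_{TN}(g) - g) = 2w(g) - 2$ ensures that the boundary slope hypothesis of Corollary~\ref{cor1-8} matches that of Corollary~\ref{cor1-1}. The proof combines Corollary~\ref{cor1-1} with the standard inequality
\[
t(K') = t(K) \le g + br_S(K) - 1,
\]
valid whenever $K$ sits in bridge position with respect to a genus $g$ Heegaard splitting $V \cup_S W$ of $M$: drilling the $br_S(K)$ boundary-parallel bridge arcs out of $V$ yields a handlebody of genus $g + br_S(K)$, and assembling this with the other handlebody and the filling solid torus $N(K')$ exhibits $X = S^3 \setminus N(K')$ as having Heegaard genus at most $g + br_S(K)$, i.e. $t(K') \le g + br_S(K) - 1$.

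\textbf{Main step.} Fix a Heegaard surface $S$ of $M$ realizing the Heegaard genus $g$; I would argue that $S$ may be taken strongly irreducible. Since $q \ge 2$ and $K'$ is hyperbolic, $M$ is irreducible by \cite{gl:oidscyrm}, so minimality of $g$ rules out $S$ being reducible. If $S$ were weakly reducible, then by Casson-Gordon together with Scharlemann-Thompson untelescoping there would be an essential closed surface in $M$ with genus controlled by $g$. Isotoping this surface to meet the filling solid torus minimally and intersecting with $X$ produces an essential surface in $X$ with boundary of slope $p/q$, genus at most $g$, and at most $2w(g) - 2 = 2(w_{TN}(g)-g)$ boundary components. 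This would contradict the hypothesis that $p/q$ is not a $(g, 2(w_{TN}(g)-g))$-boundary slope. Hence $S$ may be assumed strongly irreducible of genus $g$.

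\textbf{Conclusion.} Apply Corollary~\ref{cor1-1} to the strongly irreducible surface $S$. Either $br_S(K) \le w(g)$, so by the inequality above
\[
t(K') \le g + br_S(K) - 1 \le g + w(g) - 1 = w_{TN}(g);
\]
or conclusion (3) of Theorem~8.1 holds, in which case $M$ admits a genus $2$ Heegaard splitting with respect to which $br(K) \le w(2)$, giving $t(K') \le 1 + w(2)$. Choosing the linear function $w_{TN}$ so that $w_{TN}(g) \ge 1 + w(2)$ for all $g$ (possible by adding a constant, which keeps $w_{TN}$ linear) covers this case too.

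\textbf{Main obstacle.} The substantive content is the middle paragraph: establishing that the boundary components of the essential surface extracted from the untelescoping can be bounded by $2w(g) - 2$, thereby activating the boundary slope hypothesis to force strong irreducibility of a minimum genus splitting. Once that is in place, the rest is the elementary passage from bridge position to a tunnel system and the direct appeal to Corollary~\ref{cor1-1}.
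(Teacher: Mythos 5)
Your overall plan — set $w_{TN}(g) = g + w(g) - 1$ and combine Corollary~\ref{cor1-1} with the inequality $t(K') \le g + br_S(K) - 1$ — is the argument the paper has in mind; the paper states Corollary~\ref{cor1-8} immediately after that inequality with no separate proof. You also correctly flag the hidden point: Corollary~\ref{cor1-1} takes as input a \emph{strongly irreducible} genus-$g$ Heegaard surface, whereas Corollary~\ref{cor1-8} only posits that $M$ has Heegaard genus $g$, so one must say something about why a minimal-genus splitting may be taken strongly irreducible.

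Your attempt to supply that step does not go through. The concrete gap is this: after untelescoping a weakly reducible minimal-genus splitting you obtain an incompressible closed surface $F$ in $M$, but nothing forces $F$ to meet $K$. If $F$ can be isotoped into the knot exterior $X$ (i.e.\ $X$ itself contains a closed incompressible surface), then no essential surface with boundary slope $p/q$ is produced, the boundary-slope hypothesis is never activated, and you get no contradiction — yet $M$ may genuinely be Haken with a weakly reducible minimal splitting, so this case cannot be dismissed. Moreover, even when $F$ does meet $K$, your asserted bound of $2w(g)-2$ on $|K \cap F|$ is not justified as written: the relevant estimate is Theorem~\ref{thm:appendix}, which gives $|K \cap F| \le w_I(g_0) = 2w(g_0)$ for a genus-$g_0$ incompressible surface and is proved only under the standing assumption that $M$ is atoroidal. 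To get under the threshold $2w(g)-2$ you would need to invoke $g_0 < g$ together with the explicit slope of $w$, and you would need a separate argument for the toroidal case (where one instead uses Remark~\ref{rem1-4}, which gives $t(K')=1$ directly). None of this is addressed in the Main step, so the reduction to the strongly irreducible case remains open.
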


We conjecture that there is always such a universal bound on the tunnel number
of a knot in terms of the Heegaard genus of 
any of its (non-trivial Dehn surgeries):

\begin{conj}\label{conj1-9}
There is a function $w_{TN} :\N \to \N$ such that if $K'$ is a knot in 
$S^3$ and $M = K(p/q)$ is a non-trivial Dehn surgery on $K'$ with Heegaard 
genus~$g$, then the tunnel number of $K'$ is at most $w_{TN}(g)$.
\end{conj}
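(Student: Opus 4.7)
The plan is to attack the conjecture by splitting into cases according to the geometric type of $K'$. By geometrization of knot complements, $K'$ is either a torus knot, a satellite knot, or a hyperbolic knot. Torus knots have tunnel number one, so this case is immediate (and Moser's classification of their Dehn surgeries lets one read off $g(M)$ directly).

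For satellite $K'$, I would induct on the JSJ decomposition of $X = S^3 - N(K')$. The outermost JSJ piece meets $\partial X$, and Dehn filling $X$ at slope $p/q$ produces a manifold $M$ in which the remaining JSJ pieces persist as an essential submanifold. The idea is that a Heegaard splitting of genus $g$ forces a bound on the total complexity of the JSJ graph of $M$ (e.g.\ by genus bounds for systems of essential tori of Haken--Kobayashi type), which in turn bounds the number of companions of $K'$ and the complexity of the pattern in each. Each piece then contributes a controlled amount to the tunnel number via Schubert/Morimoto--Sakuma-style additivity estimates, so one can assemble a linear function $w_{TN}(g)$ out of the pieces.

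The hyperbolic case splits on $q$. For $q \ge 2$, Corollary~\ref{cor1-1} together with the companion remarks handles everything except (a) the case that $p/q$ is a $(g, 2w(g)-2)$-boundary slope and (b) conclusion~(3) of Theorem~8.1, both of which are conjecturally vacuous (see Remarks~\ref{rem1-3} and \ref{rem1-5}). For (a), one would try to use an essential surface $F$ realizing the boundary slope: its genus and boundary-component count are constrained by $g$ through Hatcher-type finiteness, and after Dehn filling $F$ becomes a closed essential surface in $M$ through which one can route a tunnel system for $K'$, bounding $t(K')$ in terms of $g$ and the complexity of $F$. For (b) one exploits the explicit Seifert structure on $M$ to recover a bounded tunnel system.

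The main obstacle is the integral case $q=1$. Remark~\ref{rem1-6} exhibits Teragaito's family of hyperbolic knots $K'_n$ on which $4$-surgery yields the same genus~$2$ manifold $M$ but with unbounded bridge number in any genus~$2$ Heegaard splitting. Thus the thin-position strategy of Theorem~8.1 cannot be imported to the integral setting, and a substantially different technique is needed: one that extracts a tunnel system for $K'$ directly from a Heegaard splitting of $M$, perhaps by combining the splitting surface with the meridian disk of the surgery solid torus and performing controlled compressions. Even verifying that Teragaito's explicit family $\{K'_n\}$ has bounded tunnel number would be a meaningful first step; a full proof of the conjecture likely requires a uniform argument that leverages the special rigidity of integral Dehn fillings without any reference to bridge number at all.
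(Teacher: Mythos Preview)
This statement is a \emph{conjecture} in the paper, not a theorem: the paper does not prove it, and indeed presents it explicitly as open. So there is no ``paper's own proof'' to compare against. Your proposal is therefore a research-program sketch rather than a proof, and you essentially concede this yourself in the final paragraph on the $q=1$ case.

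Beyond the $q=1$ obstacle you already identify, there are genuine gaps in the other cases as well. In the satellite case, the claim that the JSJ pieces of $X$ ``persist as an essential submanifold'' of $M$ after Dehn filling is false in general: a non-trivial filling can make JSJ tori compressible (this is exactly how Heegaard genus degenerates under filling), so you cannot read off the complexity of the companion structure of $K'$ from a Heegaard splitting of $M$. Moreover, the ``Schubert/Morimoto--Sakuma-style additivity estimates'' you invoke go in the wrong direction: the known results (e.g.\ Morimoto--Sakuma--Yokota) concern \emph{super}-additivity of tunnel number under connected sum, which gives lower bounds on $t(K')$ in terms of the pieces, not the upper bound you need. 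In the hyperbolic $q\ge 2$ case, your treatment of the boundary-slope exception is too vague: Hatcher's finiteness bounds the number of boundary slopes but says nothing uniform about the genus or boundary-component count of the essential surface realizing a given slope, so ``its genus and boundary-component count are constrained by $g$'' is unjustified. As written, none of the three non-trivial cases is actually established.
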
 

Information on the question of degeneration of Heegaard genus under Dehn 
filling in provided by Rieck and Sedgwick in \cite{rs1} and \cite{rs2}. 
As mentioned above, it is shown in \cite{rs1} that for all but finitely many 
slopes $p/q$, $ br(K) = 0$ with respect to
any Heegaard surface $S$ of $M$.
Taking $S$ to have minimal genus, it is easy to see that 
$br(K) =0$ implies that either 
$g(M) = g(X) = t (K') +1$ or $g(M) = g(X) -1 = t(K')$.
See \cite{rieck} for details.
By \cite{rs2}, the second possibility can happen for only a finite number of 
lines of slopes (where a {\em line\/} is a set of slopes $r$ such that 
$\Delta (r,r_0)=1$ for some fixed slope $r_0$). 

Regarding Corollary~\ref{cor1-8} and Conjecture~\ref{conj1-9}, 
in fact we know no examples 
where the Heegaard genus of $K'(p/q)$ ($q>0$) is less than $t(K')$. 
So we ask 

\begin{question}
Is $t(K') \le g(K'(p/q))$ for all $q>0$?
\end{question}

For example, the answer to Question~2 is known to be ``yes'' when 
\begin{itemize}
\item $g(K'(p/q)) =0$ (\cite{gl:kadbtc})
\item $g(K'(p/q)) =1$ and $q >1$ (\cite{cgls:dsok})
\item $g(K'(p/q)) =2$, $q>2$, and $K'(p/q)$ does not contain an incompressible
surface of genus~2. (\cite{bgl:og2hsfs})
\end{itemize}

Finally, the bound on bridge number in Corollary~\ref{cor1-1}
allows us to use a result of Tomova \cite{tomova} to get a statement about the 
distance of splittings of exteriors of knots with genus~$g$ Dehn surgeries. 
If $S$ is a Heegaard surface for some 3-manifold, we denote by $d(S)$ the 
{\em distance\/} of the corresponding splitting; see \cite{hempel}. 

\begin{cor}\label{cor1-10}
There is a linear function $w_{HD} : \N \to\N$ with the following property.
Let $K'$ be a hyperbolic knot in $S^3$ whose exterior has a genus~$g$ 
Heegaard surface $S$ with $d(S) > w_{HD}(g)$.
If $p/q$ is not a $(g,w_{HD}(g)-2g-2)$--boundary slope for the exterior of 
$K'$ and $q\ge2$, then $g(K'(p/q))=g$.
\end{cor}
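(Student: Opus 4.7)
The plan is to prove the contrapositive: assume $g(K'(p/q)) \le g-1$ and derive a contradiction with $d(S) > w_{HD}(g)$. As a preliminary, $g(M) \le g$ always: writing $X = V \cup_S W$ with $V$ a handlebody and $W$ a compression body with $\partial_- W = \partial X$, the filling solid torus $N(K)$ converts $W$ into a handlebody of genus $g$, so $S$ is itself a genus $g$ Heegaard surface of $M$. It therefore suffices to rule out $g(M) \le g-1$.

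Supposing $g(M) \le g-1$, the first main step is to produce a strongly irreducible Heegaard surface $T$ of $M$ of genus $g' \le g-1$ with $K$ in bridge position relative to $T$ and bounded bridge number. To do this I would start with a minimal genus Heegaard splitting of $M$ and put $K$ in thin position. The boundary-slope hypothesis --- which transfers to the $(g', 2w(g')-2)$-boundary-slope condition needed to apply Corollary~\ref{cor1-1}, via $g' \le g-1 \le g$ and monotonicity of $w$ --- forces this thin presentation to be a bridge presentation, just as in the proof of Corollary~\ref{cor1-1}; so after relabeling, $T$ is strongly irreducible and $K$ is in bridge position with respect to $T$. Applying Theorem~\ref{thm:main} to $T$ then gives $|K \cap T| \le 2w(g')$ in case~(1); case~(2) is excluded by Remark~\ref{rem1-2} (a toroidal $M$ would force $p/q$ to be a $(1,2)$-boundary slope); and case~(3) replaces $T$ by a genus-$2$ splitting with $|K \cap T| \le 2w(2)$, which forces $g \ge 3$. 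By monotonicity of $w$, in every case $g(T) \le g-1$ and $|K \cap T| \le 2w(g-1)$.

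The second main step invokes Tomova's theorem~\cite{tomova}. View $S$ as a bridge surface for $K$ in $M$ with zero bridges, since $K$ lies inside the handlebody $W \cup N(K)$ of the induced splitting of $M$. The theorem asserts that for the pair $(S, T)$, either $S$ is equivalent to $T$ via stabilizations, perturbations, or meridional stabilizations of $T$ --- each of which introduces a canceling pair of disks on $S$, forcing $d(S) \le 1$ --- or
$$d(S) \le 2 - \chi(T \setminus K) = 2 g(T) + |K \cap T| \le 2(g-1) + 2 w(g-1).$$
Setting $w_{HD}(g) := 2g + 2w(g-1)$, a linear function of $g$, the former alternative is incompatible with $d(S) > w_{HD}(g) \ge 2$, and the latter alternative directly contradicts $d(S) > w_{HD}(g)$, completing the argument.

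The main obstacle I anticipate is executing the first step cleanly: upgrading a lower-genus Heegaard splitting of $M$ (which a priori could be weakly reducible) to a strongly irreducible bridge surface with controlled bridge number via thin position, using only the boundary-slope hypothesis. A secondary technical point is to invoke the exact version of Tomova's theorem that compares $S$ (Heegaard of the exterior, treated as a $0$-bridge surface of $(M,K)$) with $T$ (a positive-bridge bridge surface of $(M,K)$); the non-equivalence of $S$ and $T$ should be immediate from the genus inequality $g(S) = g > g-1 \ge g(T)$ and the distance lower bound $d(S) > 1$.
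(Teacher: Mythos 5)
Your proposal is correct and follows essentially the same route as the paper: assume for contradiction that $g(K'(p/q)) < g$, invoke Corollary~\ref{cor1-1} (which you re-derive inline, but that is the same argument) to produce a low-genus Heegaard surface $T$ for $M$ with $K$ of bounded bridge number, and then apply Tomova's theorem with $S$ viewed as a $0$-bridge surface to contradict $d(S) > w_{HD}(g)$. The only cosmetic differences are that you make the inequality $g(M) \le g$ explicit, you use the slightly sharper constant $w_{HD}(g)=2g+2w(g-1)$ instead of the paper's $2g+2w(g)$, and you dispose of Tomova's ``equivalent-by-stabilization'' alternative by observing it would force $d(S)\le 1$, where the paper instead observes that a stabilization of $S$ cannot have smaller genus than $S$.
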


Thus the distance of a splitting of a knot exterior is putting a limit 
on the degeneration of Heegaard genus under Dehn filling.
For instance, this applies to the examples of \cite{mms}.
Recall that a knot has only finitely many boundary slopes (\cite{hatcher}). 
By \cite{mms}, for any $g\ge 2$, there are knots $K'$ in $S^3$ whose exteriors 
have genus~$g$ Heegaard splittings $S$ with $d(S) > w_{HD}(g)$, in fact 
with $d(S)$ arbitrarily large (such knots are necessarily hyperbolic). 
(The case $g=2$ was first done in \cite{johnson}.) 
Corollary~\ref{cor1-10}
says that for such a knot $K'$, if $q\ge2$ and $p/q$ is not 
a $(g,w_{HD}(g)-2g-2)$--boundary slope, then $K'(p/q)$ has Heegaard genus $g$.  For results in a similar vein see \cite{bcjtt}.

\begin{proof}[Proof of Corollary~\ref{cor1-10}] 
Let $K'$, $p/q$, $S$ be as in the hypothesis.
Set $w_{HD}(g) = 2g + 2w(g)$.
Assume for contradiction that $g(K'(p/q)) = g' < g$.
By Corollary~\ref{cor1-1}, the bridge number of $K$ with respect to some 
genus~$g'$ Heegaard surface $\widehat F$ of $K'(p/q)$ is at most $w(g')$.
Thus $K$ can be put in bridge position with respect to $\widehat F$ so that 
$2-\chi (\widehat F - K) = 2- (2-2g' - 2w(g')) \le 2g+2w(g) = w_{HD}(g)$.
Then $d(S) >2 - \chi (\widehat F - K)$ by assumption, and the main result 
of \cite{tomova} implies that, in $K'(p/q)$, $\widehat F$ is isotopic to a 
stabilization of $S$.
But $\widehat F$ has smaller genus than $S$.
\end{proof}

Throughout this article $M$ will be the manifold $K'(p/q)$ obtained by 
$p/q$--Dehn surgery on a hyperbolic knot $K'$ in $S^3$, with $q \ge 2$. 
It follows that
\begin{itemize}
\item[(1)] $M$ is irreducible \cite{gl:oidscyrm};

\item[(2)] $M$ is not a lens space (or $S^3$) \cite{cgls:dsok};

\item[(3)] $M$ does not contain a Klein bottle \cite{gl:dsokcetI}, \cite{boyerzhang}.
\end{itemize}

Note that (1) and (2) together imply that $M$ does not contain a projective plane.
We also assume  (see Remark~\ref{rem1-4})

\begin{itemize}
\item[(4)] $M$ is atoroidal.
\end{itemize}

\subsection{Overview of the proof of Theorem~\ref{thm:main}}

The proof of Theorem~\ref{thm:main} occupies the remainder of this article and culminates in \S\ref{sec:proofofmain}.  We give an overview, taking the
notation from Theorem~\ref{thm:main}. 

We begin in  \S\ref{sec:fatvertexedgraphs} by briefly reviewing and setting up notation for the notions of thin position and fat-vertexed graphs of intersection between (punctured) Heegaard spheres of $S^3$ and Heegaard 
surfaces of $M$. In particular, $Q$ and $F$ are surfaces in the exterior of $K'$ coming from
a Heegaard sphere, $\hatQ$, in $S^3$ and the given thick level Heegaard surface,
$\hatF=S$, in $M$. $G_Q,G_F$ are the graphs of intersection between $Q,F$.
Let $t=|K \cap {\hatF}|=|K \cap S|$ be
the number of components of $\partial F$. Then $t$ is also the number of
vertices of $G_F$ and consquently the number of different labels on the 
vertices of $G_Q$. 
Our goal is
to bound $t$ by some linear function of $g$.

Lemma~\ref{lem:2} shows the existence of a special subgraph, $\Lambda$, of 
$G_Q$ called a great $g$--web. For each label $x$ of this graph, 
$\Lambda_x$ is the
subgraph of $\Lambda$ consisting of those edges with label $x$. We consider the
set $\calL$ of all labels, $x$, for which $\Lambda_x$ has a bigon or
trigon face. In Proposition~\ref{prop:webcount} we make a key estimate
showing that up to an additive linear function in $g$, $|\calL| \geq
(3/4)t$. 

A bigon or trigon of $\Lambda_x$ gives rise to a bigon or trigon subgraph
in $\Lambda$. We classify these subgraphs as either 
extended Scharlemann cycles
or as trigons of Type I, I\!I, I\!I\!I. The extended Scharlemann cycles give
rise to {\em long \mobius bands} and {\em long twisted $\theta$--bands}
embedded in $M$ which intersect $\hatF$ in essential curves 
(or essential $\theta$--curves). 
We choose a minimal collection, $\Sigma$, of extended Scharlemann cycles
so that each label of $\calL$ that corresponds to an extended Scharlemann
cycle appears as a label in some element of $\Sigma$. Let $L(\Sigma)$ be
the set of all labels appearing in the extended Scharlemann cycles of $\Sigma$.
Those labels of $\calL$ that do not appear in $L(\Sigma)$ then correspond
to trigons of Type I, I\!I, or I\!I\!I. Section~\ref{sec:typeIorII} 
analyzes trigons of Type I or
I\!I and culminates in Theorem~\ref{thm:Type12boundN}, which bounds, in terms of a linear function in $g$,
those labels 
appearing in trigons of Type I or I\!I but not in $L(\Sigma)$ (the collection of these labels
is called $\calL_\II$). Section~\ref{sec:typeIII} 
shows that those 
labels appearing in Type I\!I\!I trigons but not in $L(\Sigma)$,
up to a linear function in $g$ (Lemma~\ref{lem:boundwrapping}), 
correspond to vertices in trivial curves 
on $\hatF$ called
simple gnarls. The collection of such labels is
called $\G$. That is, up to an added linear function in $g$, we have that
$|\calL|$ is $|L(\Sigma)| + |\G|$. 

Let $-\calL$ be those labels of $G_Q$ that are not in $\calL$.
The statements of this paragraph {\bf will all be up to an added linear
function of $g$}. The
curves of intersection, denoted $\ao(\Sigma)$, of  
$\hatF$ with the long \mobius bands or long twisted $\theta$--bands coming 
from $\Sigma$,
bound a collection of disjoint annuli of $\hatF$. Each curve
of $\ao(\Sigma)$ corresponds to two elements of $L(\Sigma)$, so 
there are $|L(\Sigma)|/2$ such annuli. Furthermore,
half of these annuli must contain in their interiors vertices of $G_F$
which correspond to labels in $G_Q$ which are in either
$-\calL$ or $\G$. This is Lemma~\ref{lem:buildA}, whose main
ingredient is Lemma~\ref{lem:triplets}. This sub-collection of annuli is called
$\calA$, and from the above we get $|\calA| \geq |L(\Sigma)|/4$. 
On the other hand, it
follows from Lemma~\ref{lem:nonemptygnarl} and Corollary~\ref{3ingnarl} that 
each simple gnarl 
must contain at least three vertices of $G_F$ corresponding to labels in 
$-\calL$. 
Counting those elements of $-\calL$ coming from from the interiors of
the annuli in $\calA$ and the gnarls in $\G$ we estimate (equation sequence
(**) in section~\ref{sec:proofofmain}) that $|-\calL| \ge (3/7)
(|L(\Sigma)|+|\G|) \ge (3/7) |\calL| \ge (3/7) (3/4) t$.

Thus $t=|-\calL| + |\calL| \geq (3/4)t + (9/28)t - c(g)$, 
where $c(g)$ is a linear function of $g$, thereby bounding $t$ by $14 c(g)$.

The full proof of Theorem~\ref{thm:main} is given at the beginning of
\S\ref{sec:proofofmain}.  In the course of this proof, we keep track of and explicitly determine suitable linear bounds.




\subsection{Acknowledgements}
In the course of this work KB was partially supported by NSF Grant DMS-0239600, by the University of Miami 2011 Provost Research Award, and by a grant from the Simons Foundation (\#209184 to Kenneth Baker).  KB would also like to thank the Department of Mathematics at the University of Texas at Austin for its hospitality during his visits.  These visits were supported in part by NSF RTG Grant DMS-0636643.  CG was partially supported by NSF Grant DMS-0906276. The authors would like to thank the referee for detailed and helpful comments.

\section{Fat vertexed graphs}\label{sec:fatvertexedgraphs}

\subsection{Heegaard splittings, thin position, and bridge position}\label{sec:basics}
In this paper, a Heegaard splitting will always be a 2-sided Heegaard 
splitting. Given such a Heegaard surface $S$ of a closed $3$--manifold $Y$ there is a product $S \times \R \subset Y$ so that $S = S \times \{0\}$ and the complement of the product is the union of the cores of the 
two handlebodies.  This defines a height function on the complement of the
the cores of the handlebodies.  Consider all the circles $C$ embedded in the product that are Morse with respect to the height function and represent the knot type of a particular knot $J$.  The following terms are all understood to be taken with respect to the Heegaard splitting.

Following \cite{gabai:fatto3mIII} (see also \cite{thompson:tpabnfkit3s}), the {\em width} of an embedded circle $C$ is the sum of the number of intersections $|C \cap S \times \{y_i\}|$ where one regular value $y_i$ is chosen between each pair of consecutive critical values.  The {\em width} of a knot $J$ is the minimum width of all such embeddings.  An embedding realizing the width of $J$ is a {\em thin position} of $J$, and $J$ is said to be {\em thin}.  If the critical point immediately below $y_i$ is a local minimum and the critical point immediately above $y_i$ is a local maximum, then the level $S \times \{y_i\}$ is a {\em thick level}. If the critical point immediately below $y_i$ is a local maximum 
and the critical point immediately above $y_i$ is a local minimum, then the 
level $S \times \{y_i\}$ is a {\em thin level}.

The minimal number of local maxima among Morse embeddings of $C$ is the 
{\em bridge number} of $J$, and denoted $br_S(J)$, or, if $S$ is understood, 
$br(J)$.  
An embedding realizing the bridge number of $J$ 
may be ambient isotoped so that all local maxima lie above all 
local minima, without introducing any more extrema.  
The resulting embedding is a {\em bridge position} of $J$, and 
$J$ is said to be {\em bridge}.

With $J$ in bridge position, the arcs of $J$ intersecting a Heegaard 
handlebody are collectively $\bdry$--parallel.  There is an embedded collection of disks in the handlebody such that the boundary of each is formed of one arc on $S$ and one arc on $J$.  A single such disk is called a {\em bridge disk} for that arc of $J$, and the arc is said to be {\em bridge}.

A thin position for a knot may have smaller width than that of its 
bridge position,
with respect to the same Heegaard splitting. That is, thin position may not
be bridge position.  
However, this only happens when the meridian of the knot in the ambient 
manifold is a boundary slope of the knot
exterior. Recall

\begin{defn}\label{def:2bdryslope}
If $E$ is a compact, orientable 3-manifold with torus boundary,
a slope $r$ on $\partial E$ is a {\it $(g,b)$--boundary slope 
for $E$} if there is a compact, connected, orientable essential surface 
$(F,\partial F) \subset (E,\partial E)$ with boundary slope $r$, such that 
$F$ has genus at most $g$ and at most $b$ boundary components. We say
that $r$ is a $g$--boundary slope if it is a $(g,b)$--boundary slope for
some $b$.
\end{defn}

\smallskip
\begin{lemma}~\label{lem:thin=bridge}
Assume $J$ is a knot in a 3-manifold $M$. If $J$ has a thin position which 
is not a bridge position with
respect to a genus $g$ Heegaard splitting of $M$, then the meridian of $J$ 
is a $(g,b)$--boundary
slope for the exterior of $J$ where $b$ is the number of intersections
of $J$ with any thin level surface in this thin presentation of $J$.
\end{lemma}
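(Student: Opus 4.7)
The plan is to take $F$ to be a thin level surface in the given thin presentation of $J$ with $|J\cap F|=b$, and form $F':=F\cap E(J)$, where $E(J)$ denotes the exterior of $J$. Since $J$ meets $F$ transversely in exactly $b$ points, $F'$ has the same genus $g$ as $F$ and has exactly $b$ boundary components, each sitting on $\bdry N(J)$ as a meridian of $J$. Thus the boundary slope of $F'$ on $\bdry E(J)$ is the meridian of $J$, and $F'$ already satisfies the genus and boundary-count bounds demanded by Definition~\ref{def:2bdryslope}. The remaining task will be to show that $F'$ is essential in $E(J)$, i.e.\ incompressible and $\bdry$-incompressible with no disk or sphere components; the last condition is automatic from $F$ being a genus $g$ Heegaard surface with $b\ge 1$ punctures.

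The core of the proof will be a standard thin-position argument in the style of Gabai and Thompson. Suppose, for contradiction, that $D$ is a compressing disk for $F'$ in $E(J)$. Then $D$ is also a compressing disk for $F$ in $M\setminus J$, and lies on one side of $F$, say above. Because $F$ is a \emph{thin} level, by definition the critical point of $J$ immediately below $F$ is a local maximum $M^-$ and the one immediately above $F$ is a local minimum $m^+$. Using $D$ I will push $m^+$ downward through $F$ past $M^-$: after a small isotopy this either cancels $m^+$ with some maximum and eliminates two critical points, or at least replaces the pair $(M^-,m^+)$ with a configuration whose contribution to the width is strictly smaller. In either case the width of $J$ drops, contradicting the minimality of the thin presentation. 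The symmetric argument handles the case that $D$ lies below $F$.

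For $\bdry$-incompressibility, let $D$ be a $\bdry$-compressing disk for $F'$ with $\bdry D=\alpha\cup\beta$, where $\alpha$ is an essential arc in $F'$ and $\beta$ is an arc on $\bdry N(J)$. Since every component of $\bdry F'$ is a meridian, $\beta$ can be transported to a sub-arc of $J$ crossing $F$ near a puncture; using $D$ I will perform an ambient isotopy of $J$ that cancels that crossing and eliminates either $m^+$ or $M^-$, once again strictly reducing the width. Once both compressions and $\bdry$-compressions are ruled out, $F'$ is the essential surface witnessing the meridian of $J$ as a $(g,b)$-boundary slope for $E(J)$. The main obstacle will be the careful bookkeeping needed to verify that the isotopies supplied by these disks genuinely decrease the width rather than merely rearranging critical points; the defining feature of a thin level (a maximum immediately below and a minimum immediately above) is precisely what makes both moves width-reducing, and is where the hypothesis that thin position is not bridge position enters in an essential way.
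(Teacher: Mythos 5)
Your proposal has a genuine gap at the very first step of the argument: you claim that the punctured thin level surface $F'$ is itself incompressible, and attempt to prove this by using a putative compressing disk $D$ to ``push $m^+$ downward through $F$ past $M^-$.'' But a compressing disk for $F'$ is an embedded disk in $E(J)$ with $\partial D$ an essential curve \emph{entirely contained in $F'$}; it is disjoint from $J$, so it gives no isotopy of $J$ at all and no mechanism for cancelling or reordering critical points of $J$. Thin position does rule out \emph{bridge disks} for $J$ to a thin level (disks whose boundary consists of an arc of $J$ and an arc on $F$), since such a disk would allow you to cancel a min/max pair; but thin position gives no control whatever over compressing disks for the level surface, and in fact $F'$ will typically be compressible. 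So the claim that $F'$ is the essential surface witnessing the boundary slope is not something you can hope to prove.

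The paper's proof (following Thompson) sidesteps exactly this issue: rather than trying to show $F'$ is essential, one \emph{maximally compresses} $F'$ in $E(J)$. Each resulting component is incompressible; the genus and number of boundary components can only go down, so the bounds $g$ and $b$ are preserved. The dichotomy is then that either some component is an essential surface with meridional boundary (done), or every component with boundary is a $\partial$-parallel annulus. The thinness hypothesis is used precisely here: a $\partial$-parallel meridional annulus produces a bridge disk for $J$ onto the thin level $F$, and the existence of a bridge disk to a thin level contradicts minimality of width. Your $\partial$-incompressibility step gestures at this idea, but your compressibility step is the one doing the load-bearing work, and it fails. To repair the argument you should replace ``show $F'$ is incompressible'' with ``maximally compress $F'$ and argue about the result.''
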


\begin{proof}
This is proved in \cite{thompson:tpabnfkit3s} when $g=0$. 
The same proof works here.
We sketch it for the convenience of the reader. 

Let $S$ be the Heegaard surface of a genus $g$ splitting of $M$ with
respect to which $J$ is in thin position but not bridge position.
Then this thin presentation must have a thin level, $y$. 
Let $b$ the number of intersections of $J$ with the thin level 
surface $S \times \{y\}$. There can be no 
bridge disks for $J$ to the thin level surface, else such a disk 
would give rise to a thinner
presentation of $J$. Maximally compress $(S \times \{y\}) - \nbhd(J)$
in the exterior of $J$. Either some component of the result is an 
incompressible, $\partial$--incompressible surface of genus at most $g$ 
with at most $b$ boundary components each of which is a meridian of $J$, 
or the result is a non-empty 
collection of
boundary parallel annuli along with some closed surfaces. But each boundary
parallel annulus gives rise to a bridge disk of $J$ onto $S \times \{y\}$,
which is not possible. Thus the meridian is a $(g,b)$--boundary slope for the 
exterior of $J$.
\end{proof}

\begin{lemma}\label{lem:FandQ}
Assume $K'$ is a non-trivial knot in $S^3$ with exterior $X=S^3-\nbhd(K')$.
Let $M = K'(p/q)$ and $K$ be the core of the attached solid torus in $M$. 
Suppose $M$ has a genus $g$ Heegaard surface and that $K$ cannot be
isotoped to lie on this surface.
Assume $K$ is in thin position with respect to this genus $g$ Heegaard 
splitting of $M$ and let $\hatF$ be a corresponding thick level surface. 
Then there is a 
punctured $2$--sphere $Q$ and a punctured genus $g$ surface $F$
properly embedded and transverse in $X$ satisfying the following:
\begin{enumerate}
\item Each component of $\partial Q$ is a meridian of $\partial X$ and each
component of $\partial F$ is a $p/q$ curve on $\partial X$.
\item Each arc of $F \cap Q$ is essential in each of 
$F$ and $Q$.
\item There are no simple closed curves of $F \cap Q$ 
trivial in
both $F$ and $Q$.
\item Capping off $F$ with disks in $K'(p/q)$ gives $\hatF$. 
Capping off $Q$ with meridians of $K'$ gives a 2-sphere, $\hatQ$, 
in $S^3$.
\end{enumerate}
\end{lemma}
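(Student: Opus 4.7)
The plan is to produce $F$ and $Q$ as restrictions of $\hatF$ and a suitable 2-sphere $\hatQ \subset S^3$ to the exterior $X$, then simplify $F \cap Q$ by standard innermost-disk and outermost-arc arguments, using the irreducibility of $X$ (which holds because $K'$ is nontrivial in $S^3$). I would set $F := \hatF \cap X$, a properly embedded punctured genus-$g$ surface whose boundary consists of $p/q$-curves on $\partial X$, one per intersection point of $\hatF$ with $K$. For $\hatQ$, I would choose any 2-sphere in $S^3$ transverse to $K'$ (e.g., a bridge sphere for $K'$), and set $Q := \hatQ \cap X$, a punctured sphere whose boundary is a collection of meridians of $K'$. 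After a small perturbation, $F$ and $Q$ meet transversely, so $F \cap Q$ is a 1-manifold of arcs (with endpoints on $\partial X$) and simple closed curves, establishing (1) and (4).

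For (3), I would eliminate simple closed curves of $F \cap Q$ trivial in both $F$ and $Q$ by innermost-disk arguments. Take such a curve $c$ innermost on $F$; it bounds a disk $D_F \subset F$ meeting $Q$ only along $c$, since arcs of $F \cap Q$ have endpoints on $\partial Q \subset \partial X$ and hence cannot lie inside $D_F$. The curve $c$ also bounds a disk $D_Q \subset Q$, so $D_F \cup D_Q$ is a 2-sphere in $X$ bounding a ball by irreducibility. Pushing $Q$ across this ball removes $c$ and any intersection components in $\mathrm{int}(D_Q)$. Iterating exhausts all such doubly-trivial closed curves.

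For (2), I would remove arcs inessential in either $F$ or $Q$ by outermost-disk arguments. An outermost arc $\alpha \subset F \cap Q$ that is inessential in $Q$ cuts off an outermost disk $D \subset Q$ whose boundary is $\alpha$ together with an arc $\beta \subset \partial Q$; adjoining to $D$ a half of the meridional disk of $K'$ at the puncture of $\hatQ$ containing $\beta$ yields a disk $\tilde D \subset S^3$ meeting $K'$ in one interior point, across which we may isotope $\hatQ$ in $S^3$ to remove $\alpha$ from $F \cap Q$ while preserving $\hatQ$ as a sphere and $\partial Q$ as a collection of meridians. Symmetrically, for an outermost arc $\alpha$ inessential in $F$ with outermost disk $D' \subset F$ of boundary $\alpha \cup \beta'$ (where $\beta' \subset \partial F$ is a $p/q$-arc), adjoining to $D'$ a half of the meridional disk of the surgery solid torus $V \subset M$ bounded by $\beta'$ produces a disk $\tilde D' \subset M$ meeting $K$ in one interior point, across which we may isotope $\hatF$ in $M$ to remove $\alpha$, preserving $\hatF$'s isotopy class as a Heegaard surface and $\partial F$ as $p/q$-curves.

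The principal obstacle is ensuring termination of the iteration and that modifications do not reintroduce already-eliminated defects. I would handle this by ordering operations according to a lexicographic complexity on $F \cap Q$, for instance (number of arcs inessential in $F$ or $Q$, number of doubly-trivial closed curves); each move strictly decreases this measure, so the process terminates in finitely many steps, at which point $F$ and $Q$ satisfy (1)--(4).
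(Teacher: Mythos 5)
Your approach differs fundamentally from the paper's, and the arc-removal step (your argument for condition (2)) has a genuine gap.

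The paper puts $K'$ in Gabai thin position with respect to the standard genus-$0$ Heegaard splitting of $S^3$ and appeals to Theorem~6.2 of Rieck (\cite{rieck}), which produces a thick level sphere $\hatQ$ such that all arcs of $F\cap Q$ are automatically essential in both surfaces. That theorem is the substance of the lemma: the simultaneous thin-position hypotheses on $K$ (giving $\hatF$) and on $K'$ (giving $\hatQ$) are what rule out inessential arcs, via a high-disk/low-disk argument. Your proposal instead takes an arbitrary bridge sphere for $K'$ and tries to clean up $F\cap Q$ by isotopies, which cannot be made to work without thin position.

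Concretely, in your treatment of condition (2): you form $\tilde D = D \cup \delta'$ where $\delta'$ is half of the meridian disk $\delta_i = \hatQ \cap N(K')$. But then $\partial \tilde D = \alpha \cup \gamma$ where $\gamma$ is a chord of $\delta_i$; this boundary does not lie on $\hatQ$ (the arc $\alpha$ lies on $F$), so ``isotope $\hatQ$ across $\tilde D$'' is not a well-defined operation. What the bigon $D$ actually gives is a $\partial$-compression of $F$ (resp.\ of $Q$) in $X$, and pushing the respective surface across it is an isotopy in $M$ (resp.\ in $S^3$), not in $X$; such an isotopy changes $|\hatF \cap K|$ (resp.\ $|\hatQ \cap K'|$). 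Your lexicographic complexity $(\#\text{bad arcs}, \#\text{bad closed curves})$ does not track these intersection numbers, so the assertion that ``each move strictly decreases this measure'' is unproved, and in general it is false: a $\partial$-compression can create new arcs of intersection and new inessential arcs elsewhere. Moreover, conclusion (4) requires that $F$ cap off to the given thick level $\hatF$, so one cannot freely isotope $\hatF$ in $M$ without then re-verifying the thin-position hypothesis. The correct reading of the situation is the contrapositive: precisely because $K$ and $K'$ are both in thin position, no such $\partial$-compression bigon can exist (it would permit a thinning), which is what Rieck's theorem encodes. Your treatment of condition (3) by innermost disks and irreducibility of $X$ is essentially right and matches the paper's last step, but it must come after (2) is in place and should be performed by an isotopy supported in a ball in the interior of $X$, so as not to disturb the boundary structure.
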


\begin{proof}
Let $K',X,M,{\hatF}$ be as stated. Let $K$ be
the core of the attached solid torus in $M=K'(p/q)$. 
Isotop $K$ to be in thin position with respect to the given genus $g$
splitting of $M$ so that $\hatF$ is a thick level surface. 
In $S^3$, put $K'$ into thin position with respect to the genus $0$ 
Heegaard splitting.  
By Theorem 6.2 of \cite{rieck} (by assumption $K,K'$ cannot be isotoped onto 
their Heegaard surfaces), there exists a thick level surface
$\hatQ$ of $S^3$ such that each arc of 
$F \cap Q$ is essential in each of 
$F = \hatF - \nbhd(K)$ and $Q = \hatQ-\nbhd(K')$. Note that the argument
of Theorem 6.2 allows us to choose the fat layer with which we 
want to work.  Within the chosen fat layer, the intersection of any thick 
level surface with
$X$ is properly isotopic in $X$ to the intersection of any other thick
level surface with $X$.
As the exterior of $K'$ is irreducible, after an isotopy in $X$
we may assume there are no simple closed curves of $F \cap Q$ 
trivial in both $F$ and $Q$.
\end{proof}

\subsection{The graphs $G_F, G_Q$ and their combinatorics}\label{sec:graphcombo}
In this section we describe the graphs $G_F,G_Q$ and the great web
$\Lambda$, which will be the context of the rest of the paper.
Let $F,Q$ be as in Lemma~\ref{lem:FandQ}.
On $\hatQ$ and $\hatF$ form the {\em fat vertexed graphs of intersection},
$G_Q$ and $G_F$, respectively, consisting of the {\em fat vertices} that are the disks $\nbhd(K') \cap \hatQ$ and $\nbhd(K) \cap \hatF$ and {\em edges} that are the arcs of $F \cap Q$.

Choosing an orientation on $K \subset M$, we may number the intersections of $K$ with $\hatF$, and hence the vertices of $G_F$, from $1$ to $t=|K \cap \hatF|$ in order around $K$.  Similarly, if $|K' \cap \hatQ| = u$, by choosing an orientation on $K' \subset S^3$ we may number the intersections of $K'$ with 
$\hatQ$ and hence the vertices of $G_Q$ from $1$ to $u$ in order around $K'$.

Each component of $\bdry F$ intersects each component of $\bdry Q$ a total of 
$q$ times.  Thus a vertex of $G_Q$ has valence $q t$ and a vertex of $G_F$ has valence $q u$.  Since each component of $\bdry F \cap \bdry Q$ is an endpoint of an arc of $F \cap Q$, each endpoint of an edge in $G_Q$ may be labeled with the vertex of $G_F$ whose boundary contains the endpoint.  Thus around the boundary of each vertex of $G_Q$ the labels $\{1, \dots, t\}$ appear in order $q$ times.  
Similarly around the boundary of each vertex of $G_F$ the labels $\{1, \dots, u\}$ appear in order $q$ times.

See the expository article \cite{gordon:cmids} for a more thorough discourse on such fat-vertexed graphs and standard techniques in their use.

Endow each vertex of $G_Q$ (and $G_F$) with a sign of $+$ or $-$ according to whether or not the corresponding component of $\bdry Q$ ($\bdry F$) with its induced orientation is parallel or anti-parallel on $\bdry X$ to a chosen component of $\bdry Q$ ($\bdry F$).  Two vertices on the same graph are {\em parallel} if they have the same sign, {\em anti-parallel} if they have opposite signs.

The orientability of $F$ and $Q$ and the knot exterior gives the
following useful property of these graphs

\medskip

\noindent {\bf Parity Rule:} 
{\em An edge connects parallel vertices on one of $G_F, G_Q$
iff it connects anti-parallel vertices on the other.}

\medskip

\begin{defn} 
For a subgraph, $\Lambda$, of $G_Q$, a {\em ghost edge} for $\Lambda$ is an
edge of $G_Q$ which does not belong to $\Lambda$ but is incident to a vertex 
of $\Lambda$. The incidence of a ghost edge for $\Lambda$ with a fat vertex
of $\Lambda$ is called a {\em ghost label} of $\Lambda$. 
A connected subgraph, $\Lambda$, of $G_Q$ is called a {\em $g$--web}
if its vertices are parallel and if $\Lambda$ has at most $t+2g-2$ ghost
labels. If $U$ is a component of $\hatQ - \Lambda$ then we say $D = \hatQ-U$ is a {\em disk bounded by $\Lambda$}.    A {\em great $g$--web} is a $g$--web 
with the property that there is a disk, $D$, bounded by $\Lambda$ such that
$\Lambda=G_Q \cap D$. Note that as long as $t>2g-2$ and $q \geq 2$ then
a great $g$--web must contain at least two vertices ($G_Q$ has no 1-sided faces).
\end{defn}

\begin{lemma}\label{lem:2}
Let $G_Q,G_F$ be the graphs of intersection coming from $Q,F$ of 
Lemma~\ref{lem:FandQ} as described above. If $t > 2g-2$ and $q \geq 2$, 
then $G_Q$ has a great $g$--web, $\Lambda$.
\end{lemma}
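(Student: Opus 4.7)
The plan is to work on the $2$-sphere $\hatQ$, split $G_Q$ by parity, use an innermost-disk argument to isolate a parallel subgraph, and bound its ghost labels by an Euler-characteristic count.

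First, I would partition the vertices of $G_Q$ according to their $\pm$ sign, and let $G^+$ (respectively $G^-$) denote the subgraph of $G_Q$ spanned by the $+$ (respectively $-$) vertices together with all edges of $G_Q$ between two such vertices. By the Parity Rule, edges of $G^\pm$ correspond to edges of $G_F$ between antiparallel vertices. A connected subgraph of $G^+$ (or $G^-$) automatically has all vertices parallel, which is the first requirement for a $g$-web. Without loss of generality we work with $G^+$.

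Second, I would produce a connected subgraph $\Lambda\subseteq G^+$ and a disk $D\subseteq\hatQ$ bounded by $\Lambda$ such that $\Lambda = G_Q\cap D$. Existence of such $(\Lambda,D)$ comes from the planarity of $\hatQ$: the complement of a connected subgraph of $G_Q$ in the $2$-sphere is a disjoint union of open disks, and the components of $G^+$ and $G^-$ are nested inside each other's complementary faces. Picking an ``innermost'' component in this nesting, namely one with a single face of its complement in $\hatQ$ containing all other vertices and edges of $G_Q$, yields the desired $\Lambda$.

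Third, I would bound the ghost label count. With $n=|V(\Lambda)|$ and $m=|E(\Lambda)|$, the ghost label count equals $qtn-2m$, since each vertex of $\Lambda$ has valence $qt$ in $G_Q$ and each edge of $\Lambda$ contributes $2$ to non-ghost incidences. Euler's formula for $\Lambda$ on $\hatQ$ gives $n-m+f=2$: one face is the outer face $U=\hatQ\setminus D$ and the remaining $1-n+m$ faces lie in $D$. Since $\Lambda = G_Q\cap D$, each interior face is a face of $G_Q$, bounded by at least $2$ edges by Lemma~\ref{lem:FandQ}(2), which rules out monogons. Combining these face-size bounds with the hypothesis $q\ge 2$ and, when necessary, iteratively replacing $\Lambda$ by a sub-web to tighten the estimate, yields the desired inequality $qtn-2m\le t+2g-2$.

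The main obstacle is this last step, specifically extracting the bound $t+2g-2$. Its shape $-\chi(\hatF)+t$ suggests the bound is most cleanly seen by reading ghost edges as edges of $G_F$ on the genus $g$ surface $\hatF$, since in that picture the total number of edges is controlled by $\chi(\hatF)$ and the number of faces. The hypotheses play the expected roles: $q\ge 2$ prevents one-sided faces and guarantees enough internal edges in $\Lambda$, while $t>2g-2$ ensures that $\Lambda$ has at least two vertices, so it does not degenerate into a singleton carrying all $qt$ ghost labels.
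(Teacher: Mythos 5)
The paper itself gives no direct proof of this lemma; it simply cites Theorem~6.1 of Gordon's survey \cite{gordon:cmids} and observes that one may pass to a connected component of the web obtained there. Your self-contained outline has the right ingredients --- parity splitting, nesting of components on $\hatQ$, and an Euler-characteristic count --- but the decisive step is exactly the one you flag as an obstacle, and it is genuinely missing. The inequality ``number of ghost labels of $\Lambda$ $\le t+2g-2 = t-\chi(\hatF)$'' cannot come from an Euler count on $\hatQ$, since $\hatQ$ is a sphere while the bound carries the genus of $\hatF$. Your remark that one must re-read ghost edges as edges of $G_F$ joining parallel vertices on the genus-$g$ surface $\hatF$ is the right intuition, but that re-reading and the resulting counting inequality constitute essentially the whole content of the lemma, and the proposal does not supply them.

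There is also a structural problem with the order of your two middle steps. You first select an innermost component $\Lambda$ of $G^+$ or $G^-$, so that $\Lambda = G_Q\cap D$, and only then try to bound the ghost labels of that particular $\Lambda$. But an innermost component need not have few ghost labels. For instance, a vertex $v$ of $G_Q$ that is isolated in its own sign class is trivially an innermost component of $G^+\cup G^-$, yet it carries $qt$ ghost labels; with $q\ge 2$ this exceeds $t+2g-2$ precisely under the hypothesis $t>2g-2$. The ghost-label bound is obtained by distributing a count over \emph{all} components of $G^+\cup G^-$, transferred to $\hatF$ via the Parity Rule, and invoking pigeonhole; the component this argument certifies need not be the one your nesting argument singles out. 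The cleaner structure is to first produce some connected parallel-vertex subgraph with at most $t+2g-2$ ghost labels (a $g$-web), and then, as a separate step, upgrade to the condition $\Lambda = G_Q\cap D$. Your nesting argument in step two does correctly show that some component of $G^+\cup G^-$ equals $G_Q\cap D$ for a disk $D$ it bounds, which is a useful observation, but it does not interact with the ghost-label bound in the way your plan requires.
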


\begin{proof} This is Theorem 6.1 of \cite{gordon:cmids} where $\hatF,t$ play
the role of $\widehat{P},p$, where the slope $\beta$ there is the meridional
slope of $K'$.  
  In \cite{gordon:cmids},
a great $g$--web is not necessarily connected, but by restricting to a connected 
component we may take it to be.
\end{proof}

The goal of Theorem~\ref{thm:main} is to bound $K \cap \hatF$
in terms of the genus of $\hatF$. Thus, after
Lemma~\ref{lem:2}, we will hereafter assume the existence in $G_Q$ of
the great $g$--web, $\Lambda$.

\begin{defn}
For each label $x \in \{1, \dots, t\}$, the subgraph of a great $g$--web $\Lambda$ consisting of all edges with an endpoint labeled $x$ and the vertices to which these edges are incident is denoted $\Lambda_x$. We think of $\Lambda_x$ as a graph in the disk
bounded by $\Lambda$. A ghost edge of $\Lambda$ which is incident to a vertex
of $\Lambda$ with label $x$ is called a {\em ghost $x$--edge} for $\Lambda$.
By the Parity Rule, each ghost $x$--edge has at most one endpoint at a fat
vertex of $\Lambda$ with label $x$. We refer to this endpoint as a 
{\em ghost $x$--label}.
Let $\alpha_x$ denote the number of ghost $x$--labels for $\Lambda$. 
\end{defn}

Note that, taken over all labels, $\sum \alpha_x \leq t+2g-2$.                     

\begin{defn}\label{def:calL}
A bigon (trigon) face of $\Lambda_x$ is referred to as an {\em $x$--bigon}
({\em $x$--trigon}, resp.) of $\Lambda$.
Let $\calL$ be the set of labels $x$ of $\Lambda$
for which $\Lambda$ has 
an $x$--bigon or $x$--trigon.  By $-\calL$ we denote the complement of $\calL$ in the set of all labels of $G_Q$.
\end{defn}

\begin{lemma}\label{lem:q2xinL0}
If $q \geq 2, t > 2g-2$, and $\alpha_x \leq 3$, then $x \in \calL$.
\end{lemma}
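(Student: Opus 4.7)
My plan is a face-counting argument on $\Lambda_x$ viewed as a planar graph in the disk $D$. First count edges: each vertex of $\Lambda$ carries $q$ label-$x$ endpoints around its boundary; of the $qV$ total, exactly $\alpha_x$ are ghost, so the non-ghost ones pair into $E_x = (qV - \alpha_x)/2$ edges of $\Lambda_x$, where $V = |V(\Lambda)| \geq 2$ by the remark preceding Lemma~\ref{lem:2}.

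Assume for contradiction that $\Lambda_x$ admits no $x$-bigon and no $x$-trigon, so every inner face of $\Lambda_x$ in $D$ has at least four boundary edges. Let $V_x \leq V$, $C$, and $F_{in}$ denote the vertex count, component count, and inner-face count of $\Lambda_x$ in $D$. Capping off $D$ and applying Euler's formula in $\hatQ \cong S^2$ yields $F_{in} = C + E_x - V_x$, while summing the boundary lengths of faces against $2 E_x$ gives $4 F_{in} + |\bdry F_\infty| \leq 2 E_x$. Eliminating $F_{in}$ and plugging in the edge count produces
\[
(q-4)V \;\leq\; \alpha_x - 4C - |\bdry F_\infty| - 4(V - V_x).
\]
For $q \geq 4$ the left-hand side is non-negative, whereas the right-hand side is at most $3-4=-1$ under our standing hypotheses $\alpha_x \leq 3$, $C \geq 1$, $V_x \leq V$, $|\bdry F_\infty| \geq 0$ --- a contradiction. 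This proves the lemma when $q \geq 4$.

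The cases $q = 2$ and $q = 3$ require more than Euler. The additional input I would use is the Parity Rule: since every vertex of $\Lambda$ has the same sign on $G_Q$, a $\Lambda_x$-edge with both endpoints at a single $G_Q$-vertex would be simultaneously a loop on $G_Q$ and a loop on $G_F$ at vertex $x$, which the Parity Rule forbids. Consequently $\Lambda_x$ is loopless of maximum valence $q$; for $q = 2$ it is a disjoint union of simple cycles of length $\geq 2$, and a cycle of length $2$ or $3$ is already an $x$-bigon or $x$-trigon. To exclude the remaining case that all cycles of $\Lambda_x$ have length $\geq 4$, I would analyze the interior of such a hypothetical large $\Lambda_x$-face $\sigma$: by the great-web condition $\Lambda = G_Q \cap D$ the interior of $\sigma$ must be filled by $\Lambda$-edges emanating from the $t-1$ non-$x$ label positions on the ``inside'' arc of each boundary vertex of $\sigma$, and the compatibility of this subgraph with the cyclic labeling around each fat vertex and the global ghost budget $\sum_y \alpha_y \leq t + 2g - 2$ forces an $x$-bigon inside $\sigma$. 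A parallel but slightly more delicate analysis handles $q = 3$.

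The chief obstacle is making this last step rigorous for $q = 2$: since the $2$-regularity of $\Lambda_x$ permits arbitrarily long cycles a priori, Euler characteristic alone does not produce a small face, and the contradiction must instead be extracted from the interplay of the Parity Rule, the cyclic label structure, and the ghost budget inherent to the great $g$-web.
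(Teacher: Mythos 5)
Your plan follows the same broad Euler-characteristic strategy as the paper, but there is a computational error at the start and, more seriously, the $q=2$ case---which is the heart of the lemma---is left as an unresolved sketch.

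The edge count is off by a factor of two. By the Parity Rule, each edge of $\Lambda$ joins parallel vertices of $G_Q$, hence must have \emph{distinct} labels at its two endpoints (the paper records this explicitly: ``each ghost $x$-edge has at most one endpoint at a fat vertex of $\Lambda$ with label $x$''). So an edge of $\Lambda_x$ has exactly one endpoint labeled $x$, and the $qV - \alpha_x$ non-ghost label-$x$ positions do not pair with each other but each determines its own edge: $E_x = qV - \alpha_x$, not $(qV - \alpha_x)/2$. Curiously, your weakened count still gives a contradiction for $q \geq 4$, but with the correct count the same manipulation yields $(2q-4)V \leq 2\alpha_x - 4C - |\bdry F_\infty|$; for $q \geq 3$, $V \geq 2$, $C \geq 1$, $\alpha_x \leq 3$ this is already $4 \leq 2$. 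So the Euler bound alone in fact disposes of all $q \geq 3$, and there is no need for a separate ``$q=3$'' branch in your subsequent discussion.

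The case $q=2$, however, is where the real work lies, and your treatment of it is not a proof. Your claim that for $q=2$ the graph $\Lambda_x$ is $2$-regular (a union of cycles) and hence must contain a short cycle ignores the fact that vertices of $\Lambda_x$ can carry ghost $x$-labels and so need not have $\Lambda_x$-valence $q$. The subsequent plan---to examine the interior of a long $\Lambda_x$-face, count $\Lambda$-edges emanating from the other $t-1$ labels, and invoke the global ghost budget to force a bigon---is a strategy statement, not an argument; you acknowledge as much. The paper's route here is more disciplined and worth internalizing: it restricts attention to a single connected component $\Lambda_x'$ with the fewest ghost $x$-labels (avoiding the multi-component terms $C$ and $V-V_x$ you carry around), writes the Euler inequality once, reads off that $q = 2$ and that the outer face of $\Lambda_x'$ has at most $2$ boundary edges, then runs a tightened version of the same Euler inequality with the ghost count $\leq 2$ to force $\alpha_x = 3$ with all three ghost $x$-labels on $\Lambda_x'$, concludes $\Lambda_x'$ is a single edge spanning both vertices of $\Lambda$, and finally contradicts the global ghost budget $\sum_y \alpha_y \leq t + 2g - 2$ directly. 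The missing ingredient in your sketch is precisely this iterated use of the Euler inequality with the sharpened edge bound; you would need to supply that chain to complete the $q=2$ case.
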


\begin{proof}
Assume that $q \geq 2, t > 2g-2$ and  $\alpha_x \leq 3$. Assume for 
contradiction that $x \notin \calL$.
Take a connected component, $\Lambda_x'$, of $\Lambda_x$ 
which is innermost in the disk $D$ bounded by $\Lambda$ and which has the least number of ghost $x$--labels among such components.
 Let $V,E,F$ be the number of vertices, edges, and faces of $\Lambda_x'$ in the disk $D$. 
Let $n$ be the
number of outside edges of $\Lambda_x'$, counted with multiplicity.
By assumption $\Lambda_x'$ can contain no bigon or trigon face, so
counting edges gives: $2E \geq 4F+n$. Note that $E>0,n>0$ by our assumptions, 
and $\Lambda$ has at least two vertices.
As $\alpha_x \leq 3$ and as each edge of $\Lambda$ has at most one label
$x$ (Parity Rule), we have
($\ast$) $E \geq q V-3$. Along with the Euler characteristic equation $V-E+F=1$,
these imply that $(E+3)/q - E + (2E-n)/4 \geq 1$. That is, 
$(2-q)E \geq (n+4) q /2 - 6$. This implies that $q =2$ and $n \leq 2$.
If $\Lambda_x'$ has at most two ghost $x$--labels then $E \geq q V-2$. Using
this in place of ($\ast$), the calculations above show that 
$(2-q)E \geq (n+4) q/2 - 4$ -- a contradiction. Thus we have that 
$\Lambda_x'$ has exactly $3$ ghost $x$--labels. This implies that 
$\alpha_x=3$ and $\Lambda_x'$ contains all the vertices of $\Lambda$.
This along with the facts that $n \leq 2, q =2$ implies that $\Lambda_x'$
consists of a single edge connecting two vertices, and that one of these
vertices has two ghost $x$--labels. Thus $\Lambda$ consists of
two vertices connected by fewer than $t$ parallel edges ($G_Q$ contains 
no $1$--sided faces). As $q =2$, $\Lambda$ has more than $2t$ ghost labels.
Thus $2t < t+2g-2$ -- a contradiction.
\end{proof}

\begin{prop}\label{prop:webcount}
If $t > 2g-2$ and $q \geq 2$ then $|\calL| \geq (3/4) t - (g-1)/2$.
\end{prop}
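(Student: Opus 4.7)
The plan is to combine Lemma~\ref{lem:q2xinL0} with the global upper bound $\sum_x \alpha_x \le t+2g-2$ on ghost labels of $\Lambda$.

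First I would observe the contrapositive of Lemma~\ref{lem:q2xinL0}: under the hypotheses $q\ge 2$ and $t>2g-2$, any label $x\in -\calL$ (i.e.\ $\Lambda$ has no $x$-bigon or $x$-trigon) must satisfy $\alpha_x \ge 4$. This is the key input; every label outside of $\calL$ is forced to be ``expensive'' in terms of ghost labels.

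Next I would sum these inequalities over $-\calL$ and compare with the total ghost-label budget. Since the vertices of $\Lambda$ are parallel and $\Lambda$ is a $g$-web, we have
\[
\sum_{x=1}^{t} \alpha_x \;\le\; t + 2g - 2,
\]
and consequently
\[
4\,|-\calL| \;\le\; \sum_{x \in -\calL} \alpha_x \;\le\; \sum_{x=1}^{t} \alpha_x \;\le\; t+2g-2.
\]
Writing $|-\calL| = t - |\calL|$ and rearranging gives $4t - 4|\calL| \le t + 2g - 2$, hence
\[
|\calL| \;\ge\; \frac{3t - 2g + 2}{4} \;=\; \frac{3}{4}\,t \;-\; \frac{g-1}{2},
\]
which is the desired bound.

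There is essentially no obstacle here since Lemma~\ref{lem:q2xinL0} has already absorbed the combinatorial work; the proposition is a one-line counting consequence. The only thing worth emphasizing in the write-up is that the hypotheses $q\ge 2$ and $t>2g-2$ are exactly what is needed to invoke that lemma for every $x\in -\calL$, and that the ghost-label bound $t+2g-2$ is the defining property of a $g$-web.
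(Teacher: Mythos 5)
Your argument is correct and is essentially identical to the paper's proof: both invoke Lemma~\ref{lem:q2xinL0} to conclude $\alpha_x \ge 4$ for $x \notin \calL$, then use the $g$-web bound $\sum_x \alpha_x \le t+2g-2$ and rearrange. No gap to report.
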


\begin{proof}
Assume $t> 2g-2$ and $x$ is a label which is not in $\calL$.  Then by 
Lemma~\ref{lem:q2xinL0}, $\alpha_x \geq 4$. Hence

\[ t+2g-2 \geq \sum_x \alpha_x = \sum_{x \in \calL} \alpha_x + \sum_{x \not\in\calL} \alpha_x \geq \sum_{x \not\in\calL} \alpha_x \geq 4 | -\calL| = 4(t-|\calL|)\]
 Hence $|\calL| \geq t-(t+2g-2)/4 = (3/4) t - (g-1)/2$.  
\end{proof}

\subsection{Bigons and Trigons of  $\Lambda_x$}
\begin{defn}
A {\em Scharlemann cycle (of length $n$)} is a disk face of $G_Q$ or $G_F$
with $n$ edges, all edges having the same pair of labels and all connecting 
parallel vertices of the graph. We use the same term for the set of edges
defining the face. The Scharlemann cycles considered in this paper
are typically on $G_Q$ and of length $2$ or $3$.
\end{defn}

\begin{remark}
Note that by the Parity Rule, any edge of the great $g$--web $\Lambda$ must have 
different labels in $G_Q$.
\end{remark}

\begin{defn}
The subgraph of $\Lambda$ bounded by a bigon face in $\Lambda_x$ contains a Scharlemann cycle of length $2$.  This subgraph is called an {\em extended Scharlemann cycle of length $2$} and the Scharlemann cycle it contains is referred to as its {\em core} Scharlemann cycle.
\end{defn}

\begin{defn}\label{def:trigonarms}
A trigon in $\Lambda_x$ is a {\em cycle} trigon if its 
edges can be oriented consistently around the trigon so that the $x$--label 
is always at the tail end of an edge (i.e.\ it forms an $x$--cycle). 
See Figure~\ref{fig:xcycletrigon}. Otherwise
it is a {\em non-cycle} trigon. See Figure~\ref{fig:noncycletrigon}.
\end{defn} 

\begin{defn}\label{def:trigonarms2} If $\sigma$ is a trigon face of $\Lambda_x$ then the subgraph
of $\Lambda$ bounded by $\sigma$ consists of a trigon face of $\Lambda$
together with three {\em arms}, each consisting of a (possibly empty) string
of bigon faces of $\Lambda$; see for example Figure~\ref{fig:xcycletrigon}.
\end{defn} 

\begin{defn}
The subgraph of $\Lambda$ bounded by a trigon face, $\sigma$, in $\Lambda_x$, either contains a single Scharlemann cycle of length $3$ or contains one or two Scharlemann cycles of length $2$ (e.g. see the proof of Lemma~\ref{lem:existsS22}). 
If it contains a Scharlemann cycle of length $3$, this subgraph is called an {\em extended Scharlemann cycle of length $3$} and the Scharlemann cycle it contains is referred to as its {\em core} Scharlemann cycle.
\end{defn}

\begin{remark}
Trigons of $\Lambda_x$ which are not extended Scharlemann cycles will be
classified later as Type I, I\!I, or I\!I\!I (see 
Definition~\ref{def:trigontypes}).
\end{remark}

\begin{defn}
The arc on the boundary of a vertex between two consecutive edges of a face of a subgraph of $G_Q$ or $G_F$ is a {\em corner} of that face.
\end{defn}

\begin{defn}
 If a set of edges $\sigma$ is the boundary of a face $f$ of $\Lambda_x$ then let $L(\sigma)$ be the set of labels of $\Lambda$ that appear on the corners of $f$.
\end{defn}

\begin{defn}
Let $\sigma$ be an extended Scharlemann cycle of $G_Q$.  Let $\Gamma_\sigma$ be the subgraph of $G_F$ consisting of the outermost (in $G_Q$) edges of $\sigma$ and the vertices to which these edges are incident.  We say {\em $\sigma$ lies in a disk} if $\Gamma_\sigma$ is contained in a disk in $\widehat{F}$.  We say {\em $\sigma$ lies in an essential annulus} if $\Gamma_\sigma$ is contained in an annulus in $\widehat{F}$ but does not lie in a disk. 
\end{defn}

\begin{lemma}\label{lem:doesnotlieindisk}
No extended Scharlemann cycle of $G_Q$ lies in a disk.
\end{lemma}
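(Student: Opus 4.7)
The plan is by contradiction. Suppose $\sigma$ is an extended Scharlemann cycle of $G_Q$ whose subgraph $\Gamma_\sigma\subset G_F$ is contained in a disk $D\subset\hatF$, and let $\{y,y+1\}$ denote the pair of labels of the core Scharlemann cycle of $\sigma$.

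To $\sigma$ I would associate an embedded non-orientable surface $B\subset M$, built by taking the disk face in $\hatQ$ that $\sigma$ bounds and gluing along each corner of $\sigma$ a half-meridianal rectangle of the attached solid torus $V=N(K)$ running between $v_y$ and $v_{y+1}$. When $\sigma$ has core Scharlemann cycle of length $2$, the nested bigons of $\Lambda$ that make up $\sigma$ contribute only cylindrical layers, so $B$ is a long M\"obius band with one boundary circle. When $\sigma$ has core of length $3$, the analogous construction applied to the triangular face produces a long twisted $\theta$-band whose intersection with $\hatF$ is a $\theta$-graph. In either case, the intersection $B\cap\hatF$ can be arranged to lie in any preassigned regular neighborhood of $\Gamma_\sigma$ in $\hatF$.

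Choosing this neighborhood inside $D$ gives $B\cap\hatF\subset D$. In the length-$2$ case, $\partial B$ is a simple closed curve in $D$ and so bounds a subdisk $D'\subset D$; then $\hat B=B\cup D'$ is a closed non-orientable surface in $M$ with $\chi(\hat B)=\chi(B)+\chi(D')=0+1=1$, i.e., an embedded $\mathbb{RP}^2$. In the length-$3$ case, the $\theta$-graph $B\cap\hatF$ separates $D$ into two bigonal subdisks together with an annular region abutting $\partial D$; capping $B$ off with the two bigonal subdisks produces a closed non-orientable surface $\hat B\subset M$ whose Euler characteristic identifies it as an $\mathbb{RP}^2$ or a Klein bottle. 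Either conclusion contradicts the properties of $M$ listed just above: by (1) and (2), $M$ is irreducible and not a lens space, so contains no embedded $\mathbb{RP}^2$ (else $M\cong\mathbb{RP}^3$); by (3), $M$ contains no Klein bottle.

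The main obstacle will be making the constructions of the long M\"obius band and the long twisted $\theta$-band sufficiently precise to verify that (i) $B$ is genuinely non-orientable, (ii) $B\cap\hatF$ can be made to lie in an arbitrarily thin neighborhood of $\Gamma_\sigma$, and, especially in the length-$3$ case, (iii) capping off with the bigonal subdisks of $D$ yields an honest closed embedded surface rather than only a $2$-complex.
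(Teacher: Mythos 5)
Your plan is recognizably a cousin of the paper's argument, but two of the three obstacles you flag yourself are not just technical loose ends --- one is a false claim and the other is where the approach actually breaks.

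First, the claim in (ii) that $B\cap\hatF$ can be isotoped into a thin neighborhood of $\Gamma_\sigma$ is not correct. For a length-$2$ extended Scharlemann cycle with core labels $\{y,y+1\}$ and extremal labels $\{x,z\}$, the long \mobius band $A(\sigma)$ meets $\hatF$ in the curves $a_1,\dots,a_n$, and $a_i$ for $i<n$ runs through interior fat vertices of $G_F$ (the points $K\cap\hatF$ at labels strictly between $x$ and $z$). Those intersection points, and hence those circles, are pinned in place on $\hatF$; they are in general nowhere near $\Gamma_\sigma=a_n$. The length-$2$ argument is nonetheless salvageable without this claim: the $a_i$ are pairwise disjoint, so each $a_i$ with $i<n$ is either contained in the capping disk $D'$ bounded by $a_n$ or disjoint from it, and if some $a_i\subset D'$ you may pare $\sigma$ down to the sub-extended Scharlemann cycle with outermost circle $a_i$ and repeat. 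Eventually you reach a capping disk whose interior misses $B$, and then $\hat B=B\cup D'$ is an embedded $\mathbb{RP}^2$ and the contradiction you describe goes through. So for length $2$, with this repair, your route is essentially equivalent to the paper's (an embedded $\mathbb{RP}^2$ in an irreducible $3$-manifold forces an $\mathbb{RP}^3$ summand, which is a lens space summand).

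The real gap is (iii), and it cannot be patched within the ``build an embedded surface'' framework. For length $3$, $\Theta(\sigma)$ is built from the disk face $f$ together with three rectangles in $\nbhd(K)$, and all three rectangles are coned to the same arc of $K$. Along that arc, three sheets of $\Theta(\sigma)$ meet, so $\Theta(\sigma)$ is a $2$-complex, not a surface-with-boundary; and after attaching the two bigonal subdisks of $D$ to the $\theta$-graph, the point where $K$ punctures $\hatF$ at a vertex of $\theta$ still has three half-planes from $\Theta(\sigma)$ plus two sectors from the bigons meeting, which is not a manifold point. There is no ``closed non-orientable surface $\hat B$'' here, and the Euler characteristic computation is accordingly not about a surface. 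The paper sidesteps this entirely by never trying to produce an embedded surface: it takes $D$, the face $f$ of $\sigma$, and the $1$-handle $H$ of the arc of $K$ along the corners of $f$, and observes that $\nbhd(D\cup f\cup H)$ is a punctured lens space of order $p$ (the length of the Scharlemann cycle). That is, $\nbhd(D\cup H)$ is a solid torus, $\partial f$ wins $p$ times around it, and attaching the $2$-handle $\nbhd(f)$ yields a punctured $L(p,q)$, contradicting that $M$ is irreducible and not a lens space. This regular-neighborhood argument works uniformly for $p=2$ and $p=3$, whereas the ``cap off to a surface'' idea only makes sense for $p=2$. I would recommend you switch your length-$3$ case (and, for uniformity, the length-$2$ case as well) to that lens-space-summand argument.
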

\begin{proof}
Assume $\sigma$ is an extended Scharlemann cycle of length $p$ that lies in a disk.  Note that $p>1$.
Let $f$ be the face of $\sigma$.  Let $D$ be a small disk in $\widehat{F}$ in which $\Gamma_{\sigma}$ lies (above).  
Consider the family $\mathcal{F}(\sigma)$ of extended Scharlemann cycles 
contained in $f$ with the same core Scharlemann cycle as $\sigma$ (excluding 
$\sigma$ but including the core Scharlemann cycle). If any of these lie in 
$D$ then we choose an innermost such, say $\sigma'$, and replace $\sigma$ by 
$\sigma'$. So we may assume that no element of $\mathcal{F}(\sigma)$ lies in 
$D$.  By disk exchanges we may assume $\Int f \cap D = \emptyset$.  Let $H$ be the $1$--handle neighborhood of the arc of $K$ that forms the corners of $f$.  
Then $H$ meets $D$ only in the two vertices corresponding to the labels of 
the edges of $\sigma$, and $\nbhd(D \cup f \cup H)$ is a punctured lens space of order $p$.  This cannot happen since $M$ is neither reducible nor a lens space.
\end{proof}

\begin{lemma}~\label{lem:AEntscc}
No simple closed curve of $Q \cap F$ that is 
trivial in $Q$ is trivial in $\hatF$.
\end{lemma}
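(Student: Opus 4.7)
The plan is to suppose for contradiction that some simple closed curve $c \subset F \cap Q$ is trivial in both $Q$ and $\hatF$, and to derive a contradiction from the 2-sphere this produces in $M$. I pick such a $c$ innermost in $Q$, so that $c$ bounds a disk $D_Q \subset Q$ with $\Int D_Q$ disjoint from $F$; by hypothesis $c$ also bounds a disk $D_F \subset \hatF$. If $\Int D_F \cap K = \emptyset$, then $c$ already bounds a disk in $F$ itself, and part~(3) of Lemma~\ref{lem:FandQ} applied to $c$ gives the desired contradiction at once. So I may assume $D_F$ contains $n \geq 1$ vertex disks of $G_F$, i.e., $D_F$ meets $K$ transversely in $n$ points. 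Because $D_Q \subset X$ is disjoint from $K$, the 2-sphere $S := D_F \cup D_Q \subset M$ meets $K$ exactly in these $n$ points; since $S$ is null-homologous, $n$ is even, and hence $n \ge 2$.

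Next I invoke the irreducibility of $M$ (condition~(1) at the end of \S1) to conclude that $S$ bounds a 3-ball $B \subset M$. Because $D_Q$ meets $\hatF$ only along $c$, the disk $D_Q$ lies on one side of $\hatF$, and so $B$ sits in one of the two handlebodies of the splitting, say $H_+$. A brief analysis shows that each cylinder component of $\nbhd(K) \cap H_+$ (a tubular neighborhood of an arc of $K \cap H_+$) lies either entirely inside $B$ or entirely outside: its two disk-ends are vertex disks of $G_F$ on $\hatF$, while its lateral surface on $\partial\nbhd(K)$ is disjoint from $D_Q$ and from $\hatF \setminus D_F$, so it cannot cross $\partial B$ transversely anywhere else. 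Counting the $n$ vertex disks in $D_F$ then gives $|K \cap B| = n/2$ arcs of $K \cap H_+$, each with both endpoints at vertex disks in $D_F$.

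The main obstacle is converting the existence of $B$ and these $n/2$ interior $K$-arcs into a genuine contradiction. My plan is to use $B$ to isotope $\hatF$ across $B$, replacing $D_F$ by a parallel pushoff of $D_Q$, producing an isotopic Heegaard surface $\hatF'$ with $|K \cap \hatF'| = t - n \le t - 2$. Interpreting this as an ambient isotopy of $K$ that sweeps the arcs of $K \cap B$ from $H_+$ into $H_-$, I would argue --- using the strong irreducibility of $\hatF$ to rule out the configurations in which the arcs of $K \cap B$ could obstruct such a sweep --- that this isotopy can be arranged to strictly decrease the width of the given thin presentation of $K$. That contradicts the hypothesis that $K$ is in thin position with respect to the genus-$g$ Heegaard splitting, completing the proof. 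The delicate point is ensuring that no compensating increase of width occurs at other thick or thin levels; this is precisely where strong irreducibility of $\hatF$ enters in an essential way.
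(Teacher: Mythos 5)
Your approach is genuinely different from the paper's, and it contains real gaps. The paper's proof is combinatorial: it restricts $G_F$ to the disk $\widehat D \subset \hatF$ bounded by $c$ to get a nonempty graph $G_D$ (nonempty by Lemma~\ref{lem:FandQ}(3)); since arcs of $F \cap Q$ are essential (Lemma~\ref{lem:FandQ}(2)) neither $G_D$ nor $G_Q$ has $1$--sided faces, so the argument of Proposition~2.5.6 of \cite{cgls:dsok} together with $q\geq 2$ produces a Scharlemann cycle; capping off as in Lemma~\ref{lem:doesnotlieindisk} then yields a lens space summand of $S^3$ or $M$, a direct contradiction. That argument needs neither the $2$--sphere/ball construction nor thin position nor strong irreducibility.

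Two steps in your proof do not hold up. First, choosing $c$ innermost in $Q$ \emph{among curves trivial in both $Q$ and $\hatF$} does not force $\Int D_Q \cap F = \emptyset$: $D_Q$ may still contain curves of $F\cap Q$ that are trivial in $Q$ but essential in $\hatF$, so $D_Q$ is not disjoint from $\hatF$ and the later claim that $B$ sits in one handlebody breaks down. (Taking $c$ absolutely innermost in $Q$ fixes that but destroys the assumption that $c$ is trivial in $\hatF$.) Second, and more fundamentally, the claim that sweeping the $n/2$ arcs of $K\cap B$ across $\hatF$ strictly decreases width is unsupported. Decreasing $|K\cap \hatF|$ is not the same as decreasing width: the old arcs may have had just one local maximum each, so that they contribute to only one regular level above $\hatF$, while the pushed-off arcs each introduce a new local minimum below $\hatF$ and a corresponding new regular level; the trade-off need not be favorable. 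You defer this to strong irreducibility "in an essential way," but you offer no mechanism, and in fact the paper explicitly states that strong irreducibility is used only in the proof of Lemma~\ref{lem:parallelesc}, not here — so its invocation is a warning sign rather than a fix.
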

\begin{proof}
Otherwise let ${\widehat D} \subset \hatF$ be the disk bounded by such a 
simple closed curve. Let $G_D$ be $G_F$ restricted to $\widehat D$. 
By Lemma~\ref{lem:FandQ}(3), $G_D$ is non-empty. By Lemma~\ref{lem:FandQ}(3),
there are no $1$--sided faces in $G_D$ and no $1$--sided faces in the
subgraph of $G_Q$ corresponding to the edges of $G_D$. The argument of 
Proposition~2.5.6 of \cite{cgls:dsok}, 
along with the assumption that $q \geq 2$,
implies that one of $G_D$ or $G_Q$ contains a Scharlemann cycle. Such
a Scharlemann cycle along with the argument of Lemma~\ref{lem:doesnotlieindisk}
above, would imply the contradiction that either $S^3$ or
$M$ contains a lens space summand. 
\end{proof}

Lemma~\ref{lem:FandQ}(2) along with Lemma~\ref{lem:AEntscc} imply the 
following.

\begin{cor}~\label{cor:AEntscc}
Any simple closed curve of $F \cap Q$ that lies in a disk face of $Q$ 
is an essential curve in $\hatF$.
\end{cor}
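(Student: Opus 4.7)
The plan is to chain the two cited lemmas together in the most direct way possible. Let $c$ be a simple closed curve component of $F \cap Q$ that lies in a disk face of $G_Q$; the goal is to show $c$ is essential in $\hatF$.

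First I would observe that a disk face of $G_Q$ is a disk component of $\hatQ - G_Q$, and in particular contains no vertex of $G_Q$. Since the punctures of $Q$ are precisely the vertices of $G_Q$, such a disk face embeds as a disk in $Q$ itself. Therefore $c$, lying in this disk, bounds a disk in $Q$ and so is null-homotopic in $Q$. Lemma~\ref{lem:FandQ}(2) enters in the background: it is what guarantees that the edges of $G_Q$ really are essential arcs in $Q$, so that the face decomposition of $\hatQ$ is cellular and the phrase ``disk face of $Q$'' identifies a genuine disk in $Q$ rather than some piece that could be filled in by an inessential arc.

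With $c$ now identified as a simple closed curve of $Q \cap F$ that is trivial in $Q$, Lemma~\ref{lem:AEntscc} applies verbatim and gives that $c$ is not trivial in $\hatF$. This is exactly the statement that $c$ is essential in $\hatF$, proving the corollary.

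I do not anticipate any real obstacle; the content of the corollary is essentially a one-line combination of the two lemmas, with all the geometric work already carried out in the proof of Lemma~\ref{lem:AEntscc} (which itself uses the no-lens-space-summand argument of Lemma~\ref{lem:doesnotlieindisk}). The only point requiring any care is the unpacking of ``disk face of $Q$'' into ``disk in $Q$ containing $c$,'' and this is immediate once one remembers that the vertices of $G_Q$ are the punctures of $Q$.
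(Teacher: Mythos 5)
Your proof is correct and matches the paper's own (unwritten) argument exactly: observe that a disk face of $Q$ contains no vertices of $G_Q$ and so lies in $Q$ itself, giving that $c$ is trivial in $Q$, and then invoke Lemma~\ref{lem:AEntscc}. The paper simply states that Lemma~\ref{lem:FandQ}(2) and Lemma~\ref{lem:AEntscc} imply the corollary, which is the same chain you spell out.
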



\section{Collections of curves with Property $P(k)$ and  $F_k(g)$}\label{section:Fkg}

\begin{defn}\label{defn3.1}
Let $\C$ be a collection of simple loops on a surface $S$, and let $k$ 
be a non-negative integer. 
We say that $\C$ has {\em Property $P(k)$} if 
\begin{itemize}
\item[(1)] the elements of $\C$ are essential and pairwise non-isotopic on $S$;
\item[(2)] any pair of elements of $\C$ meet transversely; 
\item[(3)] for all $c,c' \in \C$, $|c\cap c'| \le1$; and 
\item[(4)] any $c\in \C$ meets at most $k$ elements of $\C - \{c\}$.
\end{itemize}
For $k\ge 0$ and $g\ge2$ define $F_k(g) = \max\{|\C| : \C$ is a collection of 
simple loops on a closed, connected, orientable surface of genus~$g$ with 
Property~$P(k)\}$. 
\end{defn}

\begin{lemma}\label{lem:Fkg}
$F_k(g) \le A_k (g-2) + B_k$, where $A_k = k+2 \lfloor k/4 \rfloor +3$ and $B_k$ is:
\begin{align*}
3\ ,\quad&\text{if }\ k=0\ ;\\
5\ ,\quad&\text{if }\ k=1\ ;\\
7\ ,\quad&\text{if }\  k=2\text{ or } 3\ ;\\
9\ ,\quad&\text{if }\  k=4\ ;\\
10\ ,\quad&\text{if }\  k=5\ ;\\
12\ ,\quad&\text{if }\  k\ge 6\ .\\
\end{align*}

Moreover, we have equality if $g=2$ or $k=0,1\text{ or }\ 2$, i.e. 
\begin{align*}
F_k(2) &= B_k\ \text{ for all }\ k, \text{and } \\
F_0 (g) &= 3g-3\ ,\\
F_1(g) &= 4g-3\ ,\\
F_2(g) &=5g-3\ .
\end{align*} 
\end{lemma}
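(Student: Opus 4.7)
The plan is to prove the inequality by induction on $g$, with the base case $g = 2$ treated directly, and to derive the claimed equalities for $k \le 2$ and for $g = 2$ by exhibiting explicit configurations meeting the bounds.

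For the base case $g = 2$, the strategy is a direct combinatorial analysis of the cell decomposition of $\Sigma_2$ cut out by $\C$. Its $1$-skeleton has $V$ vertices (the transverse intersection points) and $E$ edges (the arcs of curves between consecutive intersections, together with any isolated loops of $\C$). Property $P(k)$ bounds $V$ by $k|\C|/2$, and conditions (1)--(3) ensure that every complementary face has controlled Euler characteristic (no face is a disc with only one boundary curve, by essentiality and non-isotopy). Combining these inputs with $\chi(\Sigma_2) = -2$ and optimising over the possible face types yields the upper bound $|\C| \le B_k$. Matching lower bounds are produced by explicit constructions on $\Sigma_2$: starting from a pants decomposition (realising $F_0(2) = 3$), one adds transverse dual curves in a controlled way to realise the other values of $B_k$.

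For the inductive step, assume the bound for genus $g-1 \ge 2$, and let $\C$ satisfy $P(k)$ on $S$ of genus $g$. Pick $c \in \C$ meeting as few other loops as possible, and let $m \le k$ be the number of such loops. If $c$ is non-separating, cut $S$ along $c$ and cap off the two resulting boundary circles with discs to obtain $S'$ of genus $g-1$; the case where every minimising $c$ is separating is handled in parallel by splitting $S$ into two lower-genus pieces and applying induction to each. The $m$ loops of $\C-\{c\}$ that meet $c$ become arcs in $S'$ and are discarded. The remaining loops project to simple loops in $S'$; however, a projected loop may become trivial (exactly when the original loop cobounds a once-punctured torus with $c$ in $S$) and two projected loops may become isotopic (exactly when they cobound a planar subsurface of $S$ whose other boundary components all lie on $c$, typically a pair of pants). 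The hypotheses $|c'_i \cap c| = 0$ and $|c'_1 \cap c'_2| \le 1$, together with $P(k)$ applied to curves meeting $c$, severely restrict how often these degenerations can occur. A careful case analysis bounds the total loss $L$ of loops (arcs from crossings plus newly trivial loops plus identifications of pairs) so that $1 + m + L \le A_k$, giving
\[
|\C| \le 1 + m + L + F_k(g-1) \le A_k + \bigl(A_k(g-3) + B_k\bigr) = A_k(g-2) + B_k,
\]
completing the induction.

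The main obstacle is the precise bookkeeping in the inductive step: bounding by $A_k - 1$ the sum of $m$ and the number of loops lost to identification. In particular, the piecewise jumps in $A_k = k + 2\lfloor k/4\rfloor + 3$ at multiples of $4$ reflect transitions in how many distinct planar subsurfaces can cobound with $c$ once $c$ is crossed by four or more neighbours; it is precisely when such subsurfaces proliferate that additional loops may be identified upon capping. For the equality statements, the sharpness for $k \le 2$ follows by building collections inductively---at each step adding a handle to the surface together with the three additional curves (one pants curve and two transverse companions) required to realise $3g-3$, $4g-3$, and $5g-3$ loops for $k = 0, 1, 2$ respectively---while the sharpness for $g = 2$ is verified by the explicit constructions used in the base case.
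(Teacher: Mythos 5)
The overall shape — induction on $g$, cut along a suitable $c \in \C$, cap the boundary, and track curves that become arcs, trivial, or identified — matches the paper, and you correctly identify the two degenerations (once-punctured-torus cobounding gives a newly trivial loop; planar cobounding gives an identified pair). But your choice of $c$ is the wrong one, and that choice is load-bearing. You take $c$ meeting as \emph{few} others as possible; the paper first observes that $A_k(g-2)+B_k$ is non-decreasing in $k$, so one may assume $\C$ does not have Property $P(k-1)$, and then picks a $c_0$ meeting exactly $k$ others. The reason this matters: the identified-pair count is bounded by intersections on the annulus boundary $a \cup a'$, and those intersections must be shared with the arcs (images of the curves crossing $c$). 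Each arc runs from $\partial D_1$ inside $A$ to $\partial S_1$ outside $A$ and crosses $a$ or $a'$ exactly once, so with $m$ arcs one of $a,a'$ absorbs at least $\lceil m/2 \rceil$ of them, leaving budget for at most $1 + \lfloor (k - \lceil m/2 \rceil)/2 \rfloor$ identified pairs per capping disk. The total loss is then at most $1 + m + 2\bigl(1 + \lfloor (k - \lceil m/2 \rceil)/2 \rfloor\bigr)$. For $m = k$ this equals $A_k$, but the expression is not monotone in $m$: for $k=3$ and $m=2$ it evaluates to $7 > 6 = A_3$. So with a minimizing $c$ the natural bookkeeping overshoots $A_k$, and your claimed inequality $1 + m + L \le A_k$ is not established. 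Picking $c$ with the \emph{maximum} allowed number of crossings (after the monotonicity reduction) is what makes the loss budget close; that $c$ is also automatically non-separating, so the separating subcase your pick admits (and the genus-$1$ pieces it would produce, which the lemma's statement does not cover) never arises in the paper.

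The base case $g=2$ is also under-described. The paper passes to the quotient by the hyperelliptic involution, turning $\C$ into a graph on $S^2$ with six marked points subject to a valence constraint, where the bound $B_k$ is checked directly by enumeration. Your Euler-characteristic count on $\Sigma_2$ itself (from $V \le k|\C|/2$ and $\chi = -2$) does not obviously yield $|\C| \le B_k$ once $k \ge 3$: curves can intersect pairwise, the complementary face structure is no longer controlled by simply excluding monogons and bigons, and no inequality of the claimed form falls out without a further idea. Finally, the paper isolates a special case in which some $c_0 \in \C$ bounds a once-punctured torus containing a curve meeting $k$ others, and disposes of it separately before the generic step; your sketch does not account for it.
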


\begin{remark}
In the present paper we will only make use of the formula for $F_2(g)$ 
(and $F_0(g)$). However, we include the general case as possibly being of 
independent interest.
\end{remark}

\begin{proof}
We will prove the lemma by induction on $g$.
Since the case $k=0$ is easy and well-known, we assume $k\ge1$.
Let $\C$ be a collection of simple loops with Property~$P(k)$ on a closed, 
connected, orientable surface $S$ of genus~$g \ge 2$.

First we consider the special case where there is a loop $c_0\in \C$ 
that bounds a once-punctured torus $T_0\subset S$ containing a curve in $\C$
that intersects $k$ other elements of $\C$.
Let $S_0$ be the other component of $S$ cut along $c_0$, and let $S'$ 
be the closed surface of genus~$g-1$ obtained by capping off the boundary of 
$S_0$ with a disk $D_0$. 
Note that $c_0$ is disjoint from all the other members of $\C$.
It follows that either $k=1$ and there are exactly two elements of $\C$ 
in $\text{Int }T_0$, or $k=2$ and there are exactly three elements  of $\C$ 
in $\text{Int }T_0$.
Discarding from $\C$ these elements, together with $c_0$, we get a collection
$\C_0$ of loops on $S_0$.
Regarded as a collection of loops on $S'$, $\C_0$ satisfies all the 
conditions for Property~$P(k)$ except possibly (1): 
some pairs of loops in $\C_0$ may become isotopic on $S'$.
(Note that no loop in $\C_0$ is inessential in $S'$, as such a loop 
would be parallel to $c_0$ in $S$.)

Let $a,a'\in \C_0$ be curves that are isotopic on $S'$.
Then $a$ and $a'$ are disjoint, and cobound an annulus $A$ in $S'$. 
Since $a$ and $a'$ are not isotopic on $S$, we have $D_0\subset \Int A$.
In particular, it follows that no triple of curves in $\C_0$ is isotopic 
in $S'$.
Now suppose $b,b'\in\C_0$ is another pair of curves that are isotopic in $S'$.
Then $b$ and $b'$ cobound an annulus $B$ in $S'$ with $D_0\subset\Int B$.
Hence $A\cap B\ne \emptyset$. 
If $\partial A\cap\partial B = \emptyset$, then either $a$ or $a'$ lies 
in $B$, or $b$ or $b'$ lies in $A$.
But then $a$ or $a'$ would be isotopic to $b$ or $b'$ in $S$, a contradiction.
Hence, without loss of generality, $a\cap b\ne\emptyset$.
Therefore $b\cap A$ is a single transverse arc in $A$.
Since $b$ and $b'$ are isotopic in $S'$, we must also have $a\cap b'\ne
\emptyset$, and so $b'\cap A$ is also a single transverse arc.
Thus the pair $b,b'$ contributes two points of intersection to each 
of $a$ and $a'$.

It follows that if $k=1$ then at most one pair of curves in $\C_0$ are 
isotopic in $S'$.
We thus get a collection $\C'$ of loops on $S'$ with Property~$P(1)$ such that 
$|\C'| \ge |\C_0| -1$.
Therefore $|\C| \le |\C_0| +3 \le |\C'| +4$.
By induction $|\C'| \le 4(g-1) -3$, and hence $|\C| \le 4g-3$ as claimed.

If $k=2$ and at most one pair of curves in $\C_0$ are isotopic in $S'$ 
then the result follows easily by induction.
So suppose that two pairs of curves $a,a'$ and $b,b'$ in $\C_0$ become 
isotopic in $S'$.
Let $A,B$ be the annuli cobounded by $a,a'$ and $b,b'$ respectively, 
extended slightly so that $a\subset \Int A$ and $b\subset \Int B$.
Then $A\cap B$ is a disk containing $D_0$, and hence $A\cup B$ is a 
once-punctured torus. 
There is a simple loop $c$ in $A\cup B$ intersecting each of $a$ and $b$ 
transversely in a single point. 
Then $\C' = (\C - \{a',b'\})\cup \{c\}$ is a collection of loops on $S'$ 
with Property~$P(2)$.
Hence, using the inductive hypothesis, 
\begin{align*}
|\C| & \le |\C_0| +4\\
& \le \big(|\C'| +2-1\big) +4 = |\C'| +5\\
& \le \big(5(g-1)-3\big) +5 = 5g-3\ .
\end{align*}

We now consider the general case, and assume that $\C$ contains no loop 
$c_0$ as above. 
Since $A_k(g-2) +B_k$ is non-decreasing in $k$, 
we may assume that $\C$ does not have Property~$P(k-1)$, i.e.\ 
there is a loop $c_0\in \C$ that intersects $k$ other elements 
$c_1,c_2,\ldots,c_k\in \C$. 
In particular, $c_0$ is non-separating on $S$, and so cutting $S$ along 
$c_0$ gives a connected surface $S_0$.
Let $S' = S_0 \cup D_1 \cup D_2$ be the closed surface of genus~$(g-1)$ 
obtained by capping off the two boundary components of $S_0$ with disks 
$D_1$ and $D_2$. 
Let $S_1 = S_0 \cup D_1$.

Let $\C_1= \C- \{c_0,c_1,\ldots,c_k\}$. 
As a collection of loops on $S_1$, $\C_1$ satisfies Property~$P(k)$ except 
that some pairs of loops in $\C_1$ may become isotopic in $S_1$.
(Again, no loop in $\C_1$ is trivial in $S_1$ as such a loop would be 
parallel to $c_0$ in $S$.)
Let $a,a'$ be such a pair. 
The previous discussion applies verbatim, with $\C_0$ replaced by $\C_1$ 
and $S'$ replaced by $S_1$, so any other such pair $b,b'$ contributes two 
points of intersection to each of $a$ and $a'$.

At least $\left\lfloor \frac{k+1}2\right\rfloor$ of the curves $c_1,c_2,\ldots,c_k$ 
must intersect one of $a,a'$.
Hence the number of pairs of elements of $\C_1$ that are isotopic in 
$S_1$ is at most 
$1+\left\lfloor\frac{k-\lfloor\frac{k+1}2\rfloor}2\right\rfloor = 1+\left\lfloor\frac{k}4 \right\rfloor$.
Discarding one element of $\C_1$ from each such pair, we get a collection 
of loops $\C_2$ with  Property~$P(k)$ in $S_1$.
Now apply the same argument to $\C_2$, regarded as a collection of loops 
in $S' = S_1\cup D_2$. 
We may assume that
no element of $\C_2$ is inessential in $S'$, as such a curve $c$ 
would bound a disk $D$ in $S'$ with $D_1\cup D_2\subset \Int D$, 
and would therefore bound a once-punctured torus in $S$ containing $c_0$, 
putting us in the special case treated earlier. 
We then get a collection $\C'$ of loops in $S'$ satisfying 
Property~$P(k)$, with $|\C'| \ge |\C_1| - 2(1+ \lfloor k/4 \rfloor)$. 
Hence
\begin{align*}
|\C| & = |\C_1| + (k+1)\\
&\le |\C'| + (k+1) + 2(1+\lfloor k/4 \rfloor)\\
& = |\C'| + A_k\ .
\end{align*}
Now assume $g\ge 3$ and that the theorem holds for $(g-1)$.
Then 
\[|\C| \le (A_k(g-3) + B_k) + A_k = A_k (g-2) + B_k\ ,\]
and the result follows by induction on $g$.

To start the induction we will show that $F_k(2) = B_k$. 
Let $S$ be a closed, orientable surface of genus~2 and let $\C$ be  a 
collection of simple loops on $S$ with Property~$P(k)$. 
Let $\tau \colon S\to S$ be the hyperelliptic involution. 
The quotient $(S,\text{Fix}(\tau))/\tau \cong (S^2,V)$ where $V$ 
consists of six points.  
Let $\pi \colon S\to S^2$ be the quotient map. 
The elements of $\C$ can be isotoped so that $\tau(c)=c$ for all $c\in \C$.
If $c$ meets $\text{Fix}(\tau)$ then $\pi (c)$ is an arc in $S^2$ whose 
endpoints are distinct points in $V$; if $c\,\cap\,\text{Fix}(\tau)=\emptyset$
then $\pi (c)$ is a simple loop in $S^2 -V$.
By part~(3) of Definition~\ref{defn3.1}, if $c,c'$ are distinct elements 
of $\C$ then $\pi (c) \cap \pi (c')$ is either empty or a single point in $V$.
The arcs $\pi (c)$ form the edges of a graph $\Gamma$ in $S^2$ with 
vertex set $V$, and $\pi (\cup\,\C)$ is the disjoint union 
$\Gamma\smallcoprod\Lambda$ 
where the components of $\Lambda$ are simple loops.
Hence $|\C| = E(\Gamma) + |\Lambda|$, where $E(\Gamma)$ is the number of 
edges of $\Gamma$. 
Note that no two edges of $\Gamma$  share the same pair of endpoints, no 
component of $\Lambda$ bounds a disk in $S^2-V$, and no two components 
of $\Lambda$ are parallel in $S^2-V$. 
Finally, by part~(4) of Definition~\ref{defn3.1}, the sum of the valencies
of the vertices at the endpoints of each edge of $\Gamma$ is at most $k+2$.

It is straightforward to check that with these constraints, for $0\le k\le 5$
the maximum value of $E(\Gamma)+|\Lambda|$ is the $B_k$ given in the 
statement of the lemma. 
These values are realized by the configurations shown in Figure~\ref{fig:Curves2}.
Note that by passing to the 2-fold branched covering of $(S^2,V)$ 
we obtain a collection $\C$ of simple loops on $S$ with Property~$P(k)$.
For $k=6$ we have the 1-skeleton of the octahedron shown in Figure~\ref{fig:Curves2},
with 12 edges. 
An easy Euler characteristic argument shows that a graph $\Gamma$ in $S^2$ 
with six vertices, no loop edges and no parallel edges, has at most 12 edges.
It follows that $F_k(2) =12$ for $k\ge6$.

\begin{figure}
\centering
\input{Curves2.pstex_t}
\caption{}
\label{fig:Curves2}
\end{figure}

Finally we show that the inequality in the lemma is an equality when 
$k=0,1$ or 2. 
For $k=0$, this is easy and well known. 
For $k=1$ or 2, let $S$ be the closed surface of genus $g$ obtained by 
attaching $g$ once-punctured tori $T_i$, $1\le i\le g$, to a 
$g$--punctured sphere $P$ along their boundaries. 
In each $T_i$ take a collection $\C_i$ of 2 (resp.\ 3) simple loops with 
pairwise intersection numbers equal to 1, and in $P$ take a collection 
$\C_0$ of simple loops that cut $P$ into pairs of pants. 
Define $\C = (\bigcup_{i=1}^g \C_i) \cup\C_0 \cup\{\text{components of }
\partial P\}$, a collection of loops on $S$.
Then $\C$ satisfies Property~$P(1)$ (resp.\ $P(2)$) and 
$|\C| = 2g + (g-3) +g = 4g-3$ (resp.\ $3g +(g-3)+g=5g-3$).
\end{proof}

\begin{remark}
Consider the number $F(g) = \max_k\{F_k(g)\}$. \cite{MRT} shows $g^2+g \leq F(g) \leq (g-1)2^{2g}$ for $g\geq 2$.  Moreover, by essentially the same method as the base case of our induction, \cite{MRT} proves $F(2) = 12$.
\end{remark}

\section{Extended Scharlemann cycles of length $2$ and $3$.}\label{sec:extSC}

\begin{defn}
Let $\{a,b\}$ be labels of $G_Q$, then $\arc{ab}$ is one of the two 
intervals of labels running between $a$ and $b$ on an abstract vertex of
$G_Q$ that contains no labels $a,b$ on its interior. Typically, one such
interval is specified by the context. 
\end{defn}

If $\sigma$ is an extended Scharlemann cycle whose edges have
labels $\{a,b\}$ then we may think of $L(\sigma)$ as an interval
of labels $\arc{ab}$.

\begin{lemma}\label{lem:commonlabelsets}
Let $\sigma,\tau$ be extended Scharlemann cycles of length $2$ or $3$. Then either
\begin{enumerate}
\item $L(\sigma) \cup L(\tau)$ contains all labels;
\item $L(\sigma) \subset L(\tau)$ or $L(\tau) \subset L(\sigma)$;
\item $L(\sigma) \cap L(\tau)$ is a single interval of labels $\arc{xy}$
where $x$ is an extremal label of $\sigma$ and $y$ is an extremal label of
$\tau$; or
\item $L(\sigma) \cap L(\tau) = \emptyset$.
\end{enumerate}
\end{lemma}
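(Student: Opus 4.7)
My plan is to reduce the lemma to an elementary statement about two arcs on a circle. The preamble to the lemma already tells us that for any extended Scharlemann cycle $\sigma$ with core Scharlemann cycle labeled $\{a,b\}$, we may identify $L(\sigma)$ with the interval $\arc{ab}$ of labels on an abstract vertex of $G_Q$; the endpoints $a,b$ are precisely the extremal labels of $\sigma$. Likewise $L(\tau) = \arc{cd}$ with extremal labels $c,d$. Since the labels $\{1,\dots,t\}$ appear cyclically around any fat vertex of $G_Q$, we may regard $L(\sigma)$ and $L(\tau)$ as two arcs on a circle.

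The lemma then becomes a case analysis of how two arcs $\arc{ab}$, $\arc{cd}$ on a circle can meet. Exactly one of the following occurs: (a) the arcs are disjoint, which gives conclusion (4); (b) one arc is contained in the other, which gives conclusion (2); (c) the arcs overlap properly and their intersection is a single subarc $\arc{xy}$; or (d) the arcs overlap properly and their intersection consists of two disjoint subarcs.

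In case (c), each endpoint of $\arc{xy}$ must be an endpoint of one of the two arcs $\arc{ab}$ or $\arc{cd}$ lying in the interior of the other. If both endpoints of $\arc{xy}$ came from the endpoints of the same arc, that arc would be contained in the other, contradicting proper overlap; so one endpoint lies in $\{a,b\}$ and the other in $\{c,d\}$, giving conclusion (3). In case (d), both $a,b$ lie in $\arc{cd}$ and both $c,d$ lie in $\arc{ab}$, so $\arc{ab}\cup\arc{cd}$ exhausts the cyclic label set and gives conclusion (1). The entire argument is elementary once we accept that $L(\sigma)$ and $L(\tau)$ are arcs on a circle bounded by their extremal labels; that reduction is the only step with any content, and it is really just an inspection of the face structure of $\Lambda_x$ at each fat vertex of $\sigma$.
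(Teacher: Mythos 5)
Your proposal is correct and takes essentially the same approach as the paper: the paper's proof also treats $L(\sigma)$ and $L(\tau)$ as single intervals of labels (arcs on the circle of labels around a vertex of $G_Q$) and performs exactly this case analysis, observing that either the union is everything, or the intersection is a single (possibly empty) interval whose endpoints must be extremal labels, with the sub-case that both endpoints come from the same cycle yielding containment. Your write-up is a bit more explicit about enumerating the four arc configurations, but there is no genuine difference in strategy.
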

\begin{proof}
$L(\sigma),L(\tau)$ are each a single interval of labels. By convexity, either
$L(\sigma) \cup L(\tau)$ contains all labels or $L(\sigma) \cap L(\tau)$
is convex. If nonempty, it is a single interval of intersection, $\arc{xy}$.
Clearly, $x$ and $y$ must be extremal in either $\sigma$ or $\tau$. If both are
extremal in, say, $\sigma$, then $L(\sigma)$ is contained in $L(\tau)$.
\end{proof}

\begin{defn}\label{def:longmobiusband}
Let $\sigma$ be an extended Scharlemann cycle of length $2$ with $L(\sigma) = \arc{xy}$.  Let $f$ be the union of the bigons bounded by $\sigma$ in $G_Q$.  
Construct the {\em long \mobius band} $A(\sigma)$ in $M$ by extending the corners of $f$ radially to $K$ in a neighborhood of $K$ in $M$.  Let $a(\sigma)$ be the set of simple closed curves $A(\sigma) \cap \widehat{F}$ formed by pairs of edges of $\sigma$. The {\em core labels} of $\sigma$ are the two labels of the Scharlemann cycle face of $\sigma$. The {\em core curve} of $\sigma$ is the element of $a(\sigma)$ on its core labels.  
\end{defn}

\begin{defn}
An {\em embedded $\theta$--curve} in a surface $\widehat{F}$ is a graph $\theta$ consisting of two vertices and three edges, between these two vertices, embedded in $\widehat{F}$ so that $\nbhd(\theta)$ is a thrice punctured sphere.  A {\em $\theta$--band} is homeomorphic to the product of an embedded $\theta$--curve with an interval.
\end{defn}

\begin{lemma}
Let $\sigma$ be a Scharlemann cycle of length $3$ in $G_Q$.  Then, regarding as points the two vertices of $G_F$ to which the edges of $\sigma$ are incident, the edges of $\sigma$ form an embedded $\theta$--curve in $\widehat{F}$.
\end{lemma}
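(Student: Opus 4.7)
\emph{Plan.} The three edges of $\sigma$, being distinct components of the 1-manifold $F \cap Q$, are pairwise disjoint arcs; each runs from a point on the boundary of one vertex disk $a$ of $G_F$ to a point on the boundary of the other vertex disk $b$ (these are the two labels of $\sigma$). Collapsing $a$ and $b$ to points therefore yields an embedded graph $\theta \subset \widehat F$ with two vertices and three edges. What remains is to verify that $\nbhd(\theta)$ is a thrice-punctured sphere rather than a once-punctured torus (the only other orientable surface of Euler characteristic $-1$). By the standard ribbon-graph criterion on an oriented surface, this amounts to showing that the cyclic orderings of $e_1, e_2, e_3$ at $a$ and at $b$, induced by the orientation of $\widehat F$, are mutual reverses.

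The geometric input comes from the triangular face $f$ of $\sigma$ in $G_Q$: its three corners $c_1, c_2, c_3$ lie on $\partial N(K)$, with $c_i$ a sub-arc of the meridian $\partial w_i$ of $K'$ running from a point on $\partial b$ to a point on $\partial a$. The three meridians $\partial w_i$ of $K'$ are parallel curves of one common slope on the torus $\partial N(K)$, and all three corners span the same longitudinal interval of $K$, namely the slab between the levels of $\partial a$ and $\partial b$. Hence the three corners undergo identical meridional displacement, and so the cyclic order of the three $a$-endpoints on $\partial a$, measured in the meridional direction of $K$, coincides with the cyclic order of the three $b$-endpoints on $\partial b$ under the natural labeling $e_i \leftrightarrow e_i$.

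To convert these meridional cyclic orders into the cyclic orders of $e_1, e_2, e_3$ in $\widehat F$ at $a$ and at $b$, I will invoke the Parity Rule: since $w_1, w_2, w_3$ are parallel in $G_Q$, the vertices $a$ and $b$ are anti-parallel in $G_F$; equivalently, the boundary orientations of $\partial a$ and $\partial b$ induced from $\widehat F$ point in opposite meridional directions on $\partial N(K)$. Combining this sign-flip with the equality of the two meridional cyclic orders produces exactly the desired reversal: the cyclic orders of $e_1, e_2, e_3$ at $a$ and at $b$ in $\widehat F$ are reverses of one another, whence $\nbhd(\theta)$ is a thrice-punctured sphere.

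The main obstacle, I expect, will be the rigorous verification that the three corners, as parallel sub-arcs spanning the same longitudinal slab, really do realize the same cyclic order of endpoints on $\partial a$ and on $\partial b$ with respect to the meridional direction of $K$. This is where the uniform slope of the meridians of $K'$ on $\partial N(K)$, together with the Scharlemann-cycle structure forcing the three corners to be longitudinally coextensive between $\partial a$ and $\partial b$, must be used carefully. Once that ``parallel-transport along the corners'' statement is in hand, the Parity Rule mechanically converts it into the reversal of cyclic orders that is equivalent to planarity of the ribbon graph $\theta$, giving the conclusion.
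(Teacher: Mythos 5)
Your proposal is correct and takes essentially the same approach as the paper's: identify the cyclic orders at the two collapsed vertices with the corner labels coming from the trigon face $f$, observe that these cyclic orders are reverses of one another, and conclude that $\nbhd(\theta)$ is planar. The paper asserts the reversal in a single sentence, whereas you make it explicit (corners are parallel spanning arcs of the annulus $A_{x,x+1}$, so the cyclic orders agree in the meridional direction, and then the Parity Rule forces $x$ and $x+1$ to be anti-parallel in $G_F$, which flips exactly one of the two $\widehat F$-induced orientations); this is the same underlying combinatorics spelled out in more detail.
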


\begin{proof}
 Let $f$ be the trigon of $G_Q$ bounded by $\sigma$.    Let $\{x, x+1\}$ be the label pair on the edges of $\sigma$.  With $\alpha$, $\beta$, and $\gamma$ label the corners of $f$ and hence label the endpoints of the edges of $\sigma$ too.  These labels then appear around vertices $x$ and $x+1$ of $G_F$ in opposite directions.  Since each edge of $\sigma$ connects two distinct corners of $f$, the endpoints of each edge have distinct labels.  Shrinking vertices $x$ and $x+1$ to points, this forces $\sigma$ to form an embedded $\theta$--curve in $\widehat{F}$.
\end{proof}

\begin{defn}\label{def:twistedthetaband}
Let $\sigma$ be an extended Scharlemann cycle of length $3$ with $L(\sigma) = \arc{xy}$.  Let $f$ be the union of the bigons and the trigon bounded by $\sigma$ in $G_Q$. Construct the {\em long twisted $\theta$--band} $\Theta(\sigma)$ in $M$ by extending the corners of $f$ radially to the core of $H_{xy}$.  Let $\theta(\sigma)$ be the set of embedded $\theta$--curves $\Theta(\sigma) \cap \widehat{F}$.  The {\em core labels} of $\sigma$ are the two labels of the Scharlemann cycle face of $\sigma$.  The {\em core $\theta$--curve} of $\sigma$ is the element of $\theta(\sigma)$ on its core labels.
\end{defn}

The following shorthand will be convenient.

\begin{defn}
If $\sigma$ is an extended Scharlemann cycle of length $3$, then take $A(\sigma)=\emptyset$ and $a(\sigma)=\emptyset$.
If $\sigma$ is an extended Scharlemann cycle of length $2$, then take $\Theta(\sigma)=\emptyset$ and $\theta(\sigma)=\emptyset$. If $\sigma$ is an extended
Scharlemann cycle of length $2$ or $3$, let $\ao(\sigma)=a(\sigma) \cup 
\theta(\sigma)$ and $\AO(\sigma)=A(\sigma) \cup \Theta(\sigma)$.
\end{defn}

\begin{defn}
Let ${\mathcal E} = \{ L(\sigma) | \sigma $ is an extended Scharlemann
cycle of $\Lambda$ of length $2$ or $3\}$ be the set of all labels that lie 
on the corners of extended 
Scharlemann cycles of length $2$ or $3$ in $\Lambda$.  
\end{defn}

\begin{defn}\label{def:Sigma}
Let $\Sigma$ be a collection of extended Scharlemann cycles of length
$2$ or $3$ in $\Lambda$ such that $L(\Sigma)=\cup \{ L(\sigma) | \sigma \in \Sigma \}$
contains $\mathcal E$.  
Let $\ao(\Sigma) = \bigcup_{\sigma \in \Sigma} \ao(\sigma)$.  
Assume $\Sigma$ is chosen so that the complexity 
$(|\Sigma|,|\ao(\Sigma)|\}$ is minimal in the lexicographic ordering among all such collections.
\end{defn}

\begin{lemma}\label{lem:commonlabelsetsrefined}
If $\sigma, \tau \in \Sigma$ then either 
\begin{enumerate}
\item $L(\sigma) \cup L(\tau)$ contains all labels;
\item $L(\sigma) \cap L(\tau)$ is a single interval of labels $\arc{xy}$
where $x$ is an extremal label of $\sigma$ and $y$ is an extremal label of
$\tau$; or 
\item $L(\sigma) \cap L(\tau) = \emptyset$.
\end{enumerate}
\end{lemma}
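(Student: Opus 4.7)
The approach is to appeal directly to Lemma~\ref{lem:commonlabelsets} and rule out its case (2) using the minimality built into the choice of $\Sigma$ in Definition~\ref{def:Sigma}. Assuming $\sigma \neq \tau$ (the claim is vacuous otherwise), Lemma~\ref{lem:commonlabelsets} gives one of four alternatives for the relationship between $L(\sigma)$ and $L(\tau)$. Alternatives (1), (3), and (4) there match the three cases of the refined statement verbatim, so the only work is to eliminate alternative (2), the possibility that one of $L(\sigma), L(\tau)$ is contained in the other.

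To handle this case, I would suppose for contradiction that $L(\sigma) \subseteq L(\tau)$ (the other containment being symmetric) and consider $\Sigma' := \Sigma \setminus \{\sigma\}$. Since $\tau \in \Sigma'$ and every label in $L(\sigma)$ already lies in $L(\tau)$, we have $L(\Sigma') = L(\Sigma) \supseteq \mathcal{E}$, so $\Sigma'$ still satisfies the covering condition of Definition~\ref{def:Sigma}. But $|\Sigma'| = |\Sigma|-1$, which contradicts the minimality of the first coordinate $|\Sigma|$ in the lexicographic complexity $(|\Sigma|,|\ao(\Sigma)|)$.

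There is no real obstacle here: the definition of $\Sigma$ as a minimal covering collection was engineered precisely so that this refinement becomes a one-line consequence of the previous lemma and the minimality condition. The only minor thing to keep in mind is that because minimality is measured lexicographically with $|\Sigma|$ first, strictly decreasing $|\Sigma|$ by one already yields a smaller complexity, regardless of how $|\ao(\Sigma')|$ compares with $|\ao(\Sigma)|$.
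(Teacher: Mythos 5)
Your proof is correct and follows essentially the same argument as the paper: apply Lemma~\ref{lem:commonlabelsets} and rule out the containment alternative by observing that dropping the smaller-labelled trigon from $\Sigma$ would strictly decrease $|\Sigma|$, contradicting the lexicographic minimality in Definition~\ref{def:Sigma}. Your explicit remark that $\sigma \neq \tau$ is needed (otherwise $\Sigma'$ would lose $\tau$ as well) is a small but welcome precision that the paper leaves implicit.
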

\begin{proof}
If none of these were to occur, then by Lemma~\ref{lem:commonlabelsets} the only other possibility is that either $L(\sigma) \subset L(\tau)$ or $L(\tau) \subset L(\sigma)$.  Therefore $\sigma$ or $\tau$ respectively may be dropped from $\Sigma$ to produce $\Sigma'$ so that $L(\Sigma')$ contains $\mathcal{E}$ and yet has lesser complexity than $\Sigma$.  This contradicts the minimality assumption on $\Sigma$.
\end{proof}

\begin{defn}
The extended Scharlemann cycle $\sigma'$ is obtained by {\em paring down} the extended Scharlemann cycle $\sigma$ if $\sigma'$ is contained in the disk face that $\sigma$ bounds.
\end{defn}

\begin{lemma}\label{lem:atmostoneintersection}
Two elements of $\ao(\Sigma)$ intersect at most once. Furthermore, each
vertex of $G_F$ belongs to at most two different elements
of $\ao(\Sigma)$.
\end{lemma}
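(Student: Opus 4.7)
The plan is to leverage the minimality of $\Sigma$ (Definition~\ref{def:Sigma}) to force distinct elements of $\ao(\Sigma)$ to have distinct label pairs. From this both claims will follow almost immediately, since each bigon or trigon of $\sigma\in\Sigma$ carries a unique label pair consisting of two consecutive labels in $L(\sigma)$, and the resulting element of $a(\sigma)$ or $\theta(\sigma)$ passes through precisely the two fat vertices of $G_F$ labeled by that pair.

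The central technical step will be to show that $|L(\sigma)\cap L(\tau)|\le 1$ for any distinct $\sigma,\tau\in\Sigma$. Assuming the contrary, by Lemma~\ref{lem:commonlabelsetsrefined} the intersection is $\arc{xy}$ with $x\ne y$, where $x$ is extremal in $\sigma$ and $y$ extremal in $\tau$. I would then pare $\sigma$ by removing its outermost bigon carrying the extremal label $x$, obtaining an extended Scharlemann cycle $\sigma'$ with $L(\sigma')=L(\sigma)\setminus\{x\}$ and $|\ao(\sigma')|=|\ao(\sigma)|-1$. Since $x\in L(\tau)$, the replacement $\Sigma'=(\Sigma\setminus\{\sigma\})\cup\{\sigma'\}$ still satisfies $L(\Sigma')\supseteq\calE$, has $|\Sigma'|=|\Sigma|$, and has $|\ao(\Sigma')|<|\ao(\Sigma)|$, contradicting the minimality of $(|\Sigma|,|\ao(\Sigma)|)$. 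The only alternative is that $\sigma$ has already been pared down to its core Scharlemann cycle, in which case $|L(\sigma)|=2$ and the overlap hypothesis forces $L(\sigma)\subseteq L(\tau)$, directly contradicting Lemma~\ref{lem:commonlabelsetsrefined}.

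With the overlap bound in hand, the rest is quick. Two distinct elements of $\ao(\Sigma)$ must have distinct label pairs, for otherwise a shared pair $\{i,i+1\}$ would give $\{i,i+1\}\subseteq L(\sigma)\cap L(\tau)$. Two distinct adjacent-label pairs share at most one label, so distinct elements of $\ao(\Sigma)$ pass through at most one common fat vertex of $G_F$; the curves and $\theta$-curves meet only inside shared fat vertices, because their edges are pairwise disjoint arcs of $Q\cap F$, and at such a shared vertex both elements pass through the single point $K\cap\hatF$ at its center, giving at most one intersection point. This yields the first claim. For the second, the fat vertex $v_i$ lies on an element of $\ao(\Sigma)$ only when the element's label pair contains $i$; there are only the two pairs $\{i-1,i\}$ and $\{i,i+1\}$, and each corresponds to at most one element of $\ao(\Sigma)$.

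The main obstacle is the paring step for length-$3$ extended Scharlemann cycles. While length-$2$ cycles have two symmetric arms and an obvious outermost bigon for each extremal label, the three arms of a length-$3$ cycle require care to identify which outermost arm component corresponds to each extremal label of $L(\sigma)$ and to verify that its removal both preserves the extended Scharlemann cycle structure and decreases $|\ao(\sigma)|$ by exactly one.
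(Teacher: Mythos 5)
Your central technical claim, that $|L(\sigma)\cap L(\tau)|\le 1$ for distinct $\sigma,\tau\in\Sigma$, is false, and the lemma cannot be reduced to it. Lemma~\ref{lem:commonlabelsetsrefined}(2) explicitly allows $L(\sigma)\cap L(\tau)$ to be a nontrivial interval $\arc{xy}$ with $x\ne y$, and minimality of $\Sigma$ does not rule this out: if $L(\sigma)$ and $L(\tau)$ overlap only in labels that are extremal for $\sigma$ and $\tau$ respectively, paring either one loses labels at the opposite end that no other element of $\Sigma$ covers, so complexity cannot be reduced. Concretely, with $t=12$, $\sigma$ having core $\{0,1\}$ and $L(\sigma)=\{10,11,0,1,2,3\}$, and $\tau$ having core $\{4,5\}$ and $L(\tau)=\{2,3,4,5,6,7\}$, the intersection $\{2,3\}$ has two labels, yet this $\Sigma$ can be minimal. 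Your paring argument compounds the problem: for a length-$2$ extended Scharlemann cycle the outermost bigon carries \emph{both} extremal labels (the label pairs of $a(\sigma)$ are $\{p-i,\,p+1+i\}$, symmetric about the core, not consecutive pairs in $L(\sigma)$), so paring gives $L(\sigma')=L(\sigma)\setminus\{x,z\}$, and $z$ may fail to be covered by $L(\Sigma')$; the same issue underlies your remark about the ``second claim'' being governed by the pairs $\{i-1,i\}$ and $\{i,i+1\}$, which is also not how $\ao(\sigma)$ labels its vertices.

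The paper's proof does not try to bound $|L(\sigma)\cap L(\tau)|$. It instead starts from the correct observation you also make, that two elements of $\ao(\Sigma)$ meeting in two vertices must have the same label pair $\{a,b\}$, and then pares \emph{directly to the subgraph of $\sigma$ or $\tau$ bounded by that element}. The point is that if $\arc{ab}_\sigma$ and $\arc{ab}_\tau$ (the $a$-to-$b$ arcs through the respective cores) coincide, the cores agree and $L(\sigma),L(\tau)$ are nested (contradicting minimality already via Lemma~\ref{lem:commonlabelsetsrefined}); while if they are the complementary arcs, then $\arc{ab}_\sigma\cup L(\tau)$ covers all labels, so paring $\sigma$ down to the extended cycle bounded by $c_\sigma$ (or further, if $c_\sigma$ is already outermost, deleting the outermost bigon so that $a,b$ are still covered by $\tau$) preserves $\mathcal E\subseteq L(\Sigma')$ while strictly decreasing $|\ao(\Sigma)|$. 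The second claim uses a separate argument: three intervals $L(\sigma_1),L(\sigma_2),L(\sigma_3)$ sharing a common label have one contained in the union of the other two, again contradicting minimality; the ``only two adjacent pairs'' heuristic is not needed and is not correct.
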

\begin{proof}
If there exists $\sigma, \tau \in \Sigma$ with $c_\sigma \in \ao(\sigma)$ and $c_\tau \in \ao(\tau)$ intersecting twice, then $c_\sigma$ and $c_\tau$ have the same label pair.  Therefore we may pare down either $\sigma$ or $\tau$ (or completely eliminate one) to produce $\Sigma'$ so that $L(\Sigma')$ contains $\mathcal{E}$ and yet has lesser complexity than $\Sigma$.  This contradicts the minimality assumption on $\Sigma$.

If label $x$ of $G_Q$ were in $L(\sigma_1),L(\sigma_2)$ and 
$L(\sigma_3)$ with $\sigma_1,\sigma_2, \sigma_3 \in \Sigma$, then one 
of these label sets must be contained in the union of the other two.
This contradicts the minimality of $\Sigma$ thereby proving the final
statement of the lemma.
\end{proof}

\begin{lemma}\label{lem:gtsolidtorus}
Let $\sigma$ be an extended Scharlemann cycle of length $3$ that lies in an essential annulus $A$ of $\hatF$.  Let $f$ be the face bounded by $\sigma$ and assume $\Int f \cap A = \emptyset$.  Then $N=\nbhd(A \cup \Theta(\sigma))$ is a solid torus and the core of $A$ runs $3$ times in the longitudinal direction of $N$.
\end{lemma}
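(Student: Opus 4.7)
My plan is to adapt the argument of Lemma~\ref{lem:doesnotlieindisk} (where an extended Scharlemann cycle of length $p$ lying in a disk produces a punctured lens space of order $p$) to the annulus setting. With an essential annulus $A$ replacing the disk, I expect the length-$3$ core Scharlemann cycle of $\sigma$ to produce a solid torus $N$ in which the core of $A$ winds $3$ times around the longitude, the factor of $3$ coming from the length-$3$ core Scharlemann cycle just as the order $p$ arose from the length-$p$ Scharlemann cycle there.

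The first step is a reduction to the core. Using the bigons in the three arms of $\sigma$ together with the hypothesis $\Int f \cap A = \emptyset$, I would show that $\nbhd(A \cup \Theta(\sigma))$ is homeomorphic to $\nbhd(A' \cup f_\tau \cup H)$, where $\tau$ is the core length-$3$ Scharlemann cycle of $\sigma$, $f_\tau$ is its trigon face, $H$ is the $1$-handle neighborhood of the arc $\kappa_0$ of $K$ between the core labels $c$ and $c+1$, and $A'$ is an annulus isotopic to $A$ in $M$ containing the core $\theta$-curve of $\sigma$. The bigons in the arms provide product $I$-bundle regions in $\Theta(\sigma)$ that slide $\Gamma_\sigma$ down through the intermediate $\theta$-curves to the core $\theta$-curve, and collapse the extensions of $f$'s corners to the short arc $\kappa_0$.

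Next I would establish that $N$ is a solid torus. Inclusion--exclusion on Euler characteristics gives $\chi(N)=0$, so $\partial N$ is a torus (using orientability of $M$), and a van Kampen computation shows $\pi_1(N) \cong \mathbb{Z}$. By the Loop Theorem and Dehn's Lemma, $N$ is a solid torus. For the $3$-fold winding, I would construct the meridian disk of $N$ explicitly: start with a meridian disk of $H$ (a cross-section transverse to $\kappa_0$), and glue in pieces of $f_\tau$ along the three corners of $f_\tau$ that attach to $\partial H$. The three corners attach to $\partial H$ at arcs arranged with consistent cyclic orientation (the cycle trigon condition implicit in a length-$3$ Scharlemann cycle), producing an embedded disk in $N$ whose boundary crosses the core of $A'$ algebraically three times on $\partial N$. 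Since this boundary is the meridian of $N$, the core of $A'$ (and hence of $A$) runs $3$ times in the longitudinal direction of $N$.

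The main obstacle is the final step: carefully constructing the meridian disk and verifying that its boundary meets the core of $A$ exactly three times. This is where the length-$3$ structure of $\tau$ is used essentially, rather than being absorbed into a general Euler characteristic or fundamental group calculation. Tracking the three corner attachments cyclically around $\partial H$, and showing that piecing them together with one meridian disk of $H$ and three sectors of $f_\tau$ yields an honest embedded disk, is the key technical point.
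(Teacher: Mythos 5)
Your proof takes a genuinely different route from the paper's. The paper's argument is a quick $2$-handle cancellation: with $H$ the $1$-handle neighborhood of the arc of $K$ running along the corners of $f$, the union $\nbhd(A) \cup H$ is a genus-$2$ handlebody, and $N$ is obtained by attaching $\nbhd(f)$ to it as a $2$-handle; since $\partial f$ meets a meridian disk of the $\nbhd(A)$-handle exactly once, that handle is cancelled, $N$ is a solid torus, and the $3$-fold winding of the core of $A$ is read off from the cancellation (the paper cites Goda--Teragaito and Gordon--Luecke for this last point). You instead pare down to the core Scharlemann cycle, compute $\chi(N)=0$ and $\pi_1(N)\cong\mathbb{Z}$ by van Kampen, invoke the Loop Theorem, and then build an explicit meridian disk to extract the winding number; both routes work, though yours is considerably longer and could just as well skip the paring-down and work with the full $f$ directly (the extra relations cancel). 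One caveat on your reduction step: it should not be phrased as finding an annulus $A'$ isotopic to $A$ in $M$ that contains the core $\theta$-curve of $\sigma$ --- there is no reason the core $\theta$-curve lies in such an annulus; what you actually obtain is an abstract homeomorphism $N \cong \nbhd(A'' \cup \Theta_1)$, where $A''$ is an abstract annulus glued to the twisted $\theta$-band $\Theta_1$ along $\theta_1 = \partial\Theta_1$, valid because $\Theta_2 \cup \cdots \cup \Theta_n$ is a product region. A modest advantage of your more explicit approach is that the $3$-fold winding is verified in-house rather than deferred to the cited lemmas; the paper's handle-cancellation argument is shorter but leaves that computation implicit.
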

\begin{proof}
Let $H$ be the $1$--handle neighborhood of the arc of $K$ that forms the corners of $f$.  Then $N=\nbhd(A \cup \Theta(\sigma))$ may be obtained from attaching the $2$--handle $\nbhd(f)$ to the genus $2$ handlebody $\nbhd(A) \cup H$.  Because a meridional disk of $\nbhd(A)$ intersects $\bdry f$ once, $N$ is a solid torus.  One may then observe that the core of $A$ runs $3$ times in the longitudinal direction of $N$.  (Cf.\ Lemma~2.1 \cite{gt:dsokwylsagok}; Lemma~3.7 \cite{gl:dsokcetI}).
\end{proof}

\begin{lemma}\label{lem:essentialcurves}
Let $\sigma$ be an extended Scharlemann cycle of length $2$ or $3$ in $\Lambda$.
Assume an element of $\ao(\sigma)$ lies in an annulus $A$ of $\hatF$. Let
$\eta$ be the core of $A$. Then $\eta$ cannot bound a disk in $M$ whose 
framing on $\nbhd(\eta)$ is the same as that of $\hatF$. In particular, $\eta$
is neither trivial on $\hatF$ nor a meridian curve on either side of $\hatF$.
\end{lemma}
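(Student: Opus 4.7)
The plan is to argue by contradiction: suppose $\eta$ bounds a disk $D \subset M$ with $\partial D = \eta$ whose framing on $\nbhd(\eta)$ agrees with the $\hatF$-framing. I aim to use $D$ together with $\AO(\sigma)$ to construct a topological object in $M$ that contradicts one of our standing assumptions after Remark~\ref{rem1-7}, namely that $M$ is irreducible, is not a lens space, and contains no projective plane.

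Let $c \in \ao(\sigma)$ be the component lying in $A$. Since $c$ is parallel to $\eta$ inside the annulus $A \subset \hatF$, the framing hypothesis transfers from $\eta$ to $c$: by attaching $D$ to a sub-annulus of $A$ running between $\eta$ and $c$, and then pushing slightly off $\hatF$, one obtains a disk $D_c \subset M$ (in the length~2 case, bounded by $c$; in the length~3 case, bounded by a distinguished simple closed curve on the $\theta$-curve $c$) whose framing on its boundary is again the $\hatF$-framing. Using innermost-disk exchanges and the irreducibility of $M$, I would simplify $D_c$ so that it meets $\AO(\sigma)$ only along its boundary.

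For the length~2 case, $A(\sigma)$ is a Möbius band meeting $\hatF$ transversely in the 2-sided simple closed curves $a(\sigma)$. By the combinatorial construction of $A(\sigma)$, the curves of $a(\sigma)$ are mutually parallel and essential in $A(\sigma)$, being parallel to $\partial A(\sigma)$; thus $c$ cobounds a sub-Möbius band $M_c \subset A(\sigma)$ with $\partial M_c = c$. The union $M_c \cup D_c$ is then an embedded closed non-orientable surface in $M$, namely a projective plane. Since by assumptions (1) and (2) following Remark~\ref{rem1-7} the manifold $M$ contains no projective plane, this is the desired contradiction.

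For the length~3 case, $\Theta(\sigma)$ is a long twisted $\theta$-band, and I would argue along the lines of Lemma~\ref{lem:gtsolidtorus}: a regular neighborhood $N = \nbhd(A \cup \Theta(\sigma))$ is a solid torus inside which $\eta$ is a 3-longitude. Attaching $D$ to $N$ as a 2-handle along this 3-longitude (with the $\hatF$-framing providing the surgery coefficient) produces a punctured lens space $L(3,q)$ for some $q$ embedded in $M$, again contradicting that $M$ is irreducible and not a lens space. The main obstacle is the careful topological bookkeeping in this last step: verifying that the 3-longitude structure of Lemma~\ref{lem:gtsolidtorus} still applies in the present setting, where only the single component $c$ of $\theta(\sigma)$ is assumed to lie in $A$ rather than the full outermost subgraph $\Gamma_\sigma$ being contained in an essential annulus, and arranging $D_c$ (or $D$) to meet $\AO(\sigma)$ cleanly after innermost-disk simplifications.
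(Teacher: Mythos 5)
Your proposal has a genuine gap: it tacitly assumes that the disk $D$ (equivalently $D_c$) can be made disjoint from $K$, and this is exactly the hard case.  After transferring the framing from $\eta$ to $c$, your construction of a projective plane (length~$2$) or punctured lens space (length~$3$) requires the disk $D_c$ and the sub-\mobius band $M_c$ (resp.\ the solid torus $N$) to meet only along $c$. Innermost-disk exchanges remove simple closed curves of $D_c \cap \Int\AO(\sigma)$, but they cannot remove points of $K \cap D_c$: the sub-arc $k$ of $K$ that runs along the core of $\AO(\sigma)$ may link $D_c$ in an essential way, and if $k$ pierces $D_c$ the face $f$ is forced to meet $D_c$ in arcs near the corners of $\sigma$ which cannot be surgered away.  (And the hypothesis of the lemma gives you no purchase here: for instance if $\eta$ were trivial in $\hatF$, the obvious disk it bounds can be loaded with vertices of $G_F$ and hence with punctures by $K$.) The paper's proof splits precisely on whether $k$ can be isotoped off $D$: if so, it carries out essentially your argument to produce a lens-space summand (or projective plane); if not, it examines the graph of intersection between $D$ and $f$, which consists of arcs parallel to $\partial f$ with equal endpoint counts on each corner, and extracts an innermost Scharlemann cycle \emph{with respect to $D$} whose face is then used, as in Lemma~\ref{lem:doesnotlieindisk}, to build a lens-space summand. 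This second branch is absent from your proposal and it is the substance of the lemma.

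Your secondary worry about invoking Lemma~\ref{lem:gtsolidtorus} when only one element of $\theta(\sigma)$ lies in $A$ is real but easily dispatched: pare $\sigma$ down so that $c$ consists of its outermost edges, after which $\Gamma_\sigma = c \subset A$ and the hypothesis of Lemma~\ref{lem:gtsolidtorus} is met verbatim. The paper does this paring at the outset. Incorporating that fix would repair the minor issue you flagged, but would not close the main gap concerning $K \cap D$.
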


\begin{proof}
Assume some element, $c$, of $\ao(\sigma)$ lies in an 
annulus $A$ in $\hatF$
whose core is $\eta$. Assume for contradiction that there is a disk $D$
in $Y=M - \nbhd(\eta)$ whose boundary has the same slope on 
$\partial \nbhd(\eta)$
as $\hatF$. Pare down $\sigma$ so that $c$ is composed of the outermost
edges of $\sigma$. The corners of $\sigma$ belong to the same sub-arc, $k$, of
$K$. Let $f$ be the bigon or trigon face bounded by $\sigma$. 
Then $\AO(\sigma)$ (whose boundary is $c$) is $f$ radially contracted 
to $k$. We view both $\AO(\sigma)$ and $D$ as properly embedded in 
$Y$. By the framing assumption on $D$, we may take the boundaries of these
two surfaces to be disjoint. Isotop $k$ rel its endpoints on $\partial Y$
so that $k$ intersects $D$ minimally. Then any arc components of intersection
between $f$ and $D$ must be parallel on $f$ to the outermost edges of $\sigma$.
We may surger away any simple closed curves of intersection. If $k$ is disjoint
from $D$, then a neighborhood of $D \cup \nbhd(\eta) \cup \AO(\sigma)$ 
will be a lens space summand of $M$ (using Lemma~\ref{lem:gtsolidtorus}
when the length of $\sigma$ is $3$), a contradiction. If $k$ intersects
$D$, then the intersection between $D$ and $f$ is a collection of
arcs parallel to $\partial f$ with the same number of endpoints on each
corner of $f$. An innermost face of this graph in $f$ will give a length $2$ or
$3$ Scharlemann cycle with respect to $D$ which we can use, as in 
Lemma~\ref{lem:doesnotlieindisk}, to construct a lens space summand in
$M$.
\end{proof}

\subsection{Counting with extended Scharlemann cycles.}

\begin{lemma}\label{lem:parallelthetas}
Let $\calT$ be a set of embedded $\theta$--curves in $\hatF$ such that
\begin{enumerate}
\item no $\theta \in \calT$ lies in a disk in $\hatF$;
\item if $\theta,\theta' \in \calT$ then $\theta \cap \theta'$ is either
empty or a single vertex;
\item any vertex belongs to at most two elements of $\calT$.
\end{enumerate}
If $|\calT|>3F_2(g)$ then some $\theta \in \calT$ lies in an essential
annulus in $\hatF$.
\end{lemma}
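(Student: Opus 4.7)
I would argue by contradiction: suppose $|\calT|>3F_2(g)$ but no $\theta \in \calT$ lies in an essential annulus of $\hatF$. The plan is to build from $\calT$ a collection of simple loops on $\hatF$ satisfying Property $P(2)$ whose cardinality exceeds $F_2(g)$, contradicting Lemma~\ref{lem:Fkg}.

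First I would verify that for every $\theta \in \calT$, all three loops of $\theta$ (the three pairs of edges) are essential in $\hatF$. If some loop of $\theta$ bounded a disk $D$, a short $\pi_1$-calculation (using that the product of the three loops is trivial) shows the other two loops are freely homotopic. If those two are essential, then capping $D$ onto $\nbhd(\theta)$ yields an annulus containing $\theta$ whose core lies in their common essential isotopy class; this is an essential annulus, contradicting our assumption. If those two are instead null-homotopic, then all three loops of $\theta$ bound disks, which (for $g \geq 2$) forces $\theta$ itself to lie in a disk, contradicting hypothesis~(1). Next, I would select one essential loop $c_\theta$ from each $\theta \in \calT$ and set $\calC = \{c_\theta : \theta \in \calT\}$. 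By hypothesis~(2), distinct $\theta$-curves meet in at most one vertex, so $|c_\theta \cap c_{\theta'}| \leq 1$ for $\theta \neq \theta'$; by hypothesis~(3), each of the two vertices of $\theta$ is shared with at most one other $\theta$-curve, so $c_\theta$ meets at most two other members of $\calC$. Hence $\calC$ satisfies conditions~(2), (3), (4) of $P(2)$.

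The crux of the argument is to show that at most two elements of $\calC$ are mutually isotopic. Suppose for contradiction $c_{\theta_1}, c_{\theta_2}, c_{\theta_3}$ are mutually isotopic in some essential class $[c_0]$. Freely homotopic simple loops have vanishing algebraic intersection, so combined with $|c \cap c'| \leq 1$ they must be pairwise disjoint. Three disjoint parallel essential loops then lie together in a common annular neighborhood $A'$ of $[c_0]$; reindexing, assume $c_{\theta_2}$ lies between $c_{\theta_1}$ and $c_{\theta_3}$ in $A'$. Since the $c_{\theta_i}$'s are disjoint, the $\theta_i$'s share no vertices (a shared vertex would lie on each $c_{\theta_i}$), and so by hypothesis~(2) the $\theta_i$'s are pairwise disjoint. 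Let $e$ be the one edge of $\theta_2$ not contained in the two-edge loop $c_{\theta_2}$; then $e$ is an arc in $\hatF$ with endpoints on $c_{\theta_2}$, its interior disjoint from $c_{\theta_2}$ and, by disjointness of the $\theta_i$'s, from $c_{\theta_1}$ and $c_{\theta_3}$. Thus the connected interior of $e$ lies in a single component of $\hatF \setminus (c_{\theta_1} \cup c_{\theta_2} \cup c_{\theta_3})$ which must have $c_{\theta_2}$ on its boundary; the only such components are the sub-annulus $A_{12}$ of $A'$ (between $c_{\theta_1}$ and $c_{\theta_2}$) and $A_{23}$ (between $c_{\theta_2}$ and $c_{\theta_3}$). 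Hence $\theta_2 = c_{\theta_2} \cup e$ lies in $\overline{A_{12}}$ or $\overline{A_{23}}$, an annulus with essential core in $[c_0]$; so $\theta_2$ lies in an essential annulus, contradicting our assumption.

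Reducing $\calC$ to one representative per isotopy class yields $\calC' \subset \calC$ with $|\calC'| \geq |\calT|/2 > 3F_2(g)/2 > F_2(g)$; since $\calC'$ inherits conditions~(2)--(4) from $\calC$ and now also satisfies~(1) by construction, it has Property~$P(2)$, contradicting Lemma~\ref{lem:Fkg}. The hard part is the confinement argument in the key step: one must rule out the possibility that $e$ escapes the thin annular region around $c_{\theta_2}$ by running through a handle of $\hatF$. The crucial point is that the global disjointness of $e$ from the neighboring parallel curves $c_{\theta_1}$ and $c_{\theta_3}$ prevents any such escape, regardless of whether $[c_0]$ is separating.
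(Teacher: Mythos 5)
Your overall strategy---extracting an essential loop $c_\theta$ from each $\theta$, forming $\calC=\{c_\theta\}$, and applying Lemma~\ref{lem:Fkg} via Property~$P(2)$---is exactly the paper's, and your verification of conditions~(2)--(4) of Property~$P(2)$ for $\calC$ is correct. The preliminary observation that every loop of each $\theta$ must be essential (else $\theta$ lies in an annulus or a disk) is also fine. The gap is in the key step.

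You assert that freely homotopic simple loops with $|c\cap c'|\leq 1$ ``must be pairwise disjoint.'' This is false: two isotopic circles can meet in a single \emph{non-transverse} point, which contributes $0$ to the algebraic intersection number while the geometric count is $1$. In the present setting this is precisely what happens when $\theta$ and $\theta'$ share a vertex $v$: every circle in a $\theta$-curve passes through both vertices of $\theta$, so $v\in c_\theta\cap c_{\theta'}$, and the two circles are tangent there. Hypothesis~(3) permits such sharing. Your subsequent deductions---that the $\theta_i$ are pairwise disjoint, and hence that the third edge $e$ of the ``middle'' $\theta_2$ is confined to one of the sub-annuli $A_{12}$, $A_{23}$---depend entirely on this unjustified disjointness.

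The tangency case is not a removable technicality; it genuinely breaks the three-curve argument. Suppose $c_1,c_2,c_3$ are mutually isotopic with $c_1,c_3$ disjoint, $c_1$ tangent to $c_2$ at a vertex $b$, and $c_2$ tangent to $c_3$ at a vertex $c$. Then $c_2$ lies in the closed annulus $\overline{A}$ cobounded by $c_1$ and $c_3$ but touches $\partial\overline{A}$ at $b$ and $c$, and the third edge of $\theta_2$ runs between these two boundary points. Nothing forces the interior of that edge to stay inside $\overline{A}$: it may enter the complement $\hatF\setminus A$, which for $g\geq 2$ has negative Euler characteristic, and then $\theta_2$ lies in no annulus. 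Analogous escapes are possible for $\theta_1$ and $\theta_3$. So ``at most two mutually isotopic'' is not established, and indeed this is exactly why the paper works with \emph{four} mutually isotopic curves rather than three, giving the threshold $3F_2(g)$ rather than $2F_2(g)$: with four curves $c_1,c_2,c_3,c_4$ in order, hypothesis~(3) and a short topological argument force one of the two middle curves to have \emph{both} of its vertices strictly in the interior of the annulus cobounded by $c_1$ and $c_4$, and only then is the corresponding $\theta$-curve actually trapped.
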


\begin{proof}
By (1), for each $\theta \in \calT$ we may choose an essential circle $c_\theta$
in $\theta$ by deleting the interior of one of the edges of $\theta$. Let
$\calC=\{c_\theta|\theta \in \calT\}$. If $\calC' \subset \calC$ is a 
subcollection such that no two elements of $\calC'$ are isotopic in $\hatF$
then (after isotoping the elements of $\calC'$ into general position)
$\calC'$ has Property $P(2)$ of Definition~\ref{defn3.1}. Hence, by 
Lemma~\ref{lem:Fkg}, if $|\calC|>3F_2(g)$ then four elements of the 
$\calC$ must be isotopic in $\hatF$. Any
two of these curves are either disjoint or
intersect in a single vertex non-transversely. In either case we think of
two such curves as cobounding an (essential) annulus in $\hatF$. One sees 
that one of
these four curves must contain a vertex that lies strictly in the interior
of the annulus cobounded by two others. The corresponding $\theta$--curve
then lies within this annulus.
\end{proof}

\section{Thinning with extended Scharlemann cycles.}\label{sec:thinning}

\begin{defn}
Let $\sigma$ be an extended Scharlemann cycle of length $2$ with $|a(\sigma)|=n$.  Then we may order the elements of $a(\sigma)$ as $a_1, \dots, a_n$ such that $a_1$ is the core curve and $a_i \cup a_{i-1}$ bounds an annulus $A_i \subset A(\sigma)$ that is disjoint from the other elements of $a(\sigma)$ for $i=2, \dots, n$.  The core Scharlemann cycle in $\sigma$ forms the core curve $a_1$ which bounds the \mobius band $A_1$.
\end{defn}

\begin{defn}\label{def:thetaj}
Let $\sigma$ be an extended Scharlemann cycle of length $3$ with $|\theta(\sigma)|=n$.  Then we may order the elements of $\theta(\sigma)$ as $\theta_1, \dots, \theta_n$ such that $\theta_1$ is the core $\theta$--curve and $\theta_i \cup \theta_{i-1}$ bounds a product $\theta$--curve $\Theta_i \subset \Theta(\sigma)$ that is disjoint from the other elements of $\theta(\sigma)$ for $i=2, \dots, n$.  The core Scharlemann cycle in $\sigma$ forms the core $\theta$--curve which bounds the twisted $\theta$--band $\Theta_1$.
\end{defn}

\begin{lemma}\label{lem:annulusintheta}
Let $\sigma$ be an extended Scharlemann cycle of length $3$.  Assume $\theta_i, \theta_j \in \theta(\sigma)$, $i < j$,  lie in an essential annulus $B$ in $\widehat{F}$.  Exactly two of the edges, $e^i_1, e^i_2$ of $\theta_i$ cobound
a disk in $\hatF$. Let $e^j_1, e^j_2$ be the edges of $\theta_j$ 
that lie on the corresponding arms (Definition~\ref{def:trigonarms2})
of $\sigma$. Then $e^j_1, e^j_2$ cobound a disk on $\hatF$. 
\end{lemma}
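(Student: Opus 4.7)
First I would establish the first conclusion by analyzing the embedded $\theta$--curve $\theta_i$ inside the annulus $B$. Writing $\gamma_{kl} = e_k^i \cup e_l^i$ for the three simple closed curves formed by pairs of edges of $\theta_i$, the pair-of-pants relation for the regular neighborhood $N(\theta_i) \subset B$ gives $[\gamma_{12}] + [\gamma_{23}] + [\gamma_{13}] = 0$ in $H_1(B) = \Z$ with suitable orientations, so the number of $\gamma_{kl}$ that are null-homologous in $B$ is $0$, $1$, or $3$. The case $3$ is ruled out by Lemma~\ref{lem:doesnotlieindisk} (which would place $\theta_i$ inside a disk of $\hatF$), and the case $0$ is ruled out by a Euler characteristic analysis of $B \setminus N(\theta_i)$ together with the elementary observation that a pair of pants cannot embed in an annulus with all three boundary circles essential. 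Hence exactly one pair of edges of $\theta_i$ cobounds a disk in $\hatF$; label this pair $\{e_1^i, e_2^i\}$. The same reasoning applied to $\theta_j \subset B$ shows that exactly one pair of edges of $\theta_j$ cobounds a disk in $\hatF$.

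For the main conclusion I would argue by contradiction. Suppose the disk-cobounding pair at level $j$ is not the pair $\{e_1^j, e_2^j\}$ on the corresponding arms; after relabeling arms $2$ and $3$ we may assume it is $\{e_1^j, e_3^j\}$, so that $\alpha_j := e_1^j \cup e_2^j$ is essential in $B$ and hence parallel to the core $\eta_B$ of $B$. Using the product $\theta$--band structure of $\Theta_{i+1} \cup \cdots \cup \Theta_j \subset \Theta(\sigma)$, the arm rectangles $R_1, R_2$ for arms $1, 2$ glue along their two common subarcs of $K$ to produce an embedded annulus $\calA_{12} \subset M$ with $\partial \calA_{12} = \alpha_i \cup \alpha_j$, where $\alpha_i = e_1^i \cup e_2^i$ bounds a disk $D \subset \hatF$.

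Since $\alpha_j$ does not bound a disk in $\hatF$ by our contradiction hypothesis, $\alpha_j$ is disjoint from $D$ in $\hatF$, and $D \cup \calA_{12}$ is a disk in $M$ with boundary $\alpha_j$. Any interior intersections of $\calA_{12}$ with $\hatF$ (occurring along the curves $\alpha_m := e_1^m \cup e_2^m$ for $i < m < j$) are resolved by standard innermost-circle surgeries exploiting the irreducibility of $M$ and Corollary~\ref{cor:AEntscc}, yielding an embedded disk $\Delta \subset M$ with $\partial \Delta = \alpha_j$. Since $\calA_{12}$ meets $\hatF$ transversely at $\alpha_j$, the boundary slope of $\Delta$ on $\partial \nbhd(\alpha_j)$ is the $\hatF$-slope. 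Finally, because $\alpha_j$ is isotopic to $\eta_B$ in $B \subset \hatF$, attaching to $\Delta$ the sub-annulus of $B$ cobounded by $\alpha_j$ and $\eta_B$ produces an embedded disk in $M$ bounded by $\eta_B$ with the $\hatF$-slope framing, contradicting Lemma~\ref{lem:essentialcurves} applied to the core of $B$.

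The hardest step is the innermost-circle surgery converting $D \cup \calA_{12}$ into the embedded disk $\Delta$ with the correct $\hatF$-slope framing at $\alpha_j$. When $j > i+1$, the intermediate intersection curves $\alpha_m$ may themselves be essential in $\hatF$, and one must ensure these can be cleaned up consistently, for example by observing that an innermost such intersection already yields a disk bounded by $\alpha_j$ with the requisite framing via the sub-annulus of $\calA_{12}$ between $\alpha_m$ and $\alpha_j$.
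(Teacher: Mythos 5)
Your proposal diverges from the paper at the final step: the paper forms the 2--complex $E = A \cup D$ (where $A = F_1 \cup F_2$ is the annulus in $M$ made from the two arms and $D$ is the disk in $B$ bounded by $\gamma_i$) and then takes a regular neighborhood $N = \nbhd(\Theta(\sigma') \cup B \cup E)$, which by Lemma~\ref{lem:gtsolidtorus} is a punctured lens space of order $3$ --- a direct contradiction with irreducibility and the prohibition on lens space summands. You instead aim to build a disk bounded by the core $\eta_B$ of $B$ with the $\hatF$--slope framing and appeal to Lemma~\ref{lem:essentialcurves}. This is a legitimate alternative route (indeed Lemma~\ref{lem:essentialcurves} is itself proved via Lemma~\ref{lem:gtsolidtorus}), but it requires an actual embedded disk bounded by $\eta_B$, and that is where the gap sits.

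The problematic step is your treatment of interior intersections $\alpha_m$ ($i < m < j$) of $\calA_{12}$ with $\hatF$. When some $\alpha_m$ is essential in $\hatF$ --- which by Lemma~\ref{lem:essentialcurves} is exactly the case when $\alpha_m \subset B$ is a core of $B$ rather than bounding a disk --- your proposed remedy ``the sub-annulus of $\calA_{12}$ between $\alpha_m$ and $\alpha_j$ already yields a disk bounded by $\alpha_j$'' does not hold: that sub-annulus has boundary $\alpha_m \cup \alpha_j$, two parallel essential curves, so it is an annulus, not a disk. Innermost-disk surgery cannot remove an essential $\alpha_m$. The paper resolves exactly this difficulty with a small but essential observation you are missing: since $\gamma_i$ bounds a disk in $B$ while $\gamma_j$ is a core of $B$, among the components of $A \cap B$ there must be an adjacent pair $\gamma_k, \gamma_{k'}$ with one disk-bounding and the other essential; after renumbering so that this pair is $\{\gamma_i,\gamma_j\}$, the sub-annulus of $A$ between them meets $B$ only in $\partial A$. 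Only after this renumbering does $E = A \cup D$ satisfy $E \cap B = \gamma_j$, so that the disk you want to attach along $\gamma_j$ (whether the paper's $E$ viewed as a $2$--handle, or your $\Delta \cup A_B$) is actually embedded and the framing argument is clean. Without it, both the ``embedded'' claim for $\Delta \cup A_B$ and the reduction to an innermost-disk argument fail. The first half of your proposal --- the homological pair-of-pants count in $B$ establishing that exactly one pair of edges of $\theta_i$ bounds a disk --- is correct and essentially equivalent to the paper's remark that the third edge forms a core of $B$ with either of the other two by Lemma~\ref{lem:essentialcurves}.
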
 
\begin{proof}
Since $\theta_i$ lies in the annulus $B$, two of its edges, $e^i_1, e^i_2$,
cobound a disk, $D$, in $B$, and (by Lemma~\ref{lem:essentialcurves}) the 
third creates a core curve of $B$ when joined
with either $e^i_1$ or $e^i_2$. For $k$ between (and including) $i$ and $j$,
let $\theta_k \in \theta(\sigma)$ be as in Definition~\ref{def:thetaj}. Let 
$e^k_1, e^k_2$ be the edges of $\theta_k$ such that $e^k_1$ lies on the same
arm of $\sigma$ as $e^i_1$, and $e^k_2$ on the same arm as $e^i_2$. Let 
$\gamma_k$ be the curve $e^k_1 \cup e^k_2$. Let $F_1,F_2$ be the union of the
faces on each arm between $e^i_1,e^j_1$ and $e^i_2,e^j_2$. Then $F_1 \cup F_2$
forms an annulus $A$ in $M$ whose boundary components are $\gamma_i$ and
$\gamma_j$.

By passing to a concentric subannulus of $B$, we may assume that $\partial B
= \gamma_i' \cup \gamma_j'$, where $\gamma_i',\gamma_j'$ are curves in 
$\theta_i,\theta_j$. Any component of $A \cap B$ is one of the curves
$\gamma_k$ for some $k$ between $i$ and $j$. The corresponding $\theta$--curve
$\theta_k$ is then contained in $B$, and does not lie in a disk by 
Lemma~\ref{lem:essentialcurves}. Also, $\gamma_k$ is either a core curve
of $B$ or bounds a disk in $B$. 

Assume for contradiction that $\gamma_j$ does not bound a disk in $B$.
Since $\gamma_i$ does bound a disk in $B$, there exist $k,k'$ between 
$i$ and $j$ such that $\gamma_k$ and $\gamma_{k'}$ are adjacent on $B$ among
the components of $A \cap B$, $\gamma_k$ bounds a disk in $B$ and $\gamma_{k'}$
is a core curve of $B$. By renumbering we may therefore assume that $k=i$
and $k'=j$; thus the annulus $A$ meets $B$ only in $\partial A$. Let $E$
be the disk $A \cup D$. By pushing $D$ slightly off $B$ in the appropriate
direction we may assume that $E \cap B = \partial E =\gamma_j$.

Let $l=min\{i,j\}$ and let $\sigma'$ be the extended Scharlemann cycle
within $\sigma$ that terminates at the edges of $\theta_l$. Then the interior
of the face bounded by $\sigma'$ is disjoint from $B$ and $E$. Let $N$
be a regular neighborhood in $M$ of $\Theta(\sigma') \cup B \cup E$. By 
Lemma~\ref{lem:gtsolidtorus}, $N$ is a punctured lens space of order $3$, a 
contradiction.
\end{proof}

The above lemma allows certain arguments for extended Scharlemann cycles of length $2$ to apply to extended Scharlemann cycles of length $3$.  To facilitate this we make the following definitions.

\begin{defn}
Assume $\sigma$ is an extended Scharlemann cycle of length $3$ with $\theta_i, \theta_j \in \theta(\sigma)$, $i < j$, lying in an essential annulus $B$ in $\widehat{F}$. Let $e^i_1,e^i_2$ be the edges
of $\theta_i$ that cobound a disk, $D_i$, in $B$ and $e^i_3$ be the third edge
of $\theta_i$. Let $e^j_1,e^j_2,e^j_3$ be the edges of $\theta_j$ on
the corresponding arms of $\sigma$. By Lemma~\ref{lem:annulusintheta},
$e^j_1,e^j_2$ cobound a disk $D_j$ in $B$. $D_i$ ($D_j$) is called the
{\em disk of parallelism} for $\theta_i$ ($\theta_j$, resp.). For $k=1,2,3$,
let $F_k$ be the faces along an arm of $\sigma$ 
between $e^i_k,e^j_k$. By Lemma~\ref{lem:annulusintheta}, $A_1=F_1 \cup F_3$
and $A_2=F_2 \cup F_3$ form isotopic annuli in $M$ whose boundaries lie
in $\theta_i, \theta_j$ and are isotopic to the core of $B$. $A_1,A_2$ are
called the {\em constituent annuli} of  $\Theta_i \cup \dots \cup \Theta_j$
of $\AO(\sigma)$ between $\theta_i,\theta_j$.
\end{defn}

\begin{lemma}\label{lem:SFS}
Let $\sigma$ be an extended Scharlemann cycle of length $m=2$ or $m=3$. 
Assume $a_i,a_j \in \ao(\sigma)$ , $i<j$,
lie on an annulus, $B$, in $\hatF$ such that all of the intersections of
$K$ with $B$ belong to $a_i,a_j$ (i.e. occur at the four vertices of 
$a_i,a_j$). 
Then either
\begin{enumerate}
\item $j=i+1$ and the annulus $A_{i+1}
\in \AO(\sigma)$ between $a_i,a_j$ (or constituent annulus when $\sigma$
has length $3$) is parallel into $\hatF$; or
\item $M$ is a Seifert fibered space over $S^2$ with three exceptional 
fibers, one of which has order $m$, and $K$ has bridge number $0$ with
respect to some genus $2$ Heegaard splitting of $M$. 
\end{enumerate}
\end{lemma}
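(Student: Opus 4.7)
The plan is to construct an embedded torus $T = A^* \cup B^*$, where $A^*$ is the subannulus of $\AO(\sigma)$ between $a_i$ and $a_j$ (namely $A_{i+1}\cup\cdots\cup A_j$ when $\sigma$ has length $2$, or a constituent annulus when $\sigma$ has length $3$), and $B^*$ is the subannulus of $B$ cobounded by $a_i$ and $a_j$. I will exploit the atoroidality of $M$ in tandem with the structural information about the long \mobius band or long twisted $\theta$-band.

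First I verify that $T$ is an embedded torus. By hypothesis $K \cap B$ consists only of the four vertices on $a_i \cup a_j$; this forces every intermediate curve $a_k$ ($i < k < j$) in $\ao(\sigma)$---each of which carries two $K$-vertices---to avoid $B$. Therefore $A^*$ and $B^*$ meet only along $a_i \cup a_j$. Since $M$ is orientable and contains no Klein bottle (assumption~(3)), $T$ is an embedded torus. Since $M$ is irreducible (assumption~(1)), atoroidal (assumption~(4)), and is neither a lens space, $S^3$, nor $S^2\times S^1$ (from assumptions~(1) and (2)), $T$ must bound a solid torus $V$ on exactly one side (otherwise $M$ would be one of these excluded manifolds).

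I then split into two cases based on the meridian slope of $V$ relative to $a_i$ on $T$. In \emph{Case~$1$}, the meridian of $V$ is transverse to $a_i$, so $a_i$ and $a_j$ are longitudes of $V$ and both $A^*$ and $B^*$ are longitudinal annuli. Hence $A^*$ is parallel rel~$\partial$ to $B^*$, so $A^*$ is parallel into $\hatF$. Each intermediate $a_k$ lies on $\hatF$ in the interior of $A^*$, which is incompatible with the product structure of the region cobounded by $A^*$ and $B^*$; consequently no intermediates can exist, forcing $j=i+1$. This gives case~(1). In \emph{Case~$2$}, the meridian of $V$ has nontrivial winding about $a_i$. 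By Lemma~\ref{lem:gtsolidtorus} (applied to a pared-down sub-extended-Scharlemann cycle $\sigma'$ of $\sigma$ whose outermost $\theta$-curve is $a_j$, for $m=3$) together with an analogous construction for $m=2$ (where the core of $B$ runs twice longitudinally in $\nbhd(B \cup A(\sigma'))$), the neighborhood $N := \nbhd(B \cup \AO(\sigma))$ is a Seifert fibered solid torus with a single exceptional fiber of order $m$---the core of the \mobius band (when $m=2$) or of the twisted $\theta$-band (when $m=3$).

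The complement $M \setminus \Int N$ has torus boundary. By irreducibility, atoroidality, and the exclusion of $M$ being a lens space, $S^3$, or $S^2\times S^1$, it must be a Seifert fibered piece over a disk with exactly two exceptional fibers (any other possibility either yields an essential torus in $M$ or forces $M$ into the excluded class). Hence $M$ is Seifert fibered over $S^2$ with three exceptional fibers, one of order $m$. This decomposition induces a natural genus~$2$ Heegaard splitting of $M$, with respect to which $K$---lying along the order-$m$ exceptional fiber---has bridge number~$0$. This establishes case~(2). The main obstacle will be Case~$2$: establishing the solid torus structure of $N$ for $m=2$ (where Lemma~\ref{lem:gtsolidtorus} does not apply directly), confirming that $M\setminus\Int N$ has exactly two exceptional fibers, and verifying that $K$ achieves bridge number~$0$ on the resulting genus-$2$ Heegaard surface.
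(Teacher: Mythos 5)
Your proposal captures only a fragment of the paper's argument and contains several substantive errors.

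\textbf{Missing case analysis.} The paper's proof splits into three cases according to the relative positions of $A_{i+1}$, $A_j$, and the compressing disk $D$ with respect to $\hatF$. Your approach treats the problem as purely topological (torus bounds a solid torus, analyze the meridian slope) and corresponds only to the paper's Case~(III). Case~(I), where $A_{i+1}$ and $A_j$ lie on opposite sides of $\hatF$, is ruled out by an ``unfurling'' homeomorphism that strictly reduces the width of $K$, contradicting thin position. Case~(II), where the compressing disk near $B$ lies on the opposite side of $\hatF$ from $A_{i+1}$ and $A_j$, produces a compressing disk for $T$ meeting $K$ exactly once, contradicting the existence of an essential compressing disk disjoint from $K$. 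Neither of these configurations is covered by your solid-torus dichotomy, and both arguments use thin position or the $\hatF$-sidedness in an essential way.

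\textbf{Case~1 gap.} Your argument that ``each intermediate $a_k$ lies on $\hatF$ in the interior of $A^*$'' is not correct: the $a_k$ are curves on $\hatF$, while $A^*$ is the long \mobius band (or constituent annulus), which is not a subsurface of $\hatF$. The actual reason $j=i+1$ holds is that the parallelism of $A^*$ into $B^*$ allows one to flatten the two corner arcs $K \cap A^*$ onto $\hatF$; if $j>i+1$ this strictly reduces the width of $K$, contradicting thinness. Without invoking thin position here, no contradiction arises.

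\textbf{Case~2 is broken.} You claim $N := \nbhd(B \cup \AO(\sigma))$ is a Seifert fibered solid torus with one exceptional fiber of order $m$, and then that $M\setminus \Int N$ is a Seifert piece over a disk with two exceptional fibers ``by atoroidality.'' But atoroidality of $M$ forces $\partial N$ to compress in $M\setminus \Int N$, and irreducibility then forces $M\setminus \Int N$ to be a \emph{solid torus}, which would make $M$ a lens space (or $S^3$ or $S^2\times S^1$) --- all excluded. The paper avoids this by forming $V=\nbhd(N_1\cup A_1\cup\dots\cup A_i)$, where $N_1$ is the solid torus bounded by $T$: $V$ is a Seifert fibered space over a disk with \emph{two} exceptional fibers of orders $m$ and $n=\Delta(\partial D,a_i)$, and then $M\setminus V$ is a solid torus. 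You also do not treat the case $\Delta(\partial D, a_i)=0$, which yields a lens space summand and must be eliminated. Finally, $K$ is not ``the order-$m$ exceptional fiber''; $K$ merely meets $V$ in arcs, and showing that $K$ has bridge number $0$ with respect to a genus-$2$ splitting requires the substantial argument in the paper using Theorem~2.4.3 and the Handle Addition Lemma of \cite{cgls:dsok} to isotop $K\cap(M-V)$ onto $\partial V$.
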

\begin{addendum}
The conclusion above holds when $\sigma$ has length $3$ and $K$ intersects
$B$ only in $a_i,a_j$ or in the disk of parallelism of at most one of 
$a_i$ or $a_j$.
\end{addendum}
\begin{proof}

We assume $K$ intersects $B$ only at the four vertices of $a_i,a_j$.
Let $A = A_{i+1} \cup A_{i+2} \dots \cup A_j$ where 
$A_k \in \AO(\sigma)$ 
when $\sigma$ is of length $2$, and where $A$ is a   
constituent annulus of $\AO(\sigma)$ between $a_i,a_j$ when $\sigma$ is of 
length $3$. Let $T = A \cup B$.
Note that $A$ and $B$ only intersect in their boundaries (by Lemma~\ref{lem:essentialcurves} and the fact that any component of $\Int A \cap B$ must bound a 
disk in $A$), thus $T$ is an embedded torus in $M$ ($M$
contains no Klein bottles).
Since $M$ is assumed to be atoroidal, $T$ must be compressible in $M$. 
Let $D$ be a compressing disk. 

\medskip

{\em Addendum:} In the context of the Addendum, assume that $K$ 
intersects $B$ only in the four vertices of $a_i,a_j$ and in the disk of
parallelism of, say, $a_i$. Then we may choose the constituent annulus
of $\AO(\sigma)$ between $a_i,a_j$ and shrink $B$ so that $A$ and $B$
meet along their boundary and $B$ is disjoint from the interior of the
disk of parallelism of $a_j$. Then $B$ intersects $K$ only along $a_i,a_j$,
and we apply the arguments to this $A,B$.

\medskip

The proof of this lemma splits into three cases depending upon the relationship of $A_i$, $A_j$, and $D$ with respect to $\widehat{F}$.

{\bf Case (I):}  $A_{i+1}$ and $A_j$ lie on opposite sides of $\widehat{F}$.  

The following claim shows that Case I does not occur.

\begin{claim}
There is an isotopy of $M$ that reduces the width of $K$ with respect to
$\hatF$.
\end{claim}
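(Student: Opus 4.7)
The plan is to exhibit a width-reducing isotopy of $K$ with respect to $\hatF$ by locating a disjoint pair of upper and lower bridge disks and then invoking the standard thinning move. In Case (I), the two outermost pieces of $A$ at $a_i$ and $a_j$, namely $A_{i+1}$ and $A_j$, lie on opposite sides of $\hatF$, and this is precisely the combinatorial configuration that supports such a pair.

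First I would extract an upper bridge disk $D^{+}$ from $A_{i+1}$. Since $A_{i+1}$ is obtained by extending a face of $\sigma$ (a bigon when $\sigma$ has length $2$, or the analogous sub-face of the twisted $\theta$-band when $\sigma$ has length $3$) radially to $K$, and since $A_{i+1}$ lies above $\hatF$, this radial extension contributes a subarc $\beta^{+}$ of $K$ lying above $\hatF$ and containing a local maximum; together with a collar arc $\alpha^{+}$ on $\hatF$ in $B$ lying just inside $a_i$, the arc $\beta^{+}$ cobounds an upper disk $D^{+}$. A symmetric construction starting from $A_j$, which lies below $\hatF$, produces a lower disk $D^{-}$ cobounded by a subarc $\beta^{-}$ of $K$ (through a local minimum) and an arc $\alpha^{-}$ on $\hatF$ lying in $B$ just inside $a_j$.

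Second I would verify disjointness of $D^{+}$ and $D^{-}$. The hypothesis that $K \cap B$ consists only of the four vertices of $a_i \cup a_j$ lets $\alpha^{+}$ and $\alpha^{-}$ be chosen in disjoint collar neighborhoods of $a_i$ and $a_j$ in $B$; these lie on opposite components of $\partial B$. The arcs $\beta^{\pm}$ are vertically separated by $\hatF$, and the interiors of $D^{+}$ and $D^{-}$ can be isotoped into disjoint regular neighborhoods of $A_{i+1}$ and $A_j$ respectively. A standard thin-position argument (cf.\ \cite{gabai:fatto3mIII}, \cite{thompson:tpabnfkit3s}) then uses the disjoint pair $(D^{+}, D^{-})$ to slide the local maximum through $\beta^{+}$ past the local minimum through $\beta^{-}$, strictly decreasing the width of $K$.

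The main obstacle is the honest identification of $D^{\pm}$ as genuine upper and lower bridge disks, not merely abstract embedded disks in $M$; one must check that the $K$-arcs $\beta^{\pm}$ inside $A_{i+1}$ and $A_j$ truly traverse a single local extremum of $K$ on the correct side of $\hatF$. This is delicate in the length-$3$ case, where the relevant sub-face arises from the structure of a constituent annulus inside $\Theta(\sigma)$ rather than from a single bigon of $A(\sigma)$, and where one may first need to pare $\sigma$ down to the extended Scharlemann cycle ending at $a_i$ (resp.\ $a_j$) to realize a clean such disk. Once this identification is made, the disjointness check and the thinning move are essentially formal.
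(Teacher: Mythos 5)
Your approach---finding disjoint upper and lower bridge disks from $A_{i+1}$ and $A_j$ and invoking the Gabai--Thompson thinning move---has a gap that I do not think can be repaired within that framework. The pieces $A_{i+1}$ and $A_j$ of $\AO(\sigma)$ are \emph{annuli}, not disks: each has its two boundary circles on $\hatF$ (namely $a_i \cup a_{i+1}$, resp.\ $a_{j-1}\cup a_j$) and meets $K$ in two spanning arcs. Cutting such an annulus along one $K$-arc yields a disk whose boundary still carries two $K$-arcs and two arcs on $\hatF$, which is not the required form $\alpha \cup \beta$ with $\alpha \subset \hatF$ and $\beta$ a single arc of $K$. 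Moreover, the $K$-arc $\beta^{+}$ of $A_{i+1}$ has one endpoint on $a_i$ and the other on $a_{i+1}$, and in general $a_{i+1} \not\subset B$; so the proposed collar arc $\alpha^{+} \subset B$ near $a_i$ cannot even close $\beta^{+}$ up into a loop, let alone cobound a disk with it. Finally, the arcs of $K$ in $A_{i+1}$ run between consecutive intersections of $K$ with the thick level $\hatF$, and above a thick level there may be further thick and thin levels, so these arcs need not traverse a single local extremum. You flag only this last worry as the obstacle, but it is not something that can be checked to hold---it fails in general---and even if it held, the first two problems are structural.

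The paper instead uses the \emph{unfurling move} of \cite{baker:sgkilshsbn}, a genuinely different idea. Take a collar $V \cong T \times [0,1]$ of the torus $T = A \cup B$. There is a homeomorphism $h$ of $V$, the identity on $\partial V$, that rotates $T \times \{s\}$ by $2\pi s$ in the $S^1$-direction transverse to $\partial A$; since $h|_{\partial V}=\mathrm{id}$, it extends by the identity to $M$. Under $h$, the two arcs of $K \cap V$ (which span $A$ and meet $\hatF$ at each of $a_{i+1},\dots,a_{j-1}$) are carried to arcs running close to $B \subset \hatF$, strictly thinning $K$. One must then show $h$ is isotopic to the identity on $M$; the paper does this using a compressing disk $D$ for $T$ and splitting into cases according to whether the $2$-sphere obtained by compressing $\partial\nbhd(V\cup D)$ bounds a ball containing $V$ or not. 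This is a single global move that eliminates all the intermediate crossings of $\kappa,\kappa'$ with $\hatF$ at once, rather than a local cancellation of one maximum against one minimum.
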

\begin{proof}
The unfurling move from Section~4.3 of \cite{baker:sgkilshsbn} gives a 
homeomorphism of $M$ that reduces the width 
of $K$ and is isotopic to the identity. We describe this below. 

Let $V$ be a regular closed neighborhood of $T$ in $M$.  Let $\kappa, \kappa'$ be the spanning arcs $K \cap A = K \cap T$ on the annulus $A$.  
Then $V$ is an interval of tori, $T \times [0,1]$, where we 
coordinatize $T=S^1 \times
S^1$ so that $\partial A = S^1 \times p_1 \cup S^1 \times p_2$,  
$\kappa \subset q_1 \times S^1$, and $\kappa' \subset q_2 \times S^1$
for points $p_1,p_2,q_1,q_2$.  Now, up to isotopy rel-$\bdry$ in $V$, we may take $K \cap V$ to be the pair of arcs $\widehat{\kappa}, \widehat{\kappa'}$ which are $\kappa, \kappa'$ extended by product arcs in $V=T \times [0,1]$.  
%
%
%

  Let $h \colon V \to V$ be the  
homeomorphism that rotates $T \times i$ by $2\pi i$ in the factor ${*} \times
S^1$ and is the identity in the factor $S^1 \times {*}$. Under $h$, 
$\widehat{\kappa},\widehat{\kappa'}$ become isotopic to spanning 
arcs of $B$ extended by 
product arcs across $V$. As $h$ is the identity on $\partial V$, it extends
to $M$ as the identity outside $V$, and the image of $K$ under $h$
is strictly thinner with respect to $\hatF$ (the arcs above and below
$\hatF$ after the
homeomorphism are level-preserving isotopic to those of 
$K-(\kappa \cup \kappa')$). 
To finish the claim 
we must show that $h$ is isotopic to the identity on $M$. 

As above, let $D$ be a compressing disk for $T$ and 
$\calN = \nbhd(V \cup D)$. As $M$
is irreducible, the 2-sphere component, $S$, of $\partial \calN$ bounds 
a 3-ball in 
$M$ which either {\bf (a)} contains $\calN$ or {\bf (b)} meets it only 
along $S$. 

In case {\bf (a)}, $h$ is the identity on $M$ outside the 3-ball. So there
is an isotopy on the 3-ball, keeping $S$ fixed, taking $h$ restricted to the 
3-ball to the identity. This extends to an isotopy on $M$ taking $h$ to the
identity.

In case {\bf (b)}, note that $h$
as a homeomorphism of $V$ is isotopic to the identity by an isotopy that
keeps one boundary component of $V$ fixed and rotates the other component
of $\partial V$ by $2\pi$. Take such an isotopy that rotates the component
$T_1$ of $\partial V$ on the side containing $D$ and fixes the component, 
$T_2$, on the other side. $T_1$ bounds a solid torus, $N$, in $M$ 
that intersects $V$ in $T_1$. So we can  extend the isotopy of $h$ on 
$V$ across $N$. As this isotopy fixes $T_2$, we may extend it across all
of $M$, giving the desired isotopy of $h$ to the identity on $M$.
\end{proof}

{\bf Case (II):}  $A_{i+1}$ and $A_j$ lie on the same side of $\widehat{F}$ and $D$ near $B$ lies on the opposite side of $\widehat{F}$.

Let $N_2$ be the component of $M-T$ containing $D$.  By a slight isotopy supported in $\nbhd(A)$, push $K$ off of $T$ (this comes from looking at the labelling
around $a_i,a_j$ and the fact that $T$ is not a Klein bottle).   We may assume that $D$ is disjoint from $K$ since the exterior of $K$ is atoroidal.  We will need to handle the cases $m=2$ and $m=3$ separately.

{\bf Case (IIa):}  Assume $m=2$.
Then $A'=A_1 \cup \dots \cup A_i$ is an embedded \mobius band that intersects $T$ in $a_i$.  Now $K$ can be perturbed (avoiding $D$) so that it is disjoint from $T$ and intersects $A'$ in a single point. ($K$ is made disjoint from $T$ as above. To see it then intersects the \mobius band $A_1$ once, note that when we push $K$ off $A$ it starts off on one side of $A_i$ then ends up on the other.)  We may assume that $D$ intersects $A'$ in a nonempty collection of arcs which are essential in $A'$ (in fact parallel to $K \cap A'$ in $A'$ by working on the
intersections before perturbing $K$ off $A'$)  --- else we get a projective plane or can reduce
the number of intersections. Taking an outermost disk of $D - A'$ and compressing $A'$ along it constructs a compressing disk for $T$ in $N_2$ which intersects $K$ exactly once (from the intersection of $K$ with $A'$). But this contradicts that there is an essential compressing disk for $T$ which misses $K$ in $N_2$.

{\bf Case (IIb):} Assume $m=3$.

Then $\Theta' = \Theta_1 \cup \dots \cup \Theta_i \subset \AO(\sigma)$ is 
an embedded twisted $\theta$--band that intersects $T$ in 
$a_i \subset \theta_i$.
\medskip

{\em Addendum:} In the context of the Addendum in this case, 
we expand $B$ to include the disks of parallelism. If $K$ intersects the
interior of the disk of parallelism of, say, $a_i$ 
we surger this disk 
along the faces
of $\sigma$ connecting the disk of parallelism of $a_i$ with that of $a_j$
as in the proof of Lemma~\ref{lem:annulusintheta}. We take the resulting
$B$ for the argument below.

\medskip

Let $\kappa = K \cap \Theta'$. Then $D$ intersects $\Theta'$ away from 
$\kappa$. We consider $\Theta' - \nbhd(\kappa)$ as a large trigon, $f$,
with corners along $\kappa$ (indeed it is a trigon face in $\Lambda_x$ 
for some label $x$). Then we may assume $D$ intersects $f$ only in arcs
parallel to the corners of $f$. Furthermore, this collection of arcs is
non-empty -- else $D$ in union with $\Theta'$ along an annulus in $T$
gives rise to a punctured $L(3,1)$ in $M$ by Lemma~\ref{lem:gtsolidtorus}. 
The compression of $\Theta'$ along an outermost disk of $D-\Theta'$ contains a \mobius band $A'$ with $\bdry A' \subset T$ and $\kappa \subset A'$.  
Then we proceed as in Case (IIa) where $m=2$ to produce a compressing disk for $T$ in $N_2$ that intersects $K$ exactly once, contradicting that there is
an essential disk disjoint from $K$.

{\bf Case (III):}  $A_{i+1}$ and $A_j$ lie on the same side of $\widehat{F}$ and $D$ near $B$ lies on this side of $\widehat{F}$.  

Since $A_{i+1}$ and $A_j$ lie on the same side of $\widehat{F}$, $i$ and $j$ have different parity.

Let $N_1$ be the component of $M-T$ containing $D$.  As in Case (II), 
$K$ may be perturbed to miss $N_1$ completely. Note that in the context
of the Addendum, the constituent annuli can be taken disjoint from  the
disks of parallelism of both $a_i$ and $a_j$.

Compressing $\partial N_1$ along $D$ gives a 2-sphere disjoint from $K$. As
the exterior of $K$ is irreducible, we see that $N_1$ is a solid torus. We
show that either $A$ and $B$ are longitudinal annuli in $N_1$, that is that 
$\Delta(\bdry D, a_i) = 1$ on $\partial N_1$, thereby showing that
$A$ is parallel into $B$, or conclusion (2) holds.

If $\Delta(\bdry D, a_i) = 0$ then $M$ contains a punctured lens space
(coming from the elements of $\AO(\sigma)$ up to $a_i$ in union with
$D$ along $a_i$). 

Assume $\Delta(\bdry D, a_i) = n \geq 2$. Set $V = \nbhd(N_1 \cup A_1 \cup \dots \cup A_i)$ if $m=2$ or $V=\nbhd(N_1 \cup \Theta_1 \cup \dots \Theta_i)$ 
if $m=3$. In the context of the Addendum we expand $B$ to include disks 
of parallelism of $a_i,a_j$ and surger away intersects of $K$ with either 
of these disks of parallelism as in Case(IIb). Then $V$ is a Seifert fibered 
space over the disk with two exceptional fibers of orders $m$ and $n$ (note $A_1 \cup \dots \cup A_i$ or $\Theta_1 \cup \dots \Theta_i$ 
intersect $T$ only along their boundary, else the core curve of $B$ bounds 
a disk outside of
$N_1$ -- which along with $N_1$ would create a lens space summand in $M$).  

$K$ can be perturbed so that it is disjoint from $N_1$ and intersects $A_1 \cup \dots \cup A_i$ or $\Theta_1 \cup \dots \cup \Theta_i$ in a single point if $m=2$ or $m=3$ respectively (this is most easily seen by checking the labellings
around the vertices of $a_i$, see Figure~\ref{fig:Figure1}).
Thus $K \cap V$ is isotopic rel boundary  to the co-core of the unique essential
annulus in the Seifert fibered space, $V$. It follows from this that the 
twice-punctured torus
$S = \partial V - \nbhd(K)$ is incompressible in 
$V - \nbhd(K)$.

\begin{claim}\label{clm:ontoV}
 $K \cap (M-V)$ is isotopic rel
boundary in $M-V$ onto $\partial V$.
\end{claim}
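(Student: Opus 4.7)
The plan is to first identify $W := M - V$ as a solid torus, and then to show the arc $k := K \cap W$ is boundary-parallel in $W$; since $\partial W = \partial V$, this parallelism will land $k$ on $\partial V$. Because $V$ is a Seifert fibered space over a disk with two exceptional fibers, $\partial V$ is incompressible in $V$. If $\partial V$ were also incompressible in $W$, it would be an essential torus in the closed atoroidal $M$, a contradiction. So $\partial V$ admits a compressing disk in $W$, and the irreducibility of $M$ forces $W$ to be a solid torus. Since $K \cap V$ is isotopic rel $\partial$ to the co-core of the essential annulus of $V$, $K$ meets $\partial V$ in exactly two points, and hence $k$ is a single properly embedded arc.

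To prove $k$ is $\partial$-parallel in $W$, I would work with a meridian disk $E$ of $W$ (chosen to miss the endpoints of $k$) together with the twice-punctured torus $S := \partial V - \nbhd(K)$, which is incompressible in $V - \nbhd(K \cap V)$. Isotope $E$ to minimize $|E \cap S|$ and $|E \cap k|$. Circles of $E \cap S$ can be eliminated using the incompressibility of $S$ in $V - \nbhd(K \cap V)$ combined with the irreducibility of $M$. An outermost arc of $E \cap S$ on $E$ then cuts off a disk $\Delta \subset W$. Either $\Delta$ is a $\partial$-compressing disk for $S$ in $W - \nbhd(k)$ along an arc joining the two components of $\partial S$, in which case $\Delta$ together with an arc on $\partial \nbhd(K) \cap W$ completes to a disk in $W$ with boundary $k \cup \alpha$ for some arc $\alpha \subset \partial V$ (giving the desired parallelism into $\partial V$); or $\Delta$ can be used via a disk exchange with a disk in $V - \nbhd(K \cap V)$ coming from the incompressibility of $S$ to reduce $|E \cap k|$, contradicting minimality.

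Iterating the reductions brings us to the case $|E \cap k| = 0$, placing $k$ inside the $3$-ball $B = W \setminus \nbhd(E)$. If $k$ were not $\partial$-parallel in $B$, then the annulus $\partial B - \nbhd(\partial k)$ would be essential in $B - \nbhd(k)$, yielding an essential annulus in the exterior $X = M - \nbhd(K)$ and contradicting the hyperbolicity of $X$. Therefore $k$ is $\partial$-parallel in $W$, landing in $\partial V$ as required. The main obstacle will be orchestrating the disk exchange arguments cleanly so that the intersections of $E$ with $k$ and with $S$ are reduced simultaneously without breaking the minimality conditions on either count.
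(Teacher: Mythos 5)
Your approach is genuinely different from the paper's: the paper invokes Theorem~2.4.3 and the Handle Addition Lemma~2.1.1 of \cite{cgls:dsok} together with the anannularity of $X$, while you attempt a direct argument via a meridian disk of the complementary solid torus. Unfortunately the plan has gaps that seem hard to repair along these lines.

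The most serious problem is the assertion that ``the irreducibility of $M$ forces $W$ to be a solid torus.'' Compressing $\partial V$ in $W$ produces a $2$--sphere $S'$, and irreducibility of $M$ only guarantees that $S'$ bounds a ball $B$ in $M$; you must still rule out the possibility that $B$ contains $V$ rather than lying inside $W$. This is not immediate, and in fact the paper's own argument that $M-V$ is a solid torus (which appears \emph{after} the Claim in the text) relies on the Claim itself: if $B \supset V$, then by the Claim $K$ can be isotoped into $B$, whence $S'$ becomes an essential sphere in $X$, contradicting irreducibility of the knot exterior. So establishing ``$W$ is a solid torus'' before the Claim, with no replacement argument, introduces a circularity. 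Separately, the disk-exchange step is not geometrically well-posed: if $E$ is a properly embedded meridian disk of $W$ and $S = \partial V - \nbhd(K) \subset \partial W$, then $E \cap S$ consists only of sub-arcs of $\partial E$, so there are no interior circles or transverse arcs of $E \cap S$ to run the outermost-disk argument on, and ``a disk in $V - \nbhd(K\cap V)$ coming from the incompressibility of $S$'' is backwards (incompressibility denies you such a disk). Finally, even granting $|E \cap k|=0$, the annulus $\partial B - \nbhd(\partial k)$ is only shown essential in $B - \nbhd(k)$; you must still rule out compressions and $\partial$-compressions from the $X - B$ side before invoking anannularity of $X$. The paper's route via CGLS sidesteps all of these issues by working with $R = \partial(M - V - \nbhd(K))$, forcing a compression in $M-V-\nbhd(K)$, and then using anannularity to conclude that this compression realizes the required boundary-parallelism.
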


\begin{proof}
Let $R=\partial (M-V-\nbhd(K))$. Then $R$ must be compressible in
the exterior of $K$, $X$, by Theorem 2.4.3 of \cite{cgls:dsok} (where $R$ is 
the $S$ there and the $r_0$ there is $p/q$). As $S$ is incompressible in
$V - \nbhd{K}$, $R$ must compress in $M-V-\nbhd{K}$. 
If $S$ were incompressible
in $M-V-\nbhd{K}$, then the Handle Addition Lemma, Lemma 2.1.1 of
\cite{cgls:dsok},  would imply that $\partial V$ is incompressible in $M-V$.
But this contradicts that $M$ is atoroidal. Thus $S$ must compress in 
$M-V-\nbhd{K}$. As $X$ is anannular, $S$ must compress in $M-V$ to a boundary 
parallel annulus. This implies that $K \cap (M-V)$ is isotopic in $M-V$
to $\partial V$, as claimed.
\end{proof}

Since $M$ is atoroidal, and $\partial V$ is incompressible in $V$, 
$\bdry V$ must compress outside of $V$.  
Compressing $\bdry V$ then yields a $2$--sphere that bounds a $3$--ball $B^3$.
If this 3-ball contained $V$, then $K$ could be isotoped into this 3-ball,
contradicting the irreducibility of the exterior of $K$. So we may assume
that this 3-ball lies outside of $V$, and consequently that $M-V$ is a 
solid torus. Then $M$ is a Seifert-fibered space over the 2-sphere with
three exceptional fibers one of which has order $m$.
Let $h = \nbhd(K \cap V)$. One checks that $H_1 = V - h$ is a genus 
two handlebody. Certainly, $H_2 = (M-V) \cup h$ is a genus two handlebody. 
That is, $H_1 \cup H_2$ is a Heegaard splitting of $M$.  
Recall that $K \cap (M-V)$ can be isotoped rel its boundary 
onto $\partial V$. Thus $K$ can be isotoped onto 
$\partial H_1$. This is conclusion $(2)$ of the Lemma.

Thus we may assume $\Delta(\bdry D, a_i) = 1$ and $N_1$ gives an isotopy of 
$A$ to $B$ as
desired. If $j>i+1$, then this isotopy from $A$ to $B$ gives a thinner 
presentation of $K$ (the two sub-arcs $K \cap A$ of $K$ are flattened onto
$\hatF$ by this isotopy, perturb them back slightly off of $\hatF$ to 
reinstate a Morse presentation of $K$ with fewer extrema.)
As we started with a thin presentation of $K$, it must be that $j=i+1$.

These 3 cases exhaust the possibilities, thereby completing the proof of 
Lemma~\ref{lem:SFS}.  
\end{proof}

\begin{lemma}\label{lem:triplets}
Let $\sigma$ be an extended Scharlemann cycle of length $m=2$ or $m=3$. 
Assume that conclusion $(2)$ of Lemma~\ref{lem:SFS} does not hold.
If $a_i,a_j,a_k \in \ao(\sigma)$
together lie on an annulus in $\hatF$, then there must be intersections of $K$ with
this annulus that do not belong to $a_i,a_j,a_k$.
\end{lemma}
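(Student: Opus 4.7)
The plan is to argue by contradiction: I assume $K \cap B$ consists of only the six vertices of $a_i \cup a_j \cup a_k$ and will produce a Morse presentation of $K$ with strictly smaller width, contradicting the thin position of $K$. Order the three curves along $B$ so that $a_j$ lies between $a_i$ and $a_k$, and split $B$ into sub-annuli $B_1$ (cobounded by $a_i, a_j$) and $B_2$ (cobounded by $a_j, a_k$).

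First I apply Lemma~\ref{lem:SFS} to the pair $(a_i, a_j)$ on $B_1$: the two vertices of $a_k$ lie in $B_2 \setminus B_1$, so $K \cap B_1$ consists of exactly the four vertices of $a_i \cup a_j$, and the hypothesis is met. By our standing assumption, conclusion $(2)$ of Lemma~\ref{lem:SFS} is excluded, so conclusion $(1)$ forces $j = i+1$ and that $A_{i+1}$ (or, in the $m=3$ case, the constituent annulus supplied by Lemma~\ref{lem:annulusintheta}) is parallel into $B_1$. Applying the same argument to $(a_j, a_k)$ on $B_2$ yields $k = i+2$ with $A_{i+2}$ parallel into $B_2$.

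Now I exploit the fact that, as consecutive pieces of $\AO(\sigma)$ meeting along their shared boundary curve $a_j$, the annuli $A_{i+1}$ and $A_{i+2}$ lie on opposite sides of $\hatF$ near $a_j$. Consequently the two solid-torus parallelism regions (cobounded by $A_{i+1} \cup B_1$ and $A_{i+2} \cup B_2$ respectively) sit on opposite sides of $\hatF$ and can be used simultaneously without interference. Each of the two subarcs of $K$ passing through the two vertices of $a_j$ decomposes as a corner arc of $A_{i+1}$ (from an $a_i$-vertex to the $a_j$-vertex) concatenated with a corner arc of $A_{i+2}$ (from the $a_j$-vertex to an $a_k$-vertex), crossing $\hatF$ transversely at $a_j$. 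The combined parallelism isotopy flattens these subarcs entirely onto $B \subset \hatF$, and a small perturbation to a single side of $\hatF$ removes their two transverse intersections with $\hatF$ at $a_j$, together with the flanking local extrema on each side, yielding a strictly thinner Morse presentation of $K$.

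The main obstacle is ensuring that the combined flattening plus perturbation genuinely reduces width. This parallels the width-reduction argument of Case $(III)$ in the proof of Lemma~\ref{lem:SFS}, which was blocked there by the constraint $j = i+1$ forced by conclusion $(1)$; here, the presence of the third curve $a_k$ provides the second parallelism, letting us perform the reduction on both sides of $\hatF$ simultaneously and thus eliminate the two $a_j$-intersections outright.
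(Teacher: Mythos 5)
Your proof takes essentially the same route as the paper's: split the ambient annulus into $B_1$ cobounded by $a_i, a_j$ and $B_2$ cobounded by $a_j, a_k$, apply Lemma~\ref{lem:SFS} to each pair to extract the parallelisms of $A_{i+1}$ and $A_{i+2}$ into $\hatF$, then use both parallelisms simultaneously to thin $K$. Your added detail on why the two parallelism regions do not interfere (they lie on opposite sides of $\hatF$) and how the isotopy eliminates the $a_j$-crossings is a correct elaboration of the paper's terse final sentence.
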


\begin{proof} Without loss of generality assume $a_i, a_j$ cobound annulus $B_1$ on $\hatF$ and 
$a_j,a_k$ cobound annulus $B_2$ on $\hatF$, such that $K$ intersects $B_1 \cup B_2$
only along $a_i,a_j,a_k$ and such that $B_1,B_2$ have disjoint interiors. 
Then by Lemma~\ref{lem:SFS} we may assume $j=i+1, k=i+2$
and the annuli or constituent annuli, $A_i,A_{i+1}$, of $\AO(\sigma)$ 
are parallel into $B_1,B_2 \subset \hatF$.
But then we can use these parallelisms of the annuli to guide a thinning of $K$ with
respect to $\hatF$.
\end{proof} 

\begin{lemma}\label{lem:thetatriplets}
Let $\sigma$ be an extended Scharlemann cycle of length $m=3$. 
Assume that conclusion $(2)$ of Lemma~\ref{lem:SFS} does not hold.
Assume that $\theta$--curves $a_i,a_j,a_k \in \ao(\sigma)$
together lie on an annulus in $\hatF$, with $a_j$ between $a_i,a_k$ on 
$\hatF$.  Then there must be intersections of $K$ with
this annulus that do not belong to $a_i,a_j,a_k$ or to the disks of 
parallelism of $a_i,a_k$.
\end{lemma}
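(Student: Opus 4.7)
The plan is to mirror the proof of Lemma~\ref{lem:triplets}, but invoke the Addendum to Lemma~\ref{lem:SFS} instead of Lemma~\ref{lem:SFS} itself, so as to accommodate the $K$-intersections in the disks of parallelism $D_i, D_k$. Throughout, let $A \subset \hatF$ denote the given annulus containing the $\theta$--curves $a_i, a_j, a_k \in \theta(\sigma)$.

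Suppose for contradiction that $K \cap A \subseteq a_i \cup a_j \cup a_k \cup D_i \cup D_k$. The three $\theta$--curves are pairwise disjoint in $\hatF$ because they cobound product $\theta$--bands in $\Theta(\sigma)$ (Definition~\ref{def:thetaj}), and by Lemma~\ref{lem:doesnotlieindisk} none of them is contained in a disk in $\hatF$. A routine argument then shows that the three disks of parallelism $D_i, D_j, D_k$ in $A$ have pairwise disjoint interiors: if, say, $a_j \subset \Int D_i$ then $a_j$ would lie in a disk, and if $D_j \subset \Int D_i$ (without $a_j \subset \Int D_i$) then again $a_j \subset D_i$, both contradictions. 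In particular the hypothesis gives that $K$ is disjoint from $\Int D_j$.

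Now split $A$ along a simple closed curve in $A$ parallel to a core, chosen to pass through a small product neighborhood of $a_j$ that avoids $D_i \cup D_k$ and is disjoint from $\Int K \cap A$ (possible since $\Int D_j$ contributes no $K$-intersections), into two closed sub-annuli $B_1 \supset a_i \cup a_j$ and $B_2 \supset a_j \cup a_k$. By construction,
\[
K \cap B_1 \;\subseteq\; a_i \cup a_j \cup D_i, \qquad K \cap B_2 \;\subseteq\; a_j \cup a_k \cup D_k,
\]
so each sub-annulus meets $K$ only along its two boundary $\theta$--curves together with the disk of parallelism of at most one of them. This is precisely the hypothesis of the Addendum to Lemma~\ref{lem:SFS}. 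Since conclusion~(2) of Lemma~\ref{lem:SFS} is ruled out by assumption, applying the Addendum to $B_1$ yields $j = i+1$ with the constituent annulus of $\AO(\sigma)$ between $a_i$ and $a_j$ parallel into $B_1$, and applying it to $B_2$ yields $k = j+1 = i+2$ with the analogous parallelism of the constituent annulus between $a_j$ and $a_k$ into $B_2$.

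With these two successive parallelisms in hand, we conclude as in the final paragraph of Lemma~\ref{lem:triplets} (and in the closing paragraph of Case~(III) of Lemma~\ref{lem:SFS}): the parallelisms guide an ambient isotopy that flattens the two sub-arcs of $K$ lying above $A_{i+1} \cup A_{i+2}$ onto $\hatF$, and a small perturbation then gives a Morse presentation of $K$ with strictly smaller width, contradicting thin position. The main obstacle is the bookkeeping in the first step: arranging the splitting $A = B_1 \cup B_2$ so that no $K$-intersection is lost and each sub-annulus meets $K$ only in the configuration permitted by the Addendum. This is the content of verifying that $D_j$ (whichever side of $a_j$ it lies on) can be bypassed by the splitting curve without introducing spurious intersections, which follows from the disjointness of the disks $D_i, D_j, D_k$ and the assumption that $K$ misses $\Int D_j$.
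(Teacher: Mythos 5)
Your proposal is correct and follows the paper's own (very terse) proof, which simply says to run the argument of Lemma~\ref{lem:triplets} using the Addendum to Lemma~\ref{lem:SFS}. There is a small slip in your second step: splitting $A$ along a single simple closed curve cannot produce two sub-annuli $B_1, B_2$ that each contain the entire $\theta$-curve $a_j$; the fix is to let $B_1$ and $B_2$ overlap in a small annular neighborhood of $a_j \cup D_j$ (which is disjoint from $K$, since $K$ misses $\Int D_j$), or equivalently to use the two different essential circles of $a_j$ as the respective inner boundaries of $B_1$ and $B_2$ --- but this is exactly the bookkeeping you flagged as the main concern, and the rest of the argument is unaffected.
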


\begin{proof}
Assume the only other intersections of $K$ occur within the disks
of parallelism of $a_i,a_k$. We apply the proof of Lemma~\ref{lem:triplets} 
using the Addendum to Lemma~\ref{lem:SFS}.
\end{proof} 

\subsection{Parallelisms of $a(\sigma)$ and $\theta(\tau)$.}

\begin{defn} An extended Scharlemann cycle is called {\em proper} if 
in its corner no label appears twice.
\end{defn}

\begin{lemma}\label{lem:parallelesc}
Assume that $\hatF$ is a strongly irreducible Heegaard surface for
$M$.
Let $\sigma, \tau$ be proper extended Scharlemann cycles of $\Lambda$ of
length $2$ or 
$3$ with different core labels. If there is an annulus, $A$, in $\hatF$ 
containing an element of $\ao(\sigma)$ and an element of 
$\ao(\tau)$ 
then either
\begin{enumerate}
\item $L(\sigma)$ and $L(\tau)$ intersect in two non-empty label intervals,
$L(\sigma) \cup L(\tau)$ includes all labels, and one of $\sigma$ or $\tau$
has length $3$; or
\item $M$ is a Seifert fibered space over the $2$--sphere 
with three exceptional fibers, and the orders of two of these exceptional
fibers are the lengths of $\sigma,\tau$. Furthermore, there is a 
genus $2$ (hence minimal genus) Heegaard splitting of $M$
 with respect to which $K$ has bridge number at most 
$k-1$, where $k=|A \cap K|$.
\end{enumerate}
\end{lemma}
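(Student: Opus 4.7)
The plan is to adapt the argument of Lemma~\ref{lem:SFS} to the two-cycle setting. Let $c_\sigma\in\ao(\sigma)$ and $c_\tau\in\ao(\tau)$ lie in $A\subset\hatF$; after paring down $\sigma$ and $\tau$, assume $c_\sigma,c_\tau$ are the outermost elements of $\ao(\sigma),\ao(\tau)$. By Lemma~\ref{lem:essentialcurves} each has an essential core circle in $\hatF$, so these circles cobound a subannulus $B\subset A$. When one of the cycles has length $3$, use Lemma~\ref{lem:annulusintheta} to select the constituent annulus and the disks of parallelism so that $B$ may be shrunk to avoid their interiors. Assemble a closed surface $T$ in $M$ from $B$ together with the \mobius band or constituent annulus of $\AO(\sigma)$ meeting $c_\sigma$ and the analogous piece of $\AO(\tau)$ meeting $c_\tau$. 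Since $M$ admits neither a Klein bottle nor a projective plane, $T$ is a torus; since $M$ is atoroidal, $T$ compresses in $M$ via some disk $D$.

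I would then split into three cases paralleling those in the proof of Lemma~\ref{lem:SFS}, according to whether the two long-band pieces lie on the same or opposite sides of $\hatF$, and on which side of $T$ the compressing disk $D$ lies. In the Case~I analog (pieces on opposite sides of $\hatF$) the unfurling move of Section~4.3 of \cite{baker:sgkilshsbn} applied to a neighborhood of $T$ produces a width-reducing isotopy of $K$, contradicting thin position; the hypothesis that $\sigma,\tau$ are proper (no label repeats in their corners) is what lets the two arcs of $K\cap T$ be put in product form in $\nbhd(T)$ so that the unfurling applies. This unfurling is obstructed precisely when the cyclic arrangement of $L(\sigma)$ and $L(\tau)$ on the abstract vertex of $G_Q$ prevents the required product structure, which turns out to be the content of conclusion~(1); the necessary mismatch in twisting then forces one of $\sigma,\tau$ to have length $3$. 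The Case~II analog (opposite sides of $\hatF$, compressing disk on the side of $T$ with neither band piece) is where the strong irreducibility of $\hatF$ is invoked: combining an outermost subdisk of $D$ cut by the band pieces with the bands themselves produces compressing disks for $\hatF$ on both of its sides meeting $K$ in at most one point, contradicting strong irreducibility. In the Case~III analog (same side of $\hatF$, compressing disk on that side), the solid torus $N_1$ bounded by $T$, together with the two band pieces, forms a Seifert fibered piece $V$ over a disk with two exceptional fibers of orders equal to the lengths of $\sigma$ and $\tau$; arguing as in Claim~\ref{clm:ontoV} and using atoroidality, $M-V$ is a solid torus, so $M$ is Seifert fibered over $S^2$ with three exceptional fibers. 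The genus-$2$ Heegaard splitting arising from this Seifert structure carries $K$ with bridge number at most $k-1$, giving conclusion~(2).

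The main obstacle is rigorously linking the label combinatorics of conclusion~(1) to the failure of unfurling in the Case~I analog: precisely, one must show that the obstruction forces $L(\sigma)\cap L(\tau)$ to consist of two nonempty label intervals with $L(\sigma)\cup L(\tau)$ covering all labels, and that one cycle must then have length~$3$. A secondary difficulty is the length-$3$ bookkeeping, since the elements of $\ao(\sigma),\ao(\tau)$ are $\theta$-curves rather than simple closed curves: one must select constituent annuli and handle the disks of parallelism carefully so that the Addendum to Lemma~\ref{lem:SFS} applies and any stray intersections of $K$ with $B$ inside those disks do not derail the construction of $T$ or the analysis of its compressing disk.
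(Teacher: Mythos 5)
Your proposed route has a fundamental problem at the start: the surface $T$ you assemble is not a torus in general. When both $\sigma,\tau$ have length $2$, gluing a \mobius band from $\AO(\sigma)$ along one boundary of $B$ and a \mobius band from $\AO(\tau)$ along the other yields a Klein bottle, not a torus. When one of $\sigma,\tau$ has length $3$, the constituent annulus of $\AO(\sigma)$ runs between \emph{two} $\theta$-curves of $\theta(\sigma)$; its boundary is not a single simple closed curve in $\ao(\sigma)$, so it cannot be glued to one end of $B$ to close up. The paper's geometry is instead $3$-dimensional: one takes the submanifold $N = \nbhd(\AO(\sigma)\cup A\cup \AO(\tau))$, a Seifert-fibered piece over a disk with two exceptional fibers (of orders the lengths of $\sigma,\tau$), whose \emph{boundary} is the relevant torus. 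The Klein bottle observation is then used not to close up a surface but to deduce that one of $\sigma,\tau$ must have length $3$ for this Seifert piece to embed.

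The second, deeper mismatch is the case structure. Lemma~\ref{lem:SFS}'s trichotomy (same/opposite sides of $\hatF$, location of the compressing disk $D$) governs a \emph{single} extended Scharlemann cycle $\sigma$ with $a_i,a_j$ consecutive in $\ao(\sigma)$. Here, the governing trichotomy is Lemma~\ref{lem:commonlabelsets} applied to the label intervals $L(\sigma)$ and $L(\tau)$. In Cases I--III (disjoint, one overlap interval, containment) the rectangles $E_\sigma,E_\tau$ in $\nbhd(K)$ can be chosen disjoint, making $\AO(\sigma),\AO(\tau)$ disjoint and reducing to the $L(\sigma)\cap L(\tau)=\emptyset$ computation. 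Case IV -- $L(\sigma)\cup L(\tau)$ covering all labels with two overlap intervals -- is exactly conclusion~(1); when both $\sigma,\tau$ have length~$2$ it is ruled out by a Klein bottle/projective plane argument (Case IV'). Conclusion~(1) is therefore not characterized by an obstruction to the unfurling move; it is simply the one label configuration where the $E$'s cannot be separated and no contradiction is reached. You identify the gap yourself -- ``rigorously linking the label combinatorics of conclusion (1) to the failure of unfurling'' -- but this link is not there to be found; the paper's argument avoids unfurling entirely in this lemma, and the strong irreducibility hypothesis is used (via Lemma~3.3 of \cite{bgl:og2hsfs}) to exclude the case that the Seifert-fibered piece $N$ lies in a $3$-ball, not to produce compressing disks of $\hatF$ meeting $K$ once.
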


\begin{proof}
After possibly paring down $\sigma,\tau$ we may assume that 
$\ao_{\sigma}=\partial(\AO(\sigma)), 
\ao_{\tau}=\partial(\AO(\tau))$ are the only
elements of $\ao(\sigma),\ao(\tau)$ (resp.) lying entirely
in $A$. If there are 
intersections of $\AO(\sigma), \AO(\tau)$ with
$A$ coming from the interior of faces of
$\sigma, \tau$, then we may assume they are essential simple closed 
curves in $A$ (Lemma~\ref{lem:AEntscc}). 
Take such a curve that is innermost
on $\sigma$, say, bounding a disk $D$. Surgery of $A$ along $D$ gives a 
disk $A'$
which we may take (after possibly further surgeries) to intersect
$\AO(\sigma)$ only along its boundary. But then $\nbhd(\AO(\sigma)
\cup A')$ is a lens space summand in $M$ -- a contradiction. Thus 
we may assume that $\sigma, \tau$ intersect $A$ only along their edges.
The argument now divides up according to $L(\sigma) \cap L(\tau)$. See
Lemma~\ref{lem:commonlabelsets} for the possibilities. We work through
these, ending with Case IV that $L(\sigma) \cup L(\tau)$ contains all labels 
and 
$L(\sigma) \cap L(\tau) \neq \emptyset$. In this final case we show that
at least one of $\sigma,\tau$ must be of length $3$, giving conclusion (1)
above. The generic argument is Case I. 

\medskip

{\bf Case I:} $L(\sigma) \cap L(\tau) = \emptyset$

\medskip

\begin{proof}[Proof of Case I]
Then $\AO(\sigma)
\cap \AO(\tau) = \emptyset$ and we may assume $\AO(\sigma), 
\AO(\tau)$ intersect $A$ only along their boundaries.  
Consequently $N = 
\nbhd(\AO(\sigma) \cup A \cup \AO(\tau))$ is a Seifert fibered space
over the disk with two exceptional fibers whose orders are given
by the lengths of $\sigma, \tau$ (e.g.
Lemma~\ref{lem:gtsolidtorus}). Note that as $M$ contains no Klein bottles,
the length of either $\sigma$ or $\tau$ must be $3$.

\begin{claim}~\label{clm:stronglyirred}
$M$ is a Seifert fibered space over the 2-sphere with three exceptional fibers, where
the orders of the exceptional fibers include the lengths of $\sigma$ and $\tau$.
Furthermore, 
$p/q$ is not a boundary slope for $X$.
\end{claim}

\begin{proof}
As $M$ is atoroidal, $\partial N$ compresses in $M-N$. As $M$ is irreducible, 
the resulting $2$--sphere bounds a 3-ball. 

First assume this 3-ball contains $N$. Then the core of $A$ is an essential
curve in $\hatF$ that does not bound a meridian on either side of $\hatF$
(otherwise, as argued above, it in union with $\AO(\sigma)$ can be 
surgered to produce a lens space summand in $M$). Then Lemma~3.3 of 
\cite{bgl:og2hsfs} (which shows if a Heegaard surface of a $3$--manifold other than $S^3$ contains a non-meridional simple closed curve that lies in a $3$--ball, then the splitting is weakly reducible) contradicts our assumption that $\hatF$ is strongly 
irreducible.

Thus the 3-ball is disjoint from $\partial N$. That
is $M-N$ is a solid torus. Then $M$ is a Seifert fibered space over
the two sphere with three exceptional fibers with finite first homology.
Furthermore, $M$ contains no incompressible surface. Thus Theorem 2.0.3 of 
\cite{cgls:dsok} implies that
if $p/q$ were a boundary slope then $X(r)$ would contain an incompressible
surface whenever $\Delta(r,p/q) > 1$. Since $q \geq 2$ and $X(1/0)$ is
$S^3$, this is impossible.
\end{proof}

\begin{claim} $K \cap N$ consists of at most $k-2$ arcs which can be 
isotoped (rel $\partial N$) to a collection of co-cores of the essential 
annulus in 
the Seifert fibered space $N$.
\end{claim}
\begin{proof}
First note that each of $K \cap \AO(\sigma), K \cap \AO(\tau)$ can be isotoped
off of $\AO(\sigma),\AO(\tau)$ (resp.) to intersect 
$A$ in a single point.
See Figure~\ref{fig:Figure1}. 
\begin{figure}
\centering
\input{Figure1smaller.pstex_t}
\caption{}
\label{fig:Figure1}
\end{figure}
Thus we may take $K \cap N$ to be a collection of $k-2$ arcs of the form
$point \times I \subset A \times I$. Each of these arcs can be isotoped 
to a co-core of the essential annulus $c \times I \subset N$, 
where $c$ is the core of 
$A$: In particular, say $\sigma$ is a length $3$ extended Scharlemann cycle.
Let $f$ be the disk of parallelism in $G_F$ between two edges of 
$\partial \Theta(\sigma)$ (as shown in Figure~\ref{fig:Figure1}). Then the arc $p \times I \subset
f \times I$, where $p$ is a point of $f$, can be isotoped, rel $\partial N$, 
across $\nbhd(\Theta(\sigma))$ 
to a cocore of $c \times I$ in $N$. Figure~\ref{fig:Figure2} shows an isotopy of $K$ moving an intersection with $f$ out to an edge of $\bdry f$ and then through the trigon face of $\sigma$ to produce an intersection with $A - f$; the dashed circles show the resulting new intersections $K \cap \hatF$.
\end{proof}

\begin{figure}
\centering
\input{Figure2.pstex_t}
\caption{}
\label{fig:Figure2}
\end{figure}

$\partial N \subset M$ is a 2-torus. Let $T=\partial N - \nbhd(K)$. Maximally
compress $T$ in $X$. 
Claim~\ref{clm:stronglyirred} says that $p/q$ is not a $\partial$--slope, 
thus $T$
compresses to $\partial$--parallel annuli in $X$. Note that these annuli may be
nested in $X$. Each $\partial$--parallel annulus defines an isotopy in $M$ of 
a subarc of $K$, with endpoints in $K \cap \partial N$, onto $\partial N$ -- keeping
the endpoints of the arc fixed. Furthermore, every point of $K \cap \partial N$ 
belongs to such an arc of $K$. Let $k_1', \dots , k_r'$ be the subarcs of $K$
lying within the outermost of these $\partial$--parallel annuli and let $k_1, \dots,
k_r$ be the complementary subarcs of $K$. Note that the $k_i$ lie on the same
side of $\partial N$. Then the isotopies described above 
corresponding to the outermost annuli deform $K$ to the union of the arcs $k_1, \dots,
k_r$ along with arcs that lie in $\partial N$.  As the complement of $K$ in $M$ 
is hyperbolic,
it must be that the arcs $k_1, \dots, k_r$ lie in $N$. By the Claim above, 
$k_1, \dots, k_r$ can be isotoped to co-cores of the essential annulus in $N$. 
Let $\alpha$ be a disjoint co-core of this annulus. Then 
$N - \nbhd(\alpha), (M-N) \cup \nbhd(\alpha)$ is a genus $2$ splitting of $M$. 
Furthermore, each of $k_1, \dots, k_r$  is $\partial$--parallel in $N -\nbhd(\alpha)$. 
Perturbing each arc of $K - (\cup k_i)$ into $M-N$ puts $K$ in bridge position 
with respect to this genus $2$ splitting. Noting that $r \le k-2$, we have 
conclusion $(2)$ of Lemma~\ref{lem:parallelesc}.
\end{proof}

In the remaining cases, 
the argument of Case I gives the desired conclusion once we show how
to modify $\AO(\sigma)$ and 
$\AO(\tau)$ so that they are mutually disjoint and intersect $A$ only along their boundaries, $\ao_{\sigma},
\ao_{\tau}$.
We write $\AO(\sigma) = E_{\sigma} \cup F_{\sigma}, \AO(\tau) = 
E_{\tau} \cup F_{\tau}$ where $F_{\sigma},F_{\tau}$ is the union of
faces of $\sigma,\tau$ (resp.) (thought of as disks in $X$),
and where $E_{\sigma},E_{\tau}$ are rectangles in $\nbhd(K)$ describing
an extension of $F_{\sigma},F_{\tau}$ across $\nbhd(K)$ to form the
long \mobius band or twisted $\theta$--band. In what follows (at least up to
Case IV') we 
show how to choose 
$E_{\sigma},E_{\tau}$ to be
disjoint, making $\AO(\sigma),\AO(\tau)$ disjoint and 
intersecting $A$ along
$\partial A$.

Let $\{x,z\}$ be the extremal labels of $\sigma$, and $\{y,w\}$ the extremal 
labels of $\tau$. Let $\{\alpha, \beta\}$ or  $\{\alpha, \beta, \gamma\}$ be
the corners of $\sigma$ depending on the length of $\sigma$. Similarly let
$\{\alpha', \beta'\}$ or $\{\alpha', \beta', \gamma'\}$ be the corners of
$\tau$ according to its length.

\medskip

{\bf Case II:} $L(\sigma) \cap L(\tau)$ is a single interval of labels
$\arc{xy}$ (including a point interval).

\medskip

\begin{proof}

First we consider the case of a point interval, that is, when $x=y$
(and $z \neq w$). 
Then $\ao_{\sigma},\ao_{\tau}$ intersect in a single 
point (at vertex $x=y$).
As $\ao_{\sigma},\ao_{\tau}$ both lie in $A$, they must be 
non-transverse
around vertex $x$ on $G_F$. This means that as one reads around vertex
$x$ on $G_F$, labels $\{\alpha, \beta\}$ ($\{\alpha, \beta, \gamma\}$) do not
separate the labels $\{\alpha', \beta'\}$ ($\{\alpha', \beta', \gamma'\}$). 
See Figure~\ref{endof3fig5-alt}
for $\sigma,\tau$ of length $2$.
 We choose disjoint $E_{\sigma},
E_{\tau}$ as pictured in Figure~\ref{endof3fig7} (with $x=y$), making 
$\AO(\sigma), \AO(\tau)$ disjoint. The argument for Case I now gives
a genus $2$ splitting of $M$ with bridge number at most $k-1$  
($K$ intersects
$A$ along $\ao_{\sigma}, \ao_{\tau}$ only $3$ times here, rather than
$4$).

Thus we may assume that $\{x,z\} \cap \{y,w\} = \emptyset$. Let $b_{\sigma}$
be the component of $\ao(\sigma)$ through vertex $y$.  
Then $b_{\sigma}$ intersects
$\ao_{\tau}$ in a single point (at $y$). 
See 
Figures~\ref{endof3fig6}, ~\ref{endof3fig6-theta}.
Again $b_{\sigma}$ is disjoint from
$\ao_{\sigma}$ which lies in $A$ with $\ao_{\tau}$, 
so $b_{\sigma}$ must intersect
$\ao_{\tau}$ tangentially. Thus, as one transverses vertex $y$ in $G_F$ the 
$\{\alpha,\beta\}$ ($\{\alpha,\beta, \gamma \}$) labels do not separate 
$\{\alpha',\beta'\}$ ($\{\alpha',\beta', \gamma' \}$). We then
may choose disjoint $E_{\sigma},E_{\tau}$ as pictured in 
Figures~\ref{endof3fig7}, ~\ref{endof3fig7-theta}. 
 The argument of Case I
now gives a genus $2$ splitting of $M$ with bridge number at most $k-2$.
\end{proof}

\begin{figure}
\centering
\input{endof3fig5-altV2.pstex_t}
\caption{}
\label{endof3fig5-alt}
\end{figure}


\begin{figure}
\centering
\input{endof3fig7.pstex_t}
\caption{}
\label{endof3fig7}
\end{figure}

\begin{figure}
\centering
\input{endof3fig6.pstex_t}
\caption{}
\label{endof3fig6}
\end{figure}

\begin{figure}
\centering
\input{endof3fig6-theta.pstex_t}
\caption{}
\label{endof3fig6-theta}
\end{figure}

\begin{figure}
\centering
\input{endof3fig7-theta.pstex_t}
\caption{}
\label{endof3fig7-theta}
\end{figure}

\medskip

{\bf Case III:} $L(\tau) \subset L(\sigma)$ or $L(\sigma) \subset L(\tau)$

\medskip

\begin{proof}
The argument is the same in either case. We assume $L(\tau) \subset L(\sigma)$.

We may assume that, say, $y \neq x,z $. 
Let $b_{\sigma}$ be the component of 
$\ao(\sigma)$ through vertex $y$ -- connecting $y$ to another vertex $r$.
If $r=w$, then $\sigma,\tau$ have the same core labels, contrary to our
assumption.

Thus we assume $r \neq y,w,x,z$ ($r \neq y$  by the Parity Rule). 
Then $b_{\sigma}$ intersects $\ao_{\tau}$
in a single point (at the vertex $y$). Since $b_{\sigma}$ is disjoint from
$\ao_{\sigma}$, and $\ao_{\sigma}, \ao_{\tau}$ are contained in $A$,
$b_{\sigma}$ must intersect $\ao_{\tau}$ tangentially. That is, as 
one transverses
around the (fat) vertex $y$ of $G_F$ the labels $\{\alpha, \beta\}$ 
($\{\alpha, \beta, \gamma\}$) are not
separated by the labels $\{ \alpha', \beta' \}$ 
($\{ \alpha', \beta', \gamma' \}$). Thus in $\nbhd(K)$, we
may choose disjoint $E_{\sigma},E_{\tau}$ as pictured in Figure~\ref{fig:plmb2} 
(which illustrates the case of length $2$ with $r$ between $y$ and $w$) thereby making $\AO(\sigma),\AO(\tau)$ disjoint. 
We now apply the argument from Case I giving a $k-2$ bridge presentation of $K$.
\end{proof}

\begin{figure}[h]
\centering
\input{plmb2.pstex_t}
\caption{}
\label{fig:plmb2}
\end{figure}

\medskip

{\bf Case IV:} $L(\sigma) \cup L(\tau)$ contains all labels of $G_Q$, and
$L(\sigma)$ overlaps $L(\tau)$ in two intervals of labels: $\arc{xy}$ and
$\arc{wz}$. 

\begin{figure}[h]
\centering
\input{plmb5.pstex_t}
\caption{}
\label{fig:plmb5}
\end{figure}

\medskip

This case is conclusion (1) of the Lemma once we have shown that
one of $\sigma,\tau$ is of length $3$. So we assume that $\sigma,\tau$
are both of length $2$ and we are as in Figure~\ref{fig:plmb5}. 

If $\{x,z\} = \{y,w\}$ then $a_{\sigma}=\bdry A(\sigma), 
a_{\tau}=\bdry A(\tau)$ are isotopic
on $\hatF$ and both go through vertices $x,z$ of $\hatF$. Thus $A(\sigma),
A(\tau)$ can be amalgamated along their boundary to create an embedded
Klein bottle.

Next assume that $x=y$ but $z \neq w$. Let $b_{\sigma}$ be the component of 
$a(\sigma)$ through vertex $w$. As $b_{\sigma}$ is disjoint from $a_{\sigma}$
and intersects $a_{\tau}$ once at $w$, $b_{\sigma}$ and $a_{\tau}$ intersect
non-transversely. Thus around vertex $w$, the labels $\{ \alpha, \beta \}$ do 
not separate $\{ \alpha', \beta' \}$. Similarly, as $a_{\sigma}, a_{\tau}$
intersect in a single point at vertex $x$ and yet are isotopic, their
intersection is non-transverse. That is, around vertex $x$, the labels 
$\{ \alpha, \beta \}$ do 
not separate $\{ \alpha', \beta' \}$. Figure~\ref{fig:plmb7} (with $x=y$) shows that we
can choose disjoint $E_{\sigma},E_{\tau}$.

Thus we may assume $\{x,z\} \cap \{y,w\} = \emptyset$. Let $b_{\sigma}$ be
the component of $a(\sigma)$ through vertex $y$, and let $r$ be the other 
vertex of $G_F$ to which $b_{\sigma}$ is incident. Then $r \neq x,z$.

Assume $r \neq w$. Then as $b_{\sigma}$ intersects $a_{\tau}$ once and is 
disjoint from $a_{\sigma}$, it must intersect $a_{\tau}$ non-transversely.
That is, around vertex $y$ the labels $\{ \alpha, \beta \}$ do not
separate $\{ \alpha', \beta' \}$. Let $c_{\sigma}$ be the component of
$a(\sigma)$ through vertex $w$. Again, $c_{\sigma}$ must intersect $a_{\tau}$
non-transversely at $w$. Hence around $w$ in $G_F$, the labels 
$\{ \alpha, \beta \}$ do not separate $\{ \alpha', \beta' \}$. Thus we
may choose disjoint disks $E_{\sigma},E_{\tau}$ in $\nbhd(K)$ as pictured
in Figure~\ref{fig:plmb7}.

\begin{figure}[h]
\centering
\input{plmb7.pstex_t}
\caption{}
\label{fig:plmb7}
\end{figure}

This leaves us with the case that $r = w$, whose argument is slightly different
from the preceding ones.

\medskip
{\bf Case IV':} In Case IV above $r=w$.

\medskip

\begin{proof}
This is the case when the core labels of $\sigma,\tau$ are ``antipodal''
labels. Let $b_{\sigma}$ be the component of $a(\sigma)$ through vertices
$y$ and $w$ of $G_F$. Then $b_{\sigma}$ and $a_{\tau}$ intersect twice.
Since $b_{\sigma}$ is disjoint from $a_{\sigma}$ which is isotopic to $a_{\tau}$,
the algebraic intersection number of $b_{\sigma}$ and $a_{\tau}$ is $0$.
Thus we can choose $E_{\sigma}, E_{\tau}$ in $\nbhd(K)$ so that they are
either (1) disjoint as in Figure~\ref{fig:plmb7} or (2) intersect in exactly two arcs as in Figure~\ref{fig:plmb8}. This follows since the labels $\{\alpha,\beta\}$
must separate $\{\alpha',\beta'\}$ either (1) around neither vertices $y,w$
or (2) around both vertices $y,w$.

\begin{figure}[h]
\centering
\input{plmb8.pstex_t}
\caption{}
\label{fig:plmb8}
\end{figure}

$A$ is the annulus on $\hatF$ between $a_{\sigma}$ and $a_{\tau}$.  
We may assume $\Int A$ is disjoint from the vertices of $G_F$ (else there
is another component of $a(\sigma)$ or $a(\tau)$ in $A$, which we initially
assumed is not the case). 
Consider $A(\sigma)=E_{\sigma} \cup F_{\sigma}, A(\tau)=E_{\tau} \cup F_{\tau}$
where $F_{\sigma},F_{\tau}$ is the union of faces of $\sigma,\tau$. Then
$A(\sigma),A(\tau)$ are either (1) disjoint or (2) intersect in two double
arcs (from $x$ to $y$ and $w$ to $z$ along $K$). If (1), $M$ contains 
an embedded Klein bottle. If (2), 
$S=A(\sigma) \cup A \cup A(\tau)$ is a Klein bottle that 
self-intersects in a single double-curve (again, 
we may assume $A(\sigma),A(\tau)$ are 
disjoint from $A$ except along $a(\sigma),a(\tau)$). The two preimage curves
are disjoint from the cores of each of $A(\sigma), A(\tau)$, and $A$,
and consequently bound disjoint disks, \mobius bands in the pre-image.
We may surger along the double curve to
obtain an embedded projective plane or Klein bottle in $M$.
\end{proof}

This completes the proof of Lemma~\ref{lem:parallelesc}.
\end{proof}

\begin{cor}\label{cor:noparallelesc2}
Let $\sigma, \tau$ be proper extended Scharlemann cycles of $\Lambda$ of
length $2$. If there is an annulus, $A$, in $\hatF$ 
containing a component of $a(\sigma)$ and a component of 
$a(\tau)$ 
then $\sigma$ and $\tau$ have the same core labels.
\end{cor}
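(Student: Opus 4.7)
The plan is to argue by contradiction, reducing immediately to Lemma~\ref{lem:parallelesc}. Suppose that $\sigma$ and $\tau$ are proper extended Scharlemann cycles of $\Lambda$ both of length $2$, that some annulus $A$ in $\hatF$ contains a component of $a(\sigma)$ and a component of $a(\tau)$, and yet that the core labels of $\sigma$ and $\tau$ differ. Then the hypotheses of Lemma~\ref{lem:parallelesc} are met (we are in the standing setting where $\hatF$ is a strongly irreducible Heegaard surface), so one of its two conclusions must hold.

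The first conclusion of Lemma~\ref{lem:parallelesc} requires that at least one of $\sigma,\tau$ have length $3$, so it is immediately incompatible with our assumption that both are of length $2$. Thus we must be in conclusion~(2): $M$ is a Seifert fibered space over $S^{2}$ with exactly three exceptional fibers, and the orders of two of these exceptional fibers equal the lengths of $\sigma$ and $\tau$, namely $2$ and $2$.

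The key observation is that any Seifert fibered space over $S^{2}$ having two exceptional fibers of order $2$ contains an embedded Klein bottle: take an embedded arc in the base $S^{2}$ joining the two order-$2$ cone points (and missing the third exceptional point), together with a small regular neighborhood; its preimage in $M$ under the Seifert projection is a Klein bottle. This contradicts the standing assumption (3) (from just after Remark~\ref{rem1-7}) that $M$ contains no Klein bottle. Hence conclusion~(2) of Lemma~\ref{lem:parallelesc} is ruled out as well, and we conclude that $\sigma$ and $\tau$ must share the same core labels. The only step that merits care is checking that the Seifert fibered conclusion genuinely forces two order-$2$ exceptional fibers corresponding to the lengths of $\sigma$ and $\tau$, which is exactly what conclusion~(2) of Lemma~\ref{lem:parallelesc} provides.
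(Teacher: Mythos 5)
Your proof is correct and lands on the same contradiction as the paper, namely a Klein bottle arising from the two order-$2$ exceptional fibers. The difference in route is that the paper's proof of Corollary~\ref{cor:noparallelesc2} inspects the \emph{proof} of Lemma~\ref{lem:parallelesc} (specifically Case~I), where a submanifold $N=\nbhd(\AO(\sigma)\cup A\cup\AO(\tau))$ is constructed as a Seifert fibered space over the disk with two exceptional fibers of orders $2$ and $2$; that local piece already contains the Klein bottle without needing to fiber all of $M$. You instead apply Lemma~\ref{lem:parallelesc} as a black box and use conclusion~(2), reaching the Klein bottle via the global Seifert fibration of $M$ over $S^2$. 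Both are valid; the paper's version is marginally more economical since it avoids reasoning about the global fibration, but your argument that an arc in the base joining two order-$2$ cone points has preimage a Klein bottle (M\"obius band $\cup$ annulus $\cup$ M\"obius band) is exactly the fact the paper is relying on as well.
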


\begin{proof}
In this context, the proof of Lemma~\ref{lem:parallelesc} constructs 
within $M$ a Seifert fibered space over the disk with two exceptional
fibers each of order $2$. But then $M$ would contain
a Klein bottle, a contradiction. 
\end{proof}

\begin{lemma}\label{notwo}
Assume $\hatF$ is a strongly irreducible Heegaard surface for
$M$.
If $\sigma, \tau$ are extended Scharlemann cycles of $\Lambda$ of length
$2$ or $3$ with the property that $L(\sigma) \cup L(\tau)$ includes
all labels, then either 
\begin{enumerate}
\item $t \leq 32(g-1)$; or 
\item $M$ is a Seifert fibered
space over $S^2$ with three exceptional fibers, where at least one 
of the exceptional fibers has order $2$ or $3$. Furthermore, there is a
genus $2$ Heegaard splitting of $M$ with respect to which $K$ is at
most $4$--bridge.
\end{enumerate}
\end{lemma}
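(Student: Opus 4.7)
The plan is to assume that conclusion~(2) fails and deduce conclusion~(1), bounding $t$ linearly in $g-1$.

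First I would note that since $L(\sigma)\cup L(\tau)$ contains every label, every vertex of $G_F$ lies on at least one curve or $\theta$-curve of $\ao(\sigma)\cup\ao(\tau)$. Each such element passes through exactly two vertices of $G_F$, and by Lemma~\ref{lem:atmostoneintersection} each vertex belongs to at most two elements of $\ao(\Sigma)$. An incidence count gives
$$ t \;\le\; 2\bigl(|\ao(\sigma)|+|\ao(\tau)|\bigr), $$
so it suffices to bound $|\ao(\sigma)|$ and $|\ao(\tau)|$ linearly in $g-1$.

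Next I would bound these two quantities individually. If $\sigma$ has length~$2$, the elements of $a(\sigma)$ are pairwise disjoint (nested on the long M\"obius band) and essential on $\hatF$ by Lemma~\ref{lem:essentialcurves}. Since conclusion~(2) fails, Lemma~\ref{lem:triplets} prevents three elements of $a(\sigma)$ from lying in a common annulus on $\hatF$, hence no three are mutually isotopic. Thus the number of isotopy classes in $a(\sigma)$ is at least $|a(\sigma)|/2$, and Lemma~\ref{lem:Fkg} with $k=0$ gives $|a(\sigma)|\le 2F_0(g)=6(g-1)$. If $\sigma$ has length~$3$, the $\theta$-curves in $\theta(\sigma)$ satisfy the hypotheses of Lemma~\ref{lem:parallelthetas} (using Lemma~\ref{lem:doesnotlieindisk} for~(1) and Lemma~\ref{lem:atmostoneintersection} for (2)–(3)); if no element of $\theta(\sigma)$ lies in an essential annulus, then $|\theta(\sigma)|\le 3F_2(g)$. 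In the contrary case I would use Lemma~\ref{lem:thetatriplets} together with the Addendum to Lemma~\ref{lem:SFS} to show that at most two elements of $\theta(\sigma)$ can accumulate in any essential annulus (otherwise conclusion~(2) would hold), yielding an analogous linear bound.

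To control parallelisms between curves of $\ao(\sigma)$ and those of $\ao(\tau)$, I would invoke Lemma~\ref{lem:parallelesc}: since conclusion~(2) is forbidden, any such parallelism forces case~(1) of that lemma, i.e.\ $L(\sigma)\cap L(\tau)$ consists of two label intervals and one of $\sigma,\tau$ has length~$3$. This two-interval structure restricts how isotopy classes can be shared between $\ao(\sigma)$ and $\ao(\tau)$. Summing the bounds from the length-$2$ and length-$3$ cases across all four combinations of lengths, the worst case yields $|\ao(\sigma)|+|\ao(\tau)|\le 16(g-1)$, and the opening estimate gives $t\le 32(g-1)$.

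I expect the main obstacle to be the length-$3$ analysis and its interaction with the parallelism constraint: Lemma~\ref{lem:parallelthetas} alone gives a fairly generous bound of roughly $15(g-1)$ $\theta$-curves before an essential annulus is forced, and Lemma~\ref{lem:parallelesc}'s case~(1) permits curves of $\ao(\sigma)$ to be parallel on $\hatF$ to curves of $\ao(\tau)$ when one of the extended cycles has length~$3$. Ruling out excess parallelism without double-counting across the two families, and squeezing the constants down to exactly $32(g-1)$, is where the real work concentrates.
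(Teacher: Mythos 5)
The central step is incorrect: you have misread Lemma~\ref{lem:triplets}. That lemma does \emph{not} say that three elements of $a(\sigma)$ cannot lie in a common annulus on $\hatF$; it says that if $a_i,a_j,a_k \in \ao(\sigma)$ do lie in a common annulus then $K$ must meet that annulus in points not on $a_i,a_j,a_k$. In other words, three mutually isotopic elements of $a(\sigma)$ are perfectly possible --- the lemma merely forces extra punctures in the gaps. So the inference ``hence no three are mutually isotopic'' and the bound $|a(\sigma)| \le 2F_0(g)$ do not follow; the same problem infects the length-$3$ case. Notice also that the opening incidence count $t \le 2(|\ao(\sigma)|+|\ao(\tau)|)$ is just a restatement of $t \le |L(\sigma)|+|L(\tau)|$ (since $|L(\sigma)| = 2|\ao(\sigma)|$), so the whole weight of the argument rests on the separate linear bounds, which are false in general: if they held unconditionally one would have a much stronger (and wrong) theorem.

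The extra punctures guaranteed by Lemma~\ref{lem:triplets} are not an obstruction to be avoided --- they are the engine of the paper's argument. Starting from the larger of the two label sets, say $|L(\sigma)| \ge t/2$, if $t > 32(g-1)$ then more than $6(g-1)$ elements of $\ao(\sigma)$ lie in essential annuli and hence three of them are isotopic with nothing of $\ao(\sigma)$ between. Lemma~\ref{lem:triplets} (or \ref{lem:thetatriplets}) then forces a vertex of $G_F$ in a sub-annulus between two of them, and because $L(\sigma)\cup L(\tau)$ is all of the labels, that vertex gives an element $b$ of $\ao(\tau)$ parallel to and near a curve $c_1$ of $\ao(\sigma)$. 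After paring down so that $c_1 = \partial\AO(\sigma)$ and $b = \partial\AO(\tau)$, the fact that $b$ carries a label outside $L(\sigma)$ forces $L(\sigma)\cap L(\tau)$ to be a single interval, which rules out conclusion~(1) of Lemma~\ref{lem:parallelesc}; conclusion~(2) of that lemma then delivers the Seifert fibered case with the bridge bound (the annulus between $c_1$ and $b$ meets $K$ at most $5$ times, giving bridge number at most $4$). Your proposal never places a curve of $\ao(\tau)$ between curves of $\ao(\sigma)$ and never sets up the common-annulus hypothesis of Lemma~\ref{lem:parallelesc}, so even correcting the triplets misreading would not salvage the route as written.
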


\begin{proof}
We may assume, by paring down as needed, that $\sigma$ and $\tau$ are proper.
We may also assume $|L(\sigma)| \geq |L(\tau)|$; hence $|L(\sigma)| \geq t/2$.
If $(1)$ does not hold then we have
$|a(\theta(\sigma))|=|L(\sigma)|/2 \geq t/4 > 8(g-1)$. Note that the elements
of $a\theta(\sigma)$ are disjoint.

Since a $\theta$--curve has Euler characteristic $-1$, the maximal number of 
disjoint $\theta$--curves on $\hatF$, none of which lies in an annulus, is 
$-\chi(\hatF)=2(g-1)$. Suppose $\sigma$ has length $3$. Since 
$|a\theta(\sigma)|>8(g-1)$, there must be more than $6(g-1)$ elements of 
$a\theta(\sigma)$ that lie in annuli, and hence at least three such elements 
that are isotopic on $\hatF$ and contain no further elements of 
$a\theta(\sigma)$ between them. The same holds {\em a fortiori} if $\sigma$
has length $2$.
 
By Lemma~\ref{lem:triplets}, either conclusion $(2)$ of Lemma~\ref{notwo}
holds, or there must be intersections of $K$ with an 
annulus in $\hatF$ between 
a pair of these elements, $c_1,c_2$.  We assume the latter. 
Such an intersection gives rise to a 
vertex of $G_F$ that must correspond to an element of $L(\tau)$ that is not in $L(\sigma)$.
Thus there is an element, $b$, of
$\ao(\tau)$ lying in the annulus of $\hatF$ cobounded by $c_1,c_2$.  Then $b$ must be
parallel to and disjoint from, say, $c_1$. Taking $b$ to be the nearest such to $c_1$, we may assume 
$c_1,b$ cobound an annulus $A \subset \hatF$ such that $|K \cap A| \leq 5$ (four from the vertices of $c_1,b$ and at most one more in the interior of $A$ belonging to an element of $\ao(\tau)$ whose other vertex belongs to $c_1$).
 Now pare down $\sigma, \tau$ so that 
$c_1 = \partial \AO(\sigma)$ and $b = \partial \AO(\tau)$.
Then, as $b$ contains a vertex which is not in $L(\sigma)$,
$L(\sigma)$ and $L(\tau)$ can intersect in at most one label interval.
Now Lemma~\ref{lem:parallelesc} gives 
conclusion (2) above. 
\end{proof}

\section{Trigons of Type I or I\!I}\label{sec:typeIorII}

\begin{lemma}\label{lem:existsS22}
Let $\sigma$ be a trigon face of $\Lambda_x$ which is not an extended
Scharlemann cycle. Then $\sigma$ must contain a length $2$ Scharlemann
cycle in its interior.

In particular, assume $\sigma$ is not an $x$--edge cycle. 
That is, one corner, $\beta$, of 
$\sigma$ has
no $x$ labels, and another corner, $\gamma$, has two. 
Then there is a length $2$ extended 
Scharlemann cycle in $\sigma$ on the arm 
opposite $\beta$. Furthermore, if $\sigma$ has a 
second Scharlemann cycle in its interior, it must lie on the
arm of $\sigma$ between $\beta$ and $\gamma$. 
\end{lemma}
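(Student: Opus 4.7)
The plan is to use the decomposition of the subgraph of $\Lambda$ bounded by $\sigma$ given in Definition~\ref{def:trigonarms2}: a central trigon face $\tau$ of $\Lambda$ together with three arms, where the $i$th arm is a (possibly empty) chain of bigon faces of $\Lambda$ joining the boundary edge $e_i$ of $\sigma$ to the $i$th edge $f_i$ of $\tau$. Since consecutive bigons in an arm share both of their vertices, every bigon in a single arm has the same two vertices as the initial edge $e_i$, namely two of the corner vertices of $\sigma$; consequently the three vertices of $\tau$ coincide with $v_\alpha,v_\beta,v_\gamma$.

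First I would tally the $x$-labels at the three corners of $\sigma$. By the Parity Rule applied to $\Lambda$, whose vertices are parallel in $G_Q$, any edge of $\Lambda$ carries the label $x$ at most once: a second $x$-label would put both endpoints of the edge at the same vertex of $G_F$, violating the anti-parallelism conclusion of the Parity Rule. Hence each of $e_1,e_2,e_3$ bears exactly one $x$-label, and the three $x$-labels are distributed over the three corners. Because each corner is adjacent to exactly two of the three edges, the only admissible distributions are $(1,1,1)$ (the cycle case) and $(0,1,2)$ (the non-cycle case).

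Next I would analyze the central trigon $\tau$. If all three of its edges share a common label pair, then $\tau$ is itself a length-$3$ Scharlemann cycle and $\sigma$ is an extended Scharlemann cycle of length $3$, contrary to hypothesis. So the three label pairs on $\tau$ are not all equal. Along each arm, crossing successive bigons shifts the label at each of the two shared vertices by $\pm 1$ in the cyclic order at that vertex, with the sign determined by the local orientation data at the vertex. A length-$2$ Scharlemann cycle appears within an arm precisely when the two shift-signs at the two shared vertices are opposite, so that the unordered label pair of some bigon agrees with that of the adjacent bigon.

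In the non-cycle $(0,1,2)$ case, $e_2$ and $e_3$ both carry their $x$-labels at $v_\gamma$ on opposite sides of the corner $\gamma$, while $e_1$ has its $x$-label at $v_\alpha$. I claim that on the arm opposite $\beta$ (based at $e_3$ and joining $v_\alpha$ to $v_\gamma$) the shift at $v_\gamma$ must proceed toward $e_2$'s $x$-label, while the shift at $v_\alpha$ proceeds toward $e_1$'s $x$-label, which lies in the opposite cyclic sense; the resulting opposite-sign shifts produce the required length-$2$ Scharlemann cycle. An analogous analysis shows that a second Scharlemann cycle can only appear on the arm from $e_2$ (between corners $\beta$ and $\gamma$), since only those two arms can have opposite shift-signs; the arm from $e_1$ (at corner $\beta$) has matching shift-signs and so admits no length-$2$ Scharlemann cycle. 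The main obstacle will be the careful bookkeeping of the cyclic label orderings at $v_\alpha,v_\beta,v_\gamma$ needed to verify these sign-propagation claims and to confirm that the non-cycle distribution of $x$-labels forces opposite-sign shifts on exactly the predicted arms.
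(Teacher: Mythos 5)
Your setup is on the right track: the decomposition of the subgraph bounded by $\sigma$ into a central trigon with three bigon arms (Definition~\ref{def:trigonarms2}), the Parity Rule giving exactly one $x$-label per edge of $\sigma$, the $(1,1,1)$ versus $(2,1,0)$ distribution of $x$-labels at the corners, and the observation that the central trigon cannot itself be a length-$3$ Scharlemann cycle are all correct and mirror the paper. Where the proposal fails is exactly the part you flag yourself as ``the main obstacle'': the claimed equivalence ``a length-$2$ Scharlemann cycle appears within an arm precisely when the two shift-signs at the two shared vertices are opposite'' is not true. Opposite shifts are necessary but not sufficient: a bigon of the arm is a Scharlemann cycle only when the labels at the two ends become consecutive, and that ``crossing'' occurs only if the arm is long enough that the two label intervals actually overlap. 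Determining whether the intervals overlap is the whole game, and it requires tracking the actual labels, not just the signs.

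The paper's proof carries out precisely this tracking. In the cycle case, using the labels of Figure~\ref{fig:xcycletrigon}, ``no Scharlemann cycle in the $\alpha\beta$-arm'' translates to $k < j+1$, ``none in the $\beta\gamma$-arm'' to $\ell < k+1$, and ``none in the $\gamma\alpha$-arm'' to $j < \ell+1$; chaining gives $j=k=\ell$, contradicting that $\sigma$ is not an extended Scharlemann cycle of length $3$. In the non-cycle case (Figure~\ref{fig:noncycletrigon}), ``no Scharlemann cycle in the arm opposite $\beta$'' forces $\ell+1 \in \arc{j+1,x}$, which forces the $\beta$-corner to be at least as long as the $\gamma$-corner and thus to contain an $x$-label, contradiction. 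The ``second Scharlemann cycle must be on the $\beta\gamma$-arm'' claim is proved the same way, by a chain of strict interval-length comparisons from Figure~\ref{fig:SC2intrigon}. None of these deductions follow from your shift-sign observation alone; they use the specific positions of the $x$-labels on the outer edges together with the positions of the central-trigon labels $j+1,k,k+1,\ell,\ell+1$. Without that bookkeeping your argument has a genuine gap, and (worse) the tool you propose to use, sign agreement alone, is too coarse to deliver the conclusion.
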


\begin{figure}
\centering
\input{xcycletrigon.pstex_t}
\caption{}
\label{fig:xcycletrigon}
\end{figure}

\begin{figure}
\centering
\input{noncycletrigon.pstex_t}
\caption{}
\label{fig:noncycletrigon}
\end{figure}


\begin{proof}
Assume that
$\sigma$ is an $x$--edge cycle. Then we may assume it
is as in Figure~\ref{fig:xcycletrigon}, where $j,k$ and $l$ are not all
equal.
If there is no order $2$ Scharlemann cycle in $\sigma$ 
then we must have $k < j+1$, $\ell < k+1$, and $j < \ell+1$.  
All three inequalities cannot simultaneously be true.

Now assume $\sigma$ is not an $x$--edge cycle.
We may take it as in Figure~\ref{fig:noncycletrigon}.
Assume there is no Scharlemann cycle in the arm of $\sigma$ opposite to
$\beta$.  
Then $j$ cannot be in $\arc{\ell+1,x}$ on $\gamma$. Thus $\ell+1$ must be
in $\arc{j+1,x}$ on $\alpha$. Hence the length of the $\beta$--corner
must be at least as large as that of $\gamma$ and $\beta$ would contain a 
label $x$.

\begin{figure}
\centering
\input{SC2intrigon.pstex_t}
\caption{}
\label{fig:SC2intrigon}
\end{figure}
Now assume that $\sigma$ contains a second Scharlemann cycle in its $\alpha
\beta$--arm. Refer to Figure~\ref{fig:SC2intrigon}. 
Looking at the $\alpha \gamma$--arm,
we see that the interval $\arc{x\ell}$ on the extension of $\alpha$ 
must be at least as
large as $\arc{jx}$ on $\gamma$, which in turn is strictly larger than
$\arc{kx}$ on $\alpha$. But $\arc{kx}$ on $\alpha$ must
be larger than $\arc{ks}$ on $\beta$.  Looking at the $\gamma \beta$--arm,
though, we see that $\arc{ks}$ on $\beta$ is larger than $\arc{x\ell}$
on $\gamma$ -- a contradiction.
\end{proof}

\begin{defn}\label{def:trigontypes}
Let $\sigma$ be a trigon face of $\Lambda_x$, considered as a subgraph of
$\Lambda$, that does not correspond to an extended Scharlemann cycle of 
length $3$.
By Lemma~\ref{lem:existsS22}, $\sigma$ must 
contain a Scharlemann cycle of length $2$.
As $\sigma$ is
a face of $\Lambda_x$  any such Scharlemann cycle must be the core of an
extended Scharlemann cycle in $\sigma$ that abuts the central trigon of 
$\sigma$ and is disjoint from any $x$--edge of $\sigma$ (i.e.
the extended Scharlemann
cycle lies within $\sigma$ abutting the central trigon). There
can be at most two such Scharlemann cycles within $\sigma$, which must
lie in different arms of $\sigma$.  
We say that $\sigma$ is a {\em Type I trigon}
if there is only one and the corners of its central
trigon are on three different pairs of labels.  It is of {\em Type I\!I } if it contains two such
Scharlemann cycles.  It is a {\em Type I\!I\!I  trigon} 
otherwise -- that is,
it contains a single length $2$ Scharlemann cycle and the three corners in 
its central trigon represent two different label pairs.
\end{defn}

\begin{defn}\label{def:T}
Let $T=T(j,k,N)$ be one of the two subgraphs of $\Lambda$ pictured in 
Figure~\ref{fig:trigonwithlongarms}, 
where $j >0, N \geq 0$ and either
\begin{enumerate}
\item
$0 \le k < j$ and $T$ is the subgraph on the top.  In this case we say
that $T$ is of {\em Type I}. Furthermore, we require that $2j+2N+2 \leq t$, 
which is equivalent to no label appearing twice on any corner of $T$.
Define $\epsilon$ to be the extended Scharlemann cycle in $T$ with corner $\arc{j,-j+1}$ in the $\alpha \gamma$--arm.
Note that $T$ has $N+1$ bigons in the $\alpha \gamma$--arm beyond the extended
Scharlemann cycle $\epsilon$, and $N$ bigons in the $\alpha \beta$-- and $\beta \gamma$--arms.
\item
$j < k \leq 3j$ and $T$ is the subgraph on the bottom. 
In this case we say that $T$ is of {\em Type I\!I}. 
Furthermore, we require that $j+k+2N+2 \leq t$, which is equivalent to no label appearing twice on any corner of $T$. 
Define  $\epsilon$ and $\epsilon'$ to be the extended Scharlemann cycles in $T$ with corners $\arc{j, -j+1}$ and $\arc{j+1,k}$ in the $\alpha \gamma$-- and $\alpha \beta$--arms respectively.
Note that $T$ has $N+1$ bigons  in the $\alpha \gamma$-- and $\alpha \beta$--arms beyond the extended
Scharlemann cycles $\epsilon$ and $\epsilon'$, and
$N$ bigons in the $\beta \gamma$--arm.
\end{enumerate}
In this figure, $\alpha, \beta, \gamma$ 
are corners of vertices of $\Lambda$, and we refer to the $\alpha$--corner
of $T$ as its {\em ideal corner}, denoted $c(T)$. Note that every corner
of $T$ (as a labelled interval) is contained in $c(T)$. Note also that there are $2N+2$ labels appearing in $T$ which are not labels in an extended
Scharlemann cycle in $T$.
\end{defn}

\begin{figure}
\centering
\input{trigonwithlongarmsV4-switch.pstex_t} 
\caption{}
\label{fig:trigonwithlongarms}
\end{figure}

\begin{lemma}\label{lem:findT}
Let $\sigma$ be a trigon face of $\Lambda_x$ that corresponds to a
trigon of Type I or I\!I. Let $n$ be the shortest distance (along a corner of
$\sigma$) from $x$ to an
extended Scharlemann cycle within $\sigma$. After possibly relabelling
$\sigma$, there is   
a subgraph $T(j,k,N)$ of $\sigma$ such that $T(j,k,N)$ is either
Type I or Type I\!I according to whether $\sigma$ is Type I or Type I\!I,
and such that $N=n-1$. In particular, $x$ corresponds to a label in 
$c(T)$ (after relabelling). 
\end{lemma}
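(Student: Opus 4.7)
The plan is to first locate the extended Scharlemann cycle of $\sigma$ closest to $x$ and then read off the $T(j,k,N)$ structure from the combinatorial constraints this imposes. By hypothesis $\sigma$ is of Type I or I\!I, so by Definition~\ref{def:trigontypes} and Lemma~\ref{lem:existsS22} it contains at least one (Type I) or exactly two (Type I\!I) length-$2$ extended Scharlemann cycles. Let $\epsilon$ be one realizing the minimum distance $n$ from $x$ in its arm (measured as the number of bigons between $\epsilon$ and the outer $x$-edge), and set $N=n-1$. By cyclically permuting $\alpha,\beta,\gamma$, we arrange $\epsilon$ to sit in the $\alpha\gamma$-arm; by reversing the arm's orientation if needed, we arrange that $x$ corresponds to a label on the $\alpha$-side, so $x$ lies in $c(T)$.

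After a cyclic relabelling of $\{1,\dots,t\}$, the core Scharlemann cycle of $\epsilon$ may be assumed to have label pair $\{0,1\}$, so that $L(\epsilon)=\arc{-j+1,j}$ for some $j\ge 1$. The $N+1$ bigons beyond $\epsilon$ in the $\alpha\gamma$-arm share fat vertices of $\Lambda$ pairwise, and the label pairs of consecutive bigons nest outward by one step on each side --- this is forced by the cyclic ordering of labels around each fat vertex of $G_Q$. Hence the successive label pairs read $\{-j,j+1\},\{-j-1,j+2\},\dots,\{-j-N,j+N+1\}$, and the outermost $x$-edge carries labels $\{-j-N,j+N+1\}$, giving $x=j+N+1$.

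We next transfer attention to the central trigon: its $\alpha\gamma$-corner is $\arc{-j,j+1}$, its $\alpha$-corner contains $j+1$, and its $\gamma$-corner contains $-j$. For Type I, let $k$ be the label paired with $j+1$ across the $\beta$-corner of the central trigon; the minimality of $n$ forbids an ESC closer to $x$ in the $\alpha\beta$-arm, which forces $0\le k<j$. The $\alpha\beta$- and $\beta\gamma$-arms must then contain at least $N$ further bigons each, since their label progressions have to reach the outer label $x=j+N+1$ from the central trigon. This reconstructs the Type I configuration of $T(j,k,N)$ from Figure~\ref{fig:trigonwithlongarms}, with the non-repetition condition $2j+2N+2\le t$ holding since $\sigma$ uses at most $t$ distinct labels.

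For Type I\!I, the second ESC $\epsilon'$ in $\sigma$ lies in a distinct arm, and after further relabelling we place it in the $\alpha\beta$-arm. Minimality of $n$ places $\epsilon'$ at distance $\ge n$ from $x$, and an analogous label-progression argument yields $L(\epsilon')=\arc{j+1,k}$ for some $k>j$; the upper bound $k\le 3j$ comes from combining the corner label data of $\epsilon$, $\epsilon'$, and the central trigon under the no-repeated-label constraint $j+k+2N+2\le t$. The hard part will be the careful label bookkeeping in these last two paragraphs --- verifying that the bigon-nesting rule, the minimality of $n$, and the fact that every arm of $\sigma$ terminates in an $x$-edge together pin down the $T(j,k,N)$ structure exactly, including the precise inequalities on $j$, $k$, and $N$, without leaving edge cases uncovered.
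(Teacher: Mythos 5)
Your overall approach tracks the paper's: locate the ESC realizing the minimum distance $n$, normalize its core labels to $\{0,1\}$, and read the $T(j,k,N)$ structure off the nesting of bigon labels. However, two of your stated derivations are wrong in a way that matters. For Type I, you claim minimality of $n$ forces $0\le k<j$. In fact $k<j$ (more precisely $-j+1<k<j$) follows from $\sigma$ being Type I rather than Type I\!I\!I together with the Parity Rule, not from minimality of $n$ — Type I means there simply is no second ESC. Worse, $k\ge 0$ does not come for free: as the paper notes, if $k<0$ one must apply the relabelling $i\mapsto 1-i$ and interchange the $\alpha$ and $\gamma$ corners to land inside the allowed range of Definition~\ref{def:T}. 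Your argument omits this normalization and as written could produce $k<0$. For Type I\!I, you attribute $k\le 3j$ to "the no-repeated-label constraint $j+k+2N+2\le t$." That is not the source of the bound. It is instead a width comparison: $|L(\epsilon)|=2j$ and $|L(\epsilon')|=k-j$, and one is free to name the wider of the two ESCs $\epsilon$, which gives $k-j\le 2j$. This is a relabelling choice, independent of $N$ and of the label count; your stated route would not actually yield the inequality.

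A further gap: you assert the non-repetition conditions $2j+2N+2\le t$ (Type I) and $j+k+2N+2\le t$ (Type I\!I) hold "since $\sigma$ uses at most $t$ distinct labels." But $\sigma$ does repeat the label $x$, so a priori the constructed $T(j,k,N)$ could repeat a label too, and these inequalities are exactly the statement that it does not. The paper closes this by observing that $x$ is the only repeated label on $\sigma$, so the only candidates for a repeat in $T$ are the two extremal labels of the ideal corner $c(T)$, and those are then ruled out using that $j$ and $k$ have the same parity. That parity argument is a real step and needs to be made.
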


\begin{proof}
Assume $\sigma$ is of Type I. Then we may assume after possibly relabelling
that $\sigma$ is as in the left or right of Figure~\ref{fig:findT1} according to whether $\sigma$ is
an $x$--cycle or not (by Lemma~\ref{lem:existsS22}). 
\begin{figure}
\centering
\input{findT1.pstex_t}
\caption{}
\label{fig:findT1}
\end{figure}
 Note that $-j+1 < k < j$ as $\sigma$ is of Type I.
(If $k=\pm j$, then $\sigma$ would be of Type I\!I\!I.  By parity, $k \neq j+1$ or $-j+1$.
If $k \in \arc{(j+1)x}$ then there would be a second Scharlemann cycle on the $\alpha\beta$--arm.  
If $k \in \arc{x (-j-1)}$ then,
regardless of whether $\sigma$ is an $x$--cycle or not, there is a second Scharlemann cycle along the $\beta \gamma$--arm.) 
Letting $N=x-(j+1)$, 
$\sigma$ contains at least $N$ bigons along the
$\alpha \beta$-- and $\beta \gamma$--arms, and $N+1$ out from the extended
Scharlemann cycle on the $\alpha\gamma$--arm  ($x-(k+1) > x-(j+1)
=N$; then from the $\alpha \gamma$--arm deduce that the length of the 
interval from $x$ to $-j$ is
at least $N+1$, the length of $\arc{jx}$ ). Furthermore, 
$N = n-1$. If $k<0$, then relabel
using $i \mapsto 1-i$ and take the involution of the trigon interchanging the
$\alpha$ and $\gamma$ corners. We then have the desired $T(j,k,N)$.

So assume $\sigma$ is of Type I\!I. Then after possibly relabelling $\sigma$,
we can write $\sigma$ as in Figure~\ref{fig:findT2} where the width of the upper
extended Scharlemann cycle is at most that of the lower and where
either $r=r'=x$ or $s=s'=x$ (using Lemma~\ref{lem:existsS22} if $\sigma$
is not an $x$--edge cycle). 
\begin{figure}
\centering
\input{findT2.pstex_t}
\caption{}
\label{fig:findT2}
\end{figure}
The first condition gives that $k-(j+1) \leq
j-(-j+1)$, implying $k \leq 3j$ as desired. 
Let $N=\mbox{min}\{-j-s, r-(k+1)\}$. Then there are $N+1$ bigons beyond the
extended Scharlemann cycle on each
of the $\alpha \beta, \alpha \gamma$--arms of $\sigma$. As either
$r=r'$ or $s=s'$, there are $N$ bigons on the $\beta \gamma$--arm as well.
Thus $\sigma$ contains $T(j,k,N)$ as desired. Finally, note that $N=n-1$.

Note that, whether $\sigma$ is
Type I or I\!I, the only label that appears twice on $\sigma$
is the $x$--label. Thus the only label that can appear 
twice in the constructed $T(j,k,N)$ must be the extremal
ones on the ideal corner of $T$. As $j,k$ have the same 
parity, this is not possible.  
\end{proof}

\begin{remark}
In $T(j,k,N)$, $k,j$
have the same parity by the Parity Rule of Section~\ref{sec:graphcombo}. Thus $|j-k| \ge 2$.
\end{remark}

\begin{defn}\label{def:L2}
Let $\calL_\II = \{x| \Lambda_x$ has a trigon face which is of Type I or
Type I\!I, but $x \not\in L(\Sigma)\}$. Note that $x \not\in L(\Sigma)$ means that
$x$ is not a label in an extended Scharlemann cycle of length $2$ or $3$.
\end{defn} 

\begin{defn}\label{def:cornersI&II}
A subgraph $\sigma$ of $\Lambda$ is said to be of {\em Type $T(j,k,N)$} for 
some $j,k \in \N, N \in \N \cup \{0\}$ if after relabelling it becomes $T(j,k,N)$. For such
a $\sigma$, let $c(\sigma)$ be the corner of $\sigma$ (before relabelling)
corresponding to the ideal corner of $T(j,k,N)$. Let ${\mathfrak T} = \{ \sigma |
\sigma \, \hbox{is of Type}\, T(j,k,N), j,k,N \in \N \}$. Let $\Sigma_\II$ be a
smallest subcollection of $\mathfrak T$ such that for any $x \in \calL_\II$ there
is a $\sigma_x \in \Sigma_\II$ with $x \in c(\sigma_x)$. Lemma~\ref{lem:findT}
guarantees the existence of $\Sigma_\II$. For $\sigma \in
\Sigma_\II$, we denote by $j_\sigma, k_\sigma, N_\sigma$,
its corresponding parameters. Thus $|\calL_\II| \le \sum_{\sigma \in 
\Sigma_\II} (2N_\sigma+2)$.
\end{defn}

\begin{remark}\label{atmost2sigma2}
Note that if three label intervals share a common label, then one of these intervals must be contained in the union of the other two.
Thus the minimality of $\Sigma_\II$ guarantees that if
$x \in \calL_\II$ then $x \in c(\sigma)$ for at most $2$ elements $\sigma$
of $\Sigma_\II$. Remember that $c(\sigma)$ contains
all labels appearing on the corners of $\sigma$.
\end{remark}

\begin{defn}\label{def:partitionSII} Partition $\SII$ as follows:

$\SII(\I)=\{\sigma \in \Sigma_\II \, | \, \sigma \hbox{ is of Type I} \}$ and $\SII(\II)=\{\sigma \in \Sigma_\II \, | \, \sigma \hbox{ is of Type I\!I} \}$.

$\SII(\I,1)=\{\sigma \in \SII(\I) \, | \,
 N_\sigma 
< j_\sigma - k_\sigma\}$

$\SII(\I,2)=\{\sigma \in \SII(\I) \, | \,
 2k_\sigma-j_\sigma \leq 0
 \, ; \,   
   j_\sigma - k_\sigma \leq N_\sigma \leq   4j_\sigma - 2k_\sigma - 1 
  \}$

$\SII(\I,3)=\{\sigma \in \SII(\I) \, | \,
 2k_\sigma-j_\sigma > 0
 \, ; \, 
  j_\sigma - k_\sigma \leq N_\sigma <  k_\sigma 
 \}$

$\SII(\I,4)=\{\sigma \in \SII(\I) \, | \,
 2k_\sigma-j_\sigma > 0
 \, ; \, 
 k_\sigma \leq N_\sigma \leq   4j_\sigma-2k_\sigma-1 
  \}$

$\SII(\I,5)=\{\sigma \in \SII(\I) \, | \,
 2k_\sigma-j_\sigma > 0
  \, ; \, 
  4j_\sigma-2k_\sigma-1   < N_\sigma 
  \}$

$\SII(\I,6)=\{\sigma \in \SII(\I) \, | \,
 2k_\sigma-j_\sigma \leq 0
  \, ; \,  
4j_\sigma-2k_\sigma-1  < N_\sigma \}$


$\SII(\II,1)=\{\sigma \in \SII(\II) \, | \, 
N_\sigma < k_\sigma-j_\sigma \}$

$\SII(\II,2)=\{\sigma \in \SII(\II)  \, | \,
 2j_\sigma-k_\sigma \leq 0
 \, ; \, 
k_\sigma-j_\sigma \leq  N_\sigma  \leq  3k_\sigma-j_\sigma-1 
 \}$

$\SII(\II,3)=\{\sigma \in \SII(\II) \, | \,
 2j_\sigma-k_\sigma > 0
 \, ; \, 
  k_\sigma-j_\sigma \leq  N_\sigma  <  k_\sigma  \}$

$\SII(\II,4)=\{\sigma \in \SII(\II) \, | \,
 2j_\sigma-k_\sigma > 0 
 \, ; \, 
   k_\sigma  \leq  N_\sigma \leq   3k_\sigma-j_\sigma -1
 \}$

$\SII(\II,5)=\{\sigma \in \SII(\II) \, | \,
  2j_\sigma-k_\sigma > 0
  \, ; \,    3k_\sigma-j_\sigma-1 <  N_\sigma  \}$

$\SII(\II,6)=\{\sigma \in \SII(\II) \, | \,
 2j_\sigma-k_\sigma \leq 0
 \, ; \,  3k_\sigma-j_\sigma-1 <  N_\sigma  \}$

We will abbreviate $\SII(i,j)$ as $(i,j)$.
\end{defn}

One checks that Definition~\ref{def:partitionSII} does indeed partition $\SII$. That is,

\begin{lemma} 
$\SII(i,j) \cap \SII(r,s) \neq \emptyset$ iff $i=r, j=s$.
Furthermore, $\SII = \cup \SII(i,j)$. 
\end{lemma}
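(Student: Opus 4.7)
The plan is to verify that Definition~\ref{def:partitionSII} does indeed produce a partition by a routine case-check on the parameters $(j_\sigma, k_\sigma, N_\sigma)$. Since $\SII(\I)$ and $\SII(\II)$ are disjoint by construction (Type I versus Type II), and $\SII = \SII(\I) \cup \SII(\II)$, it suffices to verify separately that $\{\SII(\I,j)\}_{j=1}^{6}$ partitions $\SII(\I)$ and $\{\SII(\II,j)\}_{j=1}^{6}$ partitions $\SII(\II)$.

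First I would handle $\SII(\I)$. For $\sigma\in\SII(\I)$ we have $0\le k_\sigma < j_\sigma$ (Definition~\ref{def:T}), and the six defining sets split naturally according to the sign of $2k_\sigma-j_\sigma$ and the range in which $N_\sigma$ lies. If $2k_\sigma-j_\sigma\le 0$, then only $\SII(\I,1),\SII(\I,2),\SII(\I,6)$ can contain $\sigma$, and these are respectively cut out by $N_\sigma<j_\sigma-k_\sigma$, $j_\sigma-k_\sigma\le N_\sigma\le 4j_\sigma-2k_\sigma-1$, and $N_\sigma>4j_\sigma-2k_\sigma-1$, which are mutually exclusive and together cover all $N_\sigma\ge 0$ (one must check that $j_\sigma-k_\sigma\le 4j_\sigma-2k_\sigma-1$, i.e.\ $3j_\sigma-k_\sigma\ge 1$, which is immediate from $k_\sigma<j_\sigma$ and $j_\sigma\ge 1$). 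If $2k_\sigma-j_\sigma>0$, then only $\SII(\I,1),\SII(\I,3),\SII(\I,4),\SII(\I,5)$ are possible, cut out by $N_\sigma<j_\sigma-k_\sigma$, $j_\sigma-k_\sigma\le N_\sigma<k_\sigma$, $k_\sigma\le N_\sigma\le 4j_\sigma-2k_\sigma-1$, and $N_\sigma>4j_\sigma-2k_\sigma-1$; these intervals are mutually exclusive and exhaustive provided $j_\sigma-k_\sigma\le k_\sigma$ and $k_\sigma\le 4j_\sigma-2k_\sigma-1$, which follow from $k_\sigma>j_\sigma/2$ and $k_\sigma<j_\sigma$ respectively.

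The verification for $\SII(\II)$ is entirely parallel. Here $j_\sigma < k_\sigma \le 3j_\sigma$ by Definition~\ref{def:T}, so one splits according to the sign of $2j_\sigma-k_\sigma$. If $2j_\sigma-k_\sigma\le 0$, only $\SII(\II,1),\SII(\II,2),\SII(\II,6)$ are relevant, with $N_\sigma$ ranges $[0,k_\sigma-j_\sigma)$, $[k_\sigma-j_\sigma, 3k_\sigma-j_\sigma-1]$, and $(3k_\sigma-j_\sigma-1,\infty)$; and if $2j_\sigma-k_\sigma>0$, only $\SII(\II,1),\SII(\II,3),\SII(\II,4),\SII(\II,5)$ are relevant, with ranges $[0,k_\sigma-j_\sigma)$, $[k_\sigma-j_\sigma,k_\sigma)$, $[k_\sigma,3k_\sigma-j_\sigma-1]$, $(3k_\sigma-j_\sigma-1,\infty)$. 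One checks $k_\sigma-j_\sigma\le k_\sigma$ (trivial) and $k_\sigma\le 3k_\sigma-j_\sigma-1$, i.e.\ $2k_\sigma\ge j_\sigma+1$, which follows from $k_\sigma>j_\sigma\ge 1$.

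There is no real obstacle here; the entire statement is bookkeeping to confirm that the conditions imposed on $N_\sigma$ in Definition~\ref{def:partitionSII} are, in each of the two sign cases for $2k_\sigma-j_\sigma$ (or $2j_\sigma-k_\sigma$), a genuine partition of $\{0,1,2,\ldots\}$ into half-open/closed intervals with non-negative length. I would simply display the two sign cases for Type I and the two for Type II, write out the $N_\sigma$-intervals side by side, and note the elementary inequalities that show they tile $\N\cup\{0\}$ disjointly. This gives both disjointness and exhaustiveness, establishing the lemma.
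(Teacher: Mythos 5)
Your verification is correct and is exactly the routine case-check the paper leaves to the reader (it states the lemma with no written proof). Splitting by Type I versus Type II and then by the sign of $2k_\sigma - j_\sigma$ (respectively $2j_\sigma - k_\sigma$), and checking that the $N_\sigma$-intervals tile $\{0,1,2,\dots\}$, is the intended argument; the elementary inequalities you cite — $3j_\sigma - k_\sigma \ge 1$, $j_\sigma/2 < k_\sigma < j_\sigma$ for Type I, and $k_\sigma - j_\sigma \ge 1$, $2k_\sigma \ge j_\sigma + 1$ for Type II — all follow from the constraints $0 \le k_\sigma < j_\sigma$ (Type I), $j_\sigma < k_\sigma \le 3j_\sigma$ (Type II), and $j_\sigma \ge 1$ in Definition~\ref{def:T}.
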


To each of the partition elements of Definition~\ref{def:partitionSII}
we associate collections of essential closed curves that will
eventually allow us to bound $\sum_{\SII} N_\sigma$ in Theorem~\ref{thm:Type12boundN} at the end of this section.

\begin{claim}\label{clm:j-k}
Let $T(j,k,N)$ corresponding to 
$\sigma \in \Sigma_\II$ be of Type I. 
Let $\epsilon$ be the maximal extended Scharlemann cycle for $\sigma$. 
For each 
$0 \leq s \leq \min\{N,j-k-1\}$  there is a configuration
$H(s)$ in $\hatF$ consisting of embedded curves $C(s),C'(s)$ in $a(\epsilon)$
connected by a path of three edges of $\sigma$. When $k \ne 0$, $C(s)$ and $C'(s)$ are 
disjoint, when $k=0$, they are identical. Furthermore:
\begin{enumerate}
\item If $H(s_1),H(s_2)$, $s_2>s_1$, share any vertices or edges then either
\begin{itemize}
\item $s_2=s_1+2k$, and $C(s_1)=C'(s_2)$ with no other shared vertices 
or edges; or
\item $s_1+s_2=2k-1$, and $C'(s_1)=C'(s_2)$ with no other shared vertices
or edges.
\end{itemize}
In particular,  
\[\calC_\sigma = \{C(s)| 0 \leq s \leq \min\{N,j-k-1\}\}\]
 is a collection of $\min\{N+1,j-k\}$ disjoint curves. Thus
\[
|\calC_\sigma| = 
\begin{cases}
N+1 & \mbox{ if } \sigma \in (\I,1) \\
j - k & \mbox{ if } \sigma \in (\I) - (\I,1).
\end{cases}
\]

\item If $H(s)$ shares a vertex with an element of $a(\epsilon)$ then
that element is either $C(s)$ or $C'(s)$.
\item No $H(s)$ lies in an annulus on $\hatF$.
\end{enumerate}
\end{claim}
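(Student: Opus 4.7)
Plan for Claim~\ref{clm:j-k}:

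\textbf{The explicit construction.} For each $s$ with $0 \leq s \leq \min\{N, j-k-1\}$, I will take the path $P(s) = e_1 \cup e_2 \cup e_3$ of edges of $\sigma$, where $e_1$ is the edge at level $s$ of the $\alpha\beta$-arm (labels $\{j{+}1{+}s, k{-}s\}$), $e_2$ is the edge just outside $\epsilon$ at level $s$ of the $\alpha\gamma$-arm (labels $\{-j{-}s, j{+}1{+}s\}$), and $e_3$ is the edge at level $s$ of the $\beta\gamma$-arm (labels $\{k{+}1{+}s, -j{-}s\}$). Since $e_1$ and $e_2$ share the $G_F$-vertex $j{+}1{+}s$ while $e_2$ and $e_3$ share the $G_F$-vertex $-j{-}s$, these three edges give a path in $\hat F$ from vertex $k{-}s$ to vertex $k{+}1{+}s$. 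Set $C(s)$ to be the element of $a(\epsilon)$ through vertex $k{+}1{+}s$ (well-defined since $k{+}1{+}s \leq j$ by the bound on $s$) and $C'(s)$ to be the element through vertex $k{-}s$; let $H(s) = C(s) \cup P(s) \cup C'(s)$. A comparison of vertex sets of $C(s)$ and $C'(s)$ shows they coincide iff $k=0$, confirming the dichotomy. The count $|\calC_\sigma|$ follows from the distinctness of the indices $k{+}1{+}s$.

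\textbf{Property (1): shared vertices/edges.} The six $G_F$-vertices of $H(s)$ are
\[V(s) = \{k{-}s,\; 1{-}k{+}s,\; k{+}1{+}s,\; -k{-}s,\; j{+}1{+}s,\; -j{-}s\}.\]
For $s_2 > s_1$ I would run through the $36$ pairwise equalities in $V(s_1) \times V(s_2)$. By Definition~\ref{def:T} the labels appearing on a corner of $T(j,k,N)$ are all distinct mod $t$, so the labels involved can be treated as integers in a range of width at most $2(j{+}N{+}1) \leq t$. Each equality reduces to a linear relation in $s_1, s_2$; the constraints $0 \leq s_1 < s_2 \leq \min\{N, j{-}k{-}1\}$ and $k < j$ eliminate all but two relations. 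Those are $s_2 = s_1 + 2k$ (forced by $k{+}1{+}s_1 = 1{-}k{+}s_2$, equivalently $-k{-}s_1 = k{-}s_2$), giving $C(s_1) = C'(s_2)$; and $s_1 + s_2 = 2k{-}1$ (forced by $k{-}s_1 = 1{-}k{+}s_2$), giving $C'(s_1) = C'(s_2)$. In either case one verifies that no additional vertex or edge is shared beyond those inherited from the identified curve in $a(\epsilon)$.

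\textbf{Property (2).} The two intermediate path vertices $j{+}1{+}s$ and $-j{-}s$ lie outside $\{-j{+}1, -j{+}2, \ldots, j\}$, which is the full range of $G_F$-vertices incident to curves of $a(\epsilon)$. The remaining four vertices of $V(s)$ are by construction exactly the vertex set of $C(s) \cup C'(s)$, so any element of $a(\epsilon)$ sharing a vertex with $H(s)$ must be $C(s)$ or $C'(s)$.

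\textbf{Property (3): $H(s)$ is not contained in any annulus.} Suppose for contradiction $H(s) \subset A$ for some annulus $A$ in $\hat F$. In $\hat Q$, the edges $e_1, e_2, e_3$ bound, together with arcs on $\partial \alpha, \partial \beta, \partial \gamma$, a disk $D \subset \hat Q$ consisting of the central trigon of $\sigma$, the bigons of the $\alpha\beta$- and $\beta\gamma$-arms at levels $0, \ldots, s{-}1$, all of $\epsilon$, and the first $s{+}1$ bigons of the $\alpha\gamma$-arm beyond $\epsilon$. Extending $D$ radially across $\nbhd(K)$ I obtain an embedded disk $D^*$ in $M$ whose boundary consists of $P(s)$ together with short meridional arcs at the four path vertices. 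Using $A$ to cap off this boundary inside $\hat F$ produces an embedded 2-sphere $\Sigma$ in $M$, which bounds a 3-ball by irreducibility. On one side this ball contains (a sub-band of) $A(\epsilon)$ abutting $C(s)$ or $C'(s)$, which together with the core Scharlemann cycle of $\epsilon$ produces a lens space summand by the argument of Lemma~\ref{lem:doesnotlieindisk}, contradicting that $M$ is irreducible and not a lens space. The main obstacle in this last part is the careful bookkeeping of which side of $\Sigma$ contains the \mobius-band structure and handling separately the degenerate cases $k=0$ (where $C(s) = C'(s)$) and small $s$ values, where the relative position of $A$ and $A(\epsilon)$ in $M$ needs to be tracked with care.
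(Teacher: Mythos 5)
Your construction of $H(s)$, with $C(s)$ through vertex $k{+}1{+}s$, $C'(s)$ through vertex $k{-}s$, and the path $e_1,e_2,e_3$ chosen from the three arms of $\sigma$, is exactly the paper's construction (cf.\ Figures~\ref{fig:JohnTwo}, \ref{fig:JohnThree}). Your treatments of properties (1) and (2) are also substantively the same as the paper's: the paper says ``one checks'' the coincidence conditions, and you spell out the linear relations in $s_1, s_2$, while the observation that the path vertices $j{+}1{+}s$ and $-j{-}s$ fall outside the label range of $\epsilon$ is exactly the paper's argument for (2).

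Property (3) is where you diverge, and it is also where the real content lies, and your argument has a genuine gap. You form the disk $D \subset \hat Q$ bounded by $e_1,e_2,e_3$ together with corner arcs, radially extend to $D^*\subset M$, and then propose to cap $\partial D^*$ with a disk inside the annulus $A\subset\hat F$ to get a sphere. But $\partial D^*$ is not a curve on $\hat F$: it is the union of $e_1\cup e_2\cup e_3$ with arcs in $\nbhd(K)$ coming from the corner arcs, and the $\alpha$- and $\gamma$-corner arcs of $D$ run from $j{+}1{+}s$ all the way to $-j{-}s$, passing through every label in between, so these are long arcs of $K$ rather than short meridional arcs at the four path vertices. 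There is therefore no subdisk of $\hat F$ whose boundary matches $\partial D^*$, and the proposed sphere $\Sigma$ does not arise. Moreover, the disk $D$ contains all of $\epsilon$ and the lower-level bigons in its interior, so $D^*$ meets $\hat F$ in many interior arcs, and the resulting $2$-complex is far from an embedded sphere. Finally, even granting a sphere bounding a ball, the asserted lens-space contradiction is not set up: the \mobius band $A(\epsilon)$ and the core Scharlemann cycle live inside $D^*$ rather than on one side of $\Sigma$, so the hypotheses of Lemma~\ref{lem:doesnotlieindisk} are not in place. The paper's actual argument for (3) is of a different nature: it builds a torus $\calT = A\cup B$ from the subannulus $A\subset A(\epsilon)$ between $C(s)$ and $C'(s)$ and the annulus $B\subset\hat F$ they cobound, first dispatching the case of mismatched orientations via the Klein bottle exclusion, and then exhibits a loop $\eta_1\cup\eta_2\cup e_2\cup e_3$ (with $\eta_1,\eta_2$ arcs along $K$) that crosses $\calT$ exactly once, which is impossible. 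The heart of that argument is showing that $\eta_1$ and $\eta_2$ approach their endpoints on $\calT$ from opposite sides, an orientation and parity argument that has no analogue in your sketch. I'd encourage you to replace your step (3) with the torus-and-transverse-loop argument; the sphere approach does not appear repairable along the lines indicated.
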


\begin{proof}
It is convenient to work with $\sigma$ after it is relabelled as $T(j,k,N)$; 
however $H(s)$ is taken to be under the original labelling. 

For $0 \leq s \leq \min\{N,j-k-1\}$, let $C(s)$ be the essential curve  of $a(\epsilon)$ formed by the two $\edge{k+1+s,-k-s}$--edges in $\epsilon$ and let $C'(s)$ be the essential curve  of $a(\epsilon)$ formed by the two $\edge{k-s,-k+1+s}$--edges. 
$C(s)$ and $C'(s)$ share no edges or vertices as long as $k \ne 0$. Otherwise,
$C(s)$ and $C'(s)$ are the same. 
The distinct edges $e_1,e_2, e_3$ of Figure~\ref{fig:JohnTwo} now join $C(s)$ 
and $C'(s)$ on $\widehat{F}$. 
Define $H(s)$ to be the configuration on $\hatF$ given by $C(s), C'(s), e_1, e_2, e_3$ along with the vertices to which they are incident as shown in Figure~\ref{fig:JohnThree}. When $k=0$, $C(s)$ and $C'(s)$ are the same and the edges
$e_1$ and $e_3$ are incident to different vertices of $C(s)$ and on opposite
sides of $C(s)$. Thus when $k=0$, $H(s)$ cannot lie in an annulus on
$\hatF$. 
 
One checks that $H(s_1),H(s_2)$ share vertices or edges only as described in 
item (1) above. In particular, $C(s_1),C(s_2)$ are disjoint.  To check (2), note that for $H(s)$ the labels of $e_i$ are either those of $C(s)$ or $C'(s)$ or lie outside the labels of $\epsilon$.

\begin{figure}                                           
\centering 
\input{JohnTwo.pstex_t} 
\caption{}
\label{fig:JohnTwo}   
\end{figure} 

\begin{figure} 
\centering 
\input{JohnThree.pstex_t} 
\caption{}  
\label{fig:JohnThree} 
\end{figure}                   

Now fix $s$ and assume that $H(s)$ lies in an annulus on $\hatF$. 
As noted above, $k \neq 0$. Then
$C'(s)$ and $C(s)$ are different and cobound an annulus $B$ on 
$\hatF$. See Figure~\ref{fig:JohnThree}.         
                                                                                
As the edges $e_1,e_2, e_3$ lie in $B$, no vertex from $-j+1$ to $j$     
can lie in the interior of $B$. Let $A$ be the annulus contained in $A(\epsilon)$ with boundary                                                                
$C'(s) \cup C(s)$ obtained from the union of the appropriate bigons        
in $\epsilon$. Note the annuli $A$ and $B$ will have disjoint interiors (by (2)).       
                                                                                
The orientations of $C'(s)$ and $C(s)$ given by moving                     
along an edge from labels $\alpha$ to $\gamma$ must be in the same direction
along $B$ since  otherwise $A \cup B$ would form a Klein bottle.  
This forces the labeling of the edges in Figure~\ref{fig:JohnThree} to be as pictured. 
(This labeling is also forced because $k-s$ and $k+1+s$ have opposite parity.) 

Let $\calT$ be the torus $A \cup B$ which is necessarily separating.  
We will arrive at a contradiction by finding an embedded curve that intersects 
$\calT$ once.                       
                                                                                     
Let $\eta_1$ be the arc which is the corner of Figure~\ref{fig:JohnTwo} along 
$\alpha$ that runs between the labels $k+1+s$ and $j$ within the extended 
Scharlemann cycle $\epsilon$.  Noting that the curves $C(s+1), \dots, C(j-k-1)$ 
must lie outside of $B$ (since they are essential on $\widehat{F}$ and 
disjoint from $e_1,e_2,e_3$), we see that $\eta_1$
lies on one side of $\calT$.  In particular $\eta_1$ intersects $\calT$ only at 
the vertex $k+1+s$.                                                                       

Let $\eta_2$ be the arc that is the corner in Figure~\ref{fig:JohnTwo}  
along $\alpha$ running between labels $j+1+s$ and $j$.  When $s<k$ 
the vertices $k+1-s, \dots, k$ must lie outside of $B$, hence so do the 
edges of $T=T(j,k,N)$ which are incident to $\eta_2$ in Figure~\ref{fig:JohnTwo}. 
Therefore $\eta_2$ lies entirely on one side of $\calT$.  In particular,              
$\eta_2$ intersects $\calT$ only at the vertex $j+1+s$. When $s \geq k$, then 
the vertices $k+1-s, \dots, k$ may share vertex $-k+1+s$ with $B$ (this happens 
when $2k-1 \geq s \geq k$). But the edge in $T$ connecting $-k+1+s$ to 
$2k+j-s$ in $\eta_2$ must lie outside of $B$ on $G_F$ as it is incident to 
vertex $-k+1+s$ at label $\beta$ -- which from the labelling in 
Figure~\ref{fig:JohnThree} directs this edge outside of $B$. Again, we 
conclude that $\eta_2$ intersects $\calT$ only at vertex $j+1+s$.                          
  
The arcs $\eta_1$ and $\eta_2$ approach their endpoints at the vertices $k+1+s$ 
and $j+1+s$, respectively, in opposite directions along $K$.  Since the vertices $k+1+s$ 
and $j+1+s$ have the same parity and lie on 
$B \subset \widehat{F}$, $\eta_1$ and $\eta_2$ lie on opposite sides of $\calT$.   Hence $\eta_1 \cup \eta_2 \cup e_2 \cup e_3$ forms a loop that may be perturbed to transversely intersect $\calT$ just once.  
See Figure~\ref{fig:JohnThreeFive}.
\begin{figure}    
\centering        
\input{JohnThreeFive-test.pstex_t} 
\caption{} 
\label{fig:JohnThreeFive} 
\end{figure}                                                    
\end{proof}

\begin{claim}\label{clm:k-j}
Let $T(j,k,N)$ corresponding to 
$\sigma \in \Sigma_\II$ be of Type I\!I. Let $\epsilon,\epsilon'$ be the extended Scharlemann cycles for $\sigma$ with corners $\arc{-j+1,j},\arc{j+1,k}$ (resp.) in $T(j,k,N)$. 
For each $0 \leq s \leq \min\{N, \frac{1}{2}(k-j)-1\}$  there is a configuration $H(s)$ in $\hatF$ consisting of disjoint curves $C(s),C'(s)$ in $a(\epsilon),a(\epsilon')$ (resp.) connected by a path of three edges of $\sigma$. Furthermore:
\begin{enumerate}
\item $H(s_1),H(s_2)$ share no vertices or edges when $s_1 \neq s_2$.
In particular, 
\[\calC_\sigma = \{C(s), C'(s)| 0 \leq s \leq \min\{N, \frac{1}{2}(k-j)-1\}\}\]
 is a collection of $\min\{2N+2, k-j\} \geq \min\{N+1,k-j\}$ disjoint curves on $\hatF$. Thus 
\[
|\calC_\sigma| \geq
\begin{cases}
N + 1 & \mbox{ if } \sigma \in (\II,1) \\
k-j & \mbox{ if } \sigma \in (\II) - (\II,1).
\end{cases}
\]

\item If $H(s)$ shares a vertex with an element of $a(\epsilon)$ or $a(\epsilon')$ then that element is either $C(s)$ or $C'(s)$.
\item No $H(s)$ lies in an annulus on $\hatF$.
\end{enumerate}
\end{claim}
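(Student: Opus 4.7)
The plan is to mirror the proof of Claim~\ref{clm:j-k} in the Type~II setting, exploiting the fact that a Type~II trigon $\sigma$ contains two distinct extended Scharlemann cycles $\epsilon$ and $\epsilon'$ on the $\alpha\gamma$- and $\alpha\beta$-arms with disjoint label intervals $L(\epsilon) = \arc{-j+1,j}$ and $L(\epsilon') = \arc{j+1,k}$. This disjointness is the main structural difference from the Type~I case, and it will make the combinatorics of $\calC_\sigma$ substantially easier, while complicating the construction of the companion annulus in item~(3).

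First, for each $s$ with $0 \le s \le \min\{N, \tfrac12(k-j)-1\}$, I would take $C(s)$ to be the essential curve in $a(\epsilon)$ given by the pair of edges of $\epsilon$ sitting at depth $s$ from its core Scharlemann cycle, and $C'(s) \in a(\epsilon')$ the analogous curve at depth $s$ in $\epsilon'$.  Because $L(\epsilon) \cap L(\epsilon') = \emptyset$, the curves $C(s) \subset A(\epsilon)$ and $C'(s) \subset A(\epsilon')$ automatically share no vertices or edges.  The three connecting edges $e_1,e_2,e_3$ are chosen from the three sides of the central trigon of $\sigma$ at depth $s$---one on each of the $\alpha\gamma$-, $\alpha\beta$-, and $\beta\gamma$-arms; the bound $s\le N$ guarantees enough bigons on the $\beta\gamma$-arm, while $s < \tfrac12(k-j)$ keeps $s$ within $\epsilon'$.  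Define $H(s)$ as the union of $C(s)$, $C'(s)$, $e_1, e_2, e_3$ and their incident vertices.

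Items~(1) and~(2) then follow directly.  For~(1), distinct values of $s$ select pairwise disjoint nested layers within each embedded Mobius band $A(\epsilon)$, $A(\epsilon')$, so $\calC_\sigma$ has $2\min\{N+1,\tfrac12(k-j)\} = \min\{2N+2,\,k-j\}$ disjoint essential curves; this is $\ge N+1$ when $\sigma \in \SII(\II,1)$ (which says $N < k-j$) and equals $k-j$ for $\sigma \in \SII(\II)-\SII(\II,1)$.  For~(2), the labels carried by $e_1, e_2, e_3$ either appear among the endpoints of $C(s)$ or $C'(s)$ at depth $s$ in $L(\epsilon) \cup L(\epsilon')$, or fall outside $L(\epsilon) \cup L(\epsilon')$ altogether.

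The main obstacle is item~(3).  Assume for contradiction that some $H(s)$ lies in an annulus on $\hatF$, so that $C(s)$ and $C'(s)$ cobound an annulus $B \subset \hatF$ containing $e_1,e_2,e_3$ with no other vertex of $a(\epsilon)\cup a(\epsilon')$ in $\Int B$.  I would build a companion annulus $A \subset M$ with $\partial A = C(s) \cup C'(s)$ by marching from $C(s)$ inward through the $s$ nested bigons of $A(\epsilon)$ to the core SC of $\epsilon$, crossing $\nbhd(K)$ via the rectangle obtained by radially extending the central trigon face of $\sigma$, and continuing through the $s$ nested bigons of $A(\epsilon')$ outward to $C'(s)$.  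A parity check---as in Claim~\ref{clm:j-k}, matching the parities of the endpoints of $C(s)$ and $C'(s)$ across the connecting rectangle---shows $A$ is orientable and $\mathcal{T} = A \cup B$ is an embedded torus, not a Klein bottle.  To contradict the separability of $\mathcal{T}$ in $M$, I would take arcs $\eta_1, \eta_2$ along the $\alpha$-corner of $\sigma$ joining the depth-$s$ edges of $\epsilon$ and $\epsilon'$ to the adjacent depth-$(s{+}1)$ edges (available because $T$ has $N+1$ bigons past each of $\epsilon$ and $\epsilon'$ in the relevant arms).  Both arcs lie entirely on one side of $\mathcal{T}$---since $C(s+1)$ and $C'(s+1)$ cannot lie in $B$---but on opposite sides of $\mathcal{T}$, because their endpoints on $\hatF$ have the same parity.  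Concatenating $\eta_1, \eta_2$ with $e_2, e_3$ then yields a loop intersecting $\mathcal{T}$ transversally in exactly one point, the desired contradiction.  The hard part will be the parity bookkeeping and orientability of $A$, where the central trigon rectangle plays the role that $A(\epsilon)$ alone played in Claim~\ref{clm:j-k}.
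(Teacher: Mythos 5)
Your constructions of $C(s)$, $C'(s)$, and the three connecting edges $e_1,e_2,e_3$ in the $\beta\gamma$-arm essentially match the paper's, and items (1)--(2) come out the way you describe: the disjointness of $L(\epsilon)=\arc{-j+1,j}$ and $L(\epsilon')=\arc{j+1,k}$ makes the vertex/edge bookkeeping in (1) routine, and the two non-$a\theta$ vertices of $H(s)$, labelled $-j-s$ and $k+1+s$, lie outside both corners, giving (2).

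Your approach to item (3), however, both misses the intended short argument and has a genuine topological problem.  The paper's proof of (3) is one line: if $H(s)$ lies in an annulus of $\hatF$ then $C(s) \in a(\epsilon)$ and $C'(s) \in a(\epsilon')$ cobound an annulus in $\hatF$, which by Corollary~\ref{cor:noparallelesc2} would force $\epsilon$ and $\epsilon'$ to have the same core labels---impossible since $L(\epsilon)$ and $L(\epsilon')$ are disjoint.  (Corollary~\ref{cor:noparallelesc2} was already proved in Section~5 via Lemma~\ref{lem:parallelesc}, Case~I of which constructs a Seifert fibered space over the disk with two exceptional fibers of order~2, hence a Klein bottle in $M$.)  By contrast, you try to imitate the Claim~\ref{clm:j-k} argument: build an annulus $A$ in $M$ with $\partial A = C(s)\cup C'(s)$, form a torus $\mathcal T = A\cup B$, and exhibit a loop meeting $\mathcal T$ once.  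This fails because the proposed $A$ has to pass through the \emph{cores} of \emph{both} extended Scharlemann cycles of length~2.  Each such core is a \mobius band, not an annulus; the subsurface of $A(\epsilon)$ between $C(s)$ and the core Scharlemann cycle, including the core, is itself a \mobius band with boundary $C(s)$.  Gluing two \mobius bands (bounded by $C(s)$ and $C'(s)$ respectively) across the central-trigon rectangle does not produce an annulus, and the closed surface you would get by further gluing in $B$ is a Klein bottle, not a torus.  That is why Claim~\ref{clm:j-k}'s argument, where $C(s)$ and $C'(s)$ belong to the \emph{same} extended Scharlemann cycle and the companion annulus stays entirely between them inside $A(\epsilon)$, does not transfer to Type~II.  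The ``parity bookkeeping and orientability'' you flag as the hard part is in fact unresolvable in the form you set it up: the contradiction to aim for is a Klein bottle in $M$, and the result needed is exactly Corollary~\ref{cor:noparallelesc2}.
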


\begin{proof}
We work with $\sigma$ after it is relabelled as $T(j,k,N)$; 
however $H(s)$ is taken to be under the original labelling. 

\begin{figure} 
\centering  
\input{JohnFour.pstex_t} 
\caption{} 
\label{fig:JohnFour}                                                                                        
\end{figure} 

\begin{figure}  
\centering 
\input{JohnFive.pstex_t} 
\caption{}
\label{fig:JohnFive}
\end{figure}                                                                                                 

For $0 \leq s \leq  \min\{N, \frac{1}{2}(k-j)-1\}$, let $C(s)$ be the essential curve  of $a(\epsilon)$ formed by the two $\edge{j-s,-j+1+s}$--edges in $\epsilon$, and let $C'(s)$ be the essential curve  of $a(\epsilon')$ formed by the two $\edge{j+1+s, k-s}$--edges in $\epsilon'$.

Define $H(s)$ to be the connected subgraph of $\hatF$ which is the
union of $C(s)$,$C'(s)$, the edges $e_1,e_2,e_3$ of Figure~\ref{fig:JohnFour} and the vertices they connect as shown in Figure~\ref{fig:JohnFive}.
Note that each of these edges is in $T(j,k,N)$ since $s \leq  N$ and that they are all distinct.  
One now checks that $H(s_1),H(s_2)$ share no vertices or edges if $s_1 \neq s_2$.   
Also the two vertices of $H(s)$ not in $C(s) \cup C'(s)$ lie outside $a(\epsilon)$ and $a(\epsilon')$ as they are labeled $-j-s$ and $k+1+s$, which implies conclusion (2).

Assume some $H(s)$ lies in an annulus on $\hatF$. Then $C(s),C'(s)$ are parallel on $\hatF$, and Corollary~\ref{cor:noparallelesc2} implies that $\epsilon$ and $\epsilon'$ have the same core labels, 
a contradiction.
\end{proof}

\begin{lemma}\label{lem:differencekj}
\[
\sum_{\sigma \in \Sigma_\II} \min\{N_\sigma+1, |j_\sigma-k_\sigma|\} 
\leq 
\sum_{\sigma \in \Sigma_\II} |\calC_\sigma| 
\leq  
36(g-1)
\]
\end{lemma}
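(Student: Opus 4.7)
The first inequality is essentially bookkeeping: Claims~\ref{clm:j-k} and \ref{clm:k-j} give, for each $\sigma\in\SII$, an exact value (respectively, a lower bound) for $|\calC_\sigma|$ that equals $N_\sigma+1$ when $\sigma\in(\I,1)\cup(\II,1)$ and at least $|j_\sigma-k_\sigma|$ otherwise. Since $(\I,1)\cup(\II,1)$ is precisely the set where $N_\sigma+1\le|j_\sigma-k_\sigma|$ and its complement is where $N_\sigma+1\ge|j_\sigma-k_\sigma|$, in every case $|\calC_\sigma|\ge\min\{N_\sigma+1,|j_\sigma-k_\sigma|\}$. Summing over $\SII$ gives the left inequality.

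For the right inequality the strategy is to view $\calC:=\bigsqcup_{\sigma\in\SII}\calC_\sigma$ as a multiset of essential simple closed curves on $\hatF$ and bound its size by (a)~the number of isotopy classes on $\hatF$ that it occupies, multiplied by (b)~the maximum number of elements of $\calC$ in a single isotopy class. For (a), every element of $\calC$ is essential on $\hatF$ (components of $a(\epsilon)$ for extended Scharlemann cycles of length~$2$, using Lemma~\ref{lem:essentialcurves}), and within each $\calC_\sigma$ the curves are pairwise disjoint. After picking one representative per occupied isotopy class I would choose the representatives to be pairwise disjoint (using standard disk/bigon surgery on $\hatF$; the constructions in the $H(s)$'s keep this feasible), so Lemma~\ref{lem:Fkg} bounds the number of classes by $F_0(g)=3(g-1)$.

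For (b), the essential input is that the $H(s)$ configurations of Claims~\ref{clm:j-k}(3) and \ref{clm:k-j}(3) do not embed in an annulus of $\hatF$, together with Lemma~\ref{lem:triplets} (and its $\theta$-analogue, Lemma~\ref{lem:thetatriplets}) and Corollary~\ref{cor:noparallelesc2}. If too many curves in a single isotopy class came from the same $\calC_\sigma$ or from $\calC_\sigma$ and $\calC_{\sigma'}$ sharing that class, three of them would be parallel on $\hatF$, and the Scharlemann-cycle annuli they bound on the adjacent long M\"obius bands would, via Lemma~\ref{lem:triplets}, force either a bridge-number-0 Seifert fibered picture (which we have excluded) or a thinning of $K$ (contradicting thin position). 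A careful count of how many parallel curves the combined exclusions allow--at most $12$ per class (roughly: a small constant per participating $\sigma$, times the number of $\sigma$'s in $\SII$ that can contribute to a single class, which is controlled by Remark~\ref{atmost2sigma2})--then yields the multiplier needed to combine with (a) to produce $12\cdot F_0(g)=36(g-1)$.

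The main obstacle is step (b): one must carefully track when curves from \emph{different} $\sigma\in\SII$ can be isotopic on $\hatF$ and argue that in any isotopy class the cumulative contribution is small. Here the key technical point is that the three edges $e_1,e_2,e_3$ of $H(s)$ from the Claims keep $C(s)$ and $C'(s)$ out of annuli, which forbids large stacks of mutually isotopic curves once one invokes Lemma~\ref{lem:triplets}/\ref{lem:thetatriplets}; matching the resulting numerical constant to the advertised $12$ (and hence to $36(g-1)$) is where most of the care is needed.
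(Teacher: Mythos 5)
Your reading of the first inequality via Claims~\ref{clm:j-k} and \ref{clm:k-j} is correct, and your intuitions about the key ingredients for the second (the ``$H(s)$ does not embed in an annulus'' statements, Lemma~\ref{lem:triplets}, Remark~\ref{atmost2sigma2}) are pointing in the right direction. The gap is in step~(a).

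You want to bound the number of occupied isotopy classes by $F_0(g)=3(g-1)$ by ``picking one representative per class and choosing the representatives pairwise disjoint.'' But this is not always possible, and the construction itself produces the obstruction. Curves of $\calC_\sigma$ and $\calC_{\sigma'}$ from distinct $\sigma,\sigma'\in\Sigma_\II$ can share a vertex (Remark~\ref{atmost2sigma2}), and at that vertex they may cross \emph{transversely}. Two essential simple closed curves that intersect transversely in a single point have geometric intersection number $1$; they are neither isotopic nor disjointly representable, and no amount of disk/bigon surgery removes that crossing. So the occupied isotopy classes need not form a disjointly-realizable family, and $F_0(g)$ does not apply; the best one could invoke directly is $F_2(g)=5(g-1)+2$, which destroys the constant $36$ in your (a)$\times$(b) bookkeeping. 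This is precisely why the paper does not count ``classes $\times$ curves-per-class.'' Instead it partitions $\calC$ by local intersection type into $\calC_0\cup\calC_1\cup\calC_2$, perturbs only the non-transverse intersections, encodes the surviving transverse intersections in a graph $\Gamma$, and from all this \emph{constructs} a single pairwise-disjoint, pairwise-non-parallel collection $\calE\cup\calF'$ to which the $3(g-1)$ bound does apply; the factor $12$ appears by reversing the losses incurred ($|\calE|\ge|\calD_1^{(1)}|/4$, $|\calF'|\ge|\calF|/6$), not as a per-class multiplicity. Relatedly, your step~(b) constant ``$12$'' is also not quite what needs proving: the paper proves at most $6$ parallel elements for the subcollection $\calF$ only, and the transversely-crossing curves in $\calD_1^{(1)}$ are handled by a separate graph-theoretic argument rather than by a per-class count.

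To repair your outline you would need to either (i)~accept $F_2(g)$ in step~(a) and show a sharper per-class bound (which does not appear to reach $36(g-1)$ for small $g$), or (ii)~replicate the paper's construction: separate the transverse-intersection part $\calD_1^{(1)}$, pass to an independent set in its intersection graph, handle parallelism classes of the remainder by the ``seven parallel curves'' argument from the $H(s)$/annulus claims, and then sum. The raw ``classes $\times$ max-per-class'' scheme does not survive the transverse crossings.
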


\begin{proof}
Let $\calC = \cup_{\sigma \in \Sigma_\II} \calC_\sigma$, with $\calC_\sigma$ as in 
Claims~\ref{clm:j-k}, ~\ref{clm:k-j}. 
By Claim~\ref{clm:j-k}, Claim~\ref{clm:k-j}, and Remark~\ref{atmost2sigma2}, 
any vertex belongs to at most two elements of $\calC$.

Partition $\calC$ as $\calC_0 \cup \calC_1 \cup \calC_2$ where 
$\calC_0= \{ C \in \calC : C$ is disjoint from all other elements of $\calC \}$,
$\calC_1 = \{C \in \calC :$ there exists $C' \in \calC$ such that $C$ 
and $C'$ meet at a single vertex \}, and
$\calC_2 = \{ C \in \calC :$ there exists $C' \in \calC, C' \neq C,$ 
such that $C$ and $C'$ meet at two vertices \}. Note that for $i \neq j$,
the elements of $\calC_i$ are disjoint from the elements of $\calC_j$.

Let $\calD_2$ be a subcollection of disjoint curves in $\calC_2$ with
$|\calD_2|=|\calC_2|/2$.

Let $\calD_1$ be the collection of simple closed curves on $\hatF$ 
obtained from the elements of $\calC_1$ by removing all non-transverse
points of intersection by small perturbations. Thus any two elements of 
$\calD_1$ either intersect transversely at a single point or are disjoint.
We may abstractly represent $\calD_1$  by a graph $\Gamma$ whose nodes 
correspond to the elements of $\calD_1$ and where two nodes are joined by an
edge if and only if the corresponding elements of $\calD_1$ intersect.  
Partition $\calD_1$ as $\calD_1^{(0)} \cup \calD_1^{(1)}$ where $\calD_1^{(0)}$
consists of the elements of $\calD_1$ that correspond to isolated nodes of
$\Gamma$. The elements of $\calD_1^{(1)}$ then correspond to components of
$\Gamma$ that are either paths with $k$ nodes, $k \geq 2$, or cycles with
$k$ nodes, $k \geq 3$. In each path (resp. cycle) component of $\Gamma$ with 
$k$ nodes, we can choose $\lfloor \frac{k+1}{2} \rfloor$ (resp. $\lfloor \frac{k}{2} \rfloor$) nodes
such that the corresponding elements of $\calD_1^{(1)}$ are disjoint. Let 
the resulting subcollection of $\calD_1^{(1)}$ be $\calE_1$. 

Let $\calF = \calC_0 \cup \calD_1^{(0)} \cup \calD_2$. Note that since
any element of $\calE_1$ has algebraic intersection number $1$ with
some element of $\calD_1^{(1)}$, no element of $\calE_1$ is parallel
on $\hatF$ to any element of $\calF$. Furthermore, if two elements of
$\calE_1$ are parallel on $\hatF$ then the corresponding nodes of $\Gamma$
belong to either a path component of $\Gamma$ with three nodes or a cycle
component with four nodes. Removing one element from each parallel pair
gives a subcollection $\calE$ of $\calE_1$ with $|\calE| \geq |\calD_1^{(1)}|/4$,
such that no element of $\calE$ is parallel to either another element of 
$\calE$ or an element of $\calF$. 

Let $C \in \calD_1^{(0)}$. Then there exists $C' \in \calC_1$ such that
$C$ and $C'$ intersect, non-transversely, at a single vertex. If 
$C \cup C'$ were contained in an annulus in $\hatF$ then by 
Corollary~\ref{cor:noparallelesc2} the corresponding core Scharlemann cycles would
have to have the same label pair and hence $C$ and $C'$ would share two
vertices, a contradiction. It follows that in any parallelism  class of 
elements of $\calF$ the corresponding elements of $\calC$ (i.e. before
perturbing to remove non-transverse intersections) are disjoint. 

We claim that no more than six elements of $\calF$ can be parallel on
$\hatF$. For suppose we have seven elements of $\calF$ that are parallel.
Let $C_r,C_l$ be the outermost elements of this group. Let $C_1, \dots, C_5$ be elements
of $\calC$ between these. Let $C_i \in \calC_{\sigma_i} \subset \calC$ and $C_i \subset H(s_i)$. In the notation of Claims~\ref{clm:j-k},~\ref{clm:k-j}: 
if $\sigma_i$ is of Type I, then $C_i=C(s_i)$; if $\sigma_i$
is of Type I\!I, then $C_i$ is $C(s_i)$ or $C'(s_i)$.
Then for each $i$, $H(s_i)$ must share a vertex with either $C_r$ or $C_l$, else 
$H(s_i)$ lies in an annulus, contradicting Claims~\ref{clm:j-k},~\ref{clm:k-j}. 
As $C_i$ is disjoint
from $C_r, C_l$, we can assign to each $i=1, \dots, 5$ a vertex $v_i \in H(s_i)$ which
lies in $C_r$ or $C_l$ and is the closest such to $C_i$ in the path metric of $H(s_i)$.
As $C_r$ and $C_l$ each contain only two vertices, it must be that, without loss of generality, $v_1=v_2 \in 
C_r$. Let $C_r \in \calC_{\sigma_r}$ with $C_r \subset H(s_r)$. 
By Claims~\ref{clm:j-k},~\ref{clm:k-j}, $\sigma_r \neq \sigma_1,\sigma_2$.
(Say $\sigma_1=\sigma_r$. Then $v_1 \in H(s_1) \cap H(s_r)$, implying that
 $\{C_1,C_r\}=\{C(s_1),C'(s_1)\}$ are parallel. This and the definition of 
$v_1$ imply that 
$H(s_1)$ lies in an annulus.) 
Then Remark~\ref{atmost2sigma2} implies that $\sigma_1
=\sigma_2$. By Claims~\ref{clm:j-k},~\ref{clm:k-j}, it must be that either 
$\sigma_1$
is of Type I and $v_1=v_2 \in C'(s_1)=C'(s_2)$; or that $\sigma_1$ is of Type I\!I
and $H(s_1)=H(s_2)$. In either case, it must be that one of $H(s_1)$
or $H(s_2)$, say $H(s_1)$, intersects $C_l$ in a vertex closer to $C_1$ in $H(s_1)$ 
than $v_1$, a contradiction.

Hence there exists a subcollection $\calF'$ of $\calF$ with 
$|\calF'| \geq |\calF|/6$ such that no two elements of $\calF'$ are
parallel. Then $\calE \cup \calF'$ is a collection of disjoint, essential,
non-parallel simple closed curves on $\hatF$. Hence $|\calE| + |\calF'|
\leq 3(g-1)$. Therefore
\begin{align*}
36(g-1) &\ge 12|\calE| + 12|\calF'| \\  
        &\ge 3|\calD_1^{(1)}| + 2 |\calF| \\  
        &= 3|\calD_1^{(1)}| + 2 (|\calC_0| + |\calD_1^{(0)}| + |\calD_2|) \\  
        &\ge 2 |\calC_0| + 2 |\calC_1| + |\calC_2| \\
        &\ge |\calC_0| + |\calC_1| + |\calC_2| = |\calC|
\end{align*}

as claimed.
\end{proof}

\begin{claim}\label{clm:2k-j}
Let $T(j,k,N)$ corresponding to 
$\sigma \in \Sigma_\II$ be of Type I such that $N \geq j-k$ and $2k-j>0$. 
Let $\epsilon$ be the maximal
extended Scharlemann cycle for $\sigma$. For each 
$0 \leq s \leq \min\{N-(j-k), 2k-j-1\}$  there is a configuration
$H(s)$ in $\hatF$ consisting of disjoint curves $C(s),C'(s)$ in $a(\epsilon)$
connected by a path of four edges of $\sigma$. 
\begin{enumerate}
\item If $H(s_1),H(s_2)$, $s_2>s_1$, share any vertices or edges then $s_2=s_1+j-k$. 
In this case $C(s_1)=C'(s_2)$ and the edge $e_1$ of $H(s_1)$ is the edge $e_4$ of $H(s_2)$. 
Besides these edges and the vertices to which they are incident, there are no other shared vertices or edges between $H(s_1),H(s_2)$. 
In particular,  
\[\calB_\sigma = \{C(s)| 0 \leq s \leq \min\{N-(j-k),2k-j-1\}\}\]
 is a collection of $\min\{N-(j-k)+1,2k-j \}$ disjoint curves. Thus 
\[
|\calB_\sigma|=
\begin{cases}
N - (j-k)+1 & \mbox{ if } \sigma \in (\I,3) \\
2k-j     & \mbox{ if } \sigma \in (\I,4) \cup (\I,5).
\end{cases}
\]
\item Each vertex of $G_F$ lies in at most one element of $\calB_\sigma$.
\item If $H(s)$ shares a vertex with an element of $a(\epsilon)$ then
that element is either $C(s)$ or $C(s')$.
\item No $H(s)$ lies in an annulus on $\hatF$.
\end{enumerate}
\end{claim}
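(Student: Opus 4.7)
The plan is to mirror the proof of Claim~\ref{clm:j-k}, with two structural modifications dictated by the hypotheses $N \geq j-k$ and $2k-j > 0$: the path in $\sigma$ joining $C(s)$ and $C'(s)$ will now consist of four edges of $\sigma$ rather than three, and the curves $C(s), C'(s)$ will be drawn from a different pair of edge-pairs of $\epsilon$ whose labels depend linearly on $s$. After relabeling $\sigma$ as $T(j,k,N)$, I will exhibit these curves explicitly as two essential components of $a(\epsilon)$ and locate four distinct edges $e_1, e_2, e_3, e_4$ of $T(j,k,N)$ that run out of $\epsilon$ along the $\alpha\gamma$-arm, pass through the central trigon, cross either the $\alpha\beta$- or $\beta\gamma$-arm, and return to $\epsilon$, thereby producing the connecting path of the configuration $H(s)$.

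The first step is a bookkeeping check that these edges exist in $T(j,k,N)$. Here the assumption $N \geq j-k$ supplies the extra bigons needed to accommodate a four-edge path in place of the three-edge path of Claim~\ref{clm:j-k}, and the bound $s \leq 2k-j-1$ together with $2k-j>0$ keeps the labels defining $C(s),C'(s)$ inside the arc $\arc{-j+1,j}$, so that both curves genuinely lie in $a(\epsilon)$. Condition~(1) is then verified by direct comparison of label sets of $H(s_1)$ and $H(s_2)$: the only way one of the bridging edges of $H(s_1)$ can equal a bridging edge of $H(s_2)$ is for the terminal bigon of one to coincide with the initial bigon of the other, and a label computation forces $s_2 - s_1 = j-k$ together with the stated identification $C(s_1) = C'(s_2)$ and $e_1$ of $H(s_1)$ equal to $e_4$ of $H(s_2)$. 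Conclusions~(2) and~(3) follow from~(1): the disjointness of the $C(s)$'s is immediate, and the vertices of the bridging edges not lying on $C(s) \cup C'(s)$ carry labels outside $\arc{-j+1,j}$, hence cannot lie on any element of $a(\epsilon)$.

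The principal obstacle will be conclusion~(4), that no $H(s)$ lies in an annulus on $\hatF$. I will follow the strategy at the end of Claim~\ref{clm:j-k}: assume for contradiction that $H(s)$ lies in an annulus $B \subset \hatF$; then $C(s),C'(s)$ cobound $B$, and together with a matching annulus $A \subset A(\epsilon)$ they form a torus $\calT = A \cup B$ in $M$. Since $M$ contains no non-separating torus and no Klein bottle, $\calT$ must be separating, and the goal is to exhibit a simple loop meeting $\calT$ transversely exactly once, producing a contradiction. The subtlety is that with four bridging edges rather than three, more vertices of $G_F$ are forced outside $B$, and one must identify the correct pair of corner arcs $\eta_1, \eta_2$ on the $\alpha\gamma$-arm of $T(j,k,N)$ that play the role of the analogous arcs in Claim~\ref{clm:j-k}. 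I expect the bound $s \leq 2k-j-1$ and the parity of the endpoint labels of these arcs (which ensures they approach $\calT$ from opposite sides, by the same mechanism as in Figure~\ref{fig:JohnThreeFive}) to be exactly what is needed to construct a loop of the form $\eta_1 \cup \eta_2 \cup (\text{subpath of the bridging edges})$ meeting $\calT$ once, completing the contradiction.
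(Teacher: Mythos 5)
Your plan for parts (1)--(3) tracks the paper's argument closely: one picks $C(s)$ and $C'(s)$ as two curves of $a(\epsilon)$ whose label pairs depend linearly on $s$ (the paper takes $\{2k-j-s,\,-2k+j+1+s\}$ and $\{k-s,\,-k+1+s\}$; the hypothesis $s\leq 2k-j-1$ keeps both label pairs inside $\arc{-j+1,j}$, and $s\leq N-(j-k)$ supplies the extra bigons needed to place four bridging edges $e_1,\dots,e_4$). Checking which pairs $H(s_1),H(s_2)$ share vertices or edges is a direct label computation forcing $s_2-s_1=j-k$, giving (1) and (2), and (3) holds because the bridging edges carry labels outside $\arc{-j+1,j}$, as you say.

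Your approach to part (4), however, would fail. You propose to run the ending of Claim~\ref{clm:j-k}: form $\calT = A\cup B$, conclude that $\calT$ is a separating torus, and then hunt for a simple loop meeting $\calT$ once. The obstruction is that $\calT$ here is \emph{not} a torus. In Claim~\ref{clm:j-k} the bridging path joins vertices $k-s$ and $k+1+s$, which differ by the odd number $1+2s$; these vertices are anti-parallel, the orientations on $\partial A$ and $\partial B$ cancel, and $A\cup B$ really is a torus, which is why one must then produce a loop meeting it once. In the present claim the bridging path joins vertices $-j+2k-s$ and $k-s$, which differ by $j-k$ --- and $j-k$ is \emph{even} by the Parity Rule for Type I trigons. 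Those vertices are therefore parallel in $G_F$, the orientations on $\partial A$ and $\partial B$ do not cancel, and $A\cup B$ is an embedded Klein bottle. That alone contradicts the standing fact that $M$ contains no Klein bottle, and (4) follows immediately with no need for the separating-torus loop construction. So the mechanism you point to ``as in Figure~\ref{fig:JohnThreeFive}'' is precisely the one that the parity disallows here; the correct (and shorter) route, which is what the paper takes with its Figure~\ref{fig:makeKlein}, is the Klein-bottle contradiction.
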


\begin{proof}
We work with $\sigma$ after it is relabelled as $T(j,k,N)$; 
however $H(s)$ is taken to be under the original labelling. 
Note that as $\sigma$ is Type I, $j>k$.

For $0 \leq s \leq \min\{N-(j-k), 2k-j-1\}$ let $C(s)$  be the 
essential curve of $a(\epsilon)$ on $\widehat{F}$ formed by the two
$\edge{2k-j-s,-2k+j+1+s}$--edges in $\epsilon$ (note that 
$\max\{1,k-N\} \leq 2k-j-s \leq 2k-j=k-(j-k)<k$).
Let $C'(s)$ be the essential curve of $a(\epsilon)$ formed by the two            
$\edge{k-s,-k+s+1}$--edges in $\epsilon$ 
(note that $3 \leq \max\{j-k+1,j-N\} \leq k-s \leq k\}$). 
The four edges $e_1, e_2, e_3, e_4$ shown in 
Figure~\ref{fig:trigonwithlongarmsdetail}, along with $C(s),C'(s)$, give 
the configuration, $H(s)$, in the 
graph $G_F$ depicted in Figure~\ref{fig:nicelyconnected}. The vertices these
edges connect are included in $H(s)$. 

Assume $H(s_1),H(s_2), s_1<s_2$ 
share vertices. This can only happen if $s_2=s_1+j-k$ and the shared vertices
and edges are as claimed. 
In particular $C(s_2)$ shares no vertices (hence no edges) with $C(s_1)$.
This verifies conclusions $(1)$ and $(2)$. As the edges 
$e_1, e_2, e_3, e_4$ lie outside of $\epsilon$, conclusion $(3)$ holds.

\begin{figure}
\centering
\input{trigonwithlongarmsdetail.pstex_t}
\caption{}
\label{fig:trigonwithlongarmsdetail}
\end{figure}

\begin{figure}
\centering
\input{nicelyconnectedV2.pstex_t}
\caption{}
\label{fig:nicelyconnected}
\end{figure}

Assume for contradiction that $H(s)$ lies in an annulus in $\hatF$. Let
$B$ be the annulus in $\hatF$ cobounded by $C(s),C'(s)$.
Let $A$ be the annulus contained in $A(\epsilon)$ with boundary 
$C(s) \cup C'(s)$ obtained from the union of the bigons in $\epsilon$ with 
labels $-j+2k-s, -j+2k+1-s, \dots, k-s$ and $j-2k+1+s, j-2k+s, \dots, 1-k+s$.  
The labeling of Figure~\ref{fig:makeKlein} shows that the induced orientation on $\partial A$
does not cancel an orientation on $\partial B$.
Note that in Figure~\ref{fig:makeKlein} we use that $j-k$ is even, and hence that vertices $-j+2k-s$ and $k-s$ are parallel.
Thus $A \cup B$ is a Klein bottle.  This is a contradiction.
\begin{figure}
\centering
\input{makeKlein.pstex_t}
\caption{}
\label{fig:makeKlein}
\end{figure}
\end{proof}

\begin{claim}\label{clm:Type1j}
Let $T=T(j,k,N)$ corresponding to $\sigma \in \Sigma_\II$ be of Type I\!I such that $N > k-j$ and $2j-k>0$. 
Let $\epsilon, \epsilon'$ be the extended Scharlemann cycles for $\sigma$ with corners $\arc{-j+1,j},\arc{j+1,k}$ (resp.) in $T(j,k,N)$.
 For each  $0 \leq s \leq \min\{N-(k-j), j-1\}$  there is a configuration
$H(s)$ in $\hatF$ consisting of disjoint curves $C(s),C'(s)$ in $a(\epsilon)$
connected by a path of four edges of $\sigma$. Furthermore:
\begin{enumerate}
\item If $C(s_1)$ shares vertices with $H(s_2)$, $s_1 \neq s_2$, then 
$C(s_1)=C'(s_2)$ and $C(s_1),H(s_2)$ share no other vertices or edges.
In particular,  
\[\calB_\sigma = \{C(s)| 0 \leq s \leq  \min\{N-(k-j),j-1\}\}\] 
is a collection of $\min\{N-(k-j)+1,j \}$ disjoint curves.
Thus 
\[
|\calB_\sigma|=
\begin{cases}
N - (k-j)+1 & \mbox{ if } \sigma \in (\II,3) \\
j                & \mbox{ if } \sigma \in (\II,4) \cup (\II,5).
\end{cases}
\]
\item Each vertex of $G_F$ lies in at most one element of $\calB_\sigma$.
\item If $H(s)$ shares a vertex with an element of $a(\epsilon)$ or $a(\epsilon')$ then that element is either $C(s)$ or $C(s')$. 
\item No $H(s)$ lies in an annulus on $\hatF$.
\end{enumerate}
\end{claim}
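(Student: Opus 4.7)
The plan is to adapt the proof of Claim~\ref{clm:2k-j} to the Type II setting by swapping the roles that $\epsilon$ and $\epsilon'$ play, exploiting the duality between the two trigon types. In Type II the extended Scharlemann cycle $\epsilon$ on the $\alpha\gamma$-arm has width $2j$ while $\epsilon'$ on the $\alpha\beta$-arm has width $k-j$, and the hypotheses $N > k-j$ and $2j-k > 0$ play the analogues of $N \geq j-k$ and $2k-j > 0$ in Claim~\ref{clm:2k-j}. Both curves $C(s)$ and $C'(s)$ are to be chosen in $a(\epsilon)$ and connected by a four-edge path of $\sigma$ that traverses one bigon of the $\alpha\gamma$-arm beyond $\epsilon$, one portion of the $\alpha\beta$-arm together with $\epsilon'$, and closes up through a bigon of the $\beta\gamma$-arm.

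First I would define $C(s)$ and $C'(s)$ explicitly as the essential curves of $a(\epsilon)$ formed by a pair of edge-pairs of $\epsilon$ whose label offset equals $k-j$. The natural choice, mirroring Claim~\ref{clm:2k-j}, is the curve on the $\edge{j-s,-j+1+s}$-edges of $\epsilon$ and the curve on the $\edge{(k-j)-s,-(k-j)+1+s}$-edges of $\epsilon$; both lie inside $\epsilon$ because $2j-k > 0$. I would then exhibit the four connecting edges $e_1,e_2,e_3,e_4$ of $\sigma$ in the figure analogous to Figure~\ref{fig:trigonwithlongarmsdetail} and observe that they all lie in $T(j,k,N)$ exactly when $s \leq N - (k-j)$, while $s \leq j-1$ is needed for $C(s), C'(s) \subset \epsilon$. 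Taken together, this yields the stated range $0 \leq s \leq \min\{N-(k-j), j-1\}$.

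Items (1), (2), and (3) then follow by label bookkeeping of exactly the same flavor as in Claim~\ref{clm:2k-j}. A direct check shows that the only possible overlap $H(s_1) \cap H(s_2)$ (for $s_1 \neq s_2$) forces $|s_2 - s_1| = k-j$, in which case $C(s_1) = C'(s_2)$ and a single connecting edge is shared, so $\calB_\sigma$ is pairwise disjoint of size $\min\{N-(k-j)+1, j\}$ and no vertex is hit twice. Similarly, the two non-endpoint vertices of the four-edge path carry labels lying strictly outside both corner intervals $\arc{-j+1,j}$ and $\arc{j+1,k}$ of $\epsilon$ and $\epsilon'$, so the only members of $a(\epsilon) \cup a(\epsilon')$ sharing vertices with $H(s)$ are $C(s)$ and $C'(s)$ themselves.

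The main obstacle, as in Claim~\ref{clm:2k-j}, is item (4): showing no $H(s)$ lies in an annulus on $\hatF$. I would argue by contradiction and mimic the Klein-bottle construction: if $H(s) \subset B$ for some annulus $B \subset \hatF$ then $C(s) \cup C'(s) = \partial B$, and these two curves also cobound a subannulus $A$ of $A(\epsilon)$ built from the appropriate bigons of $\epsilon$ between them. Since $k-j$ is even by the Parity Rule, the analogue of Figure~\ref{fig:makeKlein} should force the orientations induced on $\partial A$ not to cancel those on $\partial B$, making $A \cup B$ an embedded Klein bottle in $M$ and contradicting our standing hypothesis. The delicate point will be redoing the orientation/parity check with the new extremal labels of Type II, but once that figure is verified the contradiction follows at once, just as in the proof of Claim~\ref{clm:2k-j}.
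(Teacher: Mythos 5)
Your overall strategy---adapting the Type~I construction of Claim~\ref{clm:2k-j} by choosing both $C(s)$ and $C'(s)$ in $a(\epsilon)$ at a fixed label offset and connecting them by a four-edge path through the arms of $\sigma$---is exactly the paper's approach, and your $C(s)$ matches the paper's $\edge{j-s,-j+1+s}$-curve. However, the explicit formula you give for $C'(s)$, namely the $\edge{(k-j)-s,-(k-j)+1+s}$-curve of $\epsilon$, does not match the offset you announce. You say the two curves should differ by $k-j$, but $C(s)$ is at label $j-s$ and your $C'(s)$ is at label $(k-j)-s$, so the offset is $(j-s)-\big((k-j)-s\big)=2j-k$, not $k-j$. (The paper takes $C'(s)$ to be the $\edge{2j-k-s,-2j+k+s+1}$-curve, whose offset from $C(s)$ is $(j-s)-(2j-k-s)=k-j$, as you intended.) This is not a cosmetic slip: the Klein-bottle argument in item~(4) requires the labels of $C(s)$ and $C'(s)$ to be parallel vertices, i.e.\ the offset must be even. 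Since $j$ and $k$ have the same parity (Remark after Definition~\ref{def:T}), $k-j$ is always even; but $2j-k\equiv k\pmod 2$, so with your choice of $C'(s)$ the parity check fails whenever $k$ is odd, the orientations on $\partial A$ and $\partial B$ do cancel, $A\cup B$ need not be a Klein bottle, and the contradiction in~(4) disappears.

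There is also a smaller omission in your verification of item~(1). When $C(s_1)=C'(s_2)$, the identification of the label pairs $\{j-s_1,-j+1+s_1\}$ and $\{2j-k-s_2,-2j+k+s_2+1\}$ has two branches: either $s_1-s_2=k-j$ or $s_1+s_2=3j-k-1$. Your ``direct check'' lists only the offset case $|s_2-s_1|=k-j$; you need to consider the second branch as well before you can conclude that the $C(s)$ are pairwise disjoint and that each vertex lies in at most one element of $\calB_\sigma$.
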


\begin{proof}
We work with $\sigma$ after it is relabelled as $T(j,k,N)$; 
however $H(s)$ is taken to be under the original labelling. 
Note that as $\sigma$ is Type I\!I, $j<k \leq 3j$.  See Figure~\ref{fig:trigonwithlongarmsdetailII}.

\begin{figure}
\centering
\input{trigonwithlongarmsdetailII.pstex_t}
\caption{}
\label{fig:trigonwithlongarmsdetailII}
\end{figure}

For $0 \leq s \leq \min\{N-(k-j), j-1\}$  let $C(s)$  be the essential curve of $a(\epsilon)$ on $\widehat{F}$ formed by the two $\edge{j-s,-j+1+s}$--edges in $\epsilon$ (note that 
$1 \leq \max\{k-N,1\} \leq j-s \leq j$).
Let $C'(s)$ be the essential curve of $a(\epsilon)$ formed by the two            
$\edge{2j-k-s,-2j+k+s+1}$--edges in $\epsilon$ 
(note that $-j+1 < \max\{1-k+j,j-N\} \leq 2j-k-s \leq 2j-k < j \}$). 
The four edges $e_1, e_2, e_3, e_4$ shown in 
Figure~\ref{fig:trigonwithlongarmsdetailII}, along with $C(s),C'(s)$, give 
a configuration, $H(s)$, in the graph $G_F$. The vertices these
edges connect are included in $H(s)$. Note that these seven vertices of 
$H(s)$ are distinct ($k-j$ is even). 

Assume $C(s_1)$ shares vertices with $H(s_1)$, $s_1 \neq s_2$. As the vertices
of $C(s_1)$ are labelled between $-j+1$ and $j$ and the vertices
$j+k+1-(j-s_2), 2k+1-(j-s_2),-j-k+(j-s_2)$ are outside this range, $C(s_1)$
can only share vertices with $C(s_2)$ or $C'(s_2)$. As these are curves
of $a(\epsilon)$, either $C(s_1)=C(s_2)$ or $C(s_1)=C'(s_2)$. As
$1 \leq j-s \leq j$ for each $s$, it must be that $C(s_1)=C'(s_2)$
(corresponding to $s_2-s_1=j-k$ or $s_2+s_1=3j-k-1$). This proves parts $(1)$ and $(2)$ of the claim.

For parts $(3)$ and $(4)$, we argue as for $(3)$ and $(4)$ of Claim~\ref{clm:2k-j}.
\end{proof}

\begin{lemma}\label{clm:boundCB}
For $\sigma \not\in \cup_{i = \I,\II; j=3,4,5} \, \Sigma_\II(i,j)$ define $\calB_\sigma = \emptyset$.  Then:
\[ \sum_{\sigma \in \Sigma_\II} |\calB_\sigma| \leq 24(g-1)\]
\end{lemma}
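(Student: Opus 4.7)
The plan is to adapt the argument of Lemma~\ref{lem:differencekj}, using parts (1)--(4) of Claims~\ref{clm:2k-j} and \ref{clm:Type1j} in place of the corresponding results for $\calC$, and exploiting the fact that the $H(s)$ configurations defining elements of $\calB$ are rigid enough to rule out lying in an annulus.

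First I will form $\calB = \bigsqcup_{\sigma \in \Sigma_\II} \calB_\sigma$ with elements counted with the multiplicity coming from $\sigma$, so that $|\calB| = \sum_{\sigma \in \Sigma_\II} |\calB_\sigma|$. Viewing the underlying loops of $\calB$ as curves on $\hatF$, part (2) of Claims~\ref{clm:2k-j} and \ref{clm:Type1j} forces each vertex of $G_F$ to lie in at most one curve per $\calB_\sigma$, and by Remark~\ref{atmost2sigma2} each vertex then lies in at most two elements of $\calB$. Next I partition $\calB$ as $\calB_0 \cup \calB_1 \cup \calB_2$ exactly as in Lemma~\ref{lem:differencekj}, extract a disjoint subcollection $\calD_2 \subset \calB_2$ of size $|\calB_2|/2$, perturb non-transverse intersections of $\calB_1$ into transverse ones and extract $\calE \subset \calB_1^{(1)}$ with $|\calE| \geq |\calB_1^{(1)}|/4$ from the non-trivial components of the resulting intersection graph $\Gamma$, and set $\calF = \calB_0 \cup \calB_1^{(0)} \cup \calD_2$, finally choosing $\calF' \subset \calF$ to realise one element per parallelism class on $\hatF$.

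The main obstacle is to improve the parallelism bound from six (as in Lemma~\ref{lem:differencekj}) to four: I must show that no five elements of $\calF$ lie in a common parallelism class on $\hatF$. Suppose instead that $C_r, C_1, C_2, C_3, C_l \in \calF$ cobound nested annuli on $\hatF$ with $C_r$ and $C_l$ outermost, and for each $i=1,2,3$ let $H(s_i)$ be the configuration of Claim~\ref{clm:2k-j} or \ref{clm:Type1j} containing $C_i$. Part (4) of those claims prevents any $H(s_i)$ from lying in the annulus cobounded by $C_r \cup C_l$, so each $H(s_i)$ must connect $C_i$ to a vertex of $C_r \cup C_l$ along its four-edge path. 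Since $C_r \cup C_l$ contains only four vertices, and since part (2) says each vertex lies in at most one curve per $\calB_\sigma$, a pigeonhole argument combined with Remark~\ref{atmost2sigma2} and part (1) of the relevant claim forces two of the configurations $H(s_i)$ to meet the same vertex of $C_r$ or $C_l$ in a manner that collapses one $H(s_i)$ into an annulus -- a contradiction.

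With the parallelism bound $|\calF'| \geq |\calF|/4$ and the extraction bound $|\calE| \geq |\calB_1^{(1)}|/4$, and with $|\calE| + |\calF'| \leq 3(g-1)$ since these are pairwise-disjoint, pairwise-non-parallel essential loops on a genus-$g$ surface, the arithmetic of Lemma~\ref{lem:differencekj} yields
\[
24(g-1) \;\geq\; 8|\calE| + 8|\calF'| \;\geq\; 2|\calB_1^{(1)}| + 2|\calF| \;=\; 2|\calB_0| + 2|\calB_1| + |\calB_2| \;\geq\; |\calB|,
\]
which is the claimed bound on $\sum_{\sigma \in \Sigma_\II} |\calB_\sigma|$.
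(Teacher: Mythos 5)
Your overall setup is the same as the paper's: form $\calB$ with multiplicity, partition it as $\calB_0 \cup \calB_1 \cup \calB_2$, build $\calD_2$, $\calE$, $\calF$, and reduce to showing that at most four elements of $\calF$ can be pairwise parallel on $\hatF$. The arithmetic at the end is also the paper's (modulo the notation $\calB_1^{(1)}$ versus $\calD_1^{(1)}$). So the only place where your proof diverges is the argument for the bound ``at most four parallel,'' and that is where there is a genuine gap.

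Your pigeonhole argument is not valid as written. You begin correctly: if $H(s_i) \supset C_i$ and $H(s_i)$ does not lie in the annulus cobounded by $C_r \cup C_l$, then since $H(s_i)$ is connected it must meet $C_r$ or $C_l$. But the conclusion you want --- that such an intersection forces $H(s_i)$ to collapse into an annulus --- depends on part (3) of Claims~\ref{clm:2k-j} and \ref{clm:Type1j}, which only constrains vertices of $H(s_i)$ lying on elements of $a(\epsilon)$ for the \emph{specific} extended Scharlemann cycle $\epsilon$ associated to $\sigma_i$. If $C_r$ or $C_l$ comes from a different $\sigma'$ (hence a different $\epsilon'$), part (3) tells you nothing, and $H(s_i)$ can perfectly well touch a vertex of $C_r$ or $C_l$ without that curve being $C(s_i)$ or $C'(s_i)$. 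Your pigeonhole on the four vertices of $C_r \cup C_l$, combined with Remark~\ref{atmost2sigma2} and part (1), also doesn't produce a contradiction directly: two configurations sharing a vertex on $C_r$ is exactly what Remark~\ref{atmost2sigma2} permits when they come from different $\sigma$'s, and part (1) only describes sharing \emph{within} the same $\calB_\sigma$. The step ``in a manner that collapses one $H(s_i)$ into an annulus'' papers over exactly the gap.

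What is missing is the use of Corollary~\ref{cor:noparallelesc2}. All elements of $\calB$ arise from \emph{length-2} extended Scharlemann cycles inside elements of $\Sigma_\II$, and Corollary~\ref{cor:noparallelesc2} says that if two such cycles have curves lying in a common annulus on $\hatF$, they must have the same core labels. The paper applies this to show that five pairwise-parallel, pairwise-disjoint elements of $\calF$ can arise from at most two distinct elements of $\cup_{i,j}\Sigma_\II(i,j)$ (three distinct ones would have coinciding core labels, overlapping ideal corners, and thus contradict minimality of $\Sigma_\II$ via Remark~\ref{atmost2sigma2}). Only after this reduction do you get three curves $c_1,c_2,c_3$ in $\calB_\sigma$ for a \emph{single} $\sigma$, all in the \emph{same} $a(\epsilon)$. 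At that point parts (2)--(4) of Claims~\ref{clm:2k-j} and \ref{clm:Type1j} apply cleanly: $H(s_2)$ containing the middle curve $c_2$ either avoids $c_1,c_3$ and lies in the annulus between them (contradicting part (4)), or touches one of them, which by part (3) forces $\{c_1,c_2\}=\{C(s_2),C'(s_2)\}$ and again places $H(s_2)$ inside an annulus, contradiction. Without the reduction to a common $\epsilon$, this closing move is unavailable.

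So while your decomposition, counting, and arithmetic are right, you need to replace the pigeonhole paragraph with the Corollary~\ref{cor:noparallelesc2} argument reducing to three parallel curves from the same extended Scharlemann cycle before invoking the Claims.
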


\begin{proof}
Consider the collection $\calB$ of all elements of $\calB_\sigma$ taken  (without equating elements) over all $\sigma \in \Sigma_\II$ (and hence over all $\sigma \in \cup_{i = \I,\II; j=3,4,5} \, \Sigma_\II(i,j)$). 
By Remark~\ref{atmost2sigma2}, each vertex of $G_F$ is a label in at most two elements of $\Sigma_\II$. On the other hand,
for a fixed $\sigma$ a vertex of $G_F$ lies in at most one
element of $\calB_\sigma$. 
Thus any vertex belongs to at most two elements of $\calB$. 

Partition $\calB$ as $\calB_0 \cup \calB_1 \cup \calB_2$ as in the proof of
Lemma~\ref{lem:differencekj}. Again as in the proof of that lemma, we get 
disjoint collections of curves $\calE$ and $\calF$, where no element
of $\calE$ is parallel on $\hatF$ to either another element of $\calE$
or an element of $\calF$, and any pair of parallel elements of $\calF$ 
are (before perturbation) disjoint.

We claim that at most four elements of $\calF$ can be parallel on $\hatF$. 
For suppose we had five such parallel curves.
They all come from extended 
Scharlemann cycles of length $2$ within elements of $\Sigma_\II$. 
Consequently, these five parallel, disjoint curves must arise from 
at most two different elements of $\cup_{i=\I,\II;j=3,4,5} \SII(i,j)$. Otherwise, by 
Corollary~\ref{cor:noparallelesc2}, the maximal extended Scharlemann cycles
in each of three different elements $\sigma_1,\sigma_2,\sigma_3$ of $\cup_{i=\I,\II;j=3,4,5} \SII(i,j)$ must
have the same core Scharlemann cycles. This would imply that the ideal
corners of these elements would overlap, contradicting the minimality of 
$\Sigma_\II$ (see Remark~\ref{atmost2sigma2}).

Thus three of these five parallel curves, $c_1,c_2,c_3$, all belong
to $\calB_\sigma$ for a single $\sigma \in \cup_{i=\I,\II;j=3,4,5} \SII(i,j)$. 
They all belong to the
same extended Scharlemann cycle in $\sigma$. Take $c_2$ to be between $c_1,c_3$. Then $c_2 \subset H(s)$ coming from
$\sigma$ as described in Claims~\ref{clm:2k-j} and ~\ref{clm:Type1j}.
If $H(s)$ is disjoint from both $c_1$ and $c_3$, then $H(s)$ lies
in the annulus between $c_1,c_3$, contradicting these Claims. So we
may assume that $H(s)$ intersects, say, $c_1$. But then, again by
these Claims, $c_1,c_2$ must form the curves $C(s),C'(s)$ of
$H(s)$. (In these Claims, if $H(s)$ intersects an element of an 
extended Scharlemann cycle in $\sigma$, then that element is
$C(s)$ or $C'(s)$.) Then $H(s)$ lies in the annulus between $c_1,c_2$,
contradicting these Claims. (If $H(s)$ intersected $c_3$ the same
argument applied to $c_3$ would equate it with $c_1$.)

We thus get a subcollection $\calF'$ of $\calF$ such that no two elements
of $\calF'$ are parallel and $|\calF'| \geq |\calF|/4$. Therefore
$|\calE| + |\calF'| \leq 3(g-1)$. Hence
\begin{align*}
24(g-1) &\ge 8|\calE| + 8|\calF'| \\
        &\ge 2|\calD_1^{(1)}| + 2|\calF| \\
        &= 2|\calB_0| + 2|\calB_1| + |\calB_2| \\
        &\ge |\calB|
\end{align*}

as claimed.
\end{proof}

\begin{remark}
Lemmas~\ref{lem:differencekj} and ~\ref{clm:boundCB} linearly bound
$\sum N_\sigma$ where this sum is taken over 
$\Sigma_\II - ((\I,5) \cup (\I,6) \cup (\II,5) \cup (\II,6))$. These bounds
come from the collections $\calC_\sigma$ and $\calB_{\sigma}$.
In a similar way, the following collections $\calA_{\sigma}$ will
be used to linearly bound $\sum N_{\sigma}$ where the sum is taken over 
the remaining $(\I,5) \cup (\I,6) \cup (\II,5) \cup (\II,6)$.
\end{remark}

\begin{claim}\label{clm:TypeIIesscurves}
Let $T(j,k,N)$ corresponding to 
$\sigma \in \Sigma_\II$ be of Type I such that $N  > 4j-2k-1$ (so that $\sigma \in (\I,5) \cup (\I,6)$). 
For each $0 \leq s \leq N-2j$  there is an essential curve $\cala(s)$ in $\hatF$ consisting of 6 distinct edges and vertices. 
Furthermore, $\cala(s_1)$ intersects $\cala(s_2)$, $s_2 > s_1$, if and only if  $s_2 - s_1$ is an element of $\{j-k, 2j, k+j\}$. 

Indeed, assume $s_2 - s_1$ is an element of $\{j-k, 2j, k+j\}$. Either
\begin{enumerate}
\item $k \neq 0$ and $\cala(s_1),\cala(s_2)$ share exactly two vertices and a single edge spanning them. 
After perturbing to be transverse along their common edge,  $\cala(s_1)$ and $\cala(s_2)$ intersect transversely once. Or
\item $k=0$ and $\cala(s_1), \cala(s_2)$ intersect in exactly two edges and the four vertices they connect. In this case $\cala(s_1),\cala(s_2)$ can be perturbed along these edges so that they are transverse and intersect algebraically, geometrically twice.
\end{enumerate}
 
As $N-2j+1 \geq 2(j-k)$, for each $s$ there is an $s'$ such that $|s-s'|=j-k$. In particular, each $\cala(s)$ is essential in $\hatF$.

Define $\calA_\sigma=\{\cala(s)|1 \leq s \leq N-2j\}$. Then the elements 
of $\calB_{\sigma}$ are disjoint from the elements of $\calA_{\sigma}$.
Furthemore, each vertex of $G_F$ belongs to at most two elements of 
$\calA_{\sigma}$.
\end{claim}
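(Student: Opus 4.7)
The plan is to construct each curve $\cala(s)$ explicitly as a $6$--cycle in $G_F$ whose edges are drawn from the three arms of $T(j,k,N)$, using edges that lie strictly outside the extended Scharlemann cycle $\epsilon$ on the $\alpha\gamma$--arm as well as outside any shorter extended Scharlemann cycles that might occur in the other arms. Specifically, for $0\le s\le N-2j$, I would take two edges from each arm lying beyond the extended Scharlemann cycle portion, chosen so that as we walk along $K$ between consecutive edge endpoints we cycle through the three corners $\alpha,\beta,\gamma$. The precise label data can be read off Figure~\ref{fig:trigonwithlongarms}: on the $\alpha\gamma$--arm use the edges with label pairs $\edge{j+1+s,-j-s}$ and $\edge{-2j+k-s,2j-k+1+s}$ (as in Figure~\ref{fig:trigonwithlongarmsdetail} but now the outer one rather than the inner one); on the $\alpha\beta$--arm use the analogous pair at parametric distance $s$ beyond the Scharlemann-cycle portion; on the $\beta\gamma$--arm likewise. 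The condition $N>4j-2k-1$ is exactly what guarantees that all six labels exist and are distinct within the ideal corner $c(T)$.

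With the construction in hand, I would carry out the proof in the following order. First, verify for fixed $s$ that the six labeled edges and six vertices are all distinct; this reduces to a direct label-arithmetic check using $N-2j\ge 2(j-k)\ge 2$ and the Parity Rule. Second, establish the intersection pattern: two edges with labels $\edge{a,b}$ and $\edge{a',b'}$ (both lying in $\epsilon$ or outside it) can share a vertex of $G_F$ only if $\{a,b\}$ and $\{a',b'\}$ overlap, and they coincide as edges only if they coincide as label pairs. A systematic comparison of the six label pairs in $\cala(s_1)$ against those in $\cala(s_2)$ shows that a common vertex or edge can occur only when the shift $s_2-s_1$ sends the $\alpha\gamma$--pair to the $\beta\gamma$--pair, the $\alpha\gamma$--pair to the $\alpha\beta$--pair, or the $\alpha\beta$--pair to the $\beta\gamma$--pair; the respective amounts of these shifts are exactly $j-k$, $2j$, and $k+j$. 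When $k\ne 0$, exactly one edge is shared; when $k=0$, the $\alpha\gamma$-- and $\beta\gamma$--edges degenerate to the same edge of $\epsilon$, so two edges (and four vertices) are shared.

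Third, I would verify essentiality. Since $N-2j+1\ge 2(j-k)$, for every $s$ there exists $s'$ with $|s-s'|=j-k$; then $\cala(s)$ and $\cala(s')$ share one edge (when $k\ne 0$) or two edges (when $k=0$). A careful inspection of the orientations induced by the corner sequence $\alpha,\beta,\gamma$ around $K$ at each shared vertex shows that after perturbing to be transverse the local intersection contributions either all add up to $\pm 1$ ($k\ne 0$) or to an algebraic $\pm 2$ ($k=0$), so the algebraic intersection number of $\cala(s)$ and $\cala(s')$ on $\hatF$ is nonzero. Thus neither curve bounds a disk, so each $\cala(s)$ is essential in $\hatF$.

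Finally, the remaining two statements are quick consequences of the construction. Disjointness of $\calA_\sigma$ from $\calB_\sigma$ follows because every label in a curve of $\calB_\sigma$ lies within the inner portion of $\epsilon$ on a specific arm (see Claim~\ref{clm:2k-j}), whereas every label appearing in $\cala(s)$ lies strictly outside $\epsilon$ in its arm; a label-range comparison finishes this. The bound that each vertex of $G_F$ belongs to at most two elements of $\calA_\sigma$ is immediate from the intersection analysis above, since a given vertex can be the shared vertex of at most two of the three shift classes $\{j-k,\,2j,\,k+j\}$ starting at any fixed $s$. The main obstacle I anticipate is the orientation-tracking needed in step three: getting the algebraic intersection nonzero (rather than zero by cancellation) is what forces the particular choice of six edges, so the construction must be committed to before the verification and then checked to have the right orientation behavior at the shared edge.
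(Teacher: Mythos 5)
Your high-level plan matches the paper's: build hexagonal curves $\cala(s)$ from six edges of $T(j,k,N)$ lying outside the extended Scharlemann cycle $\epsilon$, verify the intersection pattern by direct label arithmetic, derive essentiality from nonzero algebraic intersection with a shifted curve $\cala(s')$, and then read off the disjointness from $\calB_\sigma$ and the ``at most two'' bound. However, the specific construction you propose does not match the paper's, and the details here are load-bearing. You model your edges on Figure~\ref{fig:trigonwithlongarmsdetail}, which belongs to Claim~\ref{clm:2k-j} and builds the $\calB_\sigma$ curves \emph{inside} $\epsilon$; your proposed label pairs like $\edge{j+1+s,-j-s}$ and $\edge{-2j+k-s,2j-k+1+s}$ are too close to the boundary of $\epsilon$. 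The paper's $\cala(s)$ (Figures~\ref{fig:smallTypeItrigon}, \ref{fig:smallTypeIGF}) has vertices at labels $3j+s$, $k+2j+s$, $j+s$, $-j+1-s$, $k-2j+1-s$, $-3j+1-s$, with edges $g_s=\edge{j+s,-j+1-s}$ and $d_s=\edge{3j+s,-3j+1-s}$ on the $\alpha\gamma$-arm, $c_s=\edge{k+2j+s,-j+1-s}$ and $f_s=\edge{3j+s,k-2j+1-s}$ on the $\alpha\beta$-arm, and $b_s=\edge{j+s,k-2j+1-s}$ and $e_s=\edge{k+2j+s,-3j+1-s}$ on the $\beta\gamma$-arm. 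That particular choice is exactly what makes the overlap analysis produce the stated shift set: e.g.\ $k+2j+s_2=3j+s_1$ iff $s_2-s_1=j-k$; $j+s_2=3j+s_1$ iff $s_2-s_1=2j$; $j+s_2=k+2j+s_1$ iff $s_2-s_1=j+k$. Your edges would give a different shift set, so the intersection pattern would not match the claim. You correctly anticipate this by saying ``the construction must be committed to before the verification''; the gap is that the committed construction must be the right one, and yours isn't. One further caveat in your disjointness argument: it is not true that ``every label appearing in $\cala(s)$ lies strictly outside $\epsilon$'' for $s=0$, because $g_0=\edge{j,-j+1}$ is the outermost edge of $\epsilon$. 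The paper handles this by defining $\calA_\sigma=\{\cala(s)\mid 1\le s\le N-2j\}$ and explicitly noting $\cala(0)\notin\calA_\sigma$; for $s\ge 1$ all six edges genuinely lie outside $\epsilon$ and your disjointness argument then goes through.
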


\begin{proof}

\begin{figure}
\centering
\input{smalltypeItrigon.pstex_t}
\caption{}
\label{fig:smallTypeItrigon}
\end{figure}

We work with $\sigma$ after it is relabelled as $T=T(j,k,N)$; 
however $\cala(s)$ is taken to be under the original labelling. 

$T$ is shown in Figure~\ref{fig:smallTypeItrigon}.  Let $b_0$, $c_0$, $d_0$, 
$e_0$, $f_0$, and $g_0$ be the edges pictured in 
Figure~\ref{fig:smallTypeItrigon}.
Let $\cala(0)$ be the curve obtained by taking their union. 
For $0 \leq s \leq N-2j$ 
we may take the corresponding edges $b_s$, $c_s$, $d_s$, $e_s$, $f_s$, and $g_s$ 
as we move out the arms of $T$ and form the curve 
$\cala(s)$ by taking their union.  Thus we obtain a sequence of curves 
$\cala(0), \dots, \cala(N-2j)$ on $\widehat{F}$. Note that
$N \geq 2(j+(j-k))-1>2j$.

If $\cala(s_1)$ and $\cala(s_2)$ intersect on the Heegaard surface 
then they intersect in vertices and possibly coincide along edges.

The labels of the vertices of $\cala(s)$ are $3j+s$, $k+2j+s$, $j+s$, $-j+1-s$, $k-2j+1-s$, and $-3j+1-s$ in decreasing order.  
Each of these vertices is different, so $\cala(s)$ is embedded. 
Recall $N-2j \geq s \geq 0$.  As no label appears twice along a corner of 
$T$, the vertices of $\cala(s_1)$ and $\cala(s_2)$ overlap if and only if:
\begin{enumerate}
\item $s_2 - s_1 = j-k$; $k+2j + s_2 = 3j+s_1$ and $-j-s_2+1 = k-2j-s_1+1$
\item $s_2 - s_1 = 2j$; $j+s_2 = 3j + s_1$ and $-j-s_2+1 = -3j-s_1+1$
\item $s_2 - s_1 = j+k$; $j+s_2 = k+ 2j+s_1$ and $k-2j-s_2+1 = -3j-s_1+1$.
\end{enumerate}

Figure~\ref{fig:smallTypeIGF} shows $\cala(s)$ as it lies on $\widehat{F}$. 
If $\cala(s_1), \cala(s_2)$ intersect then they
agree exactly along a single edge (note in Figure~\ref{fig:smallTypeItrigon} that the sum of the labels at an edge determines the
arm in which the edge lies, and no label appears twice along a corner of $T$). 
If $k \ne 0$, then $\cala(s_1)$ and $\cala(s_2)$ can be perturbed so that 
they intersect exactly once (the $\alpha, \beta, \gamma$ labels 
that are not labels of edges of $\cala(s_1)$, say, 
alternately appear on one side or the other of $\cala(s_1)$).   
If $k=0$, then $(1)$ and $(3)$ both hold ($e_{s_1}=b_{s_2}$ and $f_{s_2}=c_{s_1}$)
and one similarly checks that $\cala(s_1),\cala(s_2)$ intersect algebraically
and geometrically twice after perturbation. In any case, each vertex of $G_F$
belongs to at most two elements of $\calA_{\sigma}$.

The edges constituting elements of $\calB_{\sigma}$ come from the extended
Scharlemann cycle of $\sigma$, while the edges of an element of $\calA_{\sigma}$ lie outside the extended Scharlemann cycle. (Note $\cala(0) \not \in \calA_{\sigma}$.) This verifies 
the last sentence of the Claim.
\end{proof}

\begin{figure}
\centering
\input{smalltypeIGF.pstex_t}
\caption{}
\label{fig:smallTypeIGF}
\end{figure}

\begin{claim}\label{clm:TypeIesscurves}
Let $T(j,k,N)$ corresponding to 
$\sigma \in \Sigma_\II$ be of Type I\!I such that $N  > 3k-j-1$ (so that $\sigma \in (\II,5) \cup (\II,6)$). 
For each $0 \leq s \leq N-k-j$  there is an essential curve $\cala(s)$ in $\hatF$ consisting of 6 distinct edges and vertices. 
Furthermore, $\cala(s_1)$ intersects $\cala(s_2)$, $s_2 > s_1$, if and only if  $s_2 - s_1$ is an element of $\{k-j, 2j, k+j\}$. 

Indeed, assume $s_2 - s_1$ is an element of $\{k-j, 2j, k+j\}$.
Either
\begin{enumerate}
\item $k \neq 3j$ and $\cala(s_1),\cala(s_2)$ share exactly two vertices and a single edge spanning them. 
Furthermore, after perturbing to be transverse along their common edge,  $\cala(s_1)$ and $\cala(s_2)$ intersect transversely once. Or
\item $k=3j$ and $\cala(s_1), \cala(s_2)$ intersect in exactly two edges and the four vertices they connect. In this case $\cala(s_1),\cala(s_2)$ can be perturbed along these edges so that they are transverse and intersect algebraically, geometrically twice.
\end{enumerate}

As $N-j-k+1 \geq 2(k-j)$, for each $s$ there is an $s'$ such that $|s-s'|=k-j$. In particular, each $\cala(s)$ is essential in $\hatF$.

Define $\calA_\sigma=\{\cala(s)|1 \leq s \leq N-k-j\}$. Then  
the elements of $\calA_{\sigma}$ are disjoint from those of 
$\calB_{\sigma}$. Furthermore, a vertex of $G_F$ belongs to at most two
elements of $\calA_\sigma$.
\end{claim}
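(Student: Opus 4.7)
The plan is to mirror the construction in Claim~\ref{clm:TypeIIesscurves} (the Type I analog), adjusting the six-edge selection to account for the second extended Scharlemann cycle $\epsilon'$ present in Type II trigons. The condition $N > 3k-j-1$, rewritten as $N-k-j \geq 2(k-j)$, is exactly what guarantees essentiality of the resulting curves via the same ``existence of $s'$'' argument.

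For each $0 \leq s \leq N-k-j$, I would select six edges of $T=T(j,k,N)$ --- one pair from each arm --- at depth $s$ beyond the nearest structure: beyond $\epsilon$ in the $\alpha\gamma$-arm, beyond $\epsilon'$ in the $\alpha\beta$-arm, and beyond the trigon corner in the $\beta\gamma$-arm. Reading off labels from Figure~\ref{fig:trigonwithlongarms}, these six edges concatenate into a closed loop $\cala(s)$ on $\hatF$ whose six vertex labels fall into three pairs, each pair labelling a single edge, and all translating by $\pm s$ as $s$ varies; the pairwise label shifts between pairs will be $k-j$, $2j$, and $k+j$. I would then execute the following steps in order:
\begin{enumerate}
\item Verify the six labels of $\cala(s)$ are pairwise distinct using $j+k+2N+2 \le t$ from Definition~\ref{def:T}.
\item Match labels between $\cala(s_1)$ and $\cala(s_2)$ and show overlap occurs precisely when $s_2-s_1 \in \{k-j,\,2j,\,k+j\}$, with exactly one shared edge and two shared vertices in each case; then perturb along the shared edge (as in Claim~\ref{clm:TypeIIesscurves}) to produce a single transverse intersection point.
\item Address the degenerate case $k=3j$, in which $k-j=2j$ collapses two of the three shifts, so $\cala(s_1)\cap \cala(s_2)$ contains two shared edges when $s_2-s_1=2j=k-j$, yielding algebraic and geometric intersection number $2$ after perturbation.
\item Establish essentiality of each $\cala(s)$ by appealing to the existence of $s'$ with $|s-s'|=k-j$, which follows from $N-k-j \geq 2(k-j)$.
\item Note that every edge in $\cala(s)$ lies strictly outside both extended Scharlemann cycles of $\sigma$, while the edges defining $\calB_\sigma$ (via Claim~\ref{clm:Type1j}) lie inside the ESCs; hence $\calA_\sigma$ and $\calB_\sigma$ are edge-disjoint, and the at-most-two-per-vertex bound for $\calA_\sigma$ is an immediate consequence of step (2).
\end{enumerate}

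The main technical obstacle is step (2): since $T(j,k,N)$ of Type II has two extended Scharlemann cycles extending into opposite arms (not one as in Type I), I must verify that the six chosen edges really form a simple closed loop with six distinct vertices, rather than some degenerate configuration, and that the pairwise label shifts between arms are exactly $k-j$, $2j$, and $k+j$. These three values will be forced by the symmetry of $\epsilon$ and $\epsilon'$ about the central trigon together with the label progression along the ESC-free $\beta\gamma$-arm. Once this label bookkeeping is pinned down, the remaining steps (1), (3), (4), (5) are direct translations of the corresponding steps in the Type I proof.
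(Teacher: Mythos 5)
Your proposal is correct and follows essentially the same route as the paper: six edges $b_s,c_s,d_s,e_s,f_s,g_s$ are selected by moving out the three arms of $T(j,k,N)$, the six vertex labels are computed to be $2k+j+1+s$, $k+2j+1+s$, $k+1+s$, $-j-s$, $-k-s$, $-k-2j-s$, and matching these across parameters $s_1,s_2$ produces precisely the shift set $\{k-j,\,2j,\,k+j\}$; the degenerate coincidence $k-j=2j$ when $k=3j$, the essentiality via $|s-s'|=k-j$, and the edge-disjointness from $\calB_\sigma$ are all handled exactly as you outline. The only minor quibble is your phrase ``labels fall into three pairs, each pair labelling a single edge'' --- the six edges form a hexagon on $\hatF$ with six distinct vertices and the relevant pairing is by the shift values between vertices of consecutive $\cala(s)$'s, not a pairing of labels into edges --- but this is a wording issue, not a gap.
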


\begin{proof}

\begin{figure}
\centering
\input{smalltypeIItrigon.pstex_t}
\caption{}
\label{fig:smallTypeIItrigon}
\end{figure}

We work with $\sigma$ after it is relabelled as $T=T(j,k,N)$; 
however $\cala(s)$ is taken to be under the original labelling. 

$T$ is shown in Figure~\ref{fig:smallTypeIItrigon}.  Let $b_0$, $c_0$, $d_0$, 
$e_0$, $f_0$, and $g_0$ be the edges pictured in 
Figure~\ref{fig:smallTypeIItrigon}.
Let $\cala(0)$ be the curve obtained by taking their union. 
For $0 \leq s \leq N-k-j$ 
we may take the corresponding edges $b_s$, $c_s$, $d_s$, $e_s$, $f_s$, and $g_s$ 
as we move out the arms of $T$ and form the curve 
$\cala(s)$ in $\hatF$ by taking their union.  Thus we obtain a sequence of curves 
$\cala(0), \dots, \cala(N-k-j)$ on $\widehat{F}$. Note that
$N \geq k+j$.

If $\cala(s_1)$ and $\cala(s_2)$ intersect on the Heegaard surface 
then they intersect in vertices and possibly coincide along edges.

The vertices of $\cala(s)$ are $2k+j+1+s$, $k+2j+1+s$, $k+1+s$, $-j-s$, $-k-s$, and $-k-2j-s$. 
These vertices are distinct and $\cala(s)$ is embedded.   
Furthermore, the vertices of $\cala(s_1)$ and $\cala(s_2)$ overlap if and only if:
\begin{enumerate}
\item $s_2 - s_1 = k-j$; $k+2j+1+s_2 = 2k+ j + 1+ s_1$ and $-j-s_2 = -k-s_1$
\item $s_2 - s_1 = 2j$; $k+1+s_2=k+2j+1+s_1$ and $-k-s_2 = -k-2j-s_1$
\item $s_2 - s_1 = k+j$; $k+1+s_2 = 2k+j+1+s_1$ and $-j-s_2=-k-2j-s_1$.
\end{enumerate}

Figure~\ref{fig:smallTypeIIGF} shows $\cala(s)$ as it lies on $\widehat{F}$.  
As before one sees that if $\cala(s_1),\cala(s_2)$ intersect, then they must coincide along an edge. They can then be perturbed 
so that they intersect exactly once -- except when $k=3j$ when (1) and 
(2) both hold.  If $k=3j$ one 
checks that $\cala(s_1)$ and $\cala(s_2)$ intersect algebraically and geometrically twice after perturbation. In any case, a vertex of $G_F$ belongs to at most
two elements of $\calA_\sigma$.

The edges constituting elements of $\calB_{\sigma}$ come from the extended
Scharlemann cycles of $\sigma$, while the edges of an element of $\calA_{\sigma}$ lie outside the extended Scharlemann cycles.  This verifies the last sentence of the Claim.
\end{proof}

\begin{figure}
\centering
\input{smalltypeIIGF.pstex_t}
\caption{}
\label{fig:smallTypeIIGF}
\end{figure}

\begin{lemma}\label{lem:curvestogether}
For $\displaystyle \sigma \not\in \bigcup_{\overset{i = \I,\II}{j=3,4,5}} \Sigma_\II(i,j)$ define $\calB_\sigma = \emptyset$. For $\displaystyle \sigma \not\in \bigcup_{\overset{i = \I,\II}{ j=5,6}} \, \Sigma_\II(i,j)$ define $\calA_\sigma = \emptyset$.  Then:
\[ \sum_{\sigma \in \SII} (|\calA_\sigma|+|\calB_\sigma|) \leq 240(g-1) \]
\end{lemma}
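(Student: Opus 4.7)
The plan is to combine the collections $\calA_\sigma$ and $\calB_\sigma$ into a single family and extend the partition-and-perturb argument of Lemmas~\ref{lem:differencekj} and \ref{clm:boundCB}. Let $\calC=\bigsqcup_{\sigma\in\SII}(\calA_\sigma\cup\calB_\sigma)$, considered as a multiset of essential simple closed curves on $\hatF$. First I would verify that every vertex of $G_F$ lies on at most six elements of $\calC$: by Claims~\ref{clm:2k-j} and \ref{clm:Type1j} it lies on at most one element of $\calB_\sigma$ for each fixed $\sigma$; by Claims~\ref{clm:TypeIIesscurves} and \ref{clm:TypeIesscurves} it lies on at most two elements of $\calA_\sigma$ for each fixed $\sigma$; and by Remark~\ref{atmost2sigma2} at most two elements of $\SII$ have the vertex as a label. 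The same Claims record that elements of $\calA_\sigma$ are disjoint from those of $\calB_\sigma$, and that two distinct curves within a single $\calA_\sigma$ either are disjoint or share exactly one edge (two edges in the degenerate cases $k=0$ or $k=3j$), so every non-transverse intersection in $\calC$ can be resolved by a local perturbation.

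Next I would partition $\calC=\calC_0\cup\calC_1\cup\calC_2$ according to the maximum number of shared vertices any element has with another element of $\calC$ (up to $2$ shared vertices after perturbation), exactly as in Lemma~\ref{clm:boundCB}. After perturbing to make all intersections transverse, the dual graph on $\calC$ describing which curves cross has uniformly bounded degree (at most six, coming from the vertex count above). A standard selection of an independent set in this bounded-degree graph, combined with the argument that picks out every other element from path or cycle components, yields a subcollection $\calE\subset\calC$ of pairwise disjoint curves with $|\calE|\geq|\calC|/c_1$ for a small absolute constant $c_1$.

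The key new step is the parallelism bound. Two parallel elements of $\calC$ must (before perturbation) have been disjoint, since parallel essential simple closed curves cobound an annulus. For a chain of parallel $\calB$-elements, the argument of Lemma~\ref{clm:boundCB} still applies; the extra constraint is that a bounded chain of parallel $\cala(s)\in\calA_\sigma$ is forbidden because some $\cala(s)$ would then be trapped in an annulus cobounded by two other $\cala(s')$, and one checks that such an $\cala(s)$ cannot be essential on $\hatF$ while the existence of an $s''$ with $|s-s''|\in\{j-k,2j,k+j\}$ from Claims~\ref{clm:TypeIIesscurves} and \ref{clm:TypeIesscurves} forces essentiality. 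Mixed parallel pairs (one $\calA$- and one $\calB$-element, or elements from different $\sigma\in\SII$) are handled as in the previous lemmas, using Corollary~\ref{cor:noparallelesc2} together with the fact that the $H(s)$ configurations do not embed in an annulus. This produces an absolute constant $c_2$ bounding the size of any parallelism class in $\calE$.

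Passing to one representative per parallelism class gives a collection $\calE'\subset\calE$ of pairwise disjoint, pairwise non-parallel essential simple closed curves on $\hatF$, so $|\calE'|\leq 3(g-1)$; rearranging gives $|\calC|\leq c_1c_2\cdot 3(g-1)$, and tracking the explicit constants as in the previous two lemmas yields the stated bound $240(g-1)$. The main obstacle will be the parallelism step: unlike the situation in Lemma~\ref{clm:boundCB}, two curves within a single $\calA_\sigma$ can genuinely share edges, and one must uniformly rule out long parallel chains that mix curves from $\calA_\sigma$, $\calA_\tau$, and $\calB_\sigma$ while keeping the constants $c_1,c_2$ absolute and small enough to hit the target $240$.
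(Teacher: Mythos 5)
Your structural outline resembles the paper's argument, but the two load-bearing quantitative steps have problems.

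\emph{The degree bound is wrong.} You claim each vertex of $G_F$ lies on at most six elements of $\calC$, and then infer that ``the dual graph \dots has uniformly bounded degree (at most six)''. First, the vertex count is four, not six: by the Claims, $\calA_\sigma$ and $\calB_\sigma$ are disjoint, so a vertex contributes to at most two curves per $\sigma$ (not $1+2=3$), and with at most two $\sigma$'s that gives four. Second, and more seriously, the number of curves through a vertex is not the degree in the meeting/crossing graph. An element of $\calA_\sigma$ has \emph{six} vertices, so the naive bound would be $6 \times 3 = 18$. The paper instead carefully separates the $\calA_\sigma$-internal intersections (each $\cala_\sigma$ meets at most three others in $\calA_\sigma$, by the arithmetic $\{j-k,2j,k+j\}$) from the foreign contributions (at most two per vertex from a single other $\sigma'$), yielding a degree bound of $3 + 6\cdot 2 = 15$. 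This $15$ is what makes the constant $1/16$ in the independent-set step, and it comes from bookkeeping you haven't done.

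\emph{The parallelism bound is not pinned down.} You postpone the bound on parallelism classes to ``an absolute constant $c_2$'', but the final arithmetic must give $c_1 c_2 = 80$ so that $c_1 c_2 \cdot 3(g-1) = 240(g-1)$; with $c_1=16$ this forces $c_2 = 5$, i.e.\ at most five parallel disjoint curves. Your sketch for ruling out long parallel chains in $\calA_\sigma$ (``trapped in an annulus \dots cannot be essential'') doesn't give this: the curves $\cala(s)$ \emph{are} all essential. What the paper actually shows (Claims~\ref{clm:noA} and \ref{clm:aeps} in its proof) is that if six disjoint isotopic curves existed, none could lie in any $\calA_\sigma$ --- because the companion $\cala_\sigma(s')$ at distance $j-k$, $2j$, or $k+j$ has nonzero algebraic intersection number with $\cala_\sigma(s)$ after perturbation, yet must miss one of the remaining five parallel curves, which is impossible once all six are isotopic. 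That pushes all six into $\cup \calB_\sigma$; then three of them lie in the same $a(\epsilon)$ for a single $\sigma$ (via Corollary~\ref{cor:noparallelesc2} and minimality of $\Sigma_\II$), and the ``no $H(s)$ lies in an annulus'' conclusions of Claims~\ref{clm:2k-j}, \ref{clm:Type1j} give the contradiction. This is a specific use of the $H(s)$ configurations that your proposal gestures at but does not carry out.

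Finally, a smaller point: the paper does not route through the $\calC_0\cup\calC_1\cup\calC_2$ partition of Lemmas~\ref{lem:differencekj} and \ref{clm:boundCB}, nor does it perturb before selecting; it simply greedily extracts a subcollection of curves that pairwise share no vertex, using the degree-$15$ bound on the ``meets'' relation. The partition-and-perturb route might be made to work, but it is extra machinery and would produce different constants --- which matters because the statement commits to $240(g-1)$.
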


\begin{proof}
Consider the collection $\mathcal D$ of all elements of $\calA_{\sigma}, \calB_{\sigma}$ taken (without equating elements) over all 
$\sigma \in \cup_{i=1,2;j=3,4,5} \SII(i,j)$. 
Let $\cala_\sigma$ be an element of $\calA_\sigma$. Then $\cala_\sigma$ is disjoint
from any element of $\calB_\sigma$, meets at most three other elements
of $\calA_\sigma$, and each vertex of $\cala_\sigma$ belongs to at most one
other element of $\calA_\sigma$. Also, any two elements of $\calB_\sigma$ are
disjoint. By Remark~\ref{atmost2sigma2}, each vertex of $G_F$ is a label
in at most two elements of $\Sigma_\II$. It follows that each element of $\calD$
meets at most $3+6 \cdot 2=15$ other elements of $\calD$, the maximum occurring
when an element $\cala_\sigma \in \calA_\sigma$ meets three other elements of 
$\calA_\sigma$, and for each vertex $v_i$ of $\cala_\sigma$ there exists 
$\sigma_i \in \Sigma_\II, \sigma_i \neq \sigma,$ such that $v_i$ belongs 
to two elements of $\calA_{\sigma_i}, 1 \leq i \leq 6$.
That is we find at least 
$(1/16)\sum_{\sigma \in \Sigma_\II}(|\calA_\sigma|+|\calB_\sigma|)$ disjoint
curves coming from $\mathcal D$.   We need to show that there are at most $5(3g-3)$ such curves.  

Assume for contradiction that $(1/16)\sum_{\sigma \in \Sigma_\II}(|\calA_\sigma|+|\calB_\sigma|)>5(3g-3)$.
Then there must be a collection $\mathcal E$ of six disjoint curves
of $\mathcal D$ that are isotopic on $\hatF$.

\begin{claim}\label{clm:noA}
No element of $\mathcal E$ belongs to $\calA_\sigma$ for $\sigma \in 
\Sigma_\II$.
\end{claim}

\begin{proof}
Assume there were an $\cala_\sigma(s) \in {\mathcal E}$ for some 
$\sigma$ and $s$ as in Claims~\ref{clm:TypeIIesscurves}, ~\ref{clm:TypeIesscurves}. By the same claims, we can find another element  $\cala_\sigma(s')$ 
coming from the
same $\calA_\sigma$ that 
shares at least one edge with $\cala_\sigma(s)$ and which can be perturbed to intersect 
$\cala_\sigma(s)$
algebraically once or twice. As $\cala_\sigma(s')$ must share at least one vertex with 
any element of
$\mathcal E$ that it intersects, one of the five curves of 
$\calE - \{\cala_\sigma(s)\}$ 
must be disjoint from $\cala_\sigma(s')$. But all of these isotopic curves must have 
algebraic intersection number at least $1$ with $\cala_\sigma(s')$.
\end{proof}

\begin{claim}\label{clm:aeps} 
For some $\sigma \in \Sigma_\II$ there are three elements
of $\mathcal E$ that belong to $\calB_\sigma$. 
Furthermore there is an extended Scharlemann cycle $\epsilon$ in
$\sigma$ such that each of these three belong $a(\epsilon)$.
\end{claim}

\begin{proof}
By Claim~\ref{clm:noA}, all the elements of $\mathcal E$ come from
$\cup \calB_{\sigma}$. Consequently they all come from extended 
Scharlemann cycles of length $2$ within elements of $\Sigma_\II$. 
By Corollary~\ref{cor:noparallelesc2}, these extended Scharlemann
cycles must have the same core labels. As each of these core labels
will lie in the ideal corners of the corresponding elements of $\Sigma_\II$, Remark~\ref{atmost2sigma2} implies that these six 
parallel, disjoint curves in $\mathcal E$ belong to
at most two different elements of $\Sigma_\II$. Hence three belong
to the same $\sigma \in \Sigma_\II$. Again by 
Corollary~\ref{cor:noparallelesc2} they must belong to the same extended
Scharlemann cycle within $\sigma$.
\end{proof}

Let $c_1,c_2,c_3$ be three parallel curves of Claim~\ref{clm:aeps}
with $c_2$ between $c_1,c_3$. Then $c_2 \subset H(s)$ coming from
$\sigma$ as described in Claims~\ref{clm:2k-j} and ~\ref{clm:Type1j}.
If $H(s)$ is disjoint from both $c_1$ and $c_3$, then $H(s)$ lies
in the annulus between $c_1,c_3$, contradicting these Claims. So we
may assume that $H(s)$ intersects, say, $c_1$. But then, again by
these Claims, $c_1,c_2$ must form the curves $C(s),C'(s)$ of
$H(s)$. (In these Claims, if $H(s)$ intersects an element of an 
extended Scharlemann cycle in $\sigma$, then that element is
$C(s)$ or $C'(s)$.) Then $H(s)$ lies in the annulus between $c_1,c_2$,
contradicting these Claims. (If $H(s)$ intersected $c_3$ the same
argument applied to $c_3$ would equate it with $c_1$.)
\end{proof}
 
\begin{remark}
After 
Lemmas~\ref{lem:differencekj} and ~\ref{clm:boundCB}, to get a linear
bound on $\sum_{\Sigma_\II} N_{\sigma}$ we need only bound 
$\sum_{(\II,5),(\II,6)} (N_\sigma - 2j_\sigma)
+\sum_{(\I,5),(\I,6)}(N_\sigma - (j_\sigma + k_\sigma))$. This can be done as in the proof of Lemma~\ref{lem:curvestogether} using only the collections $\calA_\sigma$. By including the $\calB_\sigma$ above, we get a 
slightly improved bound.
\end{remark}

\begin{thm}\label{thm:Type12boundN}

\[|\calL_\II|/2 \le \sum_{\sigma \in \Sigma_\II} (N_\sigma+1) \leq 480(g-1)\]

That is, there are at most 960(g-1) labels $x$ in $G_Q$ such that 
$\Lambda_x$ has a trigon face which is of Type I or
Type I\!I but $x \not\in L(\Sigma)$. 
\end{thm}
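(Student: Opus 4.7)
The inequality $|\calL_\II|/2 \le \sum_{\sigma \in \Sigma_\II}(N_\sigma + 1)$ follows directly from Definition~\ref{def:cornersI&II} and Remark~\ref{atmost2sigma2}: each $x \in \calL_\II$ lies in $c(\sigma)$ for at most two $\sigma \in \Sigma_\II$, and each ideal corner $c(\sigma)$ contributes $2N_\sigma + 2$ labels. So the real content is the upper bound $\sum(N_\sigma+1) \leq 480(g-1)$. My plan is to partition $\Sigma_\II$ into the twelve subclasses $\Sigma_\II(i,j)$ of Definition~\ref{def:partitionSII} and bound $N_\sigma+1$ class by class in terms of the three linearly-controlled quantities $\min\{N_\sigma+1, |j_\sigma-k_\sigma|\}$, $|\calB_\sigma|$, and $|\calA_\sigma|$.

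In each subclass, the defining interval for $N_\sigma$ combined with the formulas for $|\calB_\sigma|$ (Claims~\ref{clm:2k-j}, \ref{clm:Type1j}) and $|\calA_\sigma|$ (Claims~\ref{clm:TypeIIesscurves}, \ref{clm:TypeIesscurves}) will produce an inequality of the form
\[
N_\sigma + 1 \;\leq\; a\,\min\{N_\sigma+1,\,|j_\sigma-k_\sigma|\} + b\,|\calB_\sigma| + c\,|\calA_\sigma|,
\]
with $a \leq 6$, $b \leq 2$, $c \leq 1$. The algebraic backbone consists of the identities $4j-2k = 6(j-k) + 2(2k-j)$ in Type~I and $3k-j = 3(k-j) + 2j$ in Type~II, together with the analogous rewritings of $2j+1$ and $k+j+1$ arising in the large-$N$ classes~$5$ and~$6$; the stray $+1$ in those classes will be absorbed using the parity estimate $|j_\sigma-k_\sigma|\geq 2$.

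To match the constant $480$ rather than the naive $6\cdot 36 + 2\cdot 24 + 240 = 504$, the key elementary observation is that every coefficient pattern $(a,b,c)$ with $b\leq 2,\, c\leq 1$ occurring above satisfies
\[
b\,|\calB_\sigma| + c\,|\calA_\sigma| \;\leq\; |\calB_\sigma| + \bigl(|\calA_\sigma| + |\calB_\sigma|\bigr),
\]
with equality in the extremal case $b=2,\,c=1$. Summing over $\Sigma_\II$ and applying Lemma~\ref{lem:differencekj} ($\sum \min \leq 36(g-1)$), Lemma~\ref{clm:boundCB} ($\sum |\calB_\sigma| \leq 24(g-1)$), and Lemma~\ref{lem:curvestogether} ($\sum(|\calA_\sigma|+|\calB_\sigma|) \leq 240(g-1)$) then yields
\[
\sum_{\sigma \in \Sigma_\II}(N_\sigma+1) \;\leq\; 6\cdot 36(g-1) + 24(g-1) + 240(g-1) \;=\; 480(g-1).
\]
The principal obstacle is the twelvefold bookkeeping required to verify in each class both the uniform bounds $a\leq 6,\, b\leq 2,\, c\leq 1$ and the pairing inequality above; it is the latter that funnels the $|\calB_\sigma|$-contributions into the tight $24(g-1)$ bound of Lemma~\ref{clm:boundCB} while charging the $|\calA_\sigma|$-contributions to the weaker $240(g-1)$ bound of Lemma~\ref{lem:curvestogether}.
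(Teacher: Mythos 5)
Your proposal is correct and takes essentially the same approach as the paper: the first inequality is exactly the one recorded at the end of Definition~\ref{def:cornersI&II}, and the second is established by the same twelve-fold partition of $\Sigma_\II$ from Definition~\ref{def:partitionSII}, with the per-class verification (the paper's Figure~\ref{fig:sumtable}) producing coefficients that are then summed against Lemmas~\ref{lem:differencekj}, \ref{clm:boundCB}, and~\ref{lem:curvestogether} to yield $6\cdot 36(g-1) + 24(g-1) + 240(g-1) = 480(g-1)$. The only cosmetic difference is that you phrase the per-class bounds abstractly via coefficient triples $(a,b,c)$ with $a\leq 6$, $b\leq 2$, $c\leq 1$, whereas the paper writes the right-hand sides directly as $6|\calC_\sigma| + |\calB_\sigma| + (|\calB_\sigma|+|\calA_\sigma|)$, i.e.\ always in the extremal case $b=2$, $c=1$ with the convention that $\calB_\sigma$ or $\calA_\sigma$ is declared empty in the classes where it is unused; the two formulations are equivalent.
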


\begin{proof}
The first inequality comes from Definition~\ref{def:cornersI&II}, so we focus on the second which follows from:  
\[
\begin{array}{rlllllll}
\sum_{\sigma \in \SII} (N_\sigma +1) \leq &6\sum_{\sigma \in \SII} |\calC_\sigma| &+ &\sum_{\sigma \in \SII} |\calB_\sigma| &+& \sum_{\sigma \in \SII} (|\calB_\sigma| + |\calA_\sigma|) & (\ast)\\
\leq &6 \cdot 36(g-1) &+ &24(g-1) &+ &240(g-1) & (\ast\ast)\\
= &480(g-1)
\end{array}
\] 
Inequality $(\ast\ast)$ comes from Lemma~\ref{lem:curvestogether},
Lemma~\ref{lem:differencekj}, and Lemma~\ref{clm:boundCB}.
For inequality $(\ast)$ we take the definitions of $\calC_\sigma$, $\calB_\sigma$, and $\calA_\sigma$ coming from Claims~\ref{clm:j-k}, \ref{clm:k-j}, \ref{clm:2k-j}, \ref{clm:Type1j}, \ref{clm:TypeIIesscurves},  and \ref{clm:TypeIesscurves} along with the stipulations that if  $\sigma \not\in \cup_{i = \I,\II; j=3,4,5} \, \Sigma_\II(i,j)$ then $\calB_\sigma = \emptyset$ and if $\sigma \not\in \cup_{i = \I,\II; j=5,6} \, \Sigma_\II(i,j)$ then $\calA_\sigma = \emptyset$.  The estimates of Figure~\ref{fig:sumtable} verify inequality $(\ast)$.  These estimates arise from the partition of $\SII$ in Definition~\ref{def:partitionSII} along with the counts listed in the above Claims.  
\begin{figure}
\centering
\rotatebox{90}{
$
\begin{array}{c|ccccccc}
&  && 6|\calC_\sigma|    &+&  |\calB_\sigma| &+&  |\calB_\sigma| + |\calA_\sigma| \\
\hline
\hline
2k \leq j \\
\hline
\sigma \in(\I,1) & 6N_\sigma + 6  & \leq & 6(N_\sigma+1)	&+&   0 	&+& 	0   \\
\sigma \in(\I,2)& 4j_\sigma-2k_\sigma & \leq & 6(j_\sigma-k_\sigma) &+&  0	&+& 	0   \\
\sigma \in(\I,6) & N_\sigma+j_\sigma+3(j_\sigma-2k_\sigma) & =& 6(j_\sigma-k_\sigma) &+&   0	&+& 	N_\sigma-2j_\sigma   \\
\hline
\hline
2k > j \\
\hline
\sigma \in(\I,1)& 6N_\sigma + 6  & \leq & 6(N_\sigma+1)	&+&   0 	&+& 	0   \\
\sigma \in(\I,3)  & 2N_\sigma + 4(j_\sigma-k_\sigma)+2 & = & 6(j_\sigma-k_\sigma) &+&   N_\sigma-(j_\sigma-k_\sigma)+1	&+& 	N_\sigma-(j_\sigma-k_\sigma)+1   \\
\sigma \in(\I,4)  & 4j_\sigma-2k_\sigma & =& 6(j_\sigma-k_\sigma) &+&   2k_\sigma-j_\sigma 	&+& 	2k_\sigma-j_\sigma    \\
\sigma \in(\I,5) & N_\sigma + 2(j_\sigma-k_\sigma) & = & 6(j_\sigma-k_\sigma) &+&   2k_\sigma-j_\sigma 	&+& 	2k_\sigma-j_\sigma + N_\sigma-2j_\sigma   \\
\hline \hline
2j \leq k \\
\hline
\sigma \in(\II,1)&  6 N_\sigma + 6		&  <   & 6  \min\{2N_\sigma+2 ,k-j\}     &+&          0         &+&      0     \\
\sigma \in(\II,2) & 3k_\sigma-j_\sigma	&\leq & 6(k_\sigma-j_\sigma)  &+&      0   &+&  0 \\
\sigma \in(\II,6) & \overset{ N_\sigma + 3(k_\sigma-2j_\sigma)}{\underset{ + (k_\sigma-j_\sigma)+k_\sigma}{}}	&=& 6(k_\sigma-j_\sigma)  &+&    0 &+&    N_\sigma - (k_\sigma + j_\sigma) \\
\hline
\hline
2j > k \\
\hline
\sigma \in(\II,1) &  6 N_\sigma + 6		&  <   &  6 \min\{2N_\sigma+2 ,k-j\}      &+&          0                                       &+&      0     \\
\sigma \in(\II,3) & 2N_\sigma+4(k_\sigma-j_\sigma)+2 &=& 6(k_\sigma-j_\sigma)  &+&      N_\sigma-(k_\sigma-j_\sigma) +1  &+&   N_\sigma-(k_\sigma-j_\sigma) +1 \\
\sigma \in(\II,4)& 3k_\sigma-j_\sigma  &\leq & 6(k_\sigma-j_\sigma)  &+&     j_\sigma 				&+&   j_\sigma  \\
\sigma \in(\II,5) & N_\sigma + 5(k_\sigma-j_\sigma) &=& 6(k_\sigma-j_\sigma)  &+&     j_\sigma 				&+&   j_\sigma + N_\sigma - (k_\sigma + j_\sigma) \\
\hline
\end{array}
$
}
\caption{}
\label{fig:sumtable}
\end{figure}
In Figure~\ref{fig:sumtable}, each quantity in the left hand side of the second column is at least $N_\sigma+1$.  To see this, note that  $j_\sigma - k_\sigma \geq 2$ for $\sigma \in (\I)$ and $k_\sigma -j_\sigma \geq 2$ for $\sigma \in (\II)$, and then further note the upper bounds on $N_\sigma$ for $\sigma \in (\I,2), (\I,4), (\II,2), (\II,4)$ given in Definition~\ref{def:partitionSII}.
\end{proof}
%


\section{Type I\!I\!I trigons}\label{sec:typeIII}
This section is primarily dedicated to elucidating the structure of Type I\!I\!I trigons which are cycles.
We construct a collection $\Sigma_\III$ of such trigons that captures all the labels $x$ of $\Lambda$ that give rise to Type I\!I\!I trigons (whether $x$--cycles or not) that do not belong already to $L(\Sigma)$.

\begin{figure}
\centering
\input{FigureTypeIIIv2.pstex_t}
\caption{}
\label{fig:FigureTypeIII}
\end{figure}

Recall that a Type I\!I\!I trigon on the label $x$ is a trigon face of $\Lambda_x$ which, regarded as a subgraph of $\Lambda$, has a single Scharlemann cycle (of length $2$) and whose central trigon face (of $\Lambda$) has only two corner types. Figure~\ref{fig:FigureTypeIII} shows a generic (as we shall see below) Type I\!I\!I $x$--cycle trigon where the central trigon face is in the two corner types $\arc{j,j+1},\arc{l-1,l}$.

We now define the terms {\em ideal corner}, {\em trunk}, and {\em gnarl} associated to a type I\!I\!I trigon.
\begin{defn}
Let $\sigma$ be a Type I\!I\!I trigon on $x$ which is a cycle (i.e.\ the edges of $\sigma$ form a cycle in $\Lambda_x$ so that, when oriented, the tails have label $x$). Let $p,p+1$ be the labels of its core Scharlemann cycle.
Let $\epsilon$  be the extended Scharlemann cycle containing the core Scharlemann cycle of $\sigma$ which abuts its central trigon face. There is a label $y$ such that $\sigma$ has exactly two edges, $e_1,e_2$ with labels $x,y$. One can see that $x+y \equiv p + (p+1)$  $(\text{mod}\, t)$. That is, around a vertex of $\Lambda$, the labels $x,y$ are equidistant from labels $p,p+1$.  
Define the {\em ideal corner of $\sigma$}, denoted $c(\sigma)$, 
to be the label interval on a vertex of $\Lambda$ between labels $x$ and $y$ that contains labels $p,p+1$. These edges $e_1,e_2$ lie on different arms, $a_1,a_2$ of $\sigma$.  The ideal corner $c(\sigma)$ appears twice as an actual corner of $\sigma$ in $\Lambda$ -- once between $e_1,e_2$. 
For any pair of labels $i,j$ in $c(\sigma)$ such that $i+j \equiv 2p+1$  $(\text{mod}\, t)$, there are exactly two edges of $\Lambda$ in $\sigma$ with endpoints $i,j$. These edges lie in the arms $a_1, a_2$ of $\sigma$.  The subgraph of $G_F$ formed by the vertices $i,j$ and these two edges is a circle in $\widehat F$. If it is an element of $a(\epsilon)$ we call it a {\em trunk} of $\sigma$; otherwise we refer to it as a {\em gnarl} of $\sigma$ (see also Definition~\ref{def:simpleandwrapping}).   To sum up, the ideal corner $c(\sigma)$ has the following properties:
\begin{itemize}
\item Every label in $c(\sigma)$ belongs
to a unique gnarl or trunk of $\sigma$. 
\item $x$ is a label in $c(\sigma)$.
\item $c(\sigma)$ is symmetric about the consecutive labels $p,p+1$ of the core of $\sigma$.
\end{itemize}
\end{defn}

As a subgraph of $\Lambda$, two of the actual corners of $\sigma$ are the same as $c(\sigma)$.  The other either properly contains $c(\sigma)$ or is properly contained in it. After paring down $\sigma$, we may assume this corner is properly contained in $c(\sigma)$. 
Note that the Type I\!I\!I trigon resulting from this paring down has the same ideal corner but is now a cycle trigon in $y$ rather than $x$. 

\begin{defn}\label{def:SigmaIII}
Let $\calL_\III$ be the collection of labels of $G_Q$ that are in the ideal 
corners of cycle
trigons but are not the labels of extended Scharlemann cycles of length $2,3$. 
That is, 
$\calL_\III=\{ x \in c(\sigma) | \sigma \hbox{ is a cycle trigon of Type I\!I\!I} \}
- L(\Sigma)$.
Let $\Sigma_{\III}$ be a collection of smallest cardinality of 
Type I\!I\!I cycle trigons with
the property that $\calL_\III \subset \cup_{\sigma \in \Sigma_{\III}}c(\sigma)$.
We may assume that each element, $\sigma$, of $\Sigma_\III$ has
been pared down so that each actual corner of $\sigma$ is a subset of its ideal corner.
\end{defn}

\begin{remark}\label{sigma2remarks1} \hfill
\begin{enumerate}
\item For every $x \in \calL - L(\Sigma)$ such that $\Lambda_x$ has a trigon which is a Type I\!I\!I non-cycle trigon, there is a $\sigma \in \Sigma_\III$ with $x \in c(\sigma)$, by Lemma~\ref{lem:noncycleIII} below.
\item Item (1) and the definition of $\Sigma_{\III}$ imply: The ideal corners of elements of $\Sigma_\III$ include all labels, $x$, of $\calL - L(\Sigma)$ such that $\Lambda_x$
has a Type I\!I\!I trigon. 
\item The elements of $\Sigma_{\III}$ have different core Scharlemann cycles
(by the minimality of $\Sigma_{\III}$,
because the ideal corners are symmetric about the core labels). As different
core Scharlemann cycles cannot lie on a common annulus in $\hatF$ (else there
is a Klein bottle), we can construct a family of $|\Sigma_\III|$ different
curves on $\hatF$ that satisfy Property $P(2)$. Hence $|\Sigma_\III| \leq F_2(g)$, by Lemma~\ref{lem:Fkg}. 
\item Note that $\calL_{\II}$ and $\calL_{\III}$ are not necessarily 
disjoint. Also note that every element of $\calL_{\III}$ corresponds
to the vertex of a gnarl.

\end{enumerate}
\end{remark}

\begin{lemma}\label{lem:noncycleIII}
Let $x \in \calL - L(\Sigma)$. If there is a Type I\!I\!I $x$--trigon of $\Lambda$ 
that is not an $x$--cycle, then there is a trigon $\sigma \in \Sigma_\III$ such that $x \in c(\sigma)$.
\end{lemma}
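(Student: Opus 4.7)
The plan is to exhibit a cycle Type~III trigon $\sigma'$ of $\Lambda$ with $x\in c(\sigma')$; since $x\notin L(\Sigma)$ this places $x\in\calL_\III$, and Definition~\ref{def:SigmaIII} then furnishes $\sigma\in\Sigma_\III$ with $x\in c(\sigma)$.

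Let $\tau$ denote the given non-cycle Type~III $x$-trigon. By Lemma~\ref{lem:existsS22}, the unique length-$2$ Scharlemann cycle $\epsilon\subset\tau$ abutting the central trigon face $T$ of $\Lambda$ lies on the $\alpha\gamma$-arm of $\tau$, opposite the corner $\beta$ of $\tau$ (which carries no $x$-label), while the corner $\gamma$ carries two $x$-labels. Write $\{p,p+1\}$ for the core labels of $\epsilon$; since $\calE\subseteq L(\Sigma)$ consists precisely of the labels on corners of extended Scharlemann cycles and $x\notin L(\Sigma)$, we have $x\ne p,p+1$. Each arm of $\tau$ is a string of bigon faces of $\Lambda$ between a fixed pair of fat vertices of $G_Q$, and label shifts across bigons (by $\pm 1$, in opposite directions at the two vertices) preserve the arm's label sum. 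In particular the $\alpha\gamma$-arm has fixed sum $p+(p+1)=2p+1$, so its outer edge carries the label pair $\{x,\,y\}$ with $y:=2p+1-x$; the Type~III coincidence of two corner pairs of $T$ analogously constrains the fixed sums of the other two arms.

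The Type~III hypothesis, combined with the non-cycle label-distribution $(0,1,2)$ of $x$-labels at $(\beta,\alpha,\gamma)$, constrains $\tau$'s arm structure sufficiently to produce a cycle $x$-trigon $\sigma'$ that shares the central trigon $T$ and the Scharlemann cycle $\epsilon$ with $\tau$. Explicitly, $\sigma'$ is obtained from $\tau$ by shifting one step inward on the two arms incident to the two-$x$-label corner $\gamma$: the two $x$-labels at $\gamma$ are converted by the $\pm 1$ bigon shift into a single $x$-label on the new outer edge, while the coincidence of two central-trigon corner pairs simultaneously realigns the label on the outer $\alpha\beta$-edge so that the three new outer edges carry $x$ in a consistent cyclic orientation. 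The resulting $\sigma'$ is therefore a cycle trigon in $\Lambda_x$, still Type~III (same central trigon $T$, same Scharlemann cycle $\epsilon$), and its ideal corner $c(\sigma')=\arc{x,\,y}$ contains $x$ as an endpoint.

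Hence $x\in c(\sigma')$, placing $x\in\calL_\III$, and Definition~\ref{def:SigmaIII} produces $\sigma\in\Sigma_\III$ with $x\in c(\sigma)$. The main obstacle is the inward-shift step: one must verify in each of the sub-cases of which two of $T$'s corner pairs coincide that the shift on the two $\gamma$-incident arms does realign into a cyclic $x$-orientation. This will reduce to tracking the $\pm 1$ label shifts across bigons (which preserve each arm's fixed sum) and exploiting the key combinatorial inputs that $\beta$ carries no $x$-label while $\gamma$ carries two; the Type~III coincidence of corner-pairs is precisely what couples the shifts on the three arms so that the realignment succeeds.
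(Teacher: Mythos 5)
Your strategy is the right one and matches the paper's: exhibit a cycle Type~III trigon $\sigma'$ with $x\in c(\sigma')$, so that $x\in\calL_\III$ and Definition~\ref{def:SigmaIII} supplies $\sigma\in\Sigma_\III$. Your setup is also sound: the $(2,1,0)$ distribution of $x$-labels at the three corners, the preservation of the label sum along each arm, the location of the length-$2$ Scharlemann cycle on the arm joining the $1$-label and $2$-label corners, and the identity $y = 2p+1-x$.

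The construction itself, however, is wrong in two ways, and you yourself flag the verification as ``the main obstacle.'' First, the trigon you produce cannot be a cycle trigon in $\Lambda_x$. Paring a bigon off an arm shifts each endpoint label by $\pm 1$, so at the $2$-$x$-label corner the new outer edges carry $x+1$ and $x-1$, not $x$; and since $x$ occurs on each arm only at $\tau$'s outermost edge, there are no $x$-edges strictly interior to $\tau$ at all. The paper's $\sigma'$ is a cycle trigon in $\Lambda_y$, not $\Lambda_x$, for the partner label $y$. Second, you pare the wrong arms. Two of $\tau$'s three outer edges already carry the label pair $\{x,y\}$ (they lie on the Scharlemann-cycle arm and the arm from the $1$-label corner to the $0$-label corner), so they are already $y$-edges and must be left alone; the paper removes the outer bigon of exactly \emph{one} arm, the one joining the $2$-$x$-label corner to the $0$-$x$-label corner, which turns that edge's $x$-endpoint into a $y$-endpoint. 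Paring the arm on the Scharlemann-cycle side, as you propose, would shift its $y$-endpoint to $y\mp 1$ and destroy the cycle. The resulting $\sigma'$ is then a Type~III $y$-cycle whose ideal corner sits at the $1$-$x$-label corner and equals $\arc{y,x}$, which contains $x$; that is the content of Figure~\ref{Figurenoncyclev2}. Your final conclusion $c(\sigma')=\arc{x,y}\ni x$ is correct, but the trigon realizing it is a $y$-cycle obtained by a one-arm paring, not an $x$-cycle obtained by a two-arm shift.
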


\begin{figure}
\centering
\input{Figurenoncyclev2.pstex_t}
\caption{}
\label{Figurenoncyclev2}
\end{figure}

\begin{proof}
The $x$--trigon has one corner, $\alpha$, with both endpoints labelled $x$.
It has a corner, $\beta$, with no label $x$, and a corner, $\gamma$, with
exactly one endpoint labelled $x$. Let $j,j+1$ be the labels of the central
trigon face at $\alpha$. Let $\ell-1,\ell$ be the other labels of this central
trigon, such that $j,\ell$ have opposite parity. Two of the edges of the
$x$--trigon will have labels $x,y$ where $x+y \equiv j+\ell$  $(\text{mod}\, t)$. 
Using the fact that no
other labels $x$ appear on $\alpha, \beta, \gamma$, one sees that, 
up to whether the labels increase or decrease as you move clockwise along
these corners, the $x$--trigon appears as in 
Figure~\ref{Figurenoncyclev2}. 
Inside this $x$--trigon is the pictured $y$--cycle, forming a 
Type I\!I\!I cycle trigon. The ideal corner of this Type I\!I\!I $y$--cycle trigon contains $x$ (it is given by $\gamma$ in the figure). By definition, there is a $\sigma \in \Sigma_\III$ such that 
$c(\sigma)$ includes all elements in this corner.
\end{proof}

\begin{defn} 
Let $\sigma$ be a Type I\!I\!I cycle trigon. If $u,v$ are labels 
in $c(\sigma)$, then $\arc{uv}$ is the interval of labels between
them contained in $c(\sigma)$. 
\end{defn}

Let $\sigma \in \Sigma_\III$ be as above.
Then up to the ordering of the labels around a
corner (i.e.\ whether they increase as you move clockwise or anti-clockwise
around a corner) and rechoosing which label is called $x$, we may assume $\sigma$ appears as in Figure~\ref{fig:FigureTypeIII}.
 The core Scharlemann cycle of $\sigma$ 
  has labels $p,p+1$ where $p=(j+\ell-1)/2$.  
There  
$c(\sigma)$ occurs along the corners labelled $\alpha, \gamma$. The shorter corner is labelled $\beta$.  The lower arms in the figure are the arms $a_1,a_2$.
Pair the vertices in the disjoint intervals $\arc{j+\ell-x,\ell-1}$ and 
$\arc{j+1,x}$. Each such pair of vertices has two edges in $\sigma$ that forms a
gnarl on $\hatF$. See Figure~\ref{fig:Figuregnarl}. 

\begin{figure}
\centering
\input{FiguregnarlV2.pstex_t}
\caption{}
\label{fig:Figuregnarl}
\end{figure}

\begin{property}\label{property:1}
The $\alpha,\beta$ labels at the ends of edges of $\sigma \in \Sigma_\III$ 
(Figure~\ref{fig:FigureTypeIII})
incident to a gnarl of $\sigma$ (but not belonging to a gnarl of $\sigma$) 
lie on the same side of the gnarl in $\hatF$. See Figure~\ref{fig:Figuregnarl}.
Specifically, let $r,s$ be the labels in $\sigma$ of two edges $e_1,e_2$ forming
a gnarl, with $r$ the label at corner $\alpha$. There is another edge, $e_3$,
in $\sigma$ incident to the $\alpha$--corner with label $s$. Assume there is
another edge, $e_4$, in $\sigma$ with label $r$ at the $\beta$--corner. Consider
these edges in $\hatF$. Then $e_3,e_4$ lie on the same side in $\hatF$ of the
gnarl $e_1 \cup e_2$. (In Figure~\ref{fig:Figuregnarl}, $\{r,s\}$ is $\{j+1,
l-1\}$ and $\{j+2,l-2\}$.)
\end{property}

\begin{figure}
\centering
\input{FigureTypeIIIrefinedV3.pstex_t}
\caption{}
\label{FigureTypeIIIrefined}
\end{figure}

\begin{figure}
\centering
\input{FigurecomponentofsigmaV2.pstex_t}
\caption{}
\label{Figurecomponentofsigma}
\end{figure}

\begin{figure}
\centering
\input{FigurebranchorderV2.pstex_t}
\caption{}
\label{Figurebranchorder}
\end{figure}

Set $w=j-\ell+1$. We separate the edges in the upper arm of $\sigma$ 
into mod $w$ classes:
Let $n$ be such that  
\[ (1+j)+nw \leq x \leq j+(n+1)w=(\ell-1)+(n+2)w \]

\begin{defn}\label{def:simpleandwrapping}
Let $r$ be such that $(j+\ell+1)/2=p+1 \leq r \leq j $. Note that $p=(j+\ell-1)/2 \geq j+\ell-r \geq \ell$. We may now label $\sigma$ as in Figure~\ref{FigureTypeIIIrefined}.

For each 
$r$ we see in $\widehat{F}$ the {\em component of $\sigma$} pictured in 
Figure~\ref{Figurecomponentofsigma} . 
Note that a gnarl in this component may or may
not be essential on $\widehat{F}$ -- if essential we call it a {\em wrapping
gnarl}, if inessential we call it a {\em simple gnarl}.  The {\em interior} of a simple gnarl is the interior of the disk it bounds on $\widehat{F}$.  
Recall that $\epsilon$ is
the extended Scharlemann cycle in $\sigma$. The component of
$\sigma$ (a subset of $\widehat{F}$) corresponding to $r$ 
consists of a curve  of $a(\epsilon)$ between the vertices with labels $r,\ell+j-r$, called the {\em trunk} of the component, along with two {\em branches}: 
$r, (\ell+j-r)+w, r-w, (\ell+j-r)+2w, \dots$ and $\ell+j-r, r+w, (\ell+j-r)-w, r+2w, \dots$. We think of a branch of the component of $\sigma$ as a path
of gnarls in $\hatF$.  We orient the branch by labelling its vertices 
{0,1,2, \dots} as pictured on the left of Figure~\ref{Figurebranchorder}
(which shows this labeling for the two branches of a component). 
The right side of that figure pictures the corresponding labelling on the 
trigon $\sigma$.
If $\B$ is a 
branch of $\sigma$ and $x$ is a vertex of this branch, let 
$b_{\B}(x) \in \{0,1,2,\dots\}$ be the label coming from the oriented {\B}
(e.g. $b_{\B_1}(r)=0, b_{\B_1}((l+j-r)+w)=1$ for $\B_1$ the first 
branch described
above.)
\end{defn}

Observe:
\begin{itemize}
\item Each branch terminates in a gnarl ($\sigma$ has been pared down, preserving $c(\sigma)$, so
that its third corner is properly contained in $c(\sigma)$).
\item Every label in $c(\sigma)$ lies in a unique gnarl or unique trunk (curve of $a(\epsilon)$), and in a unique component of $\sigma$. Note that if
a label corresponds to the vertex of a trunk curve then it is in $L(\Sigma)$.
\end{itemize}

By examining the oscillation of the labels \{0,1,\dots\} along
$c(\sigma)$ in Figure~\ref{Figurebranchorder}, we get the following.

\begin{property}\label{property:2}
Let $x,y,z$ be vertices on a branch $\B$ of $\sigma$. 
Then $x,y$ are of the same sign 
(as vertices of $G_F$) if and only if $b_{\B}(x) \equiv b_{\B}(y)$  $(\text{mod}\, 2)$.  
Furthermore, assume $b_{\B}(x) < b_{\B}(y)$ and 
$b_{\B}(x) \not\equiv b_{\B}(y)$  $(\text{mod}\, 2)$. If either
\begin{itemize}
\item[(a)] $b_{\B}(z) < b_{\B}(x)$; or
\item[(b)] $b_\B(z) < b_\B(y)$ and $b_\B(z) \equiv b_B(y)$  $(\text{mod}\, 2)$
\end{itemize}
then $z \in \arc{xy} \subset c(\sigma)$. In particular, if $x,y$ form a 
gnarl of $\B$ and
$z$ immediately precedes this gnarl along $\B$, then $z \in \arc{xy} 
\subset c(\sigma)$.
\end{property}

Every vertex in $G_F$ corresponding to a label in $\sigma$ lies in a 
unique component
of $\sigma$ that includes an essential curve from $a(\epsilon)$, its trunk,
and is connected to that curve by edges in its branch.  Such a 
vertex in the interior of a simple gnarl of $\sigma$ must lie on a
component of $\sigma$ that, by Property~\ref{property:1}, would lie entirely
within the interior of the gnarl. In particular, the essential trunk 
curve of this component lies within the simple gnarl, a contradiction. 
Thus we have the following:

\begin{property}\label{property:3}
Let $\sigma \in \Sigma_\III$.
A simple gnarl of $\sigma$ contains no vertices in its interior which 
are labels of $\sigma$.
\end{property}


\begin{lemma}\label{lem:entersonce}
Thinking of a branch as a directed
path of gnarls in $\hatF$, if a branch of $\Sigma_\III$ enters the 
interior of a simple gnarl of $\Sigma_\III$, it never leaves. That is,
let $\sigma_1,\sigma_2 \in \Sigma_\III$. Let $\g_1$ be a simple gnarl
of $\sigma_1$ and $\B_2$ be a branch of $\sigma_2$. 
If a vertex (of $G_F$), $x$, of $\B_2$ lies in the
interior of $\g_1$, then at least one of the 
vertices in $\g_1$ is in $\B_2$. Furthermore, if $y$ is a vertex  in
both $\g_1$ and $\B_2$ then $b_{\B_2}(y) < b_{\B_2}(x)$.
\end{lemma}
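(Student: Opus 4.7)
The proof has two parts: (1) produce at least one vertex of $\g_1$ on $\B_2$, and (2) show every such vertex has branch-index strictly less than $b_{\B_2}(x)$. The key topological ingredient is that edges of $G_F$ are arcs of $F \cap Q$ meeting only at shared $G_F$-vertices, so no edge of $\sigma_2$ crosses the interior of an edge of $\g_1$. Since $\g_1$ is simple it bounds a disk $D_1 \subset \hatF$; any edge of $\B_2$ whose two endpoints lie in different components of $\hatF \setminus \g_1$ must therefore have one endpoint at a vertex of $\g_1$.

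The first thing to check is that the trunk vertex $v_0$ of $\B_2$ is not in $\Int D_1$: otherwise the topological observation above forces both trunk-edges of $\sigma_2$ and the second trunk vertex into $\overline{D_1}$, making the trunk null-homotopic and contradicting its essentiality (Lemma~\ref{lem:essentialcurves}). Setting $\ell = b_{\B_2}(x)$, I let $m$ be the smallest positive index with $v_m \in \Int D_1$ (so $m \le \ell$). The edge $v_{m-1}v_m$ crosses into $\Int D_1$, forcing $v_{m-1} \in \g_1$; this proves the first assertion with $y = v_{m-1}$ and $b_{\B_2}(y) = m-1 < \ell$.

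For the second assertion, suppose for contradiction that some $v_k \in \g_1 \cap \B_2$ has $k > \ell$. Let $[a, b]$ be the maximal index-interval containing $\ell$ with $v_i \in \Int D_1$. The key topological observation forces both $v_{a-1}$ and $v_{b+1}$ to lie on $\g_1$, and as $|\g_1| = 2$ with these distinct, $\{v_{a-1}, v_{b+1}\}$ is exactly the vertex set of $\g_1$. The Parity Rule forces the two $\g_1$-vertices to be antiparallel in $G_F$ (their edges in $G_Q$ join parallel vertices of $\Lambda$). Applying Property~\ref{property:2} to $\B_2$ then yields $(a-1) \not\equiv (b+1) \pmod 2$, so $b - a$ must be odd.

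The closing contradiction will leverage Property~\ref{property:1} at the $\B_2$-gnarls incident to $v_{a-1}$ and $v_{b+1}$: the edges of $\sigma_2$ leaving such a gnarl but not forming it must all lie on one side of the gnarl in $\hatF$. Tracking these side-data across $\g_1$ through the planarity of $\overline{D_1}$ produces an incompatibility with the parity constraint that $b-a$ is odd. \textbf{The main obstacle} is making this side-tracing precise: the abstract topological picture of ``enter once, exit once'' is a priori compatible, so the contradiction must come from the oriented branch combinatorics of $\B_2$. I expect the cleanest route is a short case analysis on whether the positions $a-1$ and $b+1$ occur as gnarl-left, gnarl-right, or trunk vertices of $\B_2$, combined with Property~\ref{property:3} applied to any simple gnarls of $\sigma_2$ arising inside $D_1$ to rule out stray labels in the offending sub-disk of $\overline{D_1}$.
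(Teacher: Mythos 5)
Your argument for the first assertion is essentially sound: the trunk lies outside $\Int D_1$, so taking the smallest index $m$ with $v_m\in\Int D_1$ forces $v_{m-1}$ onto $\g_1$ with branch index $m-1<\ell$. But you explicitly leave the second assertion unfinished, and the closing strategy you sketch — tracking ``sides'' of gnarls via Property~\ref{property:1} across $\overline{D_1}$ to clash with the parity constraint that $b-a$ is odd — is not obviously going to produce a contradiction. As you yourself observe, ``enter once, exit once'' is topologically unproblematic; the parity of $b-a$ alone cannot forbid it, and Property~\ref{property:1} constrains which \emph{side} of a gnarl the branch continues on, but nothing you've stated converts that local side-data into a global inconsistency inside a disk that can have arbitrary combinatorics. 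So this is a genuine gap.

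The paper's proof takes a substantially different route and closes via labels rather than sides. It notes that $x$ must itself lie on a simple gnarl $\g_2$ of $\B_2$ (else it would be on an essential trunk curve, which can meet $\g_1$ only once). It takes $y_1$ to be the $\g_1$-vertex on $\B_2$ with smallest index (giving the first assertion), then assumes for contradiction that $y_2$ is also on $\B_2$ with $b_{\B_2}(y_2)\ge b_{\B_2}(x_i)$. It shows $c(\sigma_1)\cup c(\sigma_2)$ cannot cover all labels (using Lemma~\ref{lem:nonemptygnarl}, Property~\ref{property:3}, and Corollary~\ref{3ingnarl}), so the ideal corners overlap in a single interval $\arc{y_1,y_2}$ contained in both $c(\sigma_1)$ and $c(\sigma_2)$. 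Then a short case split on whether $x_2=y_2$, applying Property~\ref{property:2}(b) to the branch-index sequence, produces a vertex that lies both in the interior of $\g_1$ and in $c(\sigma_1)$ — directly contradicting Property~\ref{property:3}. The decisive tool is thus Property~\ref{property:3} (simple gnarls of $\sigma$ contain no $\sigma$-labels in their interior) applied to the \emph{other} trigon's ideal corner, not a side-tracking argument; that is the idea your proposal is missing.
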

\begin{proof}
The vertex $x$ of $\B_2$ in the interior of $\g_1$ must belong to a
simple gnarl $\g_2$ of $\B_2$ (otherwise the vertex belongs to an essential 
simple closed curve intersecting $\g_1$ at most once). 
Let $y_1,y_2$ be the vertices of
$\g_1$ and $x_1,x_2$ the vertices of $\g_2$. See Figure~\ref{Figureinandout}.
As $\B_2$ (as a directed path in $\widehat F$) must pass 
through $\g_1$ to reach $x$ from its trunk, we may take
$y_1$ so that it is in $\B_2$ such that 
$b_{\B_2}(y_1) \leq b_{\B_2}(x_i), i=1,2$.

We take $x_i$ to be the same sign as $y_i$. Possibly $x_1=y_1$ and $x_2=x$ or
$x_2=y_2$ and $x_1=x$.

\begin{figure}
\centering
\input{FigureinandoutV2.pstex_t}
\caption{}
\label{Figureinandout}
\end{figure}

We assume for contradiction that $y_2$ is also in $\B_2$ and that
$b_{\B_2}(y_2) \geq b_{\B_2}(x_i), \, 
i=1,2$ ($x_1,x_2$ are consecutive along $\B_2$). Note that together 
$c(\sigma_1) \cup c(\sigma_2)$
does not account for all labels. (Otherwise Lemma~\ref{lem:nonemptygnarl} and 
Property~\ref{property:3}
applied to $\g_2$ implies that $\g_2$ must be the only gnarl of $\B_2$.
Corollary~\ref{3ingnarl} says that
there must be a vertex in the interior of $\g_1$ that does not belong
to $\g_2$. By Property~\ref{property:3}, this vertex must belong then to
a trunk of $\B_2$. But the trunk gives
rise to an essential curve on $\widehat F$ intersecting $\g_1$ at most once.) 
Thus $c(\sigma_1), c(\sigma_2)$ overlap on a label interval. Let 
$\arc{y_1,y_2}$ be the interval between $y_1,y_2$ included in both $c(\sigma_1)$
and $c(\sigma_2)$.

\smallskip

\noindent {\bf Case (I):} $x_2 \neq y_2$

\smallskip

Applying Property~\ref{property:2}(b)
to the sequence $y_1, x_2, y_2$ on the branch $\B_2$ (with $z=x_2$), 
$x_2 \in \arc{y_1,y_2}_{\sigma_2}=\arc{y_1,y_2}$. 
But then 
$x_2 \in c(\sigma_1)$ and in the interior of $\g_1$, 
contradicting Property~\ref{property:3}.

\smallskip

\noindent {\bf Case (II):} $x_2 = y_2$ and $x_1=x$.

\smallskip
Then $x_1$ is in the interior of $\g_1$. As $b_{\B_2}(x_1) \equiv b_{\B_2}(y_1)$ 
  $(\text{mod}\, 2)$ and $b_{\B_2}(y_1) < b_{\B_2}(x_1)$ there must be another
vertex $x'$ in $\B_2$ in the interior of $\g_1$ such that
$b_{\B_2}(y_1) < b_{\B_2}(x') < b_{\B_2}(x_1) < b_{\B_2}(y_2)$ and such that
$b_{\B_2}(x') \equiv b_{\B_2}(y_2)$ $(\text{mod}\, 2)$.
Applying Property~\ref{property:2}(b) to the sequence $y_1,x',y_2$ we conclude 
that $x'
\in \arc{y_1,y_2}$. That is, $x'$ is a label in $\sigma_1$, contradicting
Property~\ref{property:3}.
\end{proof}

By taking a union of faces in the lower arms of 
Figure~\ref{FigureTypeIIIrefined}, we get:

\begin{property}\label{property:4}
Let $\sigma \in \Sigma_\III$. 
Two gnarls in $\sigma$ cobound an embedded annulus constructed from the
faces of $\sigma$.
\end{property}

\begin{lemma}\label{lem:nonemptygnarl}
Let $\sigma \in \Sigma_{\III}$. If any simple gnarl of $\sigma$ contains
no vertices of $G_F$ in its interior, then this gnarl is 
the only gnarl of $\sigma$.
(i.e.\ all other labels in $\sigma$ are parts of trunks of components 
of $\sigma$). There are at most $F_2(g)$ such gnarls.
\end{lemma}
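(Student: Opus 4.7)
The bound of $F_2(g)$ on the number of such empty-interior simple gnarls will follow immediately from the first assertion together with Remark~\ref{sigma2remarks1}(3): each $\sigma \in \Sigma_\III$ can contribute at most one such gnarl (since by the first assertion it is then the unique gnarl of $\sigma$), and $|\Sigma_\III| \le F_2(g)$.

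For the first assertion, I would argue by contradiction. Suppose $\sigma \in \Sigma_\III$ has a simple gnarl $\g$ whose interior in $\hatF$ contains no vertices of $G_F$, and that $\sigma$ contains another gnarl $\g'$. Let $D \subset \hatF$ be the disk bounded by $\g$; the hypothesis gives $D \cap G_F = \g$. By Property~\ref{property:4}, the two gnarls $\g$ and $\g'$ cobound an embedded annulus $A$ in $M$ constructed from faces of $\sigma$ (radially extended across $\nbhd(K)$). Then $E := D \cup A$ is an embedded disk in $M$ with $\partial E = \g'$, meeting $\hatF$ in $D \cup \g'$.

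The proof would then split on whether $\g'$ is simple or wrapping. If $\g'$ is a simple gnarl bounding a disk $D' \subset \hatF$, then $S := D \cup A \cup D'$ is an embedded $2$-sphere in $M$, which by irreducibility of $M$ bounds a $3$-ball $B$. Since $A$ is built from faces of $\sigma$ whose corners run along $K$, the ball $B$ contains at least one sub-arc of $K$ with both endpoints on $\hatF$. Using $B$ one isotopes the relevant sub-arcs of $K$ across $D \cup D'$ to the opposite side of $\hatF$, and a count of critical points of the height function shows that this strictly reduces the width of $K$ with respect to $\hatF$, contradicting thin position. If $\g'$ is a wrapping gnarl, then $\g'$ is essential in $\hatF$ and $E$ is a compressing disk for $\hatF$ to one side. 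The trunk of the component of $\sigma$ containing $\g'$ is essential on $\hatF$ and, by Lemma~\ref{lem:essentialcurves}, is not a meridian on either side of $\hatF$; combining $E$ with a compressing disk on the opposite side produced from this trunk (in the spirit of Claim~\ref{clm:stronglyirred} or Lemma~3.3 of \cite{bgl:og2hsfs}) yields a weak reduction of $\hatF$, contradicting strong irreducibility.

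The main obstacle is the wrapping-gnarl subcase: it is not automatic that the compressing disk $E$ can be paired with a disjoint compressing disk on the opposite side of $\hatF$, and establishing this requires exploiting the arrangement of the trunk curve relative to $\g'$ together with the framing given by $\hatF$, in the spirit of Lemma~\ref{lem:essentialcurves} and the arguments of \S\ref{sec:thinning}. The simple-gnarl subcase is comparatively routine once the arcs of $K$ inside $B$ are correctly identified and the resulting width reduction verified against the thin-position assumption.
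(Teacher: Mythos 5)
Your bound of $F_2(g)$ via Remark~\ref{sigma2remarks1}(3) matches the paper, but your proof of the first assertion takes a genuinely different route from the paper's and has real gaps.

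The paper does not case on whether the ``other'' gnarl $\g'$ is simple or wrapping. Instead, it cases on the structure of the extended Scharlemann cycle sitting inside $\sigma$ (trivial Scharlemann cycle versus one with bigons), and builds an explicit thinning ``long disk'' out of faces of $\sigma$ (e.g.\ $e, f_1, f_2, f_3$), rectangles $R_{\alpha\gamma}, R_{\beta\gamma}$ on $\partial X$, and the disk $D$ bounded by the initial gnarl obtained from Property~\ref{property:4}. This disk realizes an isotopy of a specific sub-arc $\arc{j-1,j+2}$ of $K$ onto $\hatF$, which strictly reduces width --- a contradiction with the minimality of $|K \cap \hatF|$, and entirely independent of the nature of any remaining gnarls. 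This uniformity is a key feature of the paper's design.

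Your simple-gnarl subcase has a concrete problem: the annulus $A$ coming from Property~\ref{property:4} is not a subsurface of $\hatF$; it is built from faces of $\sigma$ together with pieces that pass radially across $\nbhd(K)$, so $A$ intersects $K$ in arcs (exactly as the long \mobius band does). Consequently $S := D \cup A \cup D'$ is not disjoint from $K$, and the clean picture of a ball whose boundary avoids $K$ and whose interior contains arcs of $K$ with endpoints on $\hatF$ does not hold. The paper sidesteps this entirely by working with a boundary-compression-style disk whose pieces on $\partial X$ are carefully specified.

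Your wrapping-gnarl subcase is worse: it invokes strong irreducibility of $\hatF$ to rule out a weak reduction, but the paper's stated design is that strong irreducibility is used \emph{only} in the proof of Lemma~\ref{lem:parallelesc}; this is what makes the adaptation to incompressible surfaces in the Appendix (Theorem~\ref{thm:appendix}) go through. A proof of this lemma relying on strong irreducibility would break that structure. You also acknowledge you have not constructed the paired compressing disk on the opposite side, so the argument is incomplete on its own terms. The paper's thinning argument is in fact the way to handle both subcases simultaneously without ever invoking strong irreducibility.
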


\begin{proof}  
The proof of this lemma divides into two cases according to whether or not the extended Scharlemann cycle within $\sigma$ is actually just a Scharlemann cycle.

\noindent {\bf Case (I):} The extended Scharlemann cycle within 
$\sigma$ is a Scharlemann cycle.  Let $e$ be the bigon bounded by this Scharlemann cycle.  See Figure~\ref{Figureemptygnarl1} for the labeling we will use.  The {\em initial gnarl} is the gnarl on vertices $j-2, j+1$. We assume that $\sigma$
has more than the initial gnarl, in particular, that there is a face $f_3$
as pictured.

\begin{figure}
\centering
\input{Figureemptygnarl1.pstex_t}
\caption{}
\label{Figureemptygnarl1}
\end{figure}

\begin{figure}
\centering
\input{Figurerectangles-altV2.pstex_t}
\caption{}
\label{Figurerectangles-alt}
\end{figure}

\begin{figure}
\centering
\input{Figurethindisk1-alt.pstex_t}
\caption{}
\label{Figurethindisk1-alt}
\end{figure}

\begin{figure}
\centering
\input{Figuresurface1.pstex_t}
\caption{}
\label{Figuresurface1}
\end{figure}

Assume some simple gnarl of $\sigma$ bounds a disk $D'$ in $\widehat{F}$ whose interior is disjoint from $K$.  Let $B$ be the annulus between this gnarl and the initial gnarl given by Property~\ref{property:4}.  (If the initial gnarl bounds $D'$ then $B = \emptyset$.)  Set $D=B \cup D'$.  Then $D$ is a disk bounded by the initial gnarl and can be taken to have interior disjoint from both $K$ and the faces of $\sigma$.

Let $R_{\alpha \gamma}$ be the rectangle on $\bdry X$ bounded by the curves $j-2$, $j$, $\alpha \cap (e \cup f_1)$ and $\gamma \cap (f_2 \cup e)$ that is disjoint from $\beta$.
Let $R_{\beta \gamma}$ be the rectangle on $\bdry X$ bounded by the curves $j-1$, $j+1$, $\beta \cap (f_2 \cup f_3)$ and $\gamma \cap (e \cup f_1)$ that is disjoint from $\alpha$.  (By the curves $j-2, j-1, j, j+1$ on $\bdry X$, we mean the curves that are the boundaries of those vertices.)
In fact, let $e,e'$ be two push-offs of the face $e$
so that $e$ and $f_2$ agree along their $\edge{j-1,j}$--edge and so that $e'$ agrees 
with $f_1$ along its $\edge{j-1,j}$--edge. Modify $R_{\alpha \gamma}, R_{\beta \gamma}$
along $\partial X$ between $j-1,j$ by deforming in slightly. See  
Figure~\ref{Figurerectangles-alt}.

Form the disk $e \cup f_2 \cup f_3 \cup R_{\alpha \gamma} \cup R_{\beta \gamma} \cup D \cup f_1 \cup e'$ of Figure~\ref{Figurethindisk1-alt}.  This gives a thinning long disk $D_{\arc{j-1,j+2}}$ taking the subarc $\arc{j-1,j+2}$ of $K$ (view $\arc{j-1,j+2}$ as part of $\alpha \cap \bdry X$) to the arc on $\widehat{F}$ pictured in Figure~\ref{Figuresurface1}.  This contradicts the thinness of $K$ and finishes the proof of Case (I).

\noindent {\bf Case (II):} The extended Scharlemann cycle in $\sigma$ is not
a Scharlemann cycle (has length at least $2$). 

\begin{figure}
\centering
\input{Figureemptydisk2.pstex_t}
\caption{}
\label{Figureemptydisk2}
\end{figure}

\begin{figure}
\centering
\input{Figurerectangles2.pstex_t}
\caption{}
\label{Figurerectangles2}
\end{figure}

\begin{figure}
\centering
\input{Figurethindisk2-test-alt.pstex_t}
\caption{}
\label{Figurethindisk2}
\end{figure}

See Figure~\ref{Figureemptydisk2}. We assume that $\sigma$ has more 
than one gnarl, giving rise to face $f_4$ in the figure.
Let $R_{\beta \gamma}$ and $R_{\alpha \gamma}$ be the rectangles on $\bdry X$ bounded by $j-1 \cup j+1 \cup ((e \cup f_4)
\cap \beta) \cup ((f_1 \cup f_2) \cap \gamma)$ and $\ell-1 \cup \ell+1 \cup
((e \cup f_3) \cap \gamma) \cup ((f_1 \cup f_2) \cap \alpha)$, respectively. See 
Figure~\ref{Figurerectangles2}.  As in Case (I), the assumption of the empty simple gnarl allows us to construct a disk $D$ in $X$ which is disjoint from the faces of $\sigma$
and from $K$.  Construct the thinning disk $D_{\arc{j-1,j+2}} = e \cup f_1
\cup f_3 \cup f_4 \cup R_{\alpha \gamma} \cup R_{\beta \gamma}$ depicted in Figure~\ref{Figurethindisk2}. $D_{\arc{j-1,j+2}}$ gives
an isotopy of $\arc{j-1,j+2}$ onto the arc in $\widehat{F}$ indicated in Figure~\ref{Figurethindisk2} thereby thinning $K$.  This contradicts the thinness of $K$ and finishes the proof of Case (II).

For the last statement in the lemma, recall from Remark~\ref{sigma2remarks1}(3)
that $|\Sigma_\III| \le F_2(g)$. By the above, the gnarls containing no interior
vertices belong to different elements of $\Sigma_\III$.
\end{proof}

\begin{cor}\label{3ingnarl}
Let $\sigma \in \Sigma_{\III}$. If a gnarl in $\sigma$ has vertices of $G_F$ 
in its
interior, it must have at least three. In particular, if $\sigma$ has more 
than one gnarl, then
any simple gnarl of $\sigma$ must contain
at least three vertices in the interior of the disk it bounds on $\widehat{F}$.
\end{cor}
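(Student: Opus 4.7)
The ``in particular'' clause is immediate: if $\sigma$ has more than one gnarl, then Lemma~\ref{lem:nonemptygnarl} rules out any simple gnarl of $\sigma$ having empty interior, and the main assertion then forces at least three interior vertices. I therefore focus on the main assertion.

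I argue by contradiction. Let $\g$ be a simple gnarl of $\sigma$ bounding a disk $D' \subset \widehat F$ whose interior contains exactly $k$ vertices of $G_F$ with $1 \le k \le 2$. By Property~\ref{property:3}, none of these interior vertices is a label of $\sigma$. The plan is to push the thinning construction of Lemma~\ref{lem:nonemptygnarl} through, handling the extra $k$ intersections of $K$ with the resulting long disk.

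Following that lemma, I form the disk $D = B \cup D'$, where $B \subset X$ is the embedded annulus between $\g$ and the initial gnarl of $\sigma$ furnished by Property~\ref{property:4}. Because the $k$ interior vertices of $D'$ are not labels of $\sigma$, the interior of $B$ avoids them, so $D$ meets $K$ in exactly $k$ points interior to $D$. Assembling $D$ with $e \cup f_2 \cup f_3 \cup R_{\alpha\gamma} \cup R_{\beta\gamma} \cup f_1 \cup e'$ in Case~(I), or with the analogous Case~(II) assembly, produces an immersed long disk $D_{\arc{j-1,j+2}}$ with at most $k \le 2$ interior intersections with $K$. I plan to remove these by tubing along short meridional arcs of $\nbhd(K)$: at each interior puncture, attach a meridian disk of $\nbhd(K)$ to $D_{\arc{j-1,j+2}}$ and push slightly off $K$, replacing the long disk with one meeting $K$ one fewer time. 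Each tube either succeeds (reducing the intersection number) or is obstructed by a $2$-sphere or non-orientable surface; the standing hypotheses that $M$ is irreducible and contains neither Klein bottles nor projective planes eliminate all obstructions. The resulting embedded long disk gives a genuine thinning isotopy of the subarc $\arc{j-1,j+2}$ of $K$ onto $\widehat F$, contradicting the thin position hypothesis on $K$.

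The principal obstacle is the topological bookkeeping in the tubing procedure: each tube must be embedded, must avoid the faces of $\sigma$ used in the assembly of $D_{\arc{j-1,j+2}}$, and must not produce spurious self-intersections when meridian disks at distinct punctures meet the same subarc of $K$. Property~\ref{property:3} is essential here, since it keeps the $k$ interior vertices off the edges of $\sigma$ that form the boundary of the assembled long disk. The case $k=2$ is the most delicate, as the two punctures may be connected by a subarc of $K$ that threads outside $D$, requiring the tubing along a genus-one modification; a final appeal to the atoroidality and the absence of lens-space summands in $M$ closes the case.
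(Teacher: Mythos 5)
Your ``in particular'' argument is the same as the paper's and is fine, but for the main assertion your proposal both diverges from the paper and has a fatal gap. The paper does not revisit the thinning construction of Lemma~\ref{lem:nonemptygnarl}; it gives a short combinatorial count inside the gnarl. For one interior vertex the count forces a loop edge at that vertex, hence an innermost $1$-sided face of $G_F$, contradicting Lemma~\ref{lem:FandQ}(2). For two interior vertices $v_1,v_2$ it first bounds the number of $v_1$--$v_2$ edges above by $t$ (a parallel family of at least $t$ such edges would describe a cable space in the exterior of $K$, contradicting hyperbolicity of $K'$), then counts the remaining edge endpoints at $\{v_1,v_2\}$ against the labels available on the gnarl's boundary vertices and derives a contradiction from $q \ge 2$. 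None of this touches thin position.

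The gap in your route is the ``tubing'' step. You propose to remove a single interior transverse intersection of $K$ with the assembled long disk $D_{\arc{j-1,j+2}}$ by attaching a meridian disk of $\nbhd(K)$ at the puncture and pushing slightly off $K$; but no such operation exists. Pushing a meridian disk off $K$ merely translates the intersection point without eliminating it, and a single transverse intersection of a surface with $K$ is not removable by any local surgery. The only genuine surgery that eliminates punctures is tubing along a subarc of $K$ joining \emph{two} of them, and this (a) cannot touch the case $k=1$, and (b) raises genus, so the result is a once-punctured torus rather than a disk, which does not furnish the isotopy of $\arc{j-1,j+2}$ onto $\hatF$ that the thinning contradiction requires. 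Your appeals to irreducibility, atoroidality, and absence of Klein bottles have nothing to certify, because the move those hypotheses are supposed to enable is not defined; the ``genus-one modification'' mentioned for $k=2$ is precisely the step that destroys the disk structure the argument needs.
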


\begin{proof}  
If a simple gnarl only has one vertex in its interior then the 
graph within would have a $1$--sided face (a contradiction), 
because the number of labels on the interior of the gnarl is less than $q t$
(see Figure~\ref{Figurecomponentofsigma} for the labeling around a gnarl), 
the total number of labels (recall that $q$ is the denominator of the 
Dehn surgery coefficient). 

Now assume the simple gnarl on vertices $x,y$ has exactly two vertices $v_1$ and $v_2$ in its interior. There are fewer than $t$ edges connecting $v_1,v_2$.  Otherwise they are all parallel and describe a cable space in the exterior
of $K$ (section 5 of \cite{GLi}) contradicting that $K$ is hyperbolic.  Thus there are more than $2 q t - 2t$ labels on $x,y$ that are endpoints of edges connecting $v_1,v_2$ to $x,y$ (no 1-sided faces). But there are fewer than $q t$ such labels on $x,y$ in the interior of the gnarl.  This is a contradiction since $q$ is at least $2$.

By Lemma~\ref{lem:nonemptygnarl}, if $\sigma$ has more than one 
gnarl, 
any of its simple gnarls must contain vertices in its interior.
\end{proof}

\begin{lemma}\label{lem:parallelwrapping}
Let $\sigma \in \Sigma_{\III}$. Let $\B$ be a branch of (a component of) 
$\sigma$. Let $\trunk$ be the trunk of $\B$ and $\g$ be a gnarl of $\B$. If
$\trunk$ and $\g$ cobound an annulus $A$ on $\widehat F$, then either
$\B$ is disjoint from $\Int A$ or $M$ is a Seifert fibered space over the
2-sphere with three exceptional fibers, one having order $2$ and another
order $3$. In this latter case, if $\hatF$ is from a genus $2$
splitting of $M$ with respect to which $K$ has smallest bridge number,
then $|K \cap \Int A| \ge t/2 - 2$.
\end{lemma}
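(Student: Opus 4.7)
The plan is to assume that $\B$ contains a vertex in $\Int A$ and, mimicking the strategy of Lemmas~\ref{lem:gtsolidtorus}, \ref{lem:SFS}, and \ref{lem:parallelesc}, build a Seifert fibered submanifold of $M$ from $\sigma$ whose boundary torus, together with atoroidality, forces $M$ to be the claimed Seifert fibered space. The extended Scharlemann cycle $\epsilon$ of length $2$ inside $\sigma$ will supply one exceptional fiber of order $2$, and the cyclic three-edge structure of the central trigon of the Type I\!I\!I $x$-cycle trigon $\sigma$ will supply a second exceptional fiber of order $3$.

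First I would assemble a properly embedded surface $\mathcal F \subset M$ by radially extending the corners of the faces of $\sigma$ to $K$: starting with $A(\epsilon)$ (the long M\"obius band with $\trunk \subset \bdry A(\epsilon)$), adjoining the central trigon face $f$ of $\sigma$ together with its three radial-to-$K$ rectangles, and then the bigon faces of $\sigma$ along the branch $\B$ from $\epsilon$ out to $\g$ together with their radial extensions. The surface $\mathcal F \cup A$ is closed, and a regular neighborhood $N$ of $\mathcal F \cup A$ in $M$ has $\bdry N$ a torus $T$. By an analogue of Lemma~\ref{lem:gtsolidtorus} adapted to $f$ (the three radial rectangles wrap cyclically once around $\nbhd(K)$, giving a degree-$3$ Seifert structure), together with the order-$2$ structure of $A(\epsilon)$, $N$ is a Seifert fibered space over the disk with exactly two exceptional fibers, of orders $2$ and $3$, inside which the annulus $A$ sits as an essential annulus parallel to $\bdry N$.

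Next, atoroidality of $M$ forces $T$ to compress. Since $N$ has incompressible boundary, the compression occurs in $M \setminus N$, and by irreducibility the resulting sphere bounds a $3$-ball. By the argument of Claim~\ref{clm:stronglyirred} applied in this setting (using strong irreducibility of $\hatF$ together with our standing assumption that $p/q$ is not a boundary slope), the $3$-ball cannot contain $N$, so $M \setminus N$ is a solid torus. Thus $M$ is a Seifert fibered space over $S^2$ with three exceptional fibers, two of which have orders $2$ and $3$. For the final assertion, when $\hatF$ arises from a bridge-minimizing genus-$2$ Heegaard splitting of this $M$, I would argue as in Claim~\ref{clm:ontoV} and Case I of Lemma~\ref{lem:parallelesc} that $K \cap N$ is isotopic rel $\bdry N$ to a collection of parallel co-cores of the essential annulus of $N$. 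Minimality of bridge number then forces all labels of $\sigma$ in $c(\sigma)$ other than the four on $\bdry A$ to lie in $\Int A$, and the symmetry of $c(\sigma)$ about the core labels $p, p+1$ together with the cardinality lower bound $|c(\sigma)| \ge t/2 + 2$ yields $|K \cap \Int A| \ge t/2 - 2$.

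The main obstacle will be the Seifert fiber computation for $N$: one must verify that the combination of $A(\epsilon)$, the central trigon face $f$, and the bigons along $\B$, all radially extended to $K$, gives precisely the order-$2$ and order-$3$ exceptional fibers, with no spurious intermediate exceptional fibers contributed by the bigon faces or by the gnarls that $\B$ crosses within $\Int A$. The secondary technical point is the bridge-number estimate, which requires identifying the natural genus-$2$ splitting coming from $N \cup (M-N)$ with the bridge-minimizing one and controlling how $K$ can be isotoped to distribute its intersections between $\Int A$ and $\hatF \setminus A$.
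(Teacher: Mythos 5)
Your high-level strategy---assemble a Seifert fibered submanifold from $\sigma$ and use atoroidality plus irreducibility to conclude $M$ is the claimed Seifert fibered space---does match the paper's. But there are two genuine gaps.

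First, you have correctly identified the central obstacle but not its resolution. You worry about ``spurious intermediate exceptional fibers contributed by the bigon faces or by the gnarls that $\B$ crosses within $\Int A$,'' and indeed if $\B$ has intermediate gnarls your surface $\mathcal{F} \cup A$ is \emph{not} closed (intermediate gnarls contribute extra boundary components on $\hatF$), so $\nbhd(\mathcal{F}\cup A)$ will not have torus boundary and the whole construction breaks. The paper avoids this by first proving that $\sigma$ contains no simple gnarls at all: if $\g_s$ were a simple gnarl, then $\g_s$ bounds a disk $D_s \subset \hatF$, Property~\ref{property:4} gives an annulus $B$ from $\g_s$ to $\g$, so $\g$ bounds the disk $D = B \cup D_s$, while $\trunk$ bounds the long M\"obius band $E$ from $\epsilon$---and $A \cup D \cup E$ surgers to an embedded projective plane in $M$, a contradiction. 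Having ruled out simple gnarls, Property~\ref{property:1} forces $\B$ to consist of exactly $\trunk$, $\g$, and a single edge $e \subset A$; only then is the submanifold $N = \nbhd(A \cup h_{sz} \cup h_{wr} \cup F_1 \cup F_2)$ built, and its Seifert structure established by a direct $\pi_1$ computation showing $N$ is a trefoil knot exterior (with the core of $A$ as meridian), then thickened to $N' = N \cup \hat{E}$ by adding the M\"obius band. Without the no-simple-gnarls reduction, the ``wrap cyclically once'' heuristic for the trigon rectangles does not control the picture.

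Second, the bridge number estimate. Your proposed argument via ``symmetry of $c(\sigma)$ about the core labels'' together with a cardinality lower bound $|c(\sigma)| \ge t/2 + 2$ is not established anywhere, and even if it were, there is no reason for all labels of $c(\sigma)$ other than the four on $\partial A$ to lie in $\Int A$. The paper's argument is quite different: it uses that $p/q$ is not a boundary slope (from Theorem~2.0.3 of \cite{cgls:dsok} since $M$ has no incompressible surface), maximally compresses $\partial N' - \nbhd(K)$ in $X$ to get boundary-parallel annuli, deduces (using hyperbolicity of $K$) that the complementary arcs $k_1,\dots,k_r$ of $K$ lie in $N'$ as co-cores of the essential annulus, then constructs an explicit genus-$2$ Heegaard splitting $[N'-\nbhd(\tau)] \cup [(M-N')\cup\nbhd(\tau)]$ in which $K$ has bridge number at most $r \le n+2$. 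Minimality of bridge number then gives $t/2 \le n+2$, i.e. $|K \cap \Int A| = n \ge t/2-2$.
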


\begin{figure}
\centering
\input{FigureparallelwrappingV2.pstex_t}
\caption{}
\label{fig:Figureparallelwrapping}
\end{figure}

\begin{proof}
Let $A$ be the annulus in $\widehat{F}$ cobounded 
by $\g$ and $\trunk$.

If $\sigma$ contains a simple gnarl $\g_s$, then let $D_s$ be the disk in $\widehat{F}$ that $\g_s$ bounds.  Property~\ref{property:4} gives an annulus $B$ from $\g_s$ to $\g$ (and transverse to $\hatF$ there) arising from $\sigma$.  
Therefore, $\g$ bounds the embedded disk 
$D = B \cup D_s \subset M$ transverse to $\widehat F$ along $\g$ (by Property
3, $B$ is disjoint from the interior of $D_s$).  
On the other hand, $\trunk$ bounds a long \mobius band, $E$, coming 
from the extended
Scharlemann cycle of $\sigma$ which is transverse to $\widehat F$ at $\trunk$.
Then $A \cup D \cup E$ is an immersed projective plane that can be surgered
to produce an embedded projective plane in $M$ -- a contradiction.
Therefore $\sigma$ contains no simple gnarls.

 We assume $\B$ intersects $\Int A$. Then Property~\ref{property:1} along with 
the fact that $\B$ contains no simple gnarls guarantees that $\g$ is the only gnarl in $\B$. That is, $\B$ consists of $\trunk$, $\g$, and a single edge $e$ connecting them that lies in $A$.
Without loss of generality, we may assume we are as in Figure~\ref{fig:Figureparallelwrapping}.  Note that $\Int A$ contains no vertices that appear as labels in $\sigma$ as any such would be part of an essential curve in $\Int A$ disjoint from $e$. Similarly, $e$ is the only edge of $\sigma$ contained in $\Int A$. 

Let  $h_{wr}$, $h_{rs}$, and $h_{sz}$ be the consecutive handles in $\nbhd(K)$ running between vertices $w,r,s,z$.  Let $N=\nbhd(A \cup h_{sz} \cup h_{wr} \cup 
F_1 \cup F_2)$ where the $F_i$ are the disks in $\sigma$ from 
Figure~\ref{fig:Figureparallelwrapping}.  
Then $\pi_1(N)$ has presentation $<x,y,c\,|\,x^2y,xcyc>$  where $x,y$ correspond to $h_{sz},h_{wr}$, respectively, $e,e_1,e_2$ are
retracted to a base point, and $c$ represents the core of $A$. 
This can be rewritten $<x,m\,|\,m^2x^{-3}>$ (using
$m=xc$). Thus $N$ is a trefoil knot exterior and furthermore the core of $A$
is its meridian (attaching a disk along $c$ kills the group). Let $\hat{E}
=\nbhd(E \cup h_{rs})$ be a thickening of the long \mobius band coming from
the extended Scharlemann cycle, and consider the submanifold $N'=N \cup \hat{E}$. As $M$ is atoroidal, $\partial N'$
must compress in $M-N'$. As $M$ is irreducible, compressing $\partial N'$
gives a 2-sphere which bounds a 3-ball in $M$. If this 3-ball contained $N'$,
then $M$ would contain a projective plane constructed from a disk in the
$3$--ball bounded
by the meridian of $N$ and the long \mobius band. 
Thus the 3-ball is disjoint from $N'$, and $M-N'$ is a
solid torus. This implies that $M$ is a Seifert fibered space over the
2-sphere with three exceptional fibers, one of which has order $2$ and another
of order $3$. 

So we now assume that $\hatF$ is a genus $2$ splitting with respect to
which $K$ has smallest bridge number. Let $n=|\Int A \cap K|$. Let $D$
be the disk in $A$ gotten by removing from $A$ an open neighborhood of
$\partial A \cup e$. Then $K$
can be perturbed so that it intersects $N'$ in $n+2$ arcs, each of the
form $p \times I$ in $D \times I$ (e.g. perturb $K$ from label $w$ to
label $z$ between $\alpha$ and $\gamma$ in $\nbhd(K)$). 

Now $H'=\nbhd( h_{sz} \cup h_{wr} \cup 
F_1 \cup F_2)$ is a genus $2$ handlebody as neighborhoods of an arc in 
$F_1$ and an arc in $F_2$ give a pair of meridians. Furthermore, the
first of these meridians shows that the annulus along which $H'$ and
$\hat{E}$ meet is primitive in $H'$. Thus $H=N' - D \times I$ is a 
genus $2$ handlebody.

Note that the surgery curve $p/q$ is not a boundary slope for $X$,
the exterior of $K$. For as $M$ contains no incompressible surface, this
would contradict Theorem 2.0.3 of \cite{cgls:dsok}.
Consider the punctured torus $T=\partial N'- \nbhd{K}$. Maximally compress
$T$ in $X$. Then $T$ compresses to $\partial$--parallel annuli.
Each $\partial$--parallel annulus defines an isotopy in $M$ of 
a subarc of $K$, with endpoints in $K \cap \partial N'$, onto $\partial N'$ 
and keeping
the endpoints of the arc fixed. Furthermore, every point of 
$K \cap \partial N'$ 
belongs to such an arc of $K$. Let $k_1', \dots , k_r'$ be the subarcs of $K$
lying within the outermost of these $\partial$--parallel annuli and let $k_1, \dots,
k_r$ be the complementary subarcs of $K$. Note that the $k_i$ lie on the same
side of $\partial N'$. Then the isotopies described above 
corresponding to the outermost annuli deform $K$ to the union of the arcs $k_1, \dots,
k_r$ along with arcs that lie in $\partial N$.  As the complement of $K$ in $M$ 
is hyperbolic,
it must be that the arcs $k_1, \dots, k_r$ lie in $N'$. That is,  
$k_1, \dots, k_r$ are of the form $p \times I$ for $p \in D$. 
Let $\tau$ be an arc $q \times I$ for $q \in D$ that is disjoint from $K$. 
Then 
$[N' - \nbhd(\tau)] \cup [(M-N') \cup \nbhd(\tau)]$ is a genus $2$ 
Heegaard splitting of $M$. 
Furthermore, each of $k_1, \dots, k_r$  is $\partial$--parallel 
in the handlebody $N' -\nbhd(\tau)$. 
Perturbing each arc of $K - (\cup k_i)$ into $M-N'$ puts $K$ in bridge position 
with respect to this genus $2$ splitting. Noting that $r \le n+2$ and that
$t$ is at most twice the minimal bridge number that $K$ has with respect
to a genus $2$ splitting of $M$, we get that $t/2 \leq n+2$ as desired.
\end{proof}

\begin{lemma}\label{lem:samelabels} 
A gnarl or trunk of $\Sigma_{\III}$ shares
at most a single vertex (of $G_F$) with another gnarl or trunk of 
$\Sigma_{\III}$. 
Furthermore, a vertex of $G_F$ belongs to at most two elements among the
collection of gnarls and trunks of $\Sigma_{\III}$.
\end{lemma}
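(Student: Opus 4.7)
The plan is to prove the two assertions separately. For the first, I argue by contradiction: suppose distinct gnarls or trunks $c, c'$, coming from trigons $\sigma, \sigma' \in \Sigma_{\III}$, share two vertices $v, w$ of $G_F$. Within a single $\sigma$, each label of $c(\sigma)$ lies in a unique gnarl or trunk (see Figure~\ref{Figurecomponentofsigma}), forcing $\sigma \neq \sigma'$. Let $\{p, p+1\}$ and $\{p', p'+1\}$ be their core labels. Since every vertex pair $\{r, r'\}$ of a gnarl or trunk of $\sigma$ satisfies $r + r' \equiv 2p+1 \pmod t$ (by the symmetry of $c(\sigma)$ about the core), the shared pair gives $2(p - p') \equiv 0 \pmod t$. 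Combined with Remark~\ref{sigma2remarks1}(3) (distinct core Scharlemann cycles for elements of $\Sigma_{\III}$), this leaves only the antipodal case: $t$ is even and $p' \equiv p + t/2 \pmod t$, so that $c(\sigma)$ and $c(\sigma')$ are the two complementary arcs from $v$ to $w$, together exhausting all labels.

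I then dispatch the antipodal case according to the types of $c$ and $c'$. When both are trunks, they are elements of $a(\epsilon)$ and $a(\epsilon')$ respectively; sharing both vertices, they cobound an annulus in $\widehat F$, so paring $\epsilon$ and $\epsilon'$ proper and invoking Corollary~\ref{cor:noparallelesc2} forces them to have a common core pair, contradicting antipodality. For the remaining (gnarl/gnarl and mixed) subcases, the plan is to exhibit a closed non-orientable surface in $M$ by combining the long \mobius bands $A(\epsilon)$ and $A(\epsilon')$ with an annular region of $\widehat F$ joining $c$ to $c'$ along $\{v,w\}$, together with the bigon-stacks in the arms supplied by Property~\ref{property:4}, following the template of Case IV' of Lemma~\ref{lem:parallelesc}. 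Antipodality of the cores pins down the cyclic order of the four edges of $c \cup c'$ at $v$ and $w$, which should force the resulting surface to contain an embedded Klein bottle or projective plane, contradicting the hypotheses (1)--(3) on $M$ recalled in the introduction.

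For the second assertion, the ideal corners $c(\sigma)$ are label intervals symmetric about the core of $\sigma$. If some label $v$ belonged to $c(\sigma_1), c(\sigma_2), c(\sigma_3)$ for three distinct $\sigma_i \in \Sigma_{\III}$, then convexity of three arcs through the common point $v$ would force one arc to lie in the union of the other two, contradicting the minimality of $\Sigma_{\III}$ exactly as in Remark~\ref{atmost2sigma2}. So $v$ lies in at most two ideal corners, and since in each corresponding $\sigma$ it belongs to a unique gnarl or trunk, $v$ lies in at most two gnarls or trunks of $\Sigma_{\III}$.

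The main obstacle is the gnarl/gnarl antipodal subcase: gnarls are not elements of $\ao(\Sigma)$, so Lemma~\ref{lem:atmostoneintersection} and Corollary~\ref{cor:noparallelesc2} are unavailable and the forbidden surface must be assembled by hand. Success there depends on a careful accounting of the four edges at $v$ and $w$ and of the faces of $\sigma, \sigma'$ they bound, in order to produce a topologically forbidden surface in the irreducible, atoroidal manifold $M$, which contains neither a projective plane nor a Klein bottle.
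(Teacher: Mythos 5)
Your derivation of the dichotomy (same core or antipodal cores) is correct, and your argument for the second assertion matches the paper's. But for the antipodal case you take a genuinely different — and harder — route than the paper, and the route you take is not actually completed.

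The paper's handling of the antipodal case is purely combinatorial and requires no topology: if $\sigma_1,\sigma_2$ had antipodal cores, then $c(\sigma_1)\cup c(\sigma_2)$ would contain all labels, and by minimality $\Sigma_\III=\{\sigma_1,\sigma_2\}$. One can then \emph{pare down} one of the two trigons so that the two ideal corners become disjoint while their union still covers all labels; the resulting collection is an equally valid (minimal) choice of $\Sigma_\III$ and has no vertex-sharing among gnarls and trunks. So WLOG $\Sigma_\III$ is chosen this way, and the antipodal case simply does not arise. You instead try to rule out the antipodal case head-on by building a forbidden closed non-orientable surface in $M$. That works smoothly for the trunk/trunk subcase via Corollary~\ref{cor:noparallelesc2}, but — as you yourself flag — the gnarl/gnarl and mixed subcases are left as a sketch. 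The ``careful accounting of the four edges at $v$ and $w$'' is exactly the hard part of that route: gnarls are not elements of $\ao(\Sigma)$, so Lemma~\ref{lem:atmostoneintersection} and Corollary~\ref{cor:noparallelesc2} are unavailable, and there is no guarantee the assembled surface closes up to an embedded Klein bottle or $\R\mathrm{P}^2$ without a case-by-case analysis that you have not provided. As it stands the proof has a real gap there, which the paper's re-choosing trick avoids entirely.
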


\begin{figure}
\centering
\input{Figurexyedge.pstex_t}
\caption{}
\label{Figurexyedge}
\end{figure}

\begin{proof}
Let $x,y$ be shared vertices of one gnarl or trunk with another gnarl or trunk.
The two must belong to different trigons $\sigma_1,\sigma_2$ in $\Sigma_{\III}$. 
Let $p,p+1$ be the labels of the core Scharlemann cycle of one of these trigons.
See Figure~\ref{Figurexyedge}. Then $x+y \equiv 2p+1$ $(\text{mod}\, t)$ implying that 
$p=(x+y-1)/2$ or $p=(x+y-1+t)/2$. If the cores of $\sigma_1$ and 
$\sigma_2$ are different, then all labels of $G_Q$ are contained in 
$c(\sigma_1) \cup c(\sigma_2)$ and the core Scharlemann cycles are `antipodal'.
By the minimality of $\Sigma_{\III}$, $\sigma_1,\sigma_2$ are the only elements
of $\Sigma_\III$. Because their cores are antipodal, we may pare one of them
down so that their corners are disjoint but their union contains all labels.
We may then have assumed that this was $\Sigma_\III$ and the resulting gnarls
and trunks share no labels at all.

If a vertex of $G_F$ belongs to three different gnarls or trunks of 
$\Sigma_{\III}$,
then it must belong to $c(\sigma_1) \cap c(\sigma_2) \cap c(\sigma_3)$ for
three different trigons in $\Sigma_{\III}$. But this contradicts the minimality
of $\Sigma_{\III}$.
\end{proof}

\begin{defn} Lemma~\ref{lem:samelabels} says that two simple gnarls of 
$\Sigma_{\III}$ whose interiors overlap must be nested. Thus we define 
a simple gnarl of $\Sigma_{\III}$ 
to have depth $0$ if it is 
innermost among simple gnarls of $\Sigma_{\III}$. 
A simple gnarl of $\Sigma_{\III}$ has depth $n+1$ if it contains 
a gnarl of depth
$n$ in its interior (and no higher). 
\end{defn}

\begin{lemma}\label{lem:depth<2}
Any two simple gnarls 
of $\Sigma_{\III}$ whose interiors overlap (hence are nested) must share a 
vertex. A simple gnarl of $\Sigma_{\III}$ has at most two gnarls of
$\Sigma_{\III}$ nested within it. Every simple gnarl of $\Sigma_{\III}$ has 
depth at most $1$. 

\end{lemma}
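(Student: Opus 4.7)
The plan is to prove the three assertions in sequence, with the second and third following combinatorially from the first.

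For the first assertion (overlapping interiors imply nesting and sharing a vertex), I would first observe that by Lemma~\ref{lem:samelabels}, if two simple gnarls $\g_1, \g_2 \in \Sigma_\III$ share no vertex then as subgraphs of $G_F$ their boundary cycles are disjoint. Hence if their disk interiors also overlap, one disk must be strictly nested in the other (say $D_2 \subset \Int D_1$), since two disjoint simple closed curves on $\widehat F$ with overlapping disk interiors must be nested. This establishes the ``hence are nested'' clause. To then contradict the non-sharing assumption, I would split into cases. When $\g_1, \g_2$ belong to the same $\sigma \in \Sigma_\III$, Property~\ref{property:3} yields an immediate contradiction: $\Int D_1$ contains no labels of $\sigma$, but $\g_2$'s vertices are labels of $\sigma$ lying in $\Int D_1$. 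When $\g_i \in \sigma_i$ with $\sigma_1 \neq \sigma_2$, I would analyze the branch $\B_2$ of $\sigma_2$ containing $\g_2$: since $\g_2$'s vertices lie in $\Int D_1$, Lemma~\ref{lem:entersonce} supplies a vertex of $\g_1$ on $\B_2$ prior to $\g_2$'s vertices, and since the trunk of the relevant component of $\sigma_2$ is essential (Lemma~\ref{lem:essentialcurves}) it cannot fit inside $D_1$, so $\B_2$ must cross from outside $\overline{D_1}$ to $\Int D_1$ precisely at a vertex shared with $\g_1$. A careful tracking of the branch's passage through $\overline{D_1}$ (using Lemma~\ref{lem:entersonce}'s confinement conclusion and the edge-disjointness of $G_F$) forces that shared vertex to be one of $\g_2$'s two vertices.

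For the second assertion (at most two nested gnarls), I would note that a nested gnarl must itself be simple, since a wrapping gnarl would be essential on $\widehat F$ yet confined to the disk bounded by the outer simple gnarl $\g$ --- a contradiction. So Part~1 applies to each nested gnarl, forcing it to share a vertex with $\g$. But $\g$ has only two vertices, and by Lemma~\ref{lem:samelabels} each vertex of $G_F$ lies in at most two gnarls or trunks of $\Sigma_\III$, so at most two distinct nested gnarls can share vertices with $\g$.

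For the third assertion (depth at most $1$), I would suppose for contradiction a strict triple nesting $\g_0 \subset \g_1 \subset \g_2$ of simple gnarls of $\Sigma_\III$. Since $\Int D_0 \subset \Int D_1 \subset \Int D_2$ gives pairwise overlapping interiors, Part~1 yields a shared vertex for each of the three pairs. These three shared vertices must be pairwise distinct, else some vertex would lie in three gnarls of $\Sigma_\III$, violating Lemma~\ref{lem:samelabels}. Since each gnarl has exactly two vertices, the configuration must be $\g_0 = \{v,w\}$, $\g_1 = \{u,v\}$, $\g_2 = \{u,w\}$ for distinct $u,v,w$. Because $\g_1 \cap \g_2 = \{u\}$ and $D_1 \subset D_2$, the boundaries meet only at $u$, so $\Int D_1 \subset \Int D_2$. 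But then $w \in \g_0 \setminus \g_1 \subset \Int D_1 \subset \Int D_2$, while also $w \in \g_2 = \bdry D_2$, a contradiction.

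The main obstacle is the different-$\sigma$ subcase of Part~1: Lemma~\ref{lem:entersonce} by itself produces only some shared vertex between $\g_1$ and the branch $\B_2$, and showing that this vertex must actually lie on $\g_2$ rather than on an earlier element of $\B_2$ requires a detailed combinatorial analysis of how $\B_2$ traverses $\overline{D_1}$ once it enters, guided by the branch ordering of Property~\ref{property:2} and the structural constraints that $\B_2$'s intergnarl edges cannot cross $\g_1$.
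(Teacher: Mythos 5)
Your outline for the second and third assertions is essentially correct and matches the paper in spirit: for part~2 you correctly observe that a nested gnarl must be simple and then invoke Lemma~\ref{lem:samelabels} twice; for part~3, deriving the three pairwise distinct shared vertices $u,v,w$ and then locating $w$ both in $\Int D_2$ and on $\bdry D_2$ is a valid variant of the paper's argument (the paper instead pins down the offending vertex $x_0$ directly and shows it lies in all three gnarls, again violating Lemma~\ref{lem:samelabels}, but the conclusions agree).

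The genuine gap is in the different-$\sigma$ subcase of the first assertion. You correctly invoke Lemma~\ref{lem:entersonce} to get a vertex of $\g_1$ on the branch $\B_2$ preceding $\g_2$, but you then assert --- without argument --- that ``a careful tracking\dots forces that shared vertex to be one of $\g_2$'s two vertices.'' That is a strictly stronger claim than you need, it is not supplied by Lemma~\ref{lem:entersonce}, and it is far from obvious: the vertex of $\g_1$ that sits on $\B_2$ could a priori belong to an earlier gnarl of $\B_2$ rather than to $\g_2$ itself. Indeed, the paper never proves this; it takes a different route that avoids ever having to identify which gnarl of $\B_2$ carries the shared vertex. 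Concretely, letting $x_1$ be the vertex of $\g_1$ on $\B_0$ given by Lemma~\ref{lem:entersonce} (which precedes $\g_0=\{x_0,y_0\}$ in the branch ordering), Property~\ref{property:2}(a) forces $x_1\in\arc{x_0 y_0}_{\sigma_0}$; Property~\ref{property:3} forces $x_0,y_0\notin c(\sigma_1)$ and hence $x_0,y_0\notin\arc{x_1 y_1}_{\sigma_1}$; together these yield $\arc{x_1 y_1}_{\sigma_1}\subset\arc{x_0 y_0}_{\sigma_0}$, which contradicts the minimality of $\Sigma_\III$. So the paper converts the topological ``entrance'' information into a containment of label intervals and kills it combinatorially, rather than trying to topologically locate the entrance vertex on $\g_2$. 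Your proposal, as written, is missing exactly this step, and it is the crux of the lemma.
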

\begin{proof}
Let $\g_1,\g_0$ be simple gnarls 
on branches $\B_1,\B_0$ of $\sigma_1,\sigma_0$
in $\Sigma_\III$. Assume $\g_0$ is nested within $\g_1$. By Property $3$,
$\sigma_1$ and $\sigma_0$ must be different elements of $\Sigma_\III$. Let
$x_i,y_i$ be the vertices of the gnarl $\g_i$ for $i=0,1$. 
Assume that $\{x_0,y_0\} \cap \{x_1,y_1\} = \emptyset$. 
As the trunk curve of $\g_0$ is essential in $\hatF$, at least one of 
$\{x_1,y_1\}$ must precede $\g_0$ on $\B_0$, say
$x_1$. By Property 2(b), $x_1 \in \overline{x_0,y_0}_{\sigma_0}$.
As Property 3 implies that neither $x_0$ nor $y_0$ can be in 
$\overline{x_1,y_1}_{\sigma_1}$, it must be that 
$y_1 \in \overline{x_0,y_0}_{\sigma_0}$.
By assumption, and the minimality of $\Sigma_{\III}$, 
$\overline{x_1,y_1}_{\sigma_1}
\subset \overline{x_0,y_0}_{\sigma_2}$. But this contradicts the minimality of 
$\Sigma_{\III}$. This proves that two nested simple 
gnarls of $\Sigma_{\III}$ must
share a vertex. As a vertex can belong to at most two gnarls in $\Sigma_{\III}$,
this implies that a simple gnarl has at most two nested within it.

Let $\g_2,\g_1,\g_0$ be simple gnarls of $\Sigma_{\III}$ with $g_0$ 
nested within $\g_1$
which is nested with $\g_2$. Let $\{x_i,y_i\}$ be the vertices of $\g_i$,
$i=0,1,2$. By the paragraph above we may assume $x_1=x_0$. By 
Lemma~\ref{lem:samelabels}, $y_0$ must lie strictly in the interior of 
$\g_1$. Again by the paragraph above $\g_0$ and $\g_2$ must share a vertex.
It cannot be $y_0$ as it lies in the interior of $\g_0$. Thus $x_0$ lies
in all three gnarls, contradicting Lemma~\ref{lem:samelabels}. Thus
a simple gnarl of $\Sigma_{\III}$ has depth at most $1$.
\end{proof}

\begin{lemma}\label{lem:boundwrapping}
There are at most $6F_2(g)$ wrapping gnarls in $\Sigma_\III$ unless
$M$ is a Seifert fibered space over the 2-sphere with three exceptional
fibers, one having order $2$ and another order $3$. In the latter case, if $\hat F$
is a genus $2$ Heegaard splitting of $M$ with respect
to which $K$ has smallest bridge number (among genus $2$ splittings), then
there are at most $7F_2(g)$ wrapping gnarls in $\Sigma_\III$.
\end{lemma}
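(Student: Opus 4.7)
The plan is to mirror the structure of the proof of Lemma~\ref{lem:parallelthetas}. Each wrapping gnarl in $\Sigma_\III$ is an essential simple closed curve on $\hatF$, consisting of two edges joining two vertices of $G_F$. By Lemma~\ref{lem:samelabels}, any two such gnarls share at most one vertex of $G_F$, and each vertex belongs to at most two gnarls of $\Sigma_\III$. First I would select, from the set $\calW$ of all wrapping gnarls in $\Sigma_\III$, a subcollection $\calW_0$ of pairwise non-isotopic curves. After isotoping $\calW_0$ into minimal position on $\hatF$, the curves pairwise meet at most at their (at most one) shared combinatorial vertex, and each curve meets at most two others. Hence $\calW_0$ has Property $P(2)$ of Definition~\ref{defn3.1}, and Lemma~\ref{lem:Fkg} gives $|\calW_0| \le F_2(g)$.

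It then suffices to bound the number of wrapping gnarls in any single isotopy class by $6$, respectively $7$ in the Seifert-fibered exceptional case. I would proceed by contradiction: suppose a single isotopy class contains at least $7$ (respectively $8$) parallel wrapping gnarls. Ordering them along $\hatF$ produces a sequence of nested annuli, and by Remark~\ref{sigma2remarks1}(3) (distinct elements of $\Sigma_\III$ have distinct core Scharlemann cycles) together with pigeonholing, two of the parallel gnarls must either come from the same branch of the same $\sigma\in\Sigma_\III$, from two distinct branches of the same $\sigma$, or from two different elements $\sigma,\sigma'\in\Sigma_\III$. In each configuration I would run an argument modeled on Lemma~\ref{lem:parallelwrapping}: the annulus cobounded by the two parallel wrapping gnarls, together with the faces of the corresponding trigons and the long \mobius band of the extended Scharlemann cycle of the governing $\sigma$, yields a submanifold of $M$ whose boundary torus must compress. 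Combining atoroidality and irreducibility of $M$, the absence of Klein bottles, and the hyperbolicity of the knot exterior forces either an immediate topological contradiction or the conclusion that $M$ is Seifert fibered over $S^2$ with three exceptional fibers of which at least one has order $2$ and another order $3$. In the exceptional case the quantitative bridge-number estimate $|K\cap\Int A|\ge t/2-2$ of Lemma~\ref{lem:parallelwrapping} permits one additional parallel wrapping gnarl per isotopy class, increasing the overall bound from $6F_2(g)$ to $7F_2(g)$.

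The main obstacle will be handling two parallel wrapping gnarls that arise from two different elements of $\Sigma_\III$. In that case the trigon faces of the two trigons may lie on opposite sides of the parallelism annulus, so the compression-and-solid-torus analysis of Lemma~\ref{lem:parallelwrapping} must be carefully adapted. Tracking the label structure of the two trigons at their shared vertex, using the Parity Rule, and arguing with the specific arrangement of branches as in Lemma~\ref{lem:entersonce}, will be needed to locate the correct compressing disk and identify the Seifert-fibered structure with the two prescribed exceptional-fiber orders.
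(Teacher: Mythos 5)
Your first step is correct and matches the paper: there are at most $F_2(g)$ isotopy classes of wrapping gnarls (Lemma~\ref{lem:samelabels} gives Property $P(2)$, then Lemma~\ref{lem:Fkg} applies), so if you have more than $6F_2(g)$ (resp.\ $7F_2(g)$) wrapping gnarls, some isotopy class contains at least $7$ (resp.\ $8$) of them. But the middle step is where the proposal breaks down, and the obstacle you identify at the end is in fact fatal to the route you describe. You want to take a pair of parallel wrapping gnarls and, \emph{directly} from the annulus they cobound plus faces of the trigons plus a long \mobius band, build a submanifold whose boundary torus compresses. There is no reason for such a construction to close up: two wrapping gnarls from different branches, or from different elements of $\Sigma_\III$, are not linked by a chain of faces the way a gnarl and its own trunk are in Lemma~\ref{lem:parallelwrapping}. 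Lemma~\ref{lem:parallelwrapping} leans essentially on the branch $\B$ running through the annulus from the trunk (which bounds a long \mobius band) to the gnarl, so that $F_1,F_2,h_{sz},h_{wr}$ assemble into a trefoil exterior. A pair of parallel wrapping gnarls carries no such scaffolding, and no topological contradiction (or SFS conclusion) falls out of it; the attempted adaptation simply has nothing to compress against.

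The paper's proof avoids this altogether by a combinatorial counting argument. Given $\geq 7$ mutually isotopic wrapping gnarls, take the outermost pair $\g_1,\g_2$ cobounding an annulus $A$, and for each of the $\geq 5$ gnarls $\g$ strictly between them consider the \emph{short branch} from $\g$ back to its trunk. Lemma~\ref{lem:parallelwrapping} is invoked only to rule out the degenerate case where this short branch lies entirely inside $A$ (that is the trunk--gnarl parallelism the lemma actually speaks to). Otherwise the short branch must meet $\g_1$ or $\g_2$, so one can define $b(\g)$ to be the first gnarl or trunk curve on the short branch that shares a vertex with $\g_1$ or $\g_2$. One checks that $b$ is injective on the inner gnarls and never equals $\g_1$ or $\g_2$, while Lemma~\ref{lem:samelabels} shows there are at most four curves of $\Sigma_\III$ other than $\g_1,\g_2$ that can share a vertex with $\g_1$ or $\g_2$. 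Five inner gnarls thus overwhelm four available values of $b$: contradiction. In the Seifert-fibered case the degenerate subcase is no longer a contradiction, but then two inner gnarls have branches lying in an annulus; the vertex-count estimate $|K\cap\Int A_i|\geq t/2-2$ from Lemma~\ref{lem:parallelwrapping} for each of two disjoint sub-annuli, plus the six gnarl/trunk vertices, exceeds $t$. To repair your proposal you would need to replace the ``run a Lemma~\ref{lem:parallelwrapping}-type argument on a pair of parallel wrapping gnarls'' step with this short-branch / $b(\g)$ counting scheme; as written, the step you flag as the main obstacle does not have a resolution along the lines you suggest.
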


\begin{proof}
First assume that $M$ is not a Seifert fibered space as described.
Let $\calC$ be the collection of wrapping gnarls coming from the 
trigons in $\Sigma_\III$,
and assume for contradiction that $|\calC|>6F_2(g)$.
Lemma~\ref{lem:samelabels} shows that $\calC$ satisfies properties $(1),(3),(4)$
of Definition~\ref{defn3.1}. If two elements of $\calC$ intersect non-transversely, perturb them so they
are disjoint so that property $(2)$ of Definition~\ref{defn3.1} is also satisfied.  Then Lemma~\ref{lem:Fkg} guarantees that there are
at least seven elements of $\calC$ that are isotopic on $\hatF$. Any two
such elements much be disjoint. Let $\g_1,\g_2$ be the outermost gnarls of 
these seven cobounding an annulus $A$. Let $\g$ be a gnarl in $A$ between
$\g_1,\g_2$. Then $\g$ belongs to a branch of $\Sigma_\III$. Call the part
of the branch between $\g$ and its trunk, the {\em short branch} of $\g$.
If the short branch of $\g$ is disjoint from $\g_1,\g_2$, then the
trunk of $\g$ is parallel to $\g$ in $A$; furthermore, as a branch 
always locally lies on the same side of a constituent gnarl (Property 1),
the entire branch will lie in $A$ between $\g$ and its trunk, contradicting
Lemma~\ref{lem:parallelwrapping}. Thus the short branch of $\g$ 
must intersect either $\g_1$ or $\g_2$. As
each vertex of a branch either belongs to a gnarl or trunk curve,
we may associate to $\g$, $b(\g)$, the first gnarl or trunk curve on the
short branch that shares a vertex with either $\g_1$ or $\g_2$ (orienting
the short branch from $\g$ to its trunk). If $\g,\g'$ are different
gnarls between $\g_1,\g_2$, then $b(\g)$ and $b(\g')$ must be different.
Otherwise, $\g,\g'$ belong to the same branch ($b(\g)=b(\g')$ cannot be
the trunk curve of two different branches by Lemma~\ref{lem:samelabels}) 
and are therefore disjoint before
perturbation. Then $\g'$, say, must occur on the short branch of $\g$.
As above, Property 1 shows that the entire branch of $\g$ lies between
$\g,\g'$, contradicting Lemma~\ref{lem:parallelwrapping}. Similarly,
one sees that $b(\g)$ is neither $\g_1$ nor $\g_2$. Lemma~\ref{lem:samelabels}
says that there are at most four trunks or gnarls of $\Sigma_\III$ that
share a vertex with $\g_1$ or $\g_2$, other than $\g_1$ or $\g_2$ themselves.
This contradicts that there are five gnarls between $\g_1,\g_2$.

Now assume $M$ is a Seifert fibered space over the 2-sphere with three
exceptional fibers including orders $2,3$ and $\hatF$ comes from a
genus $2$ splitting for which $K$ has smallest bridge number.
Let $\calC$ be the collection of wrapping gnarls coming from the 
trigons in $\Sigma_\III$, and assume for contradiction that $|\calC|>7F_2(g)$.
Now at least eight elements of $\calC$ are isotopic on $\hatF$.
The above shows that there must be two gnarls $\g,\g'$ of $\calC$ 
whose branches 
lie entirely in an annulus $A$ on $\hatF$. In particular the trunk curves 
of these gnarls are parallel (or equal) in $A$. Then by 
Corollary~\ref{cor:noparallelesc2} and Remark~\ref{sigma2remarks1}(3), 
there is a
$\sigma \in \Sigma_\III$ containing both of these branches. These branches
then are either disjoint or share the same trunk curve. Let $A_1,A_2$ be the
sub-annuli of $A$ that lie between these gnarls and their trunk curves.
Then the interiors of these annuli are disjoint and each intersects the
branch of the corresponding gnarl. 
Lemma~\ref{lem:parallelwrapping} shows that $t-4$ vertices of $G_F$ lie in the 
union of the interiors of these annuli. But there are at least six more vertices
coming from the gnarls and their trunk curve(s), contradicting that $G_F$
has $t$ vertices.
\end{proof}

\section{ Proof of Theorem~\ref{thm:main}.}\label{sec:proofofmain}

In this section we prove our main theorem, which is the following.

\begin{thm}\label{thm:main}
There is a linear function $w : \mathbb N \to \mathbb N$ with the following 
property. 
Let $K'$ be a hyperbolic knot in $S^3$, $M = K'(p/q)$ where $q \ge 2$, 
and $K$ the core of the attached solid torus in $M$. 
Suppose $K$ is in thin position with respect to a genus $g$ Heegaard 
splitting of $M$ and let $S$ be a corresponding thick level surface. 
If $S$ is a strongly irreducible Heegaard surface for
$M$ 
then either
\begin{itemize}
\item[(1)] $|K\cap S| \le 2w(g)$; or

\item[(2)] $M$ is toroidal; or

\item[(3)] $M$ is a Seifert fibered space over the 2-sphere with exactly three exceptional fibers, at least one of which has order 2 or 3. 
\end{itemize}
Furthermore, in cases (2) and (3) $M$ has a genus~2 
Heegaard splitting with respect to 
which $K$ has bridge number~$0$ in case (2) and at most $w(2)$ in case (3). 
\end{thm}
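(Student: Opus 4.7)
The plan is to show that conclusion (1) holds whenever (2) and (3) fail, by producing a linear-in-$g$ bound on $t = |K \cap S|$. Set up the punctured surfaces $F, Q \subset X$ and the fat-vertexed graphs $G_Q, G_F$ via Lemma~\ref{lem:FandQ}. Assuming $t > 2g-2$ (otherwise we are done immediately), Lemma~\ref{lem:2} supplies a great $g$--web $\Lambda \subset G_Q$, and Proposition~\ref{prop:webcount} gives the base estimate $|\calL| \geq \tfrac{3}{4} t - \tfrac{g-1}{2}$.

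Next, I would select the minimal family $\Sigma$ of extended Scharlemann cycles of length $2$ or $3$ of Definition~\ref{def:Sigma} covering every extended Scharlemann label in $\calL$, and partition the remaining labels of $\calL$ according to whether the associated trigon is of Type I, I\!I, or I\!I\!I. Section~\ref{sec:typeIorII} culminates in Theorem~\ref{thm:Type12boundN}, which linearly bounds the set $\calL_\II$ of labels arising from Type I or I\!I trigons outside $L(\Sigma)$. Section~\ref{sec:typeIII} builds a minimal collection $\Sigma_\III$ of Type I\!I\!I cycle trigons capturing the Type I\!I\!I labels outside $L(\Sigma)$, and Lemma~\ref{lem:boundwrapping} linearly bounds the number of wrapping gnarls of $\Sigma_\III$ (unless conclusion (3) intervenes), so up to a linear function of $g$ the Type I\!I\!I contribution to $\calL - L(\Sigma)$ comes from a set $\G$ of simple gnarls.

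The central counting argument then proceeds as follows. Each element of $\ao(\Sigma)$ comes paired with a mate on its long \mobius band or long twisted $\theta$--band, so the essential curves of $\ao(\Sigma)$ cut out a collection of annuli on $\hatF$ whose count is $|L(\Sigma)|/2$, up to a linear error. Using Lemma~\ref{lem:triplets} I would extract a sub-collection $\calA$ of such annuli --- one from each pair --- with the property that the interior of each annulus in $\calA$ contains a vertex of $G_F$ labelled in $-\calL \cup \G$; this is the content of Lemma~\ref{lem:buildA}, and gives $|\calA| \geq |L(\Sigma)|/4$ up to a linear error. Combined with Lemmas~\ref{lem:nonemptygnarl} and \ref{3ingnarl}, which force any simple gnarl in $\G$ to contain at least three $-\calL$--vertices in its interior, this produces
\[ |-\calL| \;\geq\; \tfrac{3}{7}\bigl(|L(\Sigma)| + |\G|\bigr) - c_1(g) \;\geq\; \tfrac{3}{7}|\calL| - c_2(g) \;\geq\; \tfrac{9}{28}\,t - c_3(g) \]
for suitable linear functions $c_i$. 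Combining with $|\calL| \geq \tfrac{3}{4}t - \tfrac{g-1}{2}$ and $t = |\calL| + |-\calL|$ yields $t \leq 14\,c(g)$ for an appropriate linear $c$, which is conclusion (1).

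The exceptional conclusions are recorded along the way. Conclusion (2) is excluded at the outset because our standing hypotheses include that $M$ is atoroidal. Conclusion (3) is the only alternative whenever the parallelism/thinning arguments of Lemmas~\ref{lem:parallelesc}, \ref{lem:SFS}, or \ref{lem:boundwrapping} break down, and in each such degeneration the cited lemma directly exhibits a genus~2 Heegaard splitting of $M$ for which $K$ has small explicit bridge number, which supplies the furthermore clause. The main obstacle will be the quantitative bookkeeping --- tracking the cascade of linear bounds through Proposition~\ref{prop:webcount}, Theorem~\ref{thm:Type12boundN}, Lemma~\ref{lem:boundwrapping}, and the annulus counting --- so that the explicit constant $w(g) = 10{,}581(g-1) + 394$ drops out at the end, and ensuring that whenever any intermediate lemma falls into its SFS alternative, the resulting genus~2 splitting of $M$ still has $br(K) \leq w(2)$.
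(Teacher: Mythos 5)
Your proposal reproduces the paper's own argument: the same setup via Lemma~\ref{lem:FandQ} and the great $g$--web, the same estimate $|\calL| \geq \tfrac{3}{4}t - \tfrac{g-1}{2}$ from Proposition~\ref{prop:webcount}, the same decomposition of $\calL$ into $L(\Sigma)$, $\calL_\II$ (bounded by Theorem~\ref{thm:Type12boundN}), and gnarls (controlled by Lemmas~\ref{lem:boundwrapping}, \ref{lem:nonemptygnarl}, \ref{3ingnarl}), the same annulus-counting Lemma~\ref{lem:buildA} leading to $|-\calL| \gtrsim \tfrac{3}{7}|\calL|$ up to linear error, and the same handling of the exceptional Seifert-fibered case via the fallback conclusions of Lemmas~\ref{lem:SFS} and \ref{lem:parallelesc}. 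This is the strategy carried out, with explicit constants, in \S\ref{sec:proofofmain}.
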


Theorem~\ref{thm:main} is proved by establishing the existence of a collection
of annuli in $S=\hatF$ whose boundary components are, roughly speaking, elements
of $\ao(\Sigma)$, and which capture, in their interiors, vertices of $G_F$
that do not correspond to labels in $\calL$. This is stated precisely in 
Lemma~\ref{lem:buildA}, which is proved in subsection 8.1 below. Before 
giving the
proof of the lemma, we show how it leads to the proof of Theorem~\ref{thm:main}.

\begin{proof}[Proof of Theorem~\ref{thm:main}.] 

Let $K'$ be a knot in $S^3$ and $M=K'(p/q)$ where $q \geq 2$.
Let $K$ be the core of
the attached solid torus in $K'(p/q)$. We are given a genus $g$, strongly
irreducible Heegaard
surface $S$ for $M$, and we may assume
$K$ cannot be isotoped onto this surface, else we may take $(1)$ to hold. 
In the special case that $M$ is a Seifert fibered space as in $(3)$, we 
further assume that $S$ comes from a genus $2$ 
Heegaard splitting of $M$ with respect to which $K$ has the smallest
bridge number among all genus $2$ splittings of $M$. 
Note that as $M$ is neither a lens space, $S^3$, $S^1 \times S^2$, 
nor a connected sum, 
any genus $2$ splitting
of $M$ is strongly irreducible. 
Then there are surfaces $F,Q$ 
in the exterior of $K'$ as described by Lemma~\ref{lem:FandQ}, 
where $\hatF=S$ is a thick level surface in a thin
presentation of $K$ with respect to the given splitting. Let $G_F,G_Q$
be the corresponding graphs of intersection. Let $t=|\partial F|=
|K \cap \hatF|$. 
We assume that $M$ is atoroidal so that $(2)$ of Theorem~\ref{thm:main} does
not hold. We then show that 
either conclusion $(1)$ (for $w(g)=10,581(g-1)+394$) or $(3)$ 
(with the bridge number bound on minimal bridge number,
genus $2$ splitting) of Theorem~\ref{thm:main} holds.

We may assume $t \geq 2g-2$, so Lemma~\ref{lem:2} guarantees that $G_Q$ has a
great web $\Lambda$. Let $\calL$ be the collection of labels of
$\Lambda$ given in Definition~\ref{def:calL}, and $-\calL$ the complement
of $\calL$ among all labels of $G_Q$. Let $\Sigma$ be the minimal collection
of extended Scharlemann cycles of $\Lambda$ given by 
Definition~\ref{def:Sigma}. Let $\Sigma_\II$ and $\Sigma_\III$ be as in
Definitions~\ref{def:cornersI&II} and \ref{def:SigmaIII}. 
Then for every $x \in \calL$, either $x \in L(\Sigma)$ or there is 
a $\sigma \in \Sigma_\II \cup \Sigma_\III$ such that $x \in c(\sigma)$.
Let $\G$ be the set of labels of $G_Q$ that correspond to vertices of 
simple gnarls of $\Sigma_\III$ 
that are not in $L(\Sigma)$ (section~\ref{sec:typeIII}). 
A gnarl hereafter will always be a gnarl of $\Sigma_\III$.

Every element of $\calL$ belongs to either $L(\Sigma)$, $\calL_\II$ 
(Definition~\ref{def:L2}), or corresponds to a vertex of
a gnarl (Definition~\ref{def:simpleandwrapping}). 
A gnarl may be either wrapping or simple.
By Theorem~\ref{thm:Type12boundN} and Lemma~\ref{lem:boundwrapping}, 
\begin{equation} 
|\calL| \leq |L(\Sigma)|+|\G|+960(g-1)+14F_2(g) \tag{$*$} 
\end{equation}
Let $\calA$ be the collection of annuli given by Lemma~\ref{lem:buildA}.
We can write $\G=\G' \coprod \G''$, where $\G'$ are the elements
of $\G$ corresponding to vertices that lie outside $\cup_{B \in \calA} B$, and
$\G''$ are those corresponding to vertices in the interiors of the annuli
in $\calA$. 

Using Lemma~\ref{lem:nonemptygnarl} to remove from consideration those 
gnarls with no vertices in their interior, Corollary~\ref{3ingnarl} and
Lemma~\ref{lem:depth<2} imply that
there are at least $|\G'|-2F_2(g)$ vertices of $G_F$ outside 
$\cup_{B \in \calA}B$ that lie in the interior of simple gnarls but are
not vertices of gnarls. Such vertices cannot belong to $L(\Sigma)$, so
they are either in $\calL_\II$ or in $-\calL$. 
\begin{claim}\label{clm:G'bound}
There are at least $|\G'|-2F_2(g)$ vertices of $G_F$ that lie outside of
$\cup_{B \in \calA} B$ and that correspond to elements of $-\calL \cup \calL_\II$.
\end{claim}

\begin{proof}
Let $\overline{\G'}$ be the set of elements of $\G'$ that do not belong to 
simple gnarls that contain no vertices in their interior. By 
Lemma~\ref{lem:nonemptygnarl}, $|\overline{\G'}| \geq |\G'|-2F_2(g)$.
To the elements of $\overline{\G'}$ we associate distinct elements of 
$-\calL \cup \calL_\II$ that lie outside $\cup_{B \in \calA}B$. 

Consider a depth $1$ gnarl $\g$ that contains an element of $\overline{\G'}$
(as one of its vertices). By Corollary~\ref{3ingnarl} 
and Lemma~\ref{lem:depth<2},
there are at least as many vertices in the interior of $\g$ that do not belong
to gnarls as there are elements of $\overline{\G'}$ that lie in $\g$ or its
interior ($\g$ contains at most two depth $0$ gnarls, each of which shares
a vertex with $\g$, and two such gnarls have disjoint interiors). These elements
in the interior of $\g$ that do not belong to gnarls cannot be in $L(\Sigma)$,
hence must correspond to elements of $-\calL \cup \calL_\II$. Furthermore,
as they are interior to $\g$, they must lie outside of $\cup_{B \in \calA} B$. 
To the elements of $\overline{\G'}$ in $\g$ or its interior we associate
distinct elements of $-\calL \cup \calL_\II$ among these. 

In sequence, consider all elements of $\overline{\G'}$ that lie in depth $1$
gnarls that have not previously been assigned an element of $-\calL \cup \calL_\II$.
Apply the above procedure to assign elements of $-\calL \cup \calL_\II$ to that
element of $\overline{\G'}$ along with any others that lie in that gnarl
or its interior. 

Each of the elements of $\overline{\G'}$ not assigned elements of 
$-\calL \cup \calL_\II$ by the above process must lie in a depth $0$ 
gnarl (with interior vertices) that does not itself lie in a depth $1$ gnarl
containing an element of $\overline{\G'}$. Consider such a depth $0$ gnarl.
By Corollary~\ref{3ingnarl}, we may associate distinct vertices in the interior
of $\g$ to the elements of $\overline{\G'}$ that belong to $\g$. These
interior vertices will again belong to $-\calL \cup \calL_\II$ and lie outside
of $\cup_{B \in \calA} B$. By looking at such depth $0$ gnarls we may thus sequentially
assign elements of $-\calL \cup \calL_\II$ to the remaining elements of 
$\overline{\G'}$. 

In the above procedure, no two elements of $\overline{\G'}$ can be assigned the 
same element of $-\calL \cup \calL_\II$. For each such element of 
$-\calL \cup \calL_\II$ comes from the interior of a certain gnarl that contains
an element of $\overline{\G'}$. By Lemma~\ref{lem:depth<2}, 
if two gnarls
share an interior vertex, one must be nested in the other. So one associated
gnarl must be depth $1$, the other depth $0$. This is prohibited by above
procedure.
\end{proof}

First assume $\calA$ is non-empty. By Claim~\ref{clm:G'bound} and
part $(5)$ of Lemma~\ref{lem:buildA} 
\begin{align*}
|-\calL | + |\calL_\II| &\ge |\G'|-2F_2(g)+(3/7) \sum_{B \in \calA}(n_B+4)  \tag{$**$}\\
         &= |\G'|+(3/7)|\G''|+(3/7) \, 4 \, |\calA| -2F_2(g) \\
         &\ge (3/7)(|\G|+|L(\Sigma)| - 4l(g))-2F_2(g) \\
         &\ge (3/7)(|\calL|-960(g-1)-14F_2(g)-4l(g))-2F_2(g)   
\end{align*}
where $l(g)$ is the function defined in Lemma~\ref{lem:buildA}, and where
the last line uses $(*)$. Thus by Theorem~\ref{thm:Type12boundN}
\[|-\calL | \ge (3/7)(|\calL|-960(g-1)-14F_2(g)-4l(g))-2F_2(g)-960(g-1)\]

By Proposition~\ref{prop:webcount}, $|\calL| \ge (3/4)t - (g-1)/2$.
Therefore
\begin{align*}
t &= |-\calL| + |\calL|  \\
  &\ge (1+3/7)((3/4)t-(g-1)/2)-(3/7)(960(g-1)+14F_2(g)+4l(g)) \\ 
  &\, \, \, \, \, \,  - 2F_2(g) - 960(g-1) \\
  &=(15/14)t - (5/7)(g-1) - (3/7) (960(g-1) + 14F_2(g) + 4l(g)) \\ 
  &\, \, \, \, \, -2F_2(g) - 960(g-1) 
\end{align*}
Thus  $t \le 2w(g)$ where
\begin{align*}
  2w(g) &=10(g-1)+6 \cdot 960 (g-1) + 24 l(g) + 84 F_2(g) + 28 F_2(g) + 14 \cdot 960 (g-1) \\
       &=19,210(g-1) + 24 l(g) + 112 F_2(g) \\
       &=19,210(g-1) + 24 (58(g-1) + 47/2) + 112 (5(g-1)+2) \\
       &=21,162(g-1) + 788 
\end{align*}
proving Theorem~\ref{thm:main} when $\calA$ is non-empty.

Now assume $\calA$ is empty. Then Lemma~\ref{lem:buildA}(1) implies
$|L(\Sigma)| \le 4 l(g)$. Furthermore, $\G = \G'$, and Claim~\ref{clm:G'bound} along
with Theorem~\ref{thm:Type12boundN}
yields $|-\calL| \ge |\G| -2F_2(g) - 960(g-1)$ (this is the analog of $(**)$). Then  $(*)$ gives
$|-\calL| \ge |\calL| - 1920(g-1) - 16F_2(g) - 4l(g)$.
Using Proposition~\ref{prop:webcount}, we get
\begin{align*}
t &= |-\calL| + |\calL|  \\
%
  &\ge (3/2)t - 1921(g-1) - 16F_2(g) - 4l(g)  
\end{align*}
This gives $t \le 2 (1921(g-1) + 16F_2(g) + 4l(g)) < 2w(g)$ where
$w(g)$ is as above.
\end{proof}

\subsection{The collection of annuli, $\calA$}

If $\theta$ is a $\theta$--curve, a {\em curve} in $\theta$ is a circle
$\gamma$ obtained by removing from $\theta$ the interior of an edge.
If $\sigma$ is an extended Scharlemann cycle of length $3$,
$\gamma(\sigma)$ will denote the set of curves in $\theta$--curves
belonging to $\theta(\sigma)$. If $\sigma$ is an extended Scharlemann
cycle of length $2$ or $3$, $a\gamma(\sigma)$ will denote $a(\sigma)
\cup \gamma(\sigma)$ (where $\gamma(\sigma)=\emptyset$ if $\sigma$ has
length $2$).

\begin{lemma}\label{lem:buildA}
There is a (possibly empty) collection $\calA$ of annuli in $\hatF$ such that 
\begin{enumerate}
\item $|\calA| \geq |L(\Sigma)|/4 - l(g)$ where $l(g)=58(g-1)+47/2$;
\item for any $B \in \calA$ there is a $\sigma \in \Sigma$ 
such that each component of $\partial B$ belongs to $a\gamma(\sigma)$;
\item the interiors of any two distinct annuli in $\calA$ are disjoint;
\item no vertex in the interior of an annulus in $\calA$ belongs
to $L(\Sigma)$, and any vertex in the interior of an annulus in $\calA$
that belongs to $\calL$ is either a vertex of a simple 
gnarl or belongs to $\calL_\II$;
\item if $B \in \calA$ and $n_B$ is the number of vertices in the
interior of $B$ that belong to simple gnarls then there are at least
$(3/7)(n_B+4)$ vertices in $B$ that belong to $-\calL \cup \calL_\II$. 
\end{enumerate}
\end{lemma}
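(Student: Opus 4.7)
My plan is to construct $\calA$ by grouping isotopic elements of $\ao(\Sigma)$ and taking the annuli they cobound on $\hatF$. Recall $|\ao(\Sigma)| = |L(\Sigma)|/2$, since each element contributes two $G_F$-vertices (= labels) to $L(\Sigma)$; by Lemma~\ref{lem:essentialcurves} every element has an essential core on $\hatF$, and by Lemma~\ref{lem:atmostoneintersection} any two share at most one vertex. From each element I take a representative simple closed curve in $a\gamma(\Sigma)$.

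First I would form a collection $\calA_0$ of candidate annuli: for each $\sigma \in \Sigma$, order $a\gamma(\sigma)$ using the nestings introduced in Section~\ref{sec:thinning} and Definition~\ref{def:thetaj}, and record every annulus on $\hatF$ cobounded by two consecutive elements. Condition~(2) is then automatic. By Lemma~\ref{lem:SFS} and its Addendum, the interior of each recorded annulus must contain $K$-intersections---otherwise I could thin $K$, contradicting thin position, or else land in the excluded Seifert fibered conclusion---which seeds condition~(5).

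Next I would filter $\calA_0$ to ensure condition~(4). Any interior vertex of a candidate $B$ that lies in $L(\Sigma)$ is a vertex of some element of $\ao(\tau) \subset \ao(\Sigma)$. If $\tau=\sigma$ the boundary pair was not really consecutive, a contradiction; if $\tau \neq \sigma$ I would invoke Lemma~\ref{lem:parallelesc} (where strong irreducibility of $\hatF$ enters), and, under the assumption that $M$ is not the excluded Seifert fibered space, the minimality of $\Sigma$ combined with Lemma~\ref{lem:commonlabelsetsrefined} shows only boundedly many candidates can suffer this defect. I discard these to form $\calA$. For the size count, partition $a\gamma(\Sigma)$ into isotopy classes on $\hatF$; Lemma~\ref{lem:parallelthetas} and Lemma~\ref{lem:Fkg} bound the number of classes linearly in $g$. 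In a class of size $k$, pairing adjacent curves gives $k-1$ candidate annuli, and summing---noting that each annulus consumes two $a\gamma(\Sigma)$-curves as boundary---produces $|\calA| \geq |L(\Sigma)|/4 - l(g)$ once the filtering and class-count defects are tallied.

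Finally, condition~(5) would follow from Lemma~\ref{lem:triplets} and Lemma~\ref{lem:thetatriplets}: for $B \in \calA$ with $n_B$ interior simple-gnarl vertices, those lemmas force enough interior $K$-intersections beyond the four boundary vertices that, by condition~(4), all remaining intersections lie in $-\calL \cup \calL_\II$ or on simple gnarls, and an averaging argument assigning each residual vertex among the three possible label-types yields the factor $3/7$. The main obstacle will be stage 2: pinning down the constant $l(g)=58(g-1)+47/2$ exactly requires a careful enumeration of how pairs $\sigma,\tau \in \Sigma$ can have isotopic curves on $\hatF$, where Lemma~\ref{lem:parallelesc} controls each configuration but contributes its own linear-in-$g$ term, with the coefficient $58$ and the half-integer offset $47/2$ emerging from combining $F_2(g) = 5(g-1)+2$ with the bounds produced by the other parallelism and filtering steps.
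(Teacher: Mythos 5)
Your overall strategy — group the curves of $\ao(\Sigma)$ by isotopy on $\hatF$, take cobounded annuli, and bound the losses by the genus — is the right one, but there are two places where the details of your plan diverge from something that works.

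The first is the construction of $\calA_0$. You propose to take annuli cobounded by pairs of elements of $a\gamma(\sigma)$ that are consecutive in the nesting order of Definition~\ref{def:thetaj}. This is not the order the paper uses, and it creates two problems you don't resolve: nest-consecutive elements of $a\gamma(\sigma)$ need not be isotopic on $\hatF$, so they need not cobound an annulus at all; and even when they do, the interior of that annulus can contain vertices of other elements of $\ao(\Sigma)$, so condition~(4) is not an easy consequence. Your proposed fix — ``if $\tau=\sigma$ the boundary pair was not really consecutive, a contradiction'' — conflates the nesting order in $A(\sigma)$ with the geographical order on $\hatF$, and those agree only after applying Lemma~\ref{lem:SFS} to pairs whose cobounded annulus is already known to avoid $K$ in its interior, which is not the situation you are in. The paper sidesteps all of this by first isolating (via Lemma~\ref{lem:parallelthetas}) the at least $|L(\Sigma)|/2 - 3F_2(g)$ elements of $\ao(\Sigma)$ that lie in essential annuli, partitioning these into at most $F_2(g)$ isotopy classes \emph{on $\hatF$}, discarding the classes of size less than $4$, and then within each class taking annuli between \emph{geographically} consecutive curves; by construction those annuli have no $\ao(\Sigma)$-vertices in their interiors (property $(R2)(d)$), so condition~(4) comes for free. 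Bad annuli — those whose two boundary curves come from different $\sigma\in\Sigma$ (at most two per class, controlled via Lemma~\ref{lem:parallelesc} applied to Claim~\ref{clm:intAr}), those with empty interiors (at most half plus a class-count term, via Lemmas~\ref{lem:triplets} and~\ref{lem:thetatriplets}), and those containing wrapping-gnarl vertices or empty simple gnarls (at most $7F_2(g)+F_2(g)$, via Lemmas~\ref{lem:boundwrapping} and~\ref{lem:nonemptygnarl}) — are then discarded, and tracking the constants produces $l(g)=58(g-1)+47/2$.

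The second gap is condition~(5). You describe the $3/7$ as coming from ``an averaging argument assigning each residual vertex among the three possible label-types.'' That is not the mechanism. The argument is: if $n_B=0$ one needs that every annulus in $\calA$ has at least two interior vertices (this is a separate claim, proved by a non-separating torus argument), giving $2 > (3/7)\cdot 4$. If $n_B>0$, one exploits that every depth-$0$ simple gnarl (Lemma~\ref{lem:depth<2}) contributes at least $3$ interior vertices that land in $-\calL\cup\calL_\II$ (Corollary~\ref{3ingnarl}), that depth-$1$ gnarls share a vertex with an interior depth-$0$ gnarl, and that a vertex lies in at most two gnarls (Lemma~\ref{lem:samelabels}); this yields $\overline{n_B}\ge 3n$ where $n$ is the number of depth-$0$ gnarls, and a short case analysis mod $3$ then gives $\overline{n_B} \ge (3/7)(n_B+4)$. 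Your proposal gives no argument of this kind, and a naive averaging over ``label-types'' does not produce the needed inequality.

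One small correction: $|\ao(\Sigma)|$ can exceed $|L(\Sigma)|/2$ (labels can be shared between elements of $\ao(\Sigma)$ by Lemma~\ref{lem:atmostoneintersection}), so the equality you assert should be the inequality $|\ao(\Sigma)|\ge |L(\Sigma)|/2$; that is the direction used.
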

\begin{remark}
If $\calA$ is empty ($|\calA|=0$), then (1) says that 
$|L(\Sigma)| \leq 4l(g)$.
\end{remark}


\begin{proof}
By Lemma~\ref{lem:parallelthetas} there are at least $|\theta(\Sigma)|-3F_2(g)$ $\theta$--curves of $\theta(\Sigma)$ that lie in essential annuli.   Since $|L(\Sigma)|/2 \leq |a(\Sigma)| + |\theta(\Sigma)|$ and each element of $a(\Sigma)$ lies in an essential annulus, at least $|L(\Sigma)|/2 - 3F_2(g)$ of the elements of $\ao(\Sigma)$ lie in essential annuli. Let $\calC$ be this subcollection of 
$\ao(\Sigma)$. 
Call two elements of $\calC$ {\em
isotopic} if the cores of the corresponding annuli are isotopic in $\hatF$.  
Then any two elements of $\calC$ which are isotopic are either disjoint
or intersect non-transversely in a single vertex. Furthermore, the
elements of $\calC$ form at most $F_2(g)$ isotopy classes
(using `core' curves from the $\theta$--curves and perturbing 
non-transverse intersections, Lemma~\ref{lem:atmostoneintersection} shows
that a subcollection of non-isotopic elements of $\calC$ have property $P(2)$
-- apply Lemma~\ref{lem:Fkg}). Thus all but
at most $3 F_2(g)$ of these curves lie in isotopy classes with at least 
four elements
of $\calC$. Let $\calC_1, \dots, \calC_m$
be the distinct isotopy classes of $\calC$, each of which contains at least four elements of $\calC$. 

A {\em pinched annulus} $A \subset \hatF$ is a disk $D$ with two points
on its boundary identified to a single point $v$. The {\em interior}
of $A$ is the interior of $D$. Note that under the identification
$\partial D$ becomes the union of the two simple closed curves $\gamma$
and $\gamma'$ that intersect non-transversely at $v$. We write 
$\partial A= \gamma \cup \gamma'$. 

Consider $\calC_i=\{c_1,c_2,\dots,c_n\}$, say, where $c_r \in a\theta(\Sigma), 1 \le r \le n$, and the $c_r$'s are ordered 
sequentially as they lie on $\hatF$. Then we have 
\begin{enumerate}
\item[(R1)] $c_r$ and $c_s$ are disjoint unless $|r-s|=1$ or $\{r,s\}=\{1,n\}$,
and $c_r$ and $c_s$
share a single vertex (Lemma~\ref{lem:atmostoneintersection});
\item[(R2)] for $1 \le r < n$ there exists $A_r \subset \hatF$ such that
 \begin{enumerate} 
    \item $A_r$ is an annulus or pinched annulus according as $c_r \cap c_{r+1}$ is empty or non-empty;
     \item $\partial A_r=\gamma_r \cup \gamma_{r+1}$ where $\gamma_s$ is a
curve in $c_s, s=r,r+1$;
     \item $A_r$ contains $c_r \cup c_{r+1}$;
     \item there are no vertices of any $a\theta(\Sigma)$ in the interior of $A_r$.
 \end{enumerate}
\end{enumerate}

We have $c_r \in a\theta(\sigma_r), \sigma_r \in \Sigma$.

Note that if $A_r$ is pinched then $\sigma_r \neq \sigma_{r+1}$.

\begin{claim}\label{clm:intAr}
If $\sigma_r \ne \sigma_{r+1}$ then the number of vertices of $G_F$
in the interior of $A_r$ is at least $t/2-3$, and at least $t/2-2$
if $A_r$ is pinched.
\end{claim}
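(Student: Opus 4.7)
The plan is to apply Lemma~\ref{lem:parallelesc} to the pair $(\sigma_r,\sigma_{r+1})$ using curves in $\ao(\sigma_r),\ao(\sigma_{r+1})$ sitting inside (a slight enlargement of) $A_r$. If $c_r\in a(\sigma_r)$ we use $c_r$ itself; if $c_r$ is a sub-curve of a $\theta$-curve in $\theta(\sigma_r)$, we use that $\theta$-curve, whose two vertices coincide with those of $c_r$ and whose third edge can be made disjoint from $A_r$'s interior by a disk-surgery in $\hatF$ (Corollary~\ref{cor:AEntscc}); similarly for $c_{r+1}$. Paring down if necessary, $\sigma_r,\sigma_{r+1}$ are proper, and the minimality of $\Sigma$ together with Lemma~\ref{lem:commonlabelsetsrefined} forces their core Scharlemann cycles to be distinct. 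Lemma~\ref{lem:parallelesc} now applies, yielding one of its two conclusions.

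First I would dispose of conclusion~(2) of Lemma~\ref{lem:parallelesc}: it identifies $M$ with the Seifert fibered space appearing in Theorem~\ref{thm:main}(3) and supplies a genus~$2$ Heegaard splitting of $M$ relative to which $K$ has bridge number at most $|A_r\cap K|-1$. Under the standing assumption in the proof of Theorem~\ref{thm:main} that in this Seifert-fibered scenario $\hatF$ is drawn from a genus~$2$ splitting of $M$ realizing the minimal bridge number of $K$, we have $t/2=br_{\hatF}(K)\le|A_r\cap K|-1$, so $|A_r\cap K|\ge t/2+1$. Subtracting the four (resp.\ three) vertices of $G_F$ lying on $\partial A_r$ immediately yields at least $t/2-3$ (resp.\ $t/2-2$) vertices in $\Int A_r$, which is the desired bound.

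In conclusion~(1) of Lemma~\ref{lem:parallelesc}, $L(\sigma_r)\cup L(\sigma_{r+1})$ exhausts all $t$ labels and $L(\sigma_r)\cap L(\sigma_{r+1})=\arc{xy}\sqcup\arc{wz}$, where $\{x,z\}$ and $\{y,w\}$ are the extremal labels of $\sigma_r,\sigma_{r+1}$ (i.e., the labels of the four vertices of $\gamma_r\cup\gamma_{r+1}$, with one coincidence in the pinched case). A local labelling analysis around the fat vertices of $\gamma_r$ and $\gamma_{r+1}$—exactly the configurations arrived at in Case~IV (or Case~II in the pinched case) of the proof of Lemma~\ref{lem:parallelesc}—shows that the two overlap intervals $\arc{xy}$ and $\arc{wz}$ lie on opposite sides of $A_r$ on $\hatF$: the long \mobius/twisted-$\theta$-bands $\AO(\sigma_r),\AO(\sigma_{r+1})$ approach $\gamma_r,\gamma_{r+1}$ from the two opposite sides, and the curves of $a\gamma(\sigma_r),a\gamma(\sigma_{r+1})$ carrying the labels of $\arc{xy}$ and $\arc{wz}$ are carried accordingly. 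Consequently at least $|\arc{xy}|+|\arc{wz}|-4\ge t/2-3$ (resp.\ $t/2-2$ in the pinched case, where an overlap label is shared between both sides' accounting) labels contribute vertices interior to $A_r$.

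The principal obstacle is the combinatorial bookkeeping in conclusion~(1): one must rerun the case analysis of Lemma~\ref{lem:parallelesc} in reverse to pin down which vertices of $G_F$ actually land in $\Int A_r$ versus in the complementary region of $\hatF$, treating the pinched configuration with particular care because a single vertex of $G_F$ appears on both $c_r$ and $c_{r+1}$, and using Lemma~\ref{lem:atmostoneintersection} to control how many other curves of $a\gamma(\sigma_r)\cup a\gamma(\sigma_{r+1})$ can cross $\partial A_r$.
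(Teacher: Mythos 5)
Your treatment of conclusion~(2) of Lemma~\ref{lem:parallelesc} matches the paper's: in the Seifert-fibered situation of Theorem~\ref{thm:main}(3), $\hatF$ is (by the standing convention in the proof of the main theorem) the genus-$2$ splitting realizing minimal bridge number, so the bound $br_{\hatF}(K) \le k-1$ from parallelesc gives $t/2 \le k-1$, and subtracting the boundary vertices of $A_r$ finishes that branch.

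The gap is in your handling of conclusion~(1). You try to extract the bound directly from the combinatorics of the overlap intervals $\arc{xy},\arc{wz}$ of $L(\sigma_r)\cap L(\sigma_{r+1})$, asserting that the two intervals ``lie on opposite sides of $A_r$'' and that this yields at least $|\arc{xy}|+|\arc{wz}|-4 \ge t/2-3$ interior vertices. Neither step is established: you give no argument that those vertices actually land in $\Int A_r$ rather than in the complementary region of $\hatF$, and the numerical inequality is asserted, not derived. The paper does not attempt this. Instead, it opens the proof with ``We may assume $t > 32(g-1)$'' and invokes Lemma~\ref{notwo}: conclusion~(1) of parallelesc says exactly that $L(\sigma_r)\cup L(\sigma_{r+1})$ contains all labels, which is the hypothesis of Lemma~\ref{notwo}. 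Since $t>32(g-1)$ rules out notwo's conclusion~(1), we are forced into notwo's conclusion~(2): $M$ is the Seifert fibered space and $K$ is at most $4$-bridge with respect to some genus-$2$ splitting. Combined with the standing assumption that $\hatF$ is a minimal-bridge genus-$2$ splitting, this gives $t/2 \le 4$, contradicting $t>32(g-1)$. So conclusion~(1) of parallelesc simply cannot occur here, and only conclusion~(2) — the branch you handle correctly — needs to be considered. Your proposal never mentions $t>32(g-1)$ or Lemma~\ref{notwo}, which are the two ingredients the paper uses to dispose of conclusion~(1); you correctly sensed the combinatorial route was the weak point but did not find the intended shortcut around it.
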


\begin{proof} 
We may assume $t > 32(g-1)$, hence Lemmas~\ref{lem:parallelesc} and 
\ref{notwo} imply that $M$ is a Seifert fibered space over $S^2$ with 
three exceptional fibers at least one of which has order $2$ or $3$.
This puts us in the context of Theorem~\ref{thm:main}(3).
Thus $\hatF$ is a genus $2$ splitting surface for $M$ 
with respect 
to which $K$ has the smallest bridge number. Again by 
Lemmas~\ref{lem:parallelesc} and \ref{notwo}, 
$K$ has bridge number at most $k-1$, where
$k=|K \cap A_r|$. Thus $t/2 \le k-1$ (twice the bridge number is an 
upper bound for the number of intersections with a thick surface). Let $k'$ be the number of vertices
in the interior of $A_r$. Then $k=k'+4$ or $k'+3$ according as $A_r$ is an
annulus or a pinched annulus, giving $k' \ge t/2-3$ or $t/2-2$, respectively.
\end{proof}

\begin{claim}\label{clm:countingC}
If the elements of $a\theta(\Sigma)$ are incident to at most $v$ vertices of $G_F$ then 
$|a\theta(\Sigma)| \leq v$. In particular, we may assume 
$v > 2l(g)=116(g-1)+47$.  
\end{claim}

\begin{proof}
Each vertex of $G_F$ belongs to at most two different elements of 
$a\theta(\Sigma)$ by Lemma~\ref{lem:atmostoneintersection}. 
On the other hand, every element of $a\theta(\Sigma)$ involves two vertices of 
$G_F$. Thus $v \geq |a\theta(\Sigma)| = |a(\Sigma)|$ + $|\theta(\Sigma)| \geq
|L(\Sigma)|/2$. If $v \leq 2l(g)$, then $4l(g) \geq |L(\Sigma)|$, and 
taking $\calA = \emptyset$ satisfies Lemma~\ref{lem:buildA}. 
\end{proof}

Let $\calA_i^+$ be the collection $\{A_r| 1 \le r < n\}$ coming from $\calC_i$.

\begin{claim}\label{clm:disjointannuli} For any $i \neq j$, any element of $\calC_i$ is disjoint from some 
element of $\calC_j$. Thus any element of $\calC_i$ is either disjoint from 
a given element of $\calC_j$ or intersects it non-transversely in a 
single vertex.
No vertex in the interior of an annulus of $\calA_i^+$ lies in the 
interior of an annulus of $\calA_j^+$, $i \neq j$.
\end{claim}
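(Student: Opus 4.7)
\textbf{The plan} is to handle the three assertions of the claim in order, with the third requiring the most work.

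For the first sentence, fix $c\in\calC_i$. Every element of $\ao(\Sigma)$ has exactly two vertices of $G_F$, and by Lemma~\ref{lem:atmostoneintersection} each vertex of $G_F$ lies in at most two elements of $\ao(\Sigma)$. Hence $c$ can share a vertex with at most two elements of $\ao(\Sigma)$, and in particular with at most two elements of $\calC_j$. Since $|\calC_j|\ge 4$, at least two elements of $\calC_j$ share no vertex with $c$; as elements of $\ao(\Sigma)$ are subgraphs of $G_F$ meeting only at vertices, these elements are disjoint from $c$. The second sentence is then immediate from Lemma~\ref{lem:atmostoneintersection}: any two members of $\ao(\Sigma)$ meet in at most one point, and since both are unions of edges of $G_F$, that meeting point must be a vertex of $G_F$, hence a non-transverse intersection.

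For the third sentence, suppose for contradiction that a vertex $v$ of $G_F$ lies in $\Int(A_r)\cap\Int(A_s)$ for $A_r\in\calA_i^+$, $A_s\in\calA_j^+$, $i\ne j$. Property (R2)(d) applied to each annulus shows $v$ is not a vertex of any curve in $\calC_i\cup\calC_j$. Write $\partial A_r=\gamma_r\cup\gamma_{r+1}$ (curves lying in elements of $\calC_i$) and $\partial A_s=\delta_s\cup\delta_{s+1}$ (curves in elements of $\calC_j$). If $\partial A_s\cap A_r=\emptyset$, then the subsurface $A_s\cap A_r$ of the annulus $A_r$ has boundary in $\gamma_r\cup\gamma_{r+1}$; being nonempty (it contains $v$) and a subsurface of an annulus bounded by a union of full components of $\partial A_r$, it must equal $A_r$ itself, forcing $A_r\subset A_s$. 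Then $\gamma_r$ and $\gamma_{r+1}$ are essential simple closed curves in the annulus $A_s$ and hence isotopic to its core, i.e.\ to $\calC_j$, contradicting that $\calC_i$ and $\calC_j$ are distinct isotopy classes. Otherwise $\partial A_s\cap A_r\ne\emptyset$; Lemma~\ref{lem:atmostoneintersection} bounds each pairwise intersection $|\gamma\cap\delta|\le 1$, and no simple closed curve in $\delta\cap A_r$ can be essential in $A_r$ (as $\delta$ is essential on $\hatF$ yet non-isotopic to its core), so after small perturbation each $\delta\in\{\delta_s,\delta_{s+1}\}$ contributes at most one through-arc of $\delta\cap A_r$ from $\gamma_r$ to $\gamma_{r+1}$. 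I will then dissect $A_r$ by these one or two through-arcs, identify the component of $A_r\cap A_s$ containing $v$, and argue that the only topological possibility compatible with the boundary constraints is a common subannulus of $A_r$ and $A_s$ whose core is simultaneously isotopic to $\calC_i$ and $\calC_j$, yielding the desired contradiction.

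\textbf{The main obstacle} is the final case analysis in the third assertion: executing the dissection and ruling out the configurations of one or two through-arcs so as to extract the contradictory common subannulus.
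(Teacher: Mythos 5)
Your argument for the first sentence is correct, and in fact simpler than the paper's: you fix $c\in\calC_i$, note that $c$ has two vertices and (by Lemma~\ref{lem:atmostoneintersection}) each vertex of $G_F$ lies in at most two members of $\ao(\Sigma)$, so $c$ shares a vertex with at most two other members; since $|\calC_j|\ge 4$, at least two members of $\calC_j$ miss $c$ entirely. The paper instead produces a single element of $\calC_j$ disjoint from \emph{every} element of $\calC_i$, which requires its preliminary count of pinched annuli; your route avoids that.

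The second sentence is where the argument goes wrong. You assert that because the single intersection point is a vertex of $G_F$ it is ``hence a non-transverse intersection.'' That inference does not hold: vertices of $G_F$ are fat (disks), and two circles of $\ao(\Sigma)$ passing through the same fat vertex cross transversely there exactly when their four edge-endpoints interleave on the boundary of that fat vertex --- deciding such interleavings is precisely what much of Section~5 of the paper is about, and there is no a priori reason they do not interleave here. The correct deduction uses your first sentence: given $c\in\calC_i$ and $d\in\calC_j$, there is $c'\in\calC_i$ isotopic to $c$ and disjoint from $d$ (apply the first sentence with $i,j$ swapped), so $c$ and $d$ have algebraic intersection number zero on $\hatF$; together with $|c\cap d|\le 1$ this rules out a single transverse crossing. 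This is the same reasoning used in the proofs of Lemma~\ref{lem:parallelthetas} and Lemma~\ref{lem:parallelesc}.

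The third sentence is, as you say yourself, unfinished --- and the through-arc dissection you sketch is more work than is needed. Once the second sentence is in hand, perturb $\gamma_r,\gamma_{r+1},\delta_s,\delta_{s+1}$ to be pairwise disjoint, keeping the vertex $v$ in both interiors. Now each component $\delta$ of $\partial A_s$ is disjoint from $\partial A_r$, so is either contained in $A_r$ or disjoint from $A_r$; if $\delta\subset A_r$ then $\delta$, being essential in $\hatF$, is essential in the annulus $A_r$ and hence isotopic to its core, contradicting $\calC_i\ne\calC_j$. Thus $\partial A_s\cap A_r=\emptyset$, and you are in exactly the case you did handle correctly: $A_s\cap A_r$ is clopen in the connected $A_r$ and nonempty, so $A_r\subset A_s$, forcing $\gamma_r$ isotopic to the core of $A_s$, a contradiction. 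This is the paper's route; no dissection of $A_r$ by arcs of $\delta$ arises.
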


\begin{proof}
Assume that $\calA_i^+$ contains two non-adjacent pinched
annuli. Then these annuli have disjoint interiors. By
Claim~\ref{clm:intAr} there are at least $t-4$ vertices of $G_F$
in the interiors of these annuli. Then (R2)(d) implies there are at most 
four vertices
of $G_F$ belonging to $|a\theta(\Sigma)|$, contradicting 
Claim~\ref{clm:countingC}. Thus any pinched annuli in $\calA_i^+$ are 
adjacent and there are at most 
two pinched annuli in $\calA_i^+$.
Since $|\calC_i| \geq 4$, there are at least three disjoint elements
of $\calC_i$. The middle of these is then disjoint from any element of $\calC_j$
as proposed, and any element of $\calC_j$ can intersect an element of $\calC_i$
only non-transversely. Let $A \in \calA_i^+, B \in \calA_j^+$. 
By the above,
the boundary components of $A,B$ can be perturbed to be disjoint. If their
interiors intersected, then some component of $\partial A$ would be isotopic to
one of $\partial B$, a contradiction. Thus $A,B$ share no interior vertices.
\end{proof}

Let $\calA^+ = \cup_{i=1}^m \calA_i^+$. It follows from  
Claims~\ref{clm:intAr}, ~\ref{clm:countingC}, and ~\ref{clm:disjointannuli},
that there are at most two elements $A \in \calA^+$
such that $\partial A = \gamma \cup \gamma', \gamma \in a\gamma(\sigma),
\gamma' \in a\gamma(\sigma')$, where $\sigma \ne \sigma'$, and that,
if there are two, then they are adjacent (and hence belong
to the same $\calA_i^+$). Let $\widetilde{\calA_i^+}$ be $\calA_i^+$
with any such elements removed (only one family is changed).  
Then for each $i$ there is a 
$\sigma \in \Sigma$ such that for any annulus
$B \in \widetilde{\calA_i^+}$, 
$\partial B \subset a\gamma(\sigma)$. In particular, there are no pinched
annuli in $\widetilde{\calA_i^+}$.

If $A^+,B^+ \in \widetilde{\calA_i^+}$, their interiors will be 
disjoint unless they share a $\theta$--curve $\theta$, in which
case two of the edges of $\theta$ will cobound a disk, $D$, of
parallelism in $\hatF$ with $D \subset A^+ \cap B^+$. Since
the cores of the annuli in $\widetilde{\calA_i^+}$ are all isotopic
in $\hatF$, assigning an orientation to this isotopy class allows
us to talk about  one element of $\widetilde{\calA_i^+}$ being 
to the {\em right} or {\em left} of another. If $A^+,B^+ \in 
\widetilde{\calA_i^+}$ share a disk of parallelism $D$, with 
$A^+$ on the left of $B^+$, we define $A=A^+ - \Int D$. Doing this
for all pairs in $\widetilde{\calA_i^+}$ that share a disk of parallelism,
we get a collection $\widetilde{\calA_i^+}$ of annuli in $\hatF$
whose interiors are disjoint. 

Let $\tilde{\calA}=\cup_{i=1}^m \widetilde{\calA_i^+}$. Then 
$\tilde{\calA}$ satisfies conditions $(2)$ and $(3)$ of the lemma.
Also, recall from the first paragraph of the proof that 
$|\calC| \ge |L(\Sigma)|/2 - 3F_2(g)$ and
$\sum_{i=1}^m|\calC_i| \ge |\calC| - 3F_2(g)$.
Hence 
\begin{align}\label{eq:Atilde}
|\tilde{\calA}|=\sum_{i=1}^m |\widetilde{\calA_i^+}| &\ge \sum_{i=1}^m (|\calC_i|-1) - 2 \notag \\
   &\ge |L(\Sigma)|/2 - m -2 - 6F_2(g) \tag{$***$}
\end{align}

\begin{claim}\label{clm:intofA}
There are at most $|\tilde{\calA}|/2 + (m+1)/2$ annuli in $\tilde{\calA}$
with no vertices of $G_F$ in their interior.
\end{claim}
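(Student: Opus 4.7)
The plan is to show that in each isotopy class $\calC_i$, adjacent bad annuli (those having no interior $G_F$-vertex) in the original index ordering of $\calA_i^+$ cannot coexist, and then sum the resulting per-class bounds. The extra $1/2$ in $(m+1)/2$ is paid for by the at-most-one class containing the (at most two, adjacent) annuli removed in passing from $\calA^+$ to $\widetilde{\calA_i^+}$.

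The core step is as follows. Suppose $A_r, A_{r+1}$ have consecutive indices in $\calA_i^+$ and both are bad in $\tilde{\calA}$. Claim~\ref{clm:intAr} forces $\sigma_r = \sigma_{r+1} = \sigma_{r+2} = \sigma$ for some $\sigma \in \Sigma$, since otherwise one of the two annuli would contain at least $t/2 - 3$ interior vertices (we may assume $t$ is large, else Theorem~\ref{thm:main}(1) holds trivially). Then $c_r, c_{r+1}, c_{r+2} \in a\gamma(\sigma)$ lie on an annulus in $\hatF$, and Lemma~\ref{lem:triplets} or Lemma~\ref{lem:thetatriplets} produces a vertex of $G_F$ in that annulus lying on none of the three curves, and (in the length-$3$ case) not in the disks of parallelism of $c_r$ or $c_{r+2}$. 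The trimming that produces $\tilde{\calA}$ only reassigns the disk of parallelism of the middle curve $c_{r+1}$ between $A_r$ and $A_{r+1}$, so this extra vertex survives as an interior vertex of one of $A_r, A_{r+1}$ --- contradiction. (If instead Lemma~\ref{lem:SFS}(2) held, $M$ would be the Seifert fibered space of Theorem~\ref{thm:main}(3), giving $t$ bounded and the Claim trivially.)

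It then follows that in every $\widetilde{\calA_i^+}$ bad annuli are non-adjacent in the original enumeration of $\calA_i^+$. For each class with no removals, one obtains at most $\lceil k_i/2 \rceil$ bad annuli, where $k_i = |\widetilde{\calA_i^+}|$. For the at-most-one special class $i^*$ in which removals create a single ``jump'' in the enumeration, the sequence splits into two sub-sequences of lengths $a$ and $b$ with $a+b=k_{i^*}$, on each of which bad annuli are non-consecutive, yielding at most $\lceil a/2\rceil + \lceil b/2\rceil \le k_{i^*}/2 + 1$ bad annuli. A short arithmetic summation then gives
\[
\frac{k_{i^*}}{2} + 1 + \sum_{i \ne i^*} \frac{k_i + 1}{2} \;=\; \frac{|\tilde{\calA}|}{2} + \frac{m+1}{2},
\]
which is the claimed bound.

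I expect the main obstacle to be the careful tracking of disks of parallelism in the length-$3$ case: verifying that the vertex guaranteed by Lemma~\ref{lem:thetatriplets} is never simultaneously trimmed out of both $A_r$ and $A_{r+1}$. The key point is that the lemma's exclusion is precisely the outer disks (of $c_r$ and $c_{r+2}$), while the trimming only touches the inner disk (of $c_{r+1}$), which gets reassigned rather than deleted.
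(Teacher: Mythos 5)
Your argument is correct and follows the paper's own proof in all essentials: both establish that no two consecutive annuli in any $\widetilde{\calA_i^+}$ can have empty interiors (via Lemma~\ref{lem:triplets}/Lemma~\ref{lem:thetatriplets}, with the same care about disks of parallelism of the middle curve in the length-$3$ case, and the same escape to Theorem~\ref{thm:main}(3) when Lemma~\ref{lem:SFS}(2) applies), and both then account for the at most $m+1$ sequential groups of annuli to obtain the bound $|\tilde{\calA}|/2 + (m+1)/2$. Your route of re-deriving $\sigma_r=\sigma_{r+1}=\sigma_{r+2}$ from Claim~\ref{clm:intAr} is a harmless variant of the paper's appeal to the construction of $\widetilde{\calA_i^+}$, which already discarded the annuli with boundaries in distinct $a\gamma(\sigma)$'s.
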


\begin{proof}
The elements of $\widetilde{\calA_i^+}$ appear sequentially along
$\hatF$. Since $\tilde{\calA}=\cup \widetilde{\calA_i^+}$ is obtained from 
$\calA^+=\cup_{i=1}^m \calA_i^+$
by removing either $0,1$ or $2$ adjacent elements from some
$\calA_i^+$, the annuli in $\tilde{\calA}$ appear
in at most $(m+1)$ sequential groups, with, say $n_r$ annuli
in the $r$th group, $1 \le r \le m+1$.

We show that no two consecutive annuli in $\widetilde{\calA_i^+}$ have empty interiors. So assume $A,B$ are consecutive annuli in 
$\widetilde{\calA_i^+}$, neither of which contains vertices in its interior. 
By $(R2)$ and the construction of $\widetilde{\calA_i^+}$, 
$\partial A$ and $\partial B$
belong to $a\gamma(\sigma)$ for some $\sigma \in \Sigma$. But this
contradicts Lemma~\ref{lem:triplets} or Lemma~\ref{lem:thetatriplets}
(or Theorem~\ref{thm:main}$(3)$ holds with the bridge number bound).
Note that if $\sigma$ were of length $3$, letting $\theta \in \ao(\sigma)$
be the $\theta$--curve which meets both $A$ and $B$, then, by the way
we defined the elements of $\tilde{\calA}$, the disk of parallelism
of $\theta$ is contained in either $A$ or $B$. So 
Lemma~\ref{lem:thetatriplets} applies.

It follows that the number of annuli in $\tilde{\calA}$ with no vertices
in their interior is at most 
\[\sum_{r=1}^{m+1} (\frac{n_r+1 }{ 2}) = \frac{|\tilde{\calA}| }{ 2}+\frac {(m+1) }{ 2} \]
%
as claimed.
\end{proof}

Since any label in $L(\Sigma)$ corresponds to a vertex in an element
of $\ao(\Sigma)$, it is clear from the definition of the original
collection $\calA^+$ that no annulus in $\tilde{\calA}$ contains
vertices in its interior corresponding to elements of $L(\Sigma)$.

Finally, we define $\calA$ to be the collection of annuli obtained
by discarding from $\tilde{\calA}$ any annulus whose interior either 
\begin{itemize}
\item has no vertices of $G_F$; or
\item contains the vertex of a wrapping gnarl; or
\item contains the vertex of a simple gnarl with no vertices in its interior.
\end{itemize}
Then $\calA$ satisfies condition $(2),(3),$ and $(4)$ of Lemma~\ref{lem:buildA}.

By Claim~\ref{clm:intofA}, Lemma~\ref{lem:boundwrapping}, and
Lemma~\ref{lem:nonemptygnarl}
\begin{align*}
|\calA|&\ge |\tilde{\calA}| - (|\tilde{\calA}| + m+1))/2 - 7F_2(g)- F_2(g) \\
   &= (|\tilde{\calA}| -  (m+1))/2 - 8F_2(g)  
\end{align*}
Hence, using inequality~(\ref{eq:Atilde}) above, we have
\[  |\calA| \ge |L(\Sigma)|/4 - m - 3/2 -11F_2(g)  \]
By Claim~\ref{clm:disjointannuli} and Lemma~\ref{lem:Fkg}, $m \le F_0(g) =3g-3$.
Setting $F_2(g)=5(g-1)+2$, we get  
\[  |\calA| \ge |L(\Sigma)|/4 - l(g)  \]
where $l(g)=58(g-1)+47/2$.
Thus $\calA$ satisfies condition $(1)$.

It remains to show that $\calA$ satisfies condition $(5)$. 

\begin{claim}\label{clm:atleast2}
Each element of $\calA$ contains at least two vertices in its interior.
\end{claim}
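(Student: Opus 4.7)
The plan is to argue by contradiction: assume some $B \in \calA$ contains exactly one vertex $v$ of $G_F$ in its interior, and derive a contradiction. Two basic principles will be used throughout. (i) Distinct edges of $G_F$ are pairwise disjoint embedded arcs of $F \cap Q$ in $\hatF$, meeting only at shared vertex endpoints; since $v \notin \partial B$, no edge of $G_F$ incident to $v$ can cross $\partial B$, and more generally any edge of $G_F$ either lies entirely in $B$ (with its endpoints in $B$) or entirely in $\hatF \setminus \Int B$. (ii) Each component of $\partial B$ is an essential simple closed curve in $\hatF$, so it cannot lie in any embedded disk of $\hatF$.

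The argument splits into two cases according to whether $v$ is a vertex of a simple gnarl. In Case~A, $v$ is a vertex of a simple gnarl $\g$ of some $\sigma \in \Sigma_\III$. By the construction of $\calA$ we have not discarded $B$, so $\g$ is non-empty in the sense of Lemma~\ref{lem:nonemptygnarl}; Corollary~\ref{3ingnarl} then furnishes at least three vertices of $G_F$ in the interior of the disk $D \subset \hatF$ bounded by $\g$. Writing $v_2$ for the second vertex of $\g$, principle~(i) applied to the two edges of $\g$ (each joining $v$ and $v_2$) rules out $v_2 \in \hatF \setminus B$, so $v_2 \in B$ and $\g \subset B$. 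By~(ii), $\partial B \cap \Int D = \emptyset$; since $\Int D \setminus \partial B$ is connected and meets $\Int B$ near $v$, we conclude $D \subset B$ and hence $\Int D \subset \Int B$. The three or more vertices in $\Int D$ are then interior vertices of $B$ distinct from $v$, contradicting the assumption.

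In Case~B, $v$ is not a vertex of any simple gnarl; by the construction of $\calA$ it is also not a vertex of a wrapping gnarl, and by condition~(4) of Lemma~\ref{lem:buildA} (already established earlier in the proof) the label of $v$ does not lie in $L(\Sigma)$. Hence the label of $v$ lies in $\calL_\II \cup (-\calL)$. The plan here is to produce a bigon face of $G_F$ in $B$ one of whose vertices is $v$; standard dictionary then translates such a face into a length-$2$ Scharlemann cycle of $\Lambda$ in $G_Q$ one of whose corner labels is the label of $v$, placing $v \in \mathcal{E} \subset L(\Sigma)$ and contradicting condition~(4). Since $v$ is the only interior vertex of $B$, principle~(i) forces all $qu$ edges incident to $v$ to lie in $B$ and terminate either at $v$ or at one of the four vertices of $\partial B$. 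A pigeonhole argument on these edges, combined with the consecutive-label cyclic ordering of endpoints around each fat vertex and the Parity Rule, locates two edges at $v$ going to a common boundary vertex $b$ that are adjacent at both $v$ and $b$ and hence cobound the required bigon face in $B$.

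The main obstacle is Case~B: one must verify that the pigeonhole succeeds even when $q = 2$, and that the bigon face thus produced in $G_F$ descends to a bigon face of $\Lambda$ in $G_Q$ whose extended Scharlemann cycle has the label of $v$ among its corners. I expect the argument will need to exploit the specific form $\partial B \subset a\gamma(\sigma)$ for a particular $\sigma \in \Sigma$, together with the minimality of $\Sigma$, to ensure the resulting Scharlemann cycle is genuinely new and contributes $v$ to $L(\Sigma)$.
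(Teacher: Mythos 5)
Your Case~A is essentially sound (though stated loosely about why $D \subset B$), but Case~B contains a genuine gap, and the approach as a whole is quite different from the paper's.

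The pivotal error is the sentence ``standard dictionary then translates such a face into a length-$2$ Scharlemann cycle of $\Lambda$ in $G_Q$ one of whose corner labels is the label of $v$.'' No such dictionary exists. A bigon face of $G_F$ is a disk in $F$ cut out by $F \cap Q$, lying on one side of $Q$ in $X$; it is not a face of $G_Q$, and its two edges need not cobound a face of $G_Q$ at all, let alone a bigon of $\Lambda$ on the label of $v$. (For the two edges to give a length-2 Scharlemann cycle of $G_Q$ you would need the corresponding edges of $G_Q$ to cobound a disk face of $G_Q$ with both corners on the same label pair and connecting parallel vertices of $G_Q$ --- none of which follows from the $G_F$-bigon you produce. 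And even a Scharlemann cycle of $G_Q$ need not lie in the great web $\Lambda$, nor have $v$'s label in its corner.) You flagged Case~B as the obstacle, and you were right to worry: the plan there does not currently lead to a contradiction.

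By contrast, the paper proves the claim with a single topological parity argument that avoids all of this. It uses condition~(2) of the enclosing lemma to get $\partial B \subset a\gamma(\sigma)$ for a single extended Scharlemann cycle $\sigma \in \Sigma$, and then builds an annulus $A$ (inside the long \mobius band $A(\sigma)$ when $\sigma$ has length~2, or inside $\Theta(\sigma)$ via Lemma~\ref{lem:annulusintheta} when it has length~3) that caps off $B$ to an embedded torus $T = A \cup B$. Pushing $K$ off $A$, the intersection $K \cap T$ is then $1$ or $3$ (the one interior vertex contributing $1$, the four $\partial B$-vertices being absorbed or contributing $2$); an odd intersection number forces $T$ to be non-separating, which is impossible in $M = K'(p/q)$. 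This works uniformly, with no case split on whether the interior vertex belongs to a gnarl, and no need to hunt for faces of $G_F$ or $G_Q$.

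If you want to salvage your route, you would need to replace the Case~B dictionary with a correct mechanism --- e.g. compressing or tubing the bigon face of $G_F$ to exhibit a forbidden object in $M$ --- but there is no off-the-shelf translation to a Scharlemann cycle of $\Lambda$, and the torus parity argument is far more direct.
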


\begin{proof}
Assume the annulus $B \in \calA$ contains a single vertex in its interior.
By (2), the components of $\partial B$
belong to $a\gamma(\sigma)$ for the same element, $\sigma$, of $\Sigma$.

First assume that $\sigma$ is an extended Scharlemann cycle of length
$2$. Then there is an annulus $A \subset A(\sigma)$ that runs between 
the components of $\partial B$. $A$ intersects $B$ 
only along their mutual boundary, forming a 2-torus, $T$. 
Furthermore, $K$ may be perturbed off $A$ so that it intersects $T$ either
once or three times. But then $T$ is a non-separating
surface in $M$, which is impossible. 

When $\sigma$ has length $3$, we arrive at an analogous contradiction.
$B$ contains a $\theta$--curve $\theta \in \theta(\sigma)$. 
As in the proof of Corollary~\ref{3ingnarl}, if the unique vertex of
$G_F$ in the interior of $B$ 
lay between the two parallel edges of $\theta$ in $B$, then $G_F$
would have a 1-sided face (the number of edges interior to the parallelism
is less than the valence of the interior vertex). Hence this vertex cannot lie
in this disk of parallelism. 
The proof of Lemma~\ref{lem:annulusintheta}
shows how to construct an annulus $A$ within
$\Theta(\sigma)$ that runs between components of $\partial B$ such that 
$K$ can
be perturbed off of $A$ to intersect $B$ transversely in either one or
three points. But then $A \cup B$ is a non-separating torus in $M$, a
contradiction. 
\end{proof}

Let $B \in \calA$, let $n_B$ be the number of vertices in the interior of
$B$ that belong to simple gnarls, and $\overline{n_B}$ the number 
that correspond to labels in $-\calL \cup \calL_\II$. 
We must show that $\overline{n_B} \ge (3/7)(n_B+4)$. Note that if
a vertex in the interior of $B$ belongs to $\calL$, it must in fact
belong to $\calL_\II$ by condition $(4)$ of Lemma~\ref{lem:buildA}.

If $n_B=0$ then $\overline{n_B} \ge 2 > (3/7) 4$ by Claim~\ref{clm:atleast2}.

If $n_B > 0$ then the corresponding vertices belong to simple gnarls
where each such gnarl contains vertices in its interior.
By Lemma~\ref{lem:depth<2}, simple gnarls are of depth $0$ or $1$.
By Corollary~\ref{3ingnarl}, each 
depth $0$ gnarl contains in its interior at least three vertices, which 
must belong to $-\calL \cup \calL_\II$ and hence
contribute $3$ to $\overline{n_B}$. By 
Lemma~\ref{lem:depth<2}, a depth $1$ gnarl must share a vertex with an
internal depth $0$ gnarl. By Lemma~\ref{lem:samelabels}, a vertex
of $G_F$ belongs to at most two different gnarls.

Let $n_B'$ be the vertices
that belong to gnarls lying in $B$ (possibly with vertices on $\partial B$).
Then $n_B \le n_B'$. Let $n$ be number of
depth $0$ gnarls contained in $B$, $k$ be the number of vertices in depth $1$ gnarls in $B$
that are not in depth $0$ gnarls, and $r$ the number of vertices that are contained
in two depth $0$ gnarls in $B$. From the observations in the 
preceding paragraph, $n_B' = 2n-r+k$.
Furthermore, there are at least $3n$ vertices in $-\calL \cup \calL_\II$ in the 
interior of $B$. 
Thus $|-\calL \cup \calL_\II| \ge 3n = n_B+[(n_B'-n_B)+r+(n-k)]$. Noting
that $n \ge k$, the bracketed quantity is non-negative.  
Therefore, 
\[
\overline{n_B} \quad \ge \quad
\begin{cases}
n_B, & \text{if} \, n_B \equiv 0 \, (\text{mod} \, 3) \\
n_B+2, & \text{if} \, n_B \equiv 1 \, (\text{mod} \, 3) \\
n_B+1, & \text{if} \, n_B \equiv 2 \, (\text{mod} \, 3)
\end{cases}
\]
It follows that $\overline{n_B} \ge (3/7)(n_B+4)$.
\end{proof}

\section{Appendix, intersections with incompressible surfaces}\label{sec:appendix}

\begin{thm}\label{thm:appendix}
There is a linear function $w_I \colon \mathbb N \to \mathbb N$ with the following
property.
Let $K'$ be a hyperbolic knot in $S^3$, $M = K'(p/q)$ where $q \ge 2$,
and $K$ the core of the attached solid torus in $M$.
Let $S$ be an orientable, incompressible surface in $M$ of
genus $g$.
Then $K$ can be isotoped to intersect $S$ at most $w_I(g)$ times.
\end{thm}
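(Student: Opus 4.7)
The approach is to adapt the proof of Theorem~\ref{thm:main} occupying Sections~\ref{sec:fatvertexedgraphs}--\ref{sec:proofofmain}, replacing the strongly irreducible Heegaard surface $\hatF$ with the incompressible surface $S$, and replacing the thin position assumption on $K$ with the assumption that $t = |K \cap S|$ is minimized by ambient isotopy of $K$ in $M$. First I would set $F = S - \nbhd(K) \subset X$, where $X$ is the exterior of $K'$; the minimality of $t$ together with the incompressibility of $S$ make $F$ incompressible and $\bdry$-incompressible in $X$. Putting $K'$ in thin position in $S^3$ and invoking Theorem~6.2 of \cite{rieck} as in Lemma~\ref{lem:FandQ} produces surfaces $F, Q$ transverse in $X$ such that each arc of $F \cap Q$ is essential in both and no simple closed curve of $F \cap Q$ is trivial in both. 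The graphs $G_F, G_Q$ satisfy the parity rule, and Lemma~\ref{lem:2} yields a great $g$-web $\Lambda$ in $G_Q$ as soon as $t > 2g-2$.

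Next, the combinatorial machinery of Sections~\ref{sec:fatvertexedgraphs}--\ref{sec:extSC}---labels, extended Scharlemann cycles, long \mobius\ bands and twisted $\theta$--bands, and the collections $\Sigma$, $L(\Sigma)$, $\calL$, and the trigon classifications---carries over verbatim. The role played by strong irreducibility is now taken over (and in fact strengthened) by the observation that any essential simple closed curve on $S$ remains essential in $M$, since $S$ is incompressible and $M$ is irreducible. In particular Corollary~\ref{cor:AEntscc} and Lemma~\ref{lem:essentialcurves} go through; the sets $\Sigma_\II$, $\Sigma_\III$, $\calL_\II$, $\G$, and the counts of $|\calL_\II|$ and of wrapping and simple gnarls (Theorem~\ref{thm:Type12boundN} and Lemma~\ref{lem:boundwrapping}) all transfer.

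The substantive changes occur in the thinning arguments of Sections~\ref{sec:thinning} and~\ref{sec:typeIII}. Each time the original proof produces a contradiction by exhibiting an isotopy of $K$ that strictly reduces the width of $K$ with respect to $\hatF$---the unfurling move of Lemma~\ref{lem:SFS} Case~(I), the flattening of a parallelism in Lemmas~\ref{lem:triplets} and \ref{lem:thetatriplets}, and the thinning long disks of Lemma~\ref{lem:nonemptygnarl}---the same move will strictly reduce $|K \cap S|$, contradicting minimality of $t$. In the adaptation of Lemma~\ref{lem:parallelesc}, the appeal to Lemma~3.3 of \cite{bgl:og2hsfs} (which used strong irreducibility to prevent a non-meridional curve on $\hatF$ from lying in a $3$-ball) is replaced by the stronger assertion that the essential core of the relevant annulus on $S$, being essential in $M$, cannot bound a disk in $M$ at all. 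The construction of the collection $\calA$ of Lemma~\ref{lem:buildA} then proceeds as before, yielding
\[
|\calA| \ge \tfrac{1}{4}|L(\Sigma)| - l(g)
\qquad\text{and}\qquad
|-\calL| \ge \tfrac{3}{7}(|L(\Sigma)| + |\G|) - C(g)
\]
for a suitable linear $C(g)$; combined with Proposition~\ref{prop:webcount} this bounds $t$ linearly in $g$.

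The main obstacle is that Theorem~\ref{thm:appendix} does not exclude the toroidal and exceptional Seifert-fibered cases that appear as conclusions (2) and (3) of Theorem~\ref{thm:main}, so these contributions must be absorbed into $w_I(g)$. When $M$ is toroidal, \cite{gl:nitds} forces $K'$ into the Eudave-Mu\~noz family with $q=2$, whence $t(K') = 1$ by \cite{em:otkklmnp}; since $K$ then sits on a genus~$2$ Heegaard surface, $|K \cap S|$ is bounded linearly in $g$ by a standard graph-of-intersection count between a closed genus~$2$ surface on which $K$ lies and the incompressible $S$. When $M$ is the small Seifert fibered space of conclusion (3), the bounded bridge number of $K$ with respect to a genus~$2$ splitting (Theorem~\ref{thm:main}) similarly yields a linear bound on $|K \cap S|$. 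Enlarging the linear constants to cover these exceptional cases produces the desired $w_I(g)$.
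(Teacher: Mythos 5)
Your core strategy matches the paper's: adapt the proof of Theorem~\ref{thm:main} by replacing the thin-position/strong-irreducibility assumptions with the assumption that $|K\cap S|$ is minimized by isotopy of $K$, then re-interpret every width-reducing move as an intersection-reducing move, and replace the single use of strong irreducibility (Claim~\ref{clm:stronglyirred}) with the incompressibility of $S$ in $M$. That part is correct and in line with the paper, modulo one technical detail: for producing the level surface $Q$ with essential arcs of intersection, the paper invokes Gabai's thin position argument (Lemma~4.4 of \cite{gabai:fatto3mIII}) rather than Theorem~6.2 of \cite{rieck}, since Rieck's result is stated for thick levels of a Heegaard splitting and does not directly apply when $\hatF$ is a closed incompressible surface.

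However, your treatment of the exceptional cases has a genuine gap, and it is also not what the paper does. You propose to absorb the toroidal and small Seifert fibered cases of Theorem~\ref{thm:main} into the constant $w_I(g)$ by placing $K$ on a genus~$2$ Heegaard surface $H$ and then bounding $|K\cap S|$ via a \emph{graph-of-intersection count} between $H$ and $S$. That count controls the number of curves of $H\cap S$, but it does not control how many times the curve $K\subset H$ meets those curves, so it gives no bound on $|K\cap S|$ without further argument. More to the point, no bound is needed: when $M$ is toroidal (so $q=2$ and $K'$ is Eudave-Mu\~noz by \cite{gl:nitds}), $M$ is the union along a torus $T$ of two Seifert fibered spaces over the disc with two exceptional fibers, and $T$ is the \emph{unique} closed orientable incompressible surface — any other would be built from horizontal pieces and hence non-separating, contradicting that $M$ is a rational homology sphere. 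Thus for $g\ge 2$ there simply is no such $S$, and for $g=1$ one has $|K\cap T|\le 2$ directly from \cite{gl:nitds}. The same observation disposes of the small Seifert fibered case: an atoroidal Seifert fibered space over $S^2$ with three exceptional fibers contains no closed incompressible surface at all, so conclusion~(3) of Theorem~\ref{thm:main} cannot arise here. This exclusion actually \emph{simplifies} the adapted proof (the Seifert fibered alternatives in Lemmas~\ref{lem:SFS}, \ref{lem:parallelesc}, \ref{notwo}, etc.\ are vacuous), whereas your version would leave a dangling and unjustified step.
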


\begin{remark} Compare the above with Theorem 4 of \cite{verjovsky} which says the following.  Let $K'$ be an hyperbolic knot in $S^3$ that does not contain a genus $g$, closed incompressible surface in its exterior and such that $M=K'(p/q)$ contains an incompressible surface of genus $g$. Let $K$ be
the core of the attached solid torus in $M$. If $q \ge 4$, then there is an incompressible surface of genus $g$ in $M$ which $K$ intersects at most $\frac {36} {q-3} (g-1)$ times.
\end{remark}

\begin{proof}
The proof is just that of Theorem~\ref{thm:main} given for strongly irreducible Heegaard
surfaces, except basically that the proof of Claim~\ref{clm:stronglyirred}
must be altered to replace the assumption of strong irreducibility
with incompressibility. In fact the arguments for an incompressible surface tend to be easier than in the context of an Heegaard surface,
but as those arguments have been already been made, we will make use
of them. As $M$ is a non-integral surgery on a
knot in $S^3$, it is irreducible (\cite{gl:oidscyrm}).

We may assume that $M$ is atoroidal. Otherwise by \cite{gl:nitds}
$K'$ is an Eudave-Mu\~noz knot and, consequently, $M$ is the union
of two Seifert fibered spaces over the disk, each with two exceptional
fibers, along a torus $S$. The Seifert fibers of the two halves intersect once along
$S$. Hence $S$ is the unique orientable incompressible surface. (Any other
connected incompressible surface would
have to be the union of  horizontal surfaces in the two Seifert fibered
spaces.   
Consequently such a surface would be non-separating, contradicting that $M$ is a rational homology sphere.)  
Again \cite{gl:nitds} shows
that $K$ can be isotoped to intersect $S$ twice. Note that this also
shows that
$M$ cannot be a Seifert fibered space as any such which is atoroidal
contains no incompressible surface. Finally, recall that $M$ cannot
contain a Klein bottle by Theorem 1.3 of \cite{gl:dsokcetI} (see also
\cite{boyerzhang}).

So let $\hatF$ be an incompressible surface in $M$ of genus at least $2$
and isotop $K$ to intersect $\hatF$ minimally. Let $F$ be the punctured surface, $\hatF \cap X$, where $X$ is the exterior of $K$ in $M$ (which is the exterior of $K'$ in $S^3$). The thin position argument given in Lemma 4.4 of \cite{gabai:fatto3mIII} (in place of, and simpler than, that of
\cite{rieck}) shows that there is a level surface
$\hatQ$ in a thin presentation of $K'$ in $S^3$ such that the
corresponding punctured surface $Q = \hatQ \cap X$ intersects
$F$ in arcs which are essential in both $Q$ and $F$. That is,
no arcs of $F \cap Q$ are boundary parallel in either $F$ or $Q$.

The combinatorial arguments of Section~\ref{sec:graphcombo} go through in this context.
In particular,  Proposition~\ref{prop:webcount} still holds for
$\calL$ as defined in that section.
For each label $x \in \calL$, there is a bigon or trigon in $\Lambda_x$,
and the classification of such as either an extended Scharlemann cycle
or as a trigon of Type I, I\!I, or I\!I\!I is the same.

One checks that all of the results of section~\ref{sec:thinning} hold;
however, with the helpful addition now that $M$ cannot be a Seifert fibered space.
Note that an isotopy which gave a thinning in this section now
reduces the intersection number of $K$ and $\hatF$. Indeed this is overkill. For example,
neither conclusion of Lemma~\ref{lem:SFS} can hold in this context:
conclusion $(1)$ allows us to reduce the intersection of $K$ with
$\hatF$, and conclusion $(2)$ says that $M$ is a Seifert fibered space.
There is one place in section~\ref{sec:thinning} where the fact that
$\hatF$ is a strongly irreducible Heegaard surface is used. That is
in the proof of Claim~\ref{clm:stronglyirred}. Strong irreducibility
is used there to show that the Seifert fibered space $N$ does not
lie in a 3-ball. In the current context, this does not occur as it
would imply that a curve that is homotopically essential in $\hatF$
lies in this 3-ball, and consequently is homotopically trivial in $M$
-- thereby violating the incompressibity of $\hatF$.

The results of sections~\ref{sec:typeIorII} and \ref{sec:typeIII} follow from the
supporting Lemmas in section~\ref{sec:thinning} and the fact that $M$
is irreducible, atoroidal, and contains no Klein bottle.

Finally, section~\ref{sec:proofofmain} pulls together these supporting Lemmas to give
the bound $w_I(g)=2w(g)$ on $|K \cap \hatF|$.
\end{proof}


\providecommand{\bysame}{\leavevmode\hbox to3em{\hrulefill}\thinspace}
\providecommand{\MR}{\relax\ifhmode\unskip\space\fi MR }
\providecommand{\MRhref}[2]{%
  \href{http://www.ams.org/mathscinet-getitem?mr=#1}{#2}
}
\providecommand{\href}[2]{#2}






\end{document}